\documentclass{amsart} 

\usepackage{AKstyle}

\numberwithin{figure}{section}

\usepackage{tikz}
\usetikzlibrary{decorations.markings}
\usepackage[margin=1in]{geometry}

\numberwithin{equation}{section}

\definecolor{gold}{rgb}{1.0, 0.75, 0.0}
\usepackage{hyperref}

\newcommand{\SW}[1]{\hyperref[SW#1]{SW#1}}

\usepackage{caption}
\begin{document}

\title{
Arithmetic Polyhedra}
\author{Daniel Allcock}
\email{allcock@math.utexas.edu}
\address{Department of Mathematics, UT Austin, Austin, TX}

\author{Pat Devlin}
\email{pdevlin2@swarthmore.edu}
\address{Department of Mathematics, Swarthmore College, Swarthmore, PA}

\author{Anna Felikson}
\email{anna.felikson@durham.ac.uk}
\address{Department of Mathematics, Durham University, Durham, UK}

\author{Alex Kontorovich}
\email{alex.kontorovich@rutgers.edu}
\address{Department of Mathematics, Rutgers University, New Brunswick, NJ}

\author{Ian Whitehead}
\email{whiteh2@stolaf.edu}
\address{Department of Mathematics, Statistics, and Computer Science, St. Olaf College, Northfield, MN}

\begin{abstract} \noindent    The Koebe-Andreev-Thurston theorem assigns a 
    $3$-dimensional hyperbolic reflection group
    to each combinatorial polyhedron. A natural question is: which of them 
    are arithmetic? In 2016, Kontorovich-Nakamura conjectured that all arithmetic reflection groups 
    obtained in this way are commensurable to those obtained from the tetrahedron, square pyramid, or cuboctahedron. In this paper, we prove the conjecture. It is a consequence of the following result of independent interest: all arithmetic ideal, right-angled hyperbolic polyhedra are obtained by gluing together copies of one of three ``seed'' polyhedra. 
\end{abstract}

\maketitle

\tableofcontents

\begingroup
\renewcommand{\thesection}{1} 
\begin{figure}[h]
    \centering
\, \hfill \includegraphics[width=.3\textwidth]{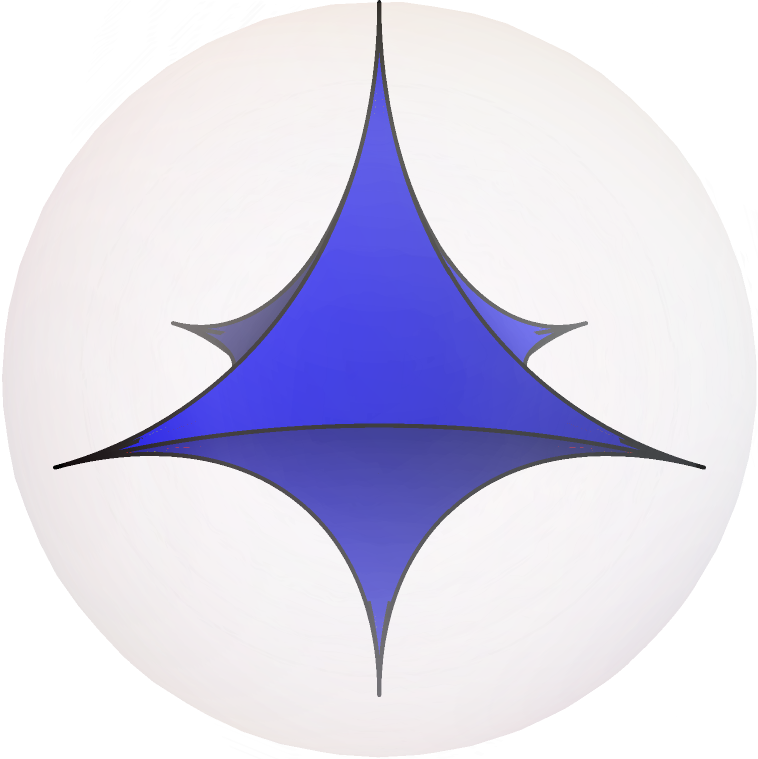} \hfill 
\includegraphics[width=.3\textwidth]{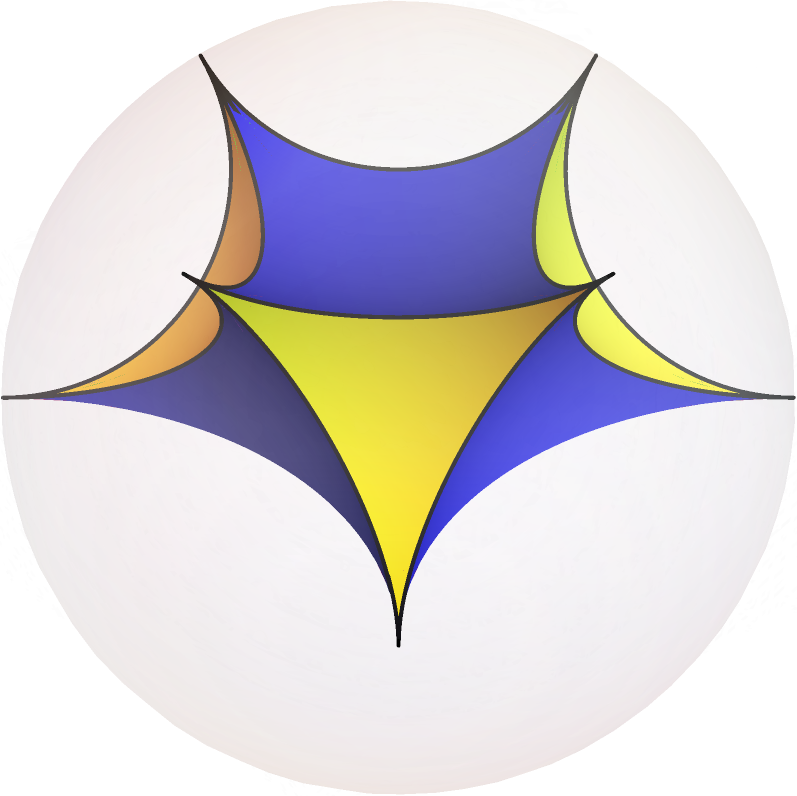}\hfill
\includegraphics[width=.3\textwidth]{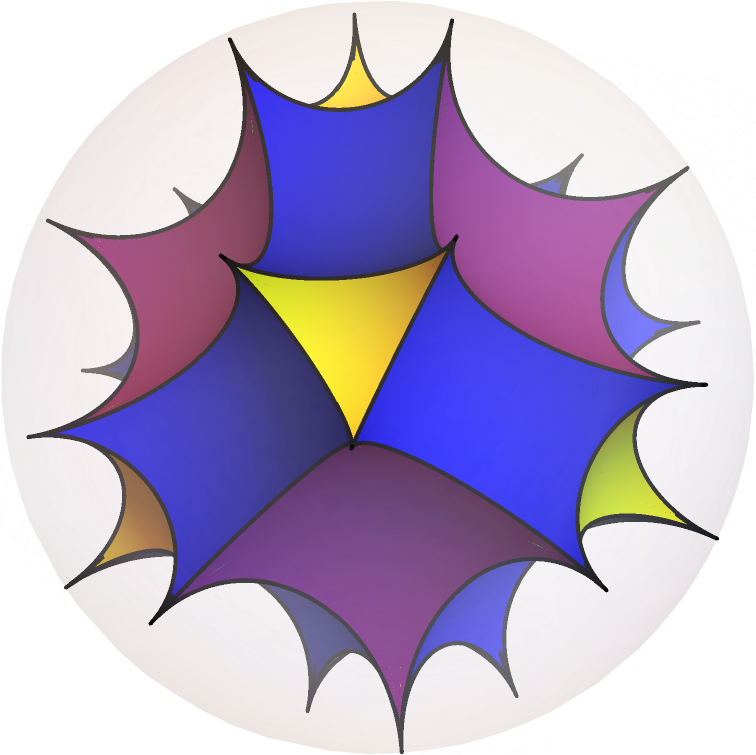} \hfill \,
\caption{The three commensurability classes of arithmetic, ideal, right-angled hyperbolic polyhedra. From left: octahedron, square antiprism, and rhombicuboctahedron.}
\label{fig:placeholder}
\end{figure}
\endgroup

\newpage

\section{Introduction}

\subsection{Motivating Question}\

By a \emph{combinatorial polyhedron}, we will mean a graph that is isomorphic to the $1$-skeleton of a convex 3-dimensional Euclidean polyhedron. The Koebe--Andreev--Thurston (KAT) theorem, reviewed below, assigns to each combinatorial polyhedron~$G$ a 
\emph{canonical} $3$-dimensional hyperbolic polyhedron~$\cP(G)$. This $\cP(G)$ has finite volume, with  all ideal vertices and all right dihedral angles. It is a fundamental
domain for the group $\Gamma(\cP(G))$ generated by the reflections across its
faces. The group $\Gamma(\cP(G))$ is a lattice in the isometry group $\Isom(\bH^3)$ of hyperbolic $3$-space, i.e., a discrete subgroup with finite-volume quotient.  In this paper, we answer the following natural question:

\begin{question} \label{QuestionVersion1}
    When is the lattice $\Gamma(\cP(G))$ arithmetic?     
\end{question}

\noindent
Vinberg's criterion for arithmeticity \cite{Vinberg1967} (see \secref{sec:Arithmeticity} below) can be applied to any chosen
$\cP(G)$, giving an algorithmic answer to the question that treats one case
at a time.  Our goal is different:
a global description of all $\cP(G)$ for which $\Gamma(\cP(G))$ is arithmetic.

\subsection{Koebe-Andreev-Thurston Polyhedra}\

We sketch the KAT construction. However, it is not really necessary for understanding our results.  
Indeed, the properties of $\cP(G)$ mentioned above show that an answer to \qref{QuestionVersion1}
follows from an answer to \qref{QuestionVersion2} below.  The latter
concerns only hyperbolic polyhedra, and does not involve the KAT construction.

By a \emph{geometrization} of a combinatorial polyhedron~$G$, we will mean a choice of polyhedron $\Pi$ in Euclidean space, with 1-skeleton isomorphic to $G$, that admits a ``midsphere,'' that is, a
sphere tangent to all edges.  The KAT theorem states
that every combinatorial polyhedron has a geometrization.  This has been proved at many
levels of generality; see, e.g., \cite{Koebe1936, Andreev1970a, Andreev1970c,
Thurstonbook, Schramm1991, Schramm1991b, HeSchramm1993, Rivin1994, Rivin1996,
ColindeVerdiere1991, ChowLuo2003, BrightwellScheinerman1993, Stephenson2005,
Ziegler2007, BobenkoSpringborn2004, RoederHubbardDunbar2007, Kapovich2001}. 
\figref{fig:KATpic} shows a geometrization $\Pi$ of the $1$-skeleton $G$ of a triangular prism (in green). Simultaneously, it shows a geometrization of its dual polyhedron $\widehat\Pi$ (the triangular bipyramid, in red). Their common midsphere is shown in yellow.

\setcounter{figure}{1}
\begin{figure}[ht]
        \centering
        \includegraphics[width=.48\textwidth]{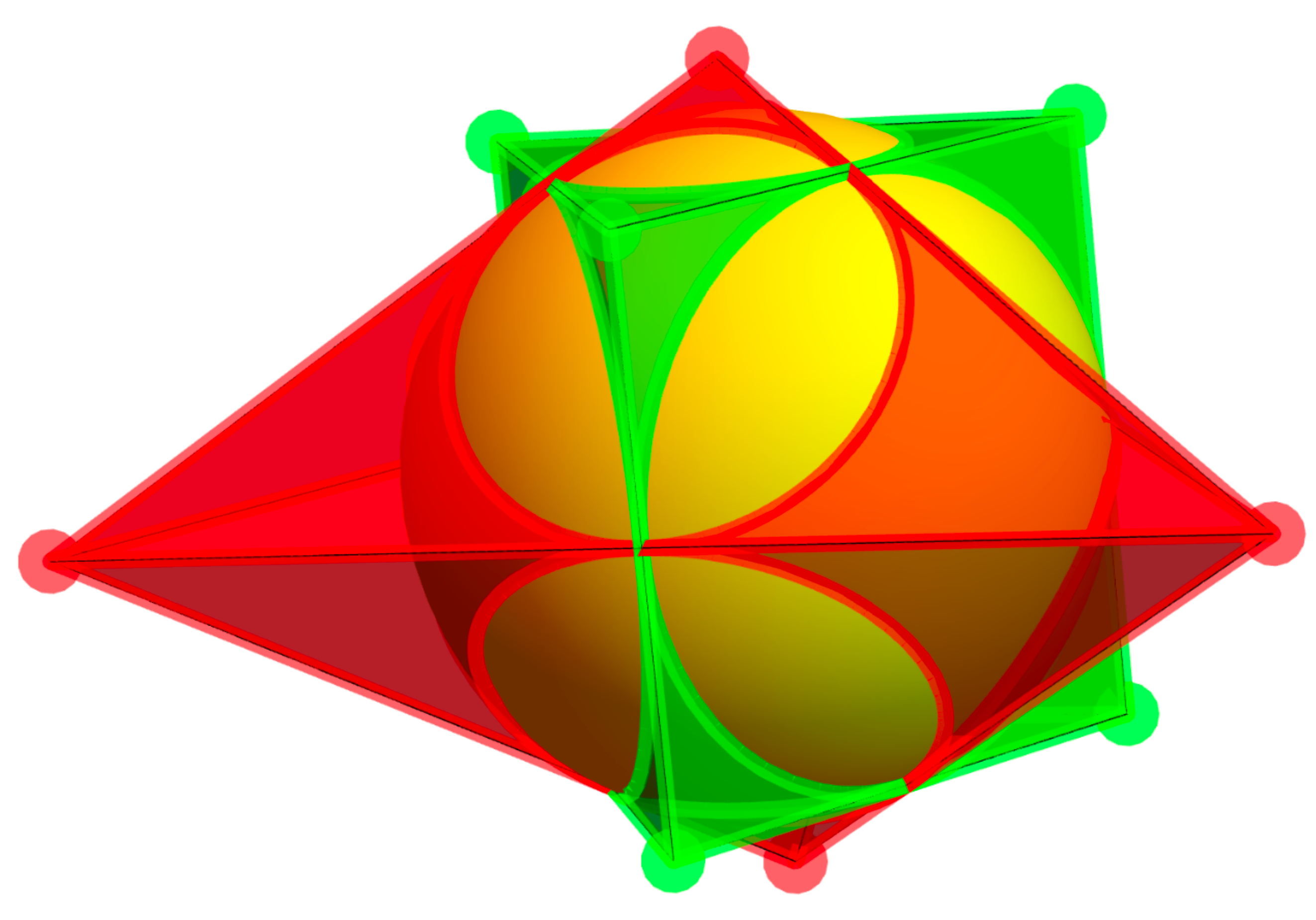} \hfill
        \includegraphics[width=.48\textwidth]{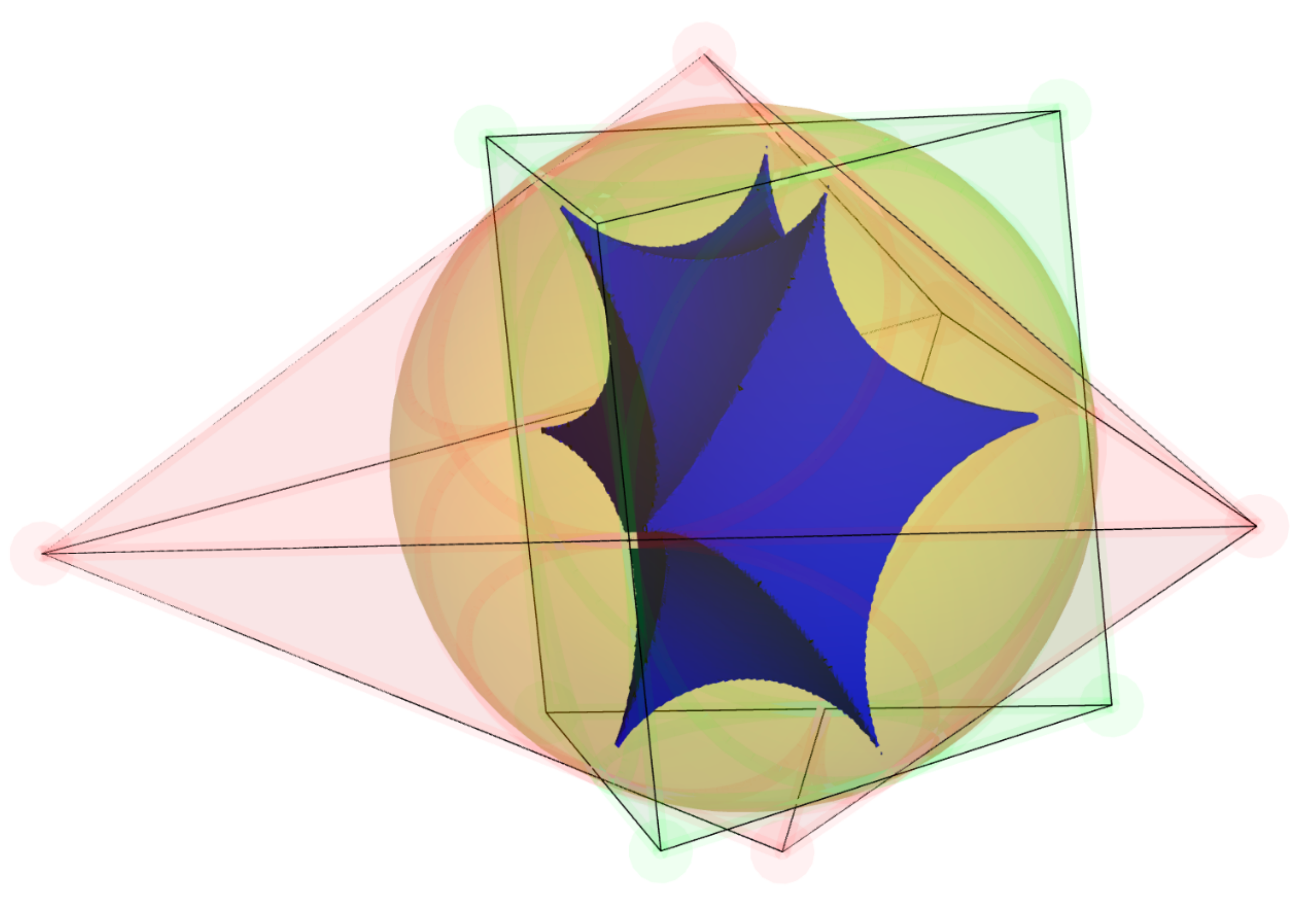}
        \caption{Geometrized polyhedron $\Pi$ and its dual $\widehat \Pi$; the corresponding hyperbolic polyhedron $\cP(\Pi)$ in the ball model of $\bH^3$}
    \label{fig:KATpic}
\end{figure}

We 
identify the midsphere with the boundary of
the ball model of 
hyperbolic space, and
define $\cP(\Pi)$ as the 
convex hull, in the hyperbolic ball, of the points
of tangency.  
The combinatorial polyhedron underlying $\cP(\Pi)$
is the \emph{rectification} of~$\Pi$, also called the ``maximal truncation,'' or, in the graph-theoretic context, the ``medial graph.'' See \figref{fig:rectification}.
The faces of $\cP(\Pi)$ correspond to the
faces and vertices of~$\Pi$, and its vertices correspond to the edges of~$\Pi$. For $\Pi$ the triangular prism, $\cP(\Pi)$ appears in \figref{fig:KATpic}. It has $3$ quadrilateral faces
(corresponding to $\Pi$'s rectangular faces) and $8$ triangular faces (two corresponding to $\Pi$'s triangular faces and six corresponding
to $\Pi$'s vertices).  

\begin{figure}[ht]
    \centering
    \, \hfill \hfill
    \includegraphics[width=.2\textwidth]{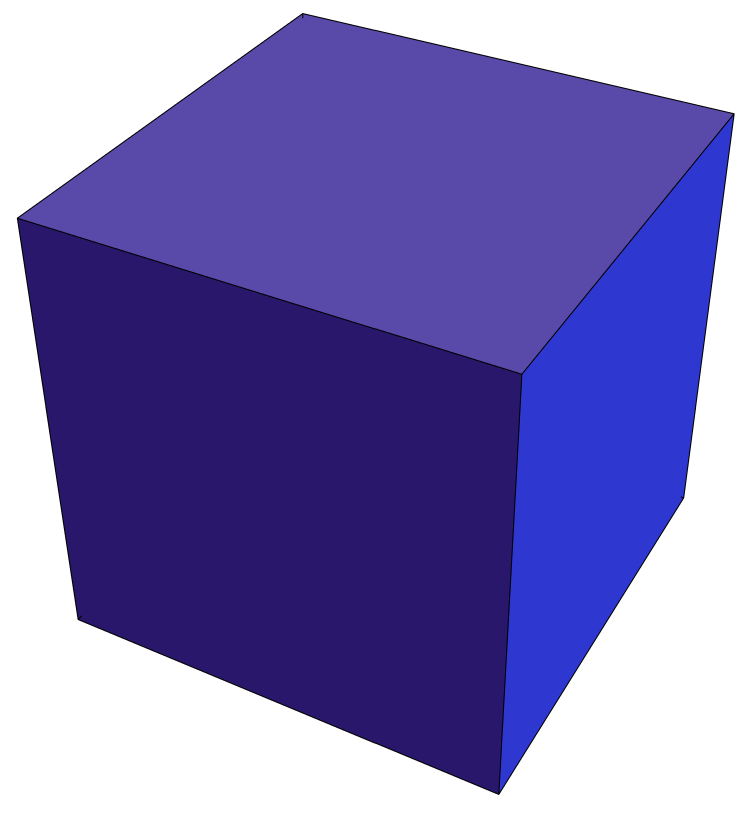} \hfill \begin{tikzpicture}[scale=1, baseline=(current bounding box.center)]
\draw[white] (0, -4) -- (0,0);
\draw[thick, ->] (0,0) to[bend left=20] (1,0);
\end{tikzpicture} \hfill \includegraphics[width=.2\textwidth]{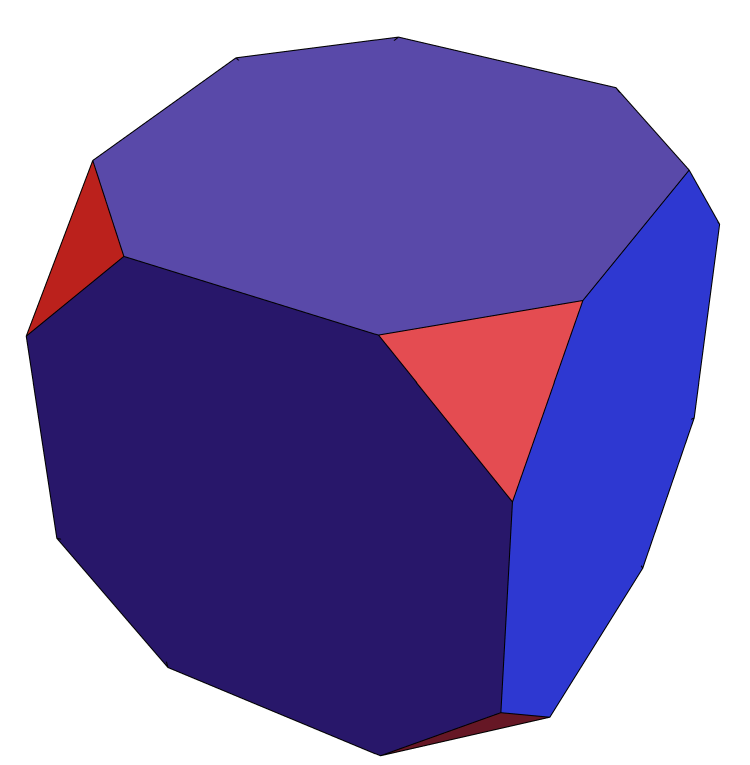} \hfill \begin{tikzpicture}[scale=1, baseline=(current bounding box.center)]
\draw[white] (0, -4) -- (0,0);
\draw[thick, ->] (0,0) to[bend left=20] (1,0);
\end{tikzpicture} \hfill \includegraphics[width=.2\textwidth]{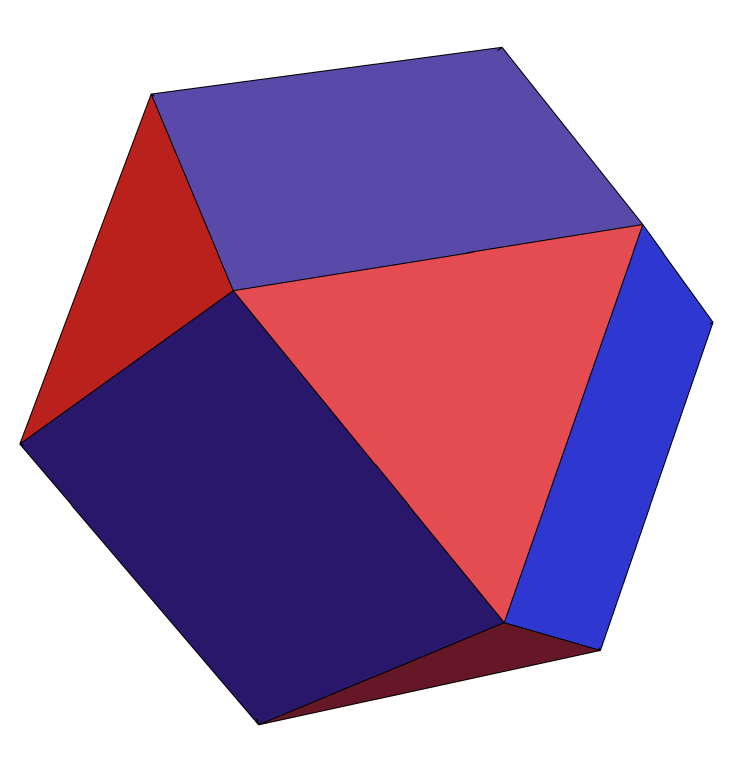} \hfill \hfill \,

\vspace{-2cm}
\, \hfill
\begin{tikzpicture}[scale=.7, baseline=(current bounding box.center)]
  \coordinate (A) at (1,1);
  \coordinate (B) at (1,-1);
  \coordinate (C) at (-1,-1);
  \coordinate (D) at (-1,1);
  \coordinate (E) at (2,2);
  \coordinate (F) at (2,-2);
  \coordinate (G) at (-2,-2);
  \coordinate (H) at (-2,2);

  \draw[thick] (E) -- (F);
  \draw[thick] (F) -- (G);
  \draw[thick] (G) -- (H);
  \draw[thick] (H) -- (E);
  
  \draw[thick] (A) -- (E);
  \draw[thick] (B) -- (F);
  \draw[thick] (C) -- (G);
  \draw[thick] (D) -- (H);
  
  \draw[thick] (A) -- (B);
  \draw[thick] (B) -- (C);
  \draw[thick] (C) -- (D);
  \draw[thick] (D) -- (A);

    \coordinate (MAB) at (1,0);        
  \coordinate (MBC) at (0, -1);        
  \coordinate (MCD) at (-1,0);        
  \coordinate (MDA) at (0,1);        
  \coordinate (MAE) at (1.33,1.33);  
  \coordinate (MBF) at (1.33,-1.33);  
  \coordinate (MCG) at (-1.33,-1.33);  
  \coordinate (MDH) at (-1.33,1.33);  
  \coordinate (MEF) at (2,0);    
  \coordinate (MFG) at (0,-2);    
  \coordinate (MGH) at (-2,0);    
  \coordinate (MHE) at (0,2); 

  \fill[blue] (MAB) circle (0.08);
  \fill[blue] (MBC) circle (0.08);
  \fill[blue] (MCD) circle (0.08);
  \fill[blue] (MDA) circle (0.08);
  \fill[blue] (MAE) circle (0.08);
  \fill[blue] (MBF) circle (0.08);
  \fill[blue] (MCG) circle (0.08);
  \fill[blue] (MDH) circle (0.08);
  \fill[blue] (MEF) circle (0.08);
  \fill[blue] (MFG) circle (0.08);
  \fill[blue] (MGH) circle (0.08);
  \fill[blue] (MHE) circle (0.08);
\end{tikzpicture}
\begin{tikzpicture}[scale=1, baseline=(current bounding box.center)]
\draw[white] (-.5, 0) -- (0,0);
\draw[thick, ->] (0,0) to[bend left=20] (1,0);
\end{tikzpicture}
\begin{tikzpicture}[scale=.7, baseline=(current bounding box.center)]
  \coordinate (A) at (1,1);
  \coordinate (B) at (1,-1);
  \coordinate (C) at (-1,-1);
  \coordinate (D) at (-1,1);
  \coordinate (E) at (2,2);
  \coordinate (F) at (2,-2);
  \coordinate (G) at (-2,-2);
  \coordinate (H) at (-2,2);

    \coordinate (MAB) at (1,0);        
  \coordinate (MBC) at (0, -1);        
  \coordinate (MCD) at (-1,0);        
  \coordinate (MDA) at (0,1);        
  \coordinate (MAE) at (1.33,1.33);  
  \coordinate (MBF) at (1.33,-1.33);  
  \coordinate (MCG) at (-1.33,-1.33);  
  \coordinate (MDH) at (-1.33,1.33);  
  \coordinate (MEF) at (2,0);    
  \coordinate (MFG) at (0,-2);    
  \coordinate (MGH) at (-2,0);    
  \coordinate (MHE) at (0,2); 

  \draw[thick] (MAB) -- (MDA);
  \draw[thick] (MAB) -- (MAE);
  \draw[thick] (MDA) -- (MAE);
  \draw[thick] (MAB) -- (MBC);
  \draw[thick] (MAB) -- (MBF);
  \draw[thick] (MBC) -- (MBF);
  \draw[thick] (MBC) -- (MCD);
  \draw[thick] (MBC) -- (MCG);
  \draw[thick] (MCD) -- (MCG);
  \draw[thick] (MDA) -- (MCD);
  \draw[thick] (MDA) -- (MDH);
  \draw[thick] (MCD) -- (MDH);
  \draw[thick] (MAE) -- (MEF);
  \draw[thick] (MAE) -- (MHE);
  \draw[thick] (MHE) to[bend left=100] (MEF);
  \draw[thick] (MBF) -- (MEF);
  \draw[thick] (MBF) -- (MFG);
  \draw[thick] (MEF) to[bend left=100] (MFG);
  \draw[thick] (MCG) -- (MFG);
  \draw[thick] (MCG) -- (MGH);
    \draw[thick] (MFG) to[bend left=100] (MGH);
    \draw[thick] (MDH) -- (MGH);  
    \draw[thick] (MDH) -- (MHE);  
    \draw[thick] (MGH) to[bend left=100] (MHE);

  \fill[blue] (MAB) circle (0.08);
  \fill[blue] (MBC) circle (0.08);
  \fill[blue] (MCD) circle (0.08);
  \fill[blue] (MDA) circle (0.08);
  \fill[blue] (MAE) circle (0.08);
  \fill[blue] (MBF) circle (0.08);
  \fill[blue] (MCG) circle (0.08);
  \fill[blue] (MDH) circle (0.08);
  \fill[blue] (MEF) circle (0.08);
  \fill[blue] (MFG) circle (0.08);
  \fill[blue] (MGH) circle (0.08);
  \fill[blue] (MHE) circle (0.08);

\end{tikzpicture}
\hfill \,

\caption{Above: Truncating a cube to its rectification, a cuboctahedron.\\
Below: The corresponding graph-theoretic rectification (medial graph). Midpoints of edges of the cube are marked on the left, and become vertices on the right.}
\label{fig:rectification}
\end{figure}

By construction, $\cP(\Pi)$ is \emph{ideal}, that is, it has all its vertices at the boundary of $\bH^3$. It is also \emph{right-angled}, meaning that every dihedral angle is~$\pi/2$. We see this in \figref{fig:KATpic}: the faces of $\cP(\Pi)$ lie in the hyperbolic planes bounded by the red and green circles.  These circles meet orthogonally whenever they meet transversally.  Because the Poincar\'e ball model is conformal, the planes bounded by two such circles meet each other at the same angle as the circles do, that is, orthogonally.

These properties also imply that $\cP(\Pi)$ is determined up to isometry by the combinatorial data of $G$. Because $\cP(\Pi)$ is right-angled, the Poincar\'e Polyhedron Theorem shows that its reflection group $\Gamma(\cP(\Pi))$ is a discrete subgroup of $\Isom(\bH^3)$, with $\cP(\Pi)$ as a fundamental domain. It also gives a presentation for $\Gamma(\cP(\Pi))$ that involves only the combinatorics of~$\cP(\Pi)$, hence depends only on~$G$.  (The proof
of Poincar\'e's theorem is quite simple for reflection groups; see \cite{delaHarpe}.)  Also, $\cP(\Pi)$ has finite volume because it is the convex hull of a finite number of ideal points. So $\Gamma(\cP(\Pi))$ is a lattice in $\Isom(\bH^3)$. Finally, the group structure of $\Gamma(\cP(\Pi))$ determines $\cP(\Pi)$ up to isometry by the Mostow rigidity theorem. This shows that $\cP(\Pi)$ depends only on~$G$ and is independent of its geometrization, justifying the notation~$\cP(G)$.

\subsection{Reformulation}\label{sec:Reformulation}\

Based on the fact that $\cP(G)$ is an ideal, right-angled hyperbolic polyhedron, we can see that
\qref{QuestionVersion1} is a special case of the following question, which we answer
in \thmref{Thm3CommensurabilityClasses} and \thmref{Thm:MainConverse}.

\begin{question} \label{QuestionVersion2}
    For which ideal, right-angled hyperbolic polyhedra $\cP$ is $\Gamma(\cP)$  an arithmetic lattice?
\end{question}

This question is of independent interest, in light of the distinguished role played by ideal, right-angled groups in a variety of subjects, including reflection groups, Coxeter groups, knot and link complements, and circle packings.  Also, it is only seemingly more general than \qref{QuestionVersion1}:

\begin{prop} \label{prop:Ian}
Every ideal,  right-angled hyperbolic polyhedron has the form $\cP(G)$ for some combinatorial polyhedron~$G$.
\end{prop}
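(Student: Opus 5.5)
Given an ideal, right-angled hyperbolic polyhedron $\cP$, the plan is to reverse the KAT construction. We find a combinatorial polyhedron $G$ whose rectification is the combinatorial type of $\cP$, and then invoke Mostow rigidity (as in the discussion above) to conclude $\cP\cong\cP(G)$.

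The first step is local combinatorics at an ideal vertex $v$ of $\cP$. A horosphere centered at $v$ cuts $\cP$ in a Euclidean polygon whose angles equal the dihedral angles of the edges at $v$. Since these are all $\pi/2$, the polygon is a rectangle. Hence every vertex of $\cP$ is $4$-valent, with exactly four faces around it.

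The second step is to $2$-colour the faces. Work in the upper half-space or ball model, where each face spans a plane bounded by a circle $C_F$ on $\partial\bH^3$. I claim the face adjacency graph (faces adjacent when they share an edge) is bipartite, equivalently that the planar $4$-regular $1$-skeleton of $\cP$ has a face $2$-colouring. For a planar graph this holds iff every vertex has even degree, which we have from the first step. Call the two colour classes ``green'' and ``red''. Around each vertex the four faces alternate green, red, green, red. Two opposite faces at $v$ do not share an edge; their circles are tangent at $v$.

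The third step builds $G$. Its vertices are the red faces, and its faces are the green faces. Two red faces are joined by an edge of $G$ for each ideal vertex of $\cP$ at which both appear, as the opposite pair there. Equivalently, $G$ is obtained from the $1$-skeleton of $\cP$ by shrinking each red face to a point, so each vertex of $\cP$ becomes an edge of $G$ separating its two green faces. By construction the medial graph of $G$ is the $1$-skeleton of $\cP$.

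The main obstacle is showing that $G$ is a genuine combinatorial polyhedron, i.e.\ $3$-connected and planar with no loops or multiple edges, so that Steinitz applies. Planarity is inherited. For the rest I would use geometry. A loop or double edge would mean two red faces touching at two ideal points, or a red face touching itself. Two distinct circles meet in at most two points, and tangency at a point forbids a second meeting point. Similarly, a $2$-separation in $G$ should produce a non-ideal or non-convex configuration. Faces of a convex polyhedron meeting at an ideal point are determined by the circle pattern, so I would translate each degeneracy into a forbidden circle configuration, using Andreev's theorem or elementary tangency arguments.

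Once $G$ is a polyhedral graph, the KAT theorem gives $\cP(G)$, whose combinatorial type is the medial graph of $G$, which is the type of $\cP$. Both are ideal and right-angled. Their reflection groups therefore have the same Poincar\'e presentation, hence are isomorphic lattices, and Mostow rigidity gives an isometry $\cP\cong\cP(G)$.
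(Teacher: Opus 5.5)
The paper does not prove this proposition; it defers to forthcoming work of Whitehead, with a pointer to Andreev. So your argument has to stand on its own.

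Its outline is sound:
\begin{itemize}
\item four faces at each ideal vertex;
\item the face $2$-colouring;
\item the inverse-medial graph $G$;
\item no loops or multiple edges, because opposite circles are tangent;
\item KAT plus Mostow at the end.
\end{itemize}
But the step that carries all the content, that $G$ is $3$-connected, is only announced (``a $2$-separation in $G$ should produce a non-ideal or non-convex configuration''). This is a genuine gap, and it cannot be closed from the combinatorics of $\cP$ alone.

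Here is a counterexample to a purely combinatorial argument. Let $G_0$ be two copies of $K_4$ glued along an edge $ab$. It is simple and planar with minimum degree $3$, but $\{a,b\}$ separates it. Its medial graph is simple, $4$-regular and $3$-connected, so it is the $1$-skeleton of a convex polyhedron. Yet its two colour classes give back $G_0$ and its dual, and neither is $3$-connected. So the geometry must enter essentially.

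You also do not treat cut vertices, though these are easy. A cut vertex $a$ occurs twice on some face $f$ of $G$. The red face $a$ and the green face $f$ of $\cP$ would then share two edges, which two faces of a convex polyhedron cannot do.

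One way to supply the missing step is as follows. A $2$-separation $\{a,b\}$ of $G$ is realized by a closed curve through $a$, a face $f_1$, then $b$, then a face $f_2$. This gives four faces of $\cP$: red $a$, green $f_1$, red $b$, green $f_2$, with consecutive ones sharing an edge. Their boundary circles therefore satisfy $C_1\perp C_2\perp C_3\perp C_4\perp C_1$. A M\"obius normalization shows exactly three possibilities:
\begin{itemize}
\item $C_1,C_3$ cross. Sending one crossing point to $\infty$ makes $C_1,C_3$ lines through $0$ and $C_2,C_4$ circles centred at $0$.
\item $C_2,C_4$ cross, symmetrically.
\item All four circles pass through a single point $p$.
\end{itemize}
In the first case, the edge shared by red $a$ and green $f_1$ is a complete geodesic, because $\cP$ is ideal. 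Its endpoints $C_1\cap C_2$ lie on opposite sides of the line $C_3$. That contradicts $\cP$ lying on one side of the plane of red $b$. The second case is the same argument with $C_4$ in place of $C_3$. In the third case, $p$ is an ideal vertex of $\cP$ and these are exactly its four faces. So $p$ is an edge $ab$ of $G$ with $f_1,f_2$ on its two sides, and the curve separates no vertices.

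This is the right-angled, ideal case of the Andreev--Rivin prismatic $4$-circuit condition, and it is precisely what rules out the medial graph of $G_0$. With it in hand, the rest of your argument goes through.
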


\noindent
This fact, which we do not need in this  paper, is
closely related to results of Andreev in \cite{Andreev1970c}, and its proof
will appear in forthcoming work of Whitehead \cite{Whitehead2026}.

\subsection{Results}\label{sec:MainThms}\

All hyperbolic polyhedra appearing in this paper are Coxeter polyhedra,
meaning that their dihedral angles are integral submultiples of~$\pi$.  
See \secref{Notation} for background.
Just as for
right-angled polyhedra, the group $\Gamma(\cP)$
generated by reflections across the
faces of a finite-volume Coxeter polyhedron $\cP\subseteq \bH^3$ 
is a lattice
in $\Isom(\bH^3)$.  We will call $\cP$ \emph{arithmetic} if $\Gamma(\cP)$
is, and we will say that two Coxeter polyhedra are \emph{commensurable}
if their reflection groups are.
Our first main result was conjectured around 2016 by Kontorovich and Nakamura 
 (see \cite{KontorovichNakamura2019, Kontorovich2016IAS}):

\begin{theorem}
    \label{Thm3CommensurabilityClasses}
    The following ideal, right-angled polyhedra are arithmetic, and
    every ideal, right-angled arithmetic polyhedron is commensurable to exactly
    one of them:
    \begin{enumerate}
            \item
                the ideal, right-angled octahedron $\cO$,
            \item
                the ideal, right-angled square antiprism $\cA$, and
            \item
                the ideal, right-angled rhombicuboctahedron $\cR$.
    \end{enumerate}
\end{theorem}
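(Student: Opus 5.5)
Theorem~\ref{Thm3CommensurabilityClasses} makes three claims: the three seeds are arithmetic, they are pairwise non-commensurable, and every ideal right-angled arithmetic polyhedron is commensurable to one of them. I would handle them in that order, with the last carrying almost all the weight.

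\emph{Arithmeticity of the seeds.} For each of $\cO$, $\cA$, $\cR$ I would compute the Gram matrix $G=(-\cos\theta_{ij})$ of outward unit normals, with $-\cosh d_{ij}$ in the entries for ultraparallel faces. Then I would apply Vinberg's criterion. Because the polyhedra are noncompact, it reduces to one check: every cyclic product of entries of $2G$ must be a rational integer. For $\cO$, all distances between non-adjacent faces come from a regular ideal octahedron, and these products lie in $\mathbb Z$; this should give the Bianchi group over $\mathbb Q(\sqrt{-1})$. The analogous computations for $\cA$ and $\cR$ should yield fields $\mathbb Q(\sqrt{-2})$ and $\mathbb Q(\sqrt{-7})$, or some similar imaginary quadratic fields. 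This step is routine.

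\emph{Distinctness.} Noncompact arithmetic lattices in $\Isom(\bH^3)$ are commensurable to Bianchi groups $\mathrm{PGL}_2(\mathcal O_d)$. Their commensurability class is determined by the invariant trace field $\mathbb Q(\sqrt{-d})$, or equivalently by the rational quadratic form $\langle -1,1,1,d\rangle$ up to similarity. So it suffices to show that the three Gram matrices give forms with three different discriminant classes modulo squares. This is a finite computation.

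\emph{Classification.} This is the main step. Let $\cP$ be ideal, right-angled and arithmetic. First, in the upper half-space model with one vertex at $\infty$, the four faces through $\infty$ form a rectangle. Its cusp shape, the ratio of its side lengths, lies in the invariant trace field $\mathbb Q(\sqrt{-d})$, so the squared ratio is rational. Next, I would exploit the fact that every vertex is 4-valent and every cusp rectangular. Running Vinberg's cycle condition around the faces surrounding each face shows that the normalized lengths $\cosh d_{ij}$ are tightly constrained. From this I would aim to show that each face is an ideal polygon whose adjacent cusp shapes come from a finite list, namely squares or $1{:}\sqrt2$ rectangles.

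The hardest point is turning these local constraints into a global decomposition. My plan is to cut $\cP$ along the planes of symmetry forced at each cusp, and to show that $\cP$ is tiled by copies of a small Coxeter polyhedron. The candidates are the common quotient pieces of $\cO$, $\cA$ and $\cR$, namely simplices or prisms with angles $\pi/2,\pi/3,\pi/4$. Because the list of such arithmetic noncompact pieces is finite, $\Gamma(\cP)$ then sits with finite index in a reflection group commensurable to one of the three seeds. I expect the local-to-global gluing argument to be the real obstacle. It is the ``gluing seeds'' result announced in the abstract, and I would try to prove it inductively by peeling one seed piece off at a face adjacent to a chosen cusp.
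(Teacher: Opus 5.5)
Your arithmeticity and distinctness steps are fine in outline. The classification step, however, is the whole content of the theorem, and it has a genuine gap. Applying Vinberg's criterion to the Gram matrix of an arbitrary ideal right-angled $\cQ$ tells you only that $\Gamma(\cQ)$ preserves a rational form, i.e.\ that the invariant trace field is some $\Q(\sqrt{-d})$. Nothing in your local analysis of faces and cusps bounds $d$. For example, rectangular cusps of ratio $\sqrt3$ tiled by $(2,3,6)$ triangles are perfectly consistent locally. The paper excludes the corresponding candidates \SW{1} and \SW{6}, whose cusp field is $\Q(\sqrt{-3})$, only by global sphere-tiling and Euler-characteristic arguments (Propositions~\ref{SW6Combinatorics} and~\ref{SW1Combinatorics}).

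The missing input is the Scharlau--Walhorn classification (Theorem~\ref{ScharlauWalhornTheorem}). It puts $\Gamma(\cQ)$ inside one of $49$ explicit maximal groups, so $\cQ$ is reflectively tiled by an explicit Coxeter polyhedron $\cP$. Only then are there planes to cut along, namely the mirrors of $\Gamma(\cP)$; an ideal right-angled polyhedron has no ``planes of symmetry forced at each cusp'' of its own. Even granting a finite list, your last sentence begs the question. The $49$ groups span more than three commensurability classes. The real work is showing that $45$ of them reflectively tile no ideal right-angled polyhedron: the paper uses gluing-invariant bad pairs, Euler characteristic at $4$-valent vertices, Felikson's idealizable polygons, and the combinatorial arguments for \SW{1} and \SW{6}. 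It then shows that the survivors \SW{2}, \SW{4}, \SW{12}, \SW{13} tile $\cO$, $\cA$, $\cR$. Peeling seed pieces off one cusp at a time gives you no mechanism for excluding the other classes.

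Your intermediate claim about cusp shapes is also false. Let $e_i$ be the outward unit normals. At the cusp of \SW{13}, faces $2,4$ and $3,5$ are the two parallel pairs, and $e_3+e_5=\lambda(e_2+e_4)$, where $\lambda$ is the ratio of the side lengths of the cusp rectangle. Pairing with $e_1$ gives $\lambda=\cos(\pi/6)/\cos(\pi/4)=\sqrt{3/2}$. For \SW{12} one finds $\lambda=\sqrt6$, and the cusps of $\cR$ itself are $\sqrt3:\sqrt2$ rectangles. So $\cR$ has cusp field, hence invariant trace field, $\Q(\sqrt{-6})$, not $\Q(\sqrt{-7})$. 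By the same computation, \SW{4} gives $\lambda=\sqrt2$ and \SW{2} has square cusps. Even within the class of $\cO$, doubling $\cO$ across a face produces $1:2$ cusps. A list containing only squares and $1:\sqrt2$ rectangles would allow at most two invariant trace fields, contradicting the three classes you are trying to establish.

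With the fields corrected to $\Q(i)$, $\Q(\sqrt{-2})$, $\Q(\sqrt{-6})$, your discriminant argument does certify ``exactly one'', a point the paper leaves implicit. Your direct Vinberg check of arithmeticity is a valid alternative to the paper's route, which gets arithmeticity for free because each seed is reflectively tiled by a Scharlau--Walhorn polyhedron.
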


The hyperbolic polyhedra $\cO$, $\cA$, and $\cR$, shown in \figref{fig:placeholder}, exist by applying the KAT construction to the tetrahedron, square pyramid, and cuboctahedron, respectively. We recall that the cuboctahedron means the common rectification of the cube and octahedron; see again \figref{fig:rectification}. The rhombicuboctahedron is defined as the common rectification of the cuboctahedron and its dual, the rhombic dodecahedron.  It has $26$ faces: $18$ are quadrilateral and $8$ are
triangular.

There are many ideal, right-angled polyhedra within each commensurability class. For example, one can glue an arithmetic ideal, right-angled polyhedron to its reflection across a face, and the result is a new arithmetic ideal, right-angled polyhedron. One can construct infinitely many arithmetic examples by iterating this process.   More generally, one can glue two ideal, right-angled polyhedra together along a common face, and the result is also ideal and right-angled, but might or might not be arithmetic.  Therefore, the best
description one could hope for, of a general 
ideal, right-angled arithmetic hyperbolic polyhedron~$\cQ$, is that it can be obtained by assembling uncomplicated pieces in an uncomplicated way.
\thmref{Thm:MainConverse} 
provides such a description.

\begin{figure}[ht] 
\includegraphics[width=.3\textwidth]{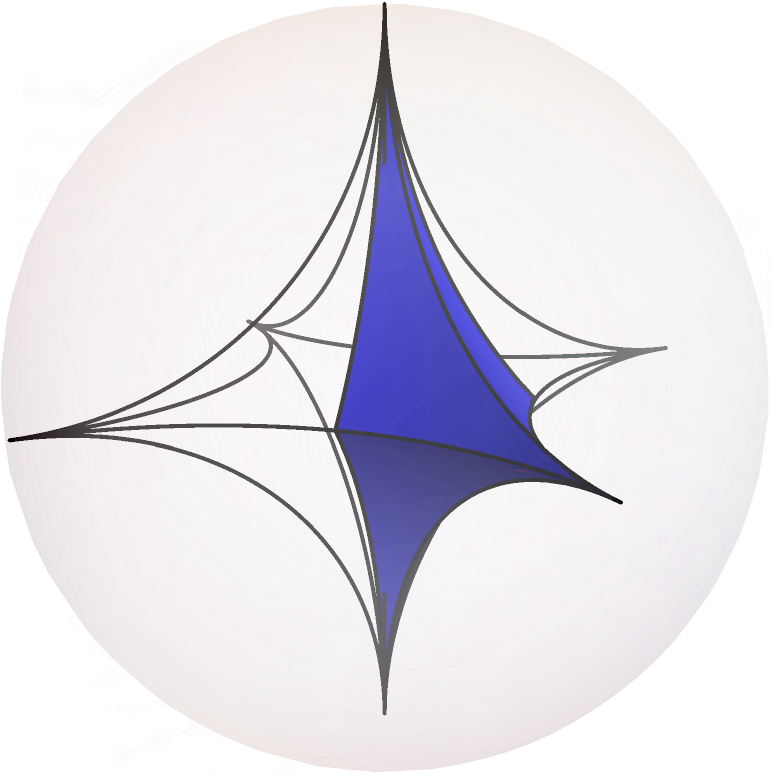} 
\caption{The right-angled triangular bipyramid $\frac14 \cO$, cut from the ideal, right-angled octahedron~$\cO$.}
\label{FigOctahedron''}
\end{figure}

\medskip
Before stating it,
we define one more hyperbolic polyhedron: the right-angled triangular bipyramid, $\frac14 \cO$. It is
right-angled, but not ideal: three of its vertices are ideal and
two are not. In fact,
$\cO$ can be cut into four copies of $\frac14 \cO$, explaining the notation.
Namely, 
choose a pair of opposite vertices of $\cO$, find the
two planes of symmetry of~$\cO$ that contain these vertices
but no others, and cut along them; see \figref{FigOctahedron''}.
The following theorem contains the results of 
Propositions \ref{Rhombicuboctahedron1}, \ref{Antiprism}, \ref{Rhombicuboctahedron2}
and~\ref{Bipyramid}.
These propositions themselves are more precise, giving in each case a 
\emph{canonical} tiling of~$\cQ$.

\begin{theorem}
    \label{Thm:MainConverse}
    \leavevmode
    \begin{enumerate}
        \item     Suppose  that $\cQ$ is an ideal, right-angled polyhedron commensurable
    with the ideal, right-angled octahedron~$\cO$.  Then $\cQ$ is tiled by~$\frac14 \cO$.  
\item 
Suppose that $\cQ$ is an ideal, right-angled polyhedron commensurable with the ideal, right-angled square antiprism ~$\cA$. Then $\cQ$ is tiled by $\cA$. 
\item 
Suppose that $\cQ$ is an ideal, right-angled polyhedron commensurable with the ideal, right-angled rhombicuboctahedron ~$\cR$. Then $\cQ$ is tiled by $\cR$.
    \end{enumerate}
\end{theorem}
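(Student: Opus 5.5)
Fix one commensurability class, say that of a seed $\cS\in\{\cO,\cA,\cR\}$, and write $\Gamma=\Gamma(\cS)$. The plan is to realize everything inside a single maximal arithmetic group and read off the tiling from the faces of $\cQ$.

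\textbf{Step 1: place $\Gamma(\cQ)$ inside the commensurator.} By Margulis, the commensurator $C$ of $\Gamma$ is a discrete arithmetic group, and $\Gamma(\cQ)$ is commensurable with $\Gamma$, so after conjugating we get $\Gamma(\cQ)\subseteq C$. Every reflection in $C$ is a reflection of the integral quadratic form defining the class. For $\cO$ and $\cA$ the field is $\mathbb{Q}(\sqrt{-1})$ and for $\cR$ it is $\mathbb{Q}(\sqrt{-2})$, with the $\mathbb{Q}(\sqrt{-1})$ case split by a finer local invariant. Let $W\subseteq C$ be the subgroup generated by all reflections in $C$. By Vinberg's algorithm $W$ has a Coxeter fundamental polyhedron $P_0$, and I would compute it explicitly in each case, obtaining a small simplex or prism. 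Since $\Gamma(\cQ)\subseteq W$, every face plane of $\cQ$ is a mirror of $W$, so $\cQ$ is a union of copies of $P_0$.

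\textbf{Step 2: use that $\cQ$ is ideal and right-angled.} This restricts which $W$-mirrors can bound $\cQ$. Every vertex of $\cQ$ is a cusp of $W$ at which the face planes of $\cQ$ form a rectangle of mirrors. Every edge of $\cQ$ lies on the intersection of two orthogonal $W$-mirrors. Combine this with the fact that $\cQ$ is a union of $P_0$-tiles. The goal is to show that the walls of $\cQ$ can only use certain mirror orbits, namely those that are walls of the tiling of $\bH^3$ by copies of the seed piece ($\frac14\cO$, $\cA$, or $\cR$). Once every face plane of $\cQ$ is shown to be a mirror of $\Gamma(\frac14\cO)$, $\Gamma(\cA)$ or $\Gamma(\cR)$ respectively, $\cQ$ is a union of tiles of that group and the theorem follows. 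The tiling is canonical because it is cut out by those mirrors.

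\textbf{Step 3: local-to-global argument.} To prove this, look near a face $F$ of $\cQ$. Its ideal vertices are cusps, and at each cusp the cross-section is a Euclidean crystallographic picture: a rectangle for $\cQ$ sitting inside the cusp group of $W$, which is a $\{4,4\}$ or related triangle group. I would classify, cusp by cusp, which mirrors can serve as the sides of an ideal right-angled corner. I expect this to force adjacent faces into a specific mirror class, and then propagate the constraint across edges, using connectivity of the boundary of $\cQ$, to all faces.

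\textbf{Main obstacle.} The hardest part is this propagation and the exclusion of "bad" mirror classes, especially for $\cO$ versus $\cA$. Both live over $\mathbb{Q}(i)$, so one must show that a polyhedron in one class cannot mix mirrors of the other type. I would first try an invariant argument: assign to each $W$-mirror its type (for instance the norm of its root, or its parity class modulo $2$), show that an ideal right-angled corner forces its two or four walls to share a type, and conclude by connectivity. If that fails, the fallback is a finite computer enumeration of the local configurations around a cusp of $P_0$.
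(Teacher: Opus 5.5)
Your Step 1 rests on a false claim, and it is the load-bearing one. Margulis's theorem says a lattice has \emph{discrete} commensurator exactly when it is \emph{non}-arithmetic. Here $\Gamma(\cO)$, $\Gamma(\cA)$ and $\Gamma(\cR)$ are arithmetic, so $C$ is dense in $\Isom(\bH^3)$: it contains the rational points of the orthogonal group of the defining form. Consequently the mirrors of the reflections in $C$ are dense, $W$ is not discrete, and there is no chamber $P_0$. No single discrete reflection group contains every reflective lattice of a commensurability class.

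If you instead fix an integral lattice and run Vinberg's algorithm, you do get a discrete group. But $\Gamma(\cQ)$ need only preserve \emph{some} lattice in the rational quadratic space, and different lattices have non-conjugate reflective parts. Enumerating the maximal ones is exactly the Scharlau--Walhorn classification the paper uses. One class can contain several non-conjugate maximal reflective lattices: SW12 and SW13 both yield $\cR$. Every other maximal lattice that could contain $\Gamma(\cQ)$ must be shown to admit no ideal, right-angled polyhedron at all. That elimination is most of the paper (bad pairs of features, Euler characteristic, non-idealizable face classes, the sphere-tiling arguments for SW1 and SW6), and your plan has nothing in its place.

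Your ``main obstacle'' is also misdiagnosed. For non-cocompact arithmetic groups on $\bH^3$, the imaginary quadratic field is a complete commensurability invariant. So $\cO$ and $\cA$, which are not commensurable, cannot both live over $\mathbb{Q}(\sqrt{-1})$.

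Even inside one fixed maximal group, the target of Steps 2--3 is false. You aim to show every face plane of $\cQ$ is a mirror of $\Gamma(\cA)$ (resp.\ $\Gamma(\frac14\cO)$, $\Gamma(\cR)$). That would make $\cQ$ \emph{reflectively} tiled by the seed, since a convex region bounded by mirrors is a union of chambers. Remark~\ref{RemNonReflectiveCuboctahedral} refutes this: two copies of $\cA$ glued along a triangular face by a non-reflection give an ideal, right-angled SW4-polyhedron that is not an $\cA$-polyhedron. Likewise, the canonical $\cR$-tilings of SW13-polyhedra need not be face-to-face. A face plane can lie in a $W$-orbit of seed mirrors without being a mirror of one fixed conjugate of $\Gamma(\cA)$, and that is precisely where your invariant-plus-connectivity propagation would break.

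The theorem only claims a tiling by isometric copies, and the paper gets it differently. The tiles are unions of SW chambers around translates of a distinguished finite vertex: 1-4-5 for SW4, 1-4-6 for SW12, and 1-2-4 or 1-3-4 for SW2. Each tile's shape ($\cC$, $\cA$ or $\frac12\cA$; $\cO$, $\frac12\cO$ or $\frac14\cO$) is fixed by whether that vertex lies in the interior, a face, or an edge of $\cQ$. This step uses two facts:
\begin{itemize}
\item non-idealizable face classes force whole finite orbits into $\cQ$ (Propositions~\ref{BadFaceOrbit} and~\ref{FiniteBadFaceOrbit2});
\item $\pi/3$ edges, and compact edges orthogonal to mirrors at both ends, cannot lie in edges of $\cQ$.
\end{itemize}
The $\frac12\cA$ pieces then pair up into antiprisms. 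For SW13 the paper instead decomposes the slices of $\cQ$ by type-6 planes into right-angled octagons.
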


See \secref{sec:refl} for the precise definition of tiling in these statements. We conjecture that in part (1), the tiling by $\frac14 \cO$ can be improved to a tiling by $\cO$, see Conjecture~\ref{Octahedron}.

\subsection{Application to Hyperbolic Link Complements}\

Reid showed that the figure eight knot complement is the only  hyperbolic knot complement whose fundamental group is arithmetic \cite{Reid1991}. The analogous problem is still open for link complements. Many link complements are closely related to quotients of  $\mathbb{H}^3$ by ideal, right-angled  reflection groups. For example, the $(2n)$-chain link complement is a hyperbolic manifold obtained by gluing four copies of the ideal, right-angled  hyperbolic $n$-gonal antiprism \cite[Section 6.8]{Thurstonbook}. The articles \cite{MeyerMillichapTrapp, NeumannReid, Kellerhals} give different proofs of the following theorem:
\begin{theorem} 
The complement of the $(2n)$-chain link is an arithmetic hyperbolic manifold if and only if $n=3$ or $4$. 
\end{theorem}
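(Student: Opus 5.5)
The plan is to realize the complement of the $(2n)$-chain link, call it $M_n$, as a finite-index cover of $\bH^3/\Gamma(\cA_n)$ orbifold-wise, and then read off arithmeticity. Here $\cA_n$ denotes the ideal, right-angled $n$-gonal antiprism. Following \cite[Section 6.8]{Thurstonbook}, $M_n$ is glued from four copies of $\cA_n$. The gluing maps are compositions of face reflections, so $\pi_1(M_n)$ is a finite-index subgroup of $\Gamma(\cA_n)$; more precisely, it sits with finite index in its orientation-preserving subgroup. Arithmeticity is a commensurability invariant, so $M_n$ is arithmetic if and only if $\Gamma(\cA_n)$ is. The statement is therefore reduced to the claim that $\cA_n$ is arithmetic exactly for $n=3,4$.

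For the ``if'' direction, $\cA_3$ is the ideal, right-angled octahedron $\cO$ and $\cA_4$ is the square antiprism $\cA$. Both are arithmetic by \thmref{Thm3CommensurabilityClasses}, which settles these two cases.

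For the ``only if'' direction, the plan is to use the necessary condition that a noncocompact arithmetic lattice in $\Isom(\bH^3)$ has all invariant traces in $\mathbb{Q}$. Equivalently, in Vinberg's criterion the cyclic products of the Gram matrix must be rational, since the field of definition is $\mathbb{Q}$ in the non-compact case. I will compute the Gram matrix of $\cA_n$ using the KAT picture, with the midsphere as the boundary of hyperbolic space. By the $n$-fold rotational symmetry, the two $n$-gonal faces are orthogonal to a common axis, and the side faces come from circles arranged symmetrically about it. Consider two faces that are disjoint and non-adjacent. Their Gram entry is $-\cosh d$, where $d$ is their distance. For the two $n$-gon faces, or for a cap face paired with an opposite side face, this entry should be an explicit function of $\cos(\pi/n)$, for instance something like $1/\sin^2(\pi/n)$ or $\cot^2(\pi/2n)$ up to scaling. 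The key step is then to show that some cyclic product, such as $4g_{ij}g_{jk}g_{ki}$ or $4g_{ij}^2$ for a suitable pair of faces, equals a rational function of $\cos(2\pi/n)$ that is not rational unless $\cos(2\pi/n)\in\mathbb{Q}$, that is, unless $n\in\{1,2,3,4,6\}$.

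The main obstacle is the case $n=6$, where $\cos(2\pi/n)$ is rational and the trace test can pass. There I would go further. One option is to check the signature condition together with whether the quadratic form is $\mathbb{Q}$-isotropic in a way compatible with the cusp cross-sections. Alternatively, by \thmref{Thm3CommensurabilityClasses}, $\cA_6$ would have to be commensurable with $\cO$, $\cA$, or $\cR$, and I would rule this out using a commensurability invariant. The cusp shapes are the natural candidate: the cusp field of $\cA_6$ should be $\mathbb{Q}(\sqrt{-3})$ or involve $\sqrt{3}$ through its $2\times 1$ versus hexagonal geometry. I would compare this against the known invariant trace fields $\mathbb{Q}(\sqrt{-1})$ for $\cO$ and $\cR$ and $\mathbb{Q}(\sqrt{-2})$ for $\cA$. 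For an arithmetic noncocompact group the invariant trace field is imaginary quadratic and equals the cusp field, so a mismatch gives the contradiction. Computing the cusp shape of $\cA_n$ explicitly from its vertex figures, which are rectangles with aspect ratio depending on $n$, is the concrete calculation I would carry out first, since it also handles general $n$ uniformly.
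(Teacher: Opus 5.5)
Your reduction to $\Gamma(\cA_n)$ and your \emph{if} direction are the same as the paper's. For \emph{only if} you take a genuinely different and valid route.

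The paper computes nothing about $\cA_n$. By Theorems~\ref{Thm3CommensurabilityClasses} and~\ref{Thm:MainConverse}, an arithmetic $\cA_n$ would be tiled by $\frac14\cO$, $\cA$ or $\cR$, whose faces have at most four sides. So for $n>4$ some ideal vertex on the $n$-gonal face carries several tiles along that face. Corollary~\ref{RectangleFact} then forces the same on the parallel face at that vertex, which is a triangle, a contradiction.

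You instead test Vinberg's criterion directly, and the computation goes through. In upper half-space, the points $e^{2\pi i k/n}$ and $\rho e^{(2k+1)\pi i/n}$, with $\rho=\sec(\pi/n)+\tan(\pi/n)$, are the vertices of the right-angled $\cA_n$. The two $n$-gonal faces lie on concentric hemispheres of radii $1$ and $\rho$. Hence $\cosh d=\frac12(\rho+\rho^{-1})=\sec(\pi/n)$, which is not either of the expressions you guessed. The cyclic product is $4\cosh^2 d=8/(1+\cos(2\pi/n))$, and for $n\ge 3$ this is rational only when $n\in\{3,4,6\}$. (Likewise, every cusp rectangle has aspect ratio $2\cos(\pi/n)$.)

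Your route needs only Vinberg's criterion and this computation, so it does not depend on the Scharlau--Walhorn classification. The paper's route needs no computation and illustrates its main theorems.

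Two corrections.

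First, $n=6$ is not an obstacle. Vinberg's criterion also requires the cyclic products of $2G$ to be algebraic integers. Concretely, the product of the reflections in the two $n$-gonal faces has trace $4\cosh^2 d$ in the $4$-dimensional representation. For an arithmetic group this trace must be a rational integer: it is rational, and some power of the element lies in $O_\Phi(\mathbb{Z})$. But $4\sec^2(\pi/6)=16/3$, so the test already fails at $n=6$, with no appeal to Theorem~\ref{Thm3CommensurabilityClasses}.

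Second, if you keep the cusp-field argument, your field for $\cR$ is wrong. If it were $\mathbb{Q}(\sqrt{-1})$, then $\cR$ would be commensurable with $\cO$, since non-cocompact arithmetic Kleinian groups are determined up to commensurability by their invariant trace field. That would contradict \emph{exactly one} in Theorem~\ref{Thm3CommensurabilityClasses}. The midsphere of the cuboctahedron gives cusp aspect ratio $\sqrt6/2$, hence the field is $\mathbb{Q}(\sqrt{-6})$. Your argument survives, because $\mathbb{Q}(\sqrt{-3})$ differs from $\mathbb{Q}(\sqrt{-1})$, $\mathbb{Q}(\sqrt{-2})$ and $\mathbb{Q}(\sqrt{-6})$, but the integrality check is simpler.

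Relatedly, the traces that must be rational are those in the $4$-dimensional representation. The $\mathrm{PSL}_2(\mathbb{C})$ invariant trace field is imaginary quadratic, as you yourself use later.
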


\begin{figure}[ht]
    \centering
\begin{tikzpicture}[scale=.75]
  
  \foreach \angle in {0, 90, 180, 270} {
    \draw[thick, rotate=\angle] (1.5,0) ellipse (0.4 and 0.8);
  }
  
  \foreach \angle in {45, 135, 225, 315} {
    \draw[white, line width=10pt, rotate=\angle] (1.9,0) 
      arc[start angle=0, end angle=90, x radius=0.4, y radius=0.8];
          \draw[white, line width=10pt, rotate=\angle] (1.9,0) 
      arc[start angle=0, end angle=-90, x radius=0.4, y radius=0.8];
  }
  
  \foreach \angle in {45, 135, 225, 315} {
    \draw[thick, rotate=\angle] (1.5,0) ellipse (0.4 and 0.8);
  }

  \foreach \angle in {0, 90, 180, 270} {
    \draw[white, line width=10pt, rotate=\angle] (1.1,0) 
      arc[start angle=180, end angle=270, x radius=0.4, y radius=0.8];
          \draw[white, line width=10pt, rotate=\angle] (1.1,0) 
      arc[start angle=180, end angle=90, x radius=0.4, y radius=0.8];
  }

  \foreach \angle in {0, 90, 180, 270} {
    \draw[thick, rotate=\angle] (1.1,0) 
      arc[start angle=180, end angle=270, x radius=0.4, y radius=0.8];
          \draw[thick, rotate=\angle] (1.1,0) 
      arc[start angle=180, end angle=90, x radius=0.4, y radius=0.8];
  }
  
\end{tikzpicture}
    \caption{$(2n)$-chain link for $n=4$.}
    \label{fig:chainlink}
\end{figure}

We will briefly sketch a proof of this theorem from our results. Because the fundamental group of the $(2n)$-chain link is commensurable with the reflection group of the ideal, right-angled  $n$-gonal hyperbolic antiprism, it suffices to show that this group is arithmetic if and only if $n=3$ or $4$. If $n=3$, the ideal, right-angled  $n$-gonal hyperbolic antiprism is the octahedron $\cO$ and if $n=4$ it is the square antiprism $\cA$. To deduce the ``only if'' part from \thmref{Thm3CommensurabilityClasses} and \thmref{Thm:MainConverse}, we can check that for $n>4$, the $n$-gonal antiprism $\cA_n$ is not tiled by the bipyramid $\frac14 \cO$, the antiprism $\cA$, or the rhombicuboctahedron $\cR$. Suppose that $\cA_n$ is tiled by one of these three. Note that $\frac14 \cO$, $\cA$, and $\cR$ have only triangular and quadrilateral faces. Thus the $n$-gonal face of $\cA_n$, for $n>4$, must be tiled by multiple faces of $\frac14 \cO$, $\cA$, or $\cR$. In particular, there must be some vertex $V$ on this face with two or more incident tiles along the face (three or more if the tile is $\frac14 \cO$). The vertex $V$ is an ideal vertex of $\cA_n$, with two pairs of parallel faces intersecting orthogonally. If one face contains two or more copies of $\cA$ or $\cR$ (or three or more copies of $\frac14 \cO$), then another face through $V$ must as well. See \corref{RectangleFact} below. This other face must be at least a quadrilateral. But besides the $n$-gonal face of $\cA_n$, the other faces through $V$ are all triangles, a contradiction. 

\subsection{Application to Superintegral Polyhedral Circle Packings}\

Kontorovich and Nakamura introduced polyhedral circle packings constructed via the Koebe-Andreev-Thurston theorem \cite{KontorovichNakamura2019}. The faces of the geometrized polyhedron $\Pi$ and its dual polyhedron $\widehat{\Pi}$ cut out a pair of dual circle configurations on the surface of the midsphere. The orbit of the circles associated to $\Pi$, under the group generated by reflections through the circles associated to $\widehat{\Pi}$, is an infinite, fractal circle packing. The orbit under the group generated by reflections through the circles associated to $\Pi$ and $\widehat{\Pi}$ is a larger fractal collection of circles, called a superpacking. The article \cite{KontorovichNakamura2019} asks the question: which polyhedra admit integral (super-)packings, that is (super-)packings where all the circles have integer curvatures? A polyhedron $\Pi$ is called (super-)integral if it admits an integral (super-)packing. \cite{KontorovichNakamura2019} establishes that if $\Pi$ is superintegral, then the reflection group $\Gamma$ is arithmetic. Thus our main theorems give a full classification of superintegral polyhedra: they arise from the commensurability classes of the tetrahedron (giving the Apollonian circle packing), the square pyramid (giving the Guettler-Mallows octahedral packing and the cubic packing), and the cuboctahedron/rhombic dodecahedron.

\subsection{Outline of the Proofs}\

Suppose $\cQ\subseteq \bH^3$ is an ideal right-angled arithmetic polyhedron with finite volume. The fact that $\Gamma(\cQ)$ is arithmetic, generated by reflections, and non-cocompact, forces it to preserve an isotropic symmetric bilinear form $\Phi$ over~$\Z$
with signature $(3,1)$; see \secref{sec:Arithmeticity}.
But only finitely many forms $\Phi$ (up to scale and isometry)
contain a reflective lattice in their orthogonal groups. These reflective lattices are partially ordered under inclusion, and R.~Scharlau found the maximal groups.  These are the groups $\Gamma(\cP)$ where $\cP$ varies over $49$ specific Coxeter polyhedra, called the Scharlau-Walhorn (SW) polyhedra because they were
published in \cite{ScharlauWalhorn1992}. So $\Gamma(\cQ)$ is a subgroup of  one of these~$\Gamma(\cP)$.  In particular, $\cQ$ is reflectively tiled by~$\cP$. See \secref{Notation} for more background, citations and definitions.

We examine the~$49$ cases.  For each~$\cP$, we consider the
ideal, right-angled polyhedra~$\cQ$ that $\cP$ tiles reflectively. The $\cQ$ that arise this way are exactly the answer to \qref{QuestionVersion2}.
For most~$\cP$, no such~$\cQ$ arise, by a variety of 
arguments concerning the shape of~$\cP$.
For example, Proposition~\ref{BadFacePair} says that
if $\cP$ has two disjoint compact faces, and all dihedral angles involving them have the form $\pi/2n$, then $\cP$ cannot reflectively
tile any ideal polyhedron. This and related criteria are developed in \secref{BadPairs}. In \secref{PotentialVertices} and \secref{PotentialFaces}, we give  more-sophisticated arguments to rule out $\cP$ based on its vertices and its faces. Then in \secref{Combinatorics}, we use combinatorial arguments to rule out the most stubborn cases. Together, these sections rule out~$45$ of the~$49$ Scharlau-Walhorn
polyhedra.  

The remaining four Scharlau-Walhorn polyhedra~$\cP$ do reflectively tile ideal, right-angled polyhedra~$\cQ$. These cases are studied in  \secref{Unfolding}. They lead to the examples in \thmref{Thm3CommensurabilityClasses}. 
(Two of these four polyhedra are commensurable,
which explains why that theorem has only three cases.)
Propositions \ref{Rhombicuboctahedron1}, \ref{Antiprism}, \ref{Rhombicuboctahedron2}
and~\ref{Bipyramid}, and hence 
\thmref{Thm:MainConverse}, follow
from a detailed study of how the $\Gamma(\cP)$-translates
of~$\cP$ fit together.

The four arithmetic Scharlau-Walhorn polyhedra are illustrated by the art piece in \figref{fig:IHES}, which was auctioned at the 2025 IHES Gala. The columns show (left to right): the geometrized polyhedron $\Pi$ and its dual against the midsphere, the intersection pattern of the polyhedra with the midsphere, the circle clusters and hyperbolic polyhedron $P$, and the tiling of $P$ by the Scharlau-Walhorn polyhedra.

\begin{figure}[ht]
   \centering
   \includegraphics[width=0.55\linewidth]{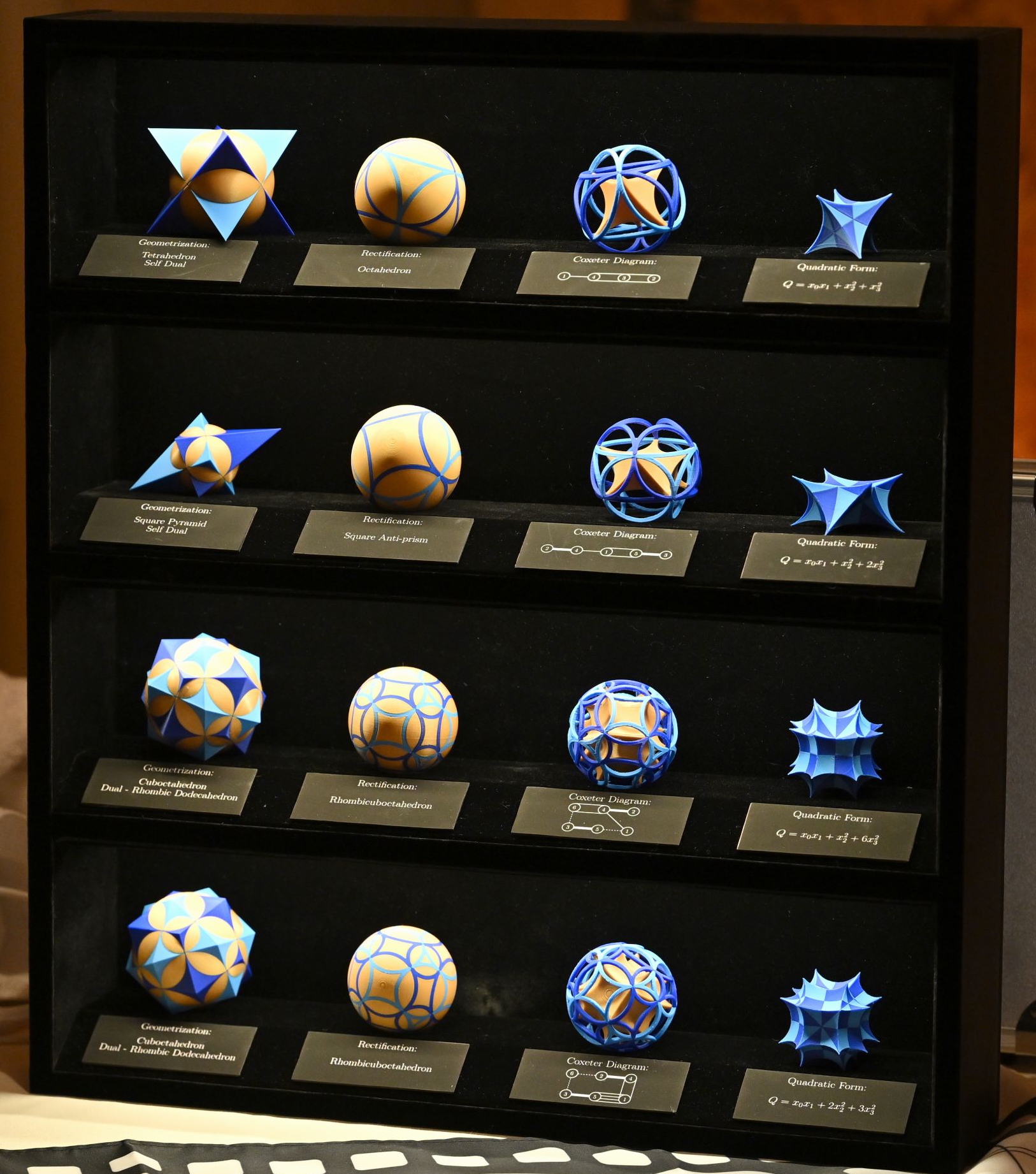}
   \caption{Artwork by Alex Kontorovich and Eric Vergo, at the 2025 IHES Gala.}
   \label{fig:IHES}
\end{figure}

\subsection*{Acknowledgements}\

We thank Arthur Baragar, Alice Mark, Allison N. Miller, Alan Reid, Phil Rehwinkel, Philip Yang, and David Yang for helpful conversations related to this project. We are grateful to the American Institute of Mathematics for hosting the workshop ``Arithmetic Reflection Groups and Crystallographic Packings,'' where this project was initiated. 
DA is partially supported by a Simons Foundation Fellowship.
AK is partially supported by NSF grant DMS-2302641, US-Israel BSF grant 2020119, and a Simons Foundation Fellowship.  This article contains no AI-generated ideas or text.  We did use Claude to help write Mathematica code to make some of the figures.

\section{Terminology, Notation, and Background} \label{Notation}

\subsection{Hyperbolic Coxeter Polyhedra, Reflection Groups, and Tilings}\label{sec:refl}\

For us, a hyperbolic polyhedron is always convex, and
will have finite-volume unless otherwise specified,
so each is an intersection of finitely many closed 
hyperbolic half-spaces. It can approach the boundary of $\bH^3$ only at a finite collection of points called cusps or ideal vertices. The polyhedron $\cP$ is called \emph{Coxeter} if each dihedral angle is equal to $\pi/n$ for some $n\in \N$. A Coxeter polyhedron $\cP$ determines a reflective lattice $\Gamma=\Gamma(\cP)$ generated by reflections across the
hyperplanes containing its faces. The lattice $\Gamma$ is contained in $\Isom(\bH^3)$, the isometry group of $\bH^3$.  The corresponding reflective tiling means the expression of 
$\bH^3$ as the union of the $\Gamma$-images of~$\cP$.
Thus the Koebe-Andreev-Thurston theorem produces, for each combinatorial polyhedron $G$, a hyperbolic polyhedron $\cP$, a reflective lattice $\Gamma$, and a reflective tiling of $\bH^3$.

A hyperbolic Coxeter polyhedron $\cP$ is determined up to isometry by the list of its faces and the dihedral angles between them. This determines the structure of $\Gamma$ as a Coxeter group, and thus determines the geometry of $\cP$ by the Mostow rigidity theorem. The dihedral angles are often encoded in a Coxeter diagram, or a Gram matrix (see \appref{ScharlauWalhornData} for definitions). We say that $\cP$ is \emph{ideal} if all its vertices are at the boundary of $\bH^3$ and \emph{right-angled} if all its dihedral angles are $\pi/2$. We also use these adjectives to describe the lattice $\Gamma$ and tiling of $\bH^3$ obtained from $\cP$. 
We say that polyhedra $\cP$ and $\cQ$ are \emph{commensurable} if the groups $\Gamma(\cP)$ and $\Gamma(\cQ)$ are commensurable (i.e. up to conjugation, they share a common finite-index subgroup). A key notion studied in this paper is the following.
\begin{definition}
We say that $\cQ$ is a \emph{$\cP$-polyhedron} if $\cQ$ is a convex hyperbolic polyhedron which is a finite union of copies of $\cP$ in the reflective tiling of $\bH^3$. In this case, we also say that $\cP$ \emph{reflectively tiles} $\cQ$.     
\end{definition}
In particular,  the reflection group generated by $\cQ$ is a finite-index subgroup of the group generated by $\cP$. More generally, we say that $\cP$ \emph{tiles} $\cQ$ if $\cQ$ is a union of isometric copies of $\cP$, with disjoint interiors. This is weaker than another common definition of the word, in which the faces of the tiles must match up. Situations where $\cP$ tiles $\cQ$ non-reflectively only appear in \secref{Unfolding}, and are clearly indicated in the text. In all these situations, there is a common polyhedron which reflectively tiles both $\cP$ and $\cQ$, ensuring their commensurability. Outside of $\secref{Unfolding}$, tilings are taken to be reflective. 

A two-dimensional polygon $P$ (hyperbolic, Euclidean, or spherical) can be described by a list of edges and angles between them. We will use the notation $(n_1, \ldots n_k)$ for a $k$-sided polygon with angles $\pi/n_1, \ldots,\pi/n_k$ in cyclic order around the perimeter. Hyperbolic polygons with more than 3 edges are not determined up to isometry by this data, but the structure of the reflection group and the combinatorics of the tiling are determined. 

\subsection{Arithmeticity} \label{sec:Arithmeticity}\

Let $k$ be a totally real number field, equipped with an identity embedding into $\R$, and let $\Phi$ be a quadratic form over the ring of integers, $\cO_k$, of $k$. We say that $\Phi$ is {\it hyperbolic} if: (1) its signature over $k$ is $(n,1)$ and (2) it is definite over any Galois conjugate $k^\sigma$. Then the orthogonal group $O_\Phi(\cO_k)$ preserving $\Phi$ over $\cO_k$ is well-known to be a lattice in $O_\Phi(\R)\cong O(n,1)=\Isom(\bH^n)$. A group $\G$ is called arithmetic (of simplest type) if it is commensurable to some such $O_\Phi(\cO_k)$. A theorem of Vinberg \cite{Vinberg1967} shows that if $\G$ is an arithmetic {\it reflective} lattice (that is, generated by reflections in hyperplanes), then $\G$ is of simplest type. The article \cite{Vinberg1967} also gives a very useful criterion for determining arithmeticity, in terms of dihedral angles. Godement's compactness criterion implies that if $\G$ is arithmetic and non-cocompact, then $\G$ has unipotents, in which case $k=\Q$ and $\Phi$ is isotropic over $\Q$, see \cite{BorelHC1962, VinbergGorbatsevichShvartsman1988}.

For a generic hyperbolic quadratic form $\Phi$, the reflections in its isometry group generate an infinite index subgroup; thus $\cO_\Phi$ cannot contain any reflective lattice $\Gamma$. Up to isometry and scaling, there are only finitely many quadratic forms $\Phi$ such that $\cO_\Phi$ contains a reflective lattice. This is one case of a larger theorem: there are only finitely many maximal arithmetic reflective hyperbolic lattices, over all dimensions and number fields! This theorem is due to the work of many people, e.g., \cite{Vinberg1981, LongMaclachlanReid2006, Agol2006, Nikulin2007, ABSW2008}; for a comprehensive survey and more background, see \cite{Belolipetsky2016}. In certain dimensions and over certain number fields, it is possible to give a complete classification. For isotropic forms over $\Q$ with signature $(3,1)$, Scharlau \cite{Scharlau} and Scharlau-Walhorn \cite{ScharlauWalhorn1992} gave a complete list of maximal-under-inclusion reflective lattices: there 
are 49. 

\begin{theorem}[Scharlau/Scharlau-Walhorn] \label{ScharlauWalhornTheorem}
    Let $\G$ be a non-cocompact arithmetic lattice acting on $\bH^3$, generated by reflections. 
    Then $\G$ is a subgroup of one of the 49 ``Scharlau-Walhorn'' lattices \hyperref[SW1]{SW1} through \hyperref[SW49]{SW49} listed in \appref{ScharlauWalhornData}.
\end{theorem}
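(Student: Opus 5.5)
This theorem is a citation of Scharlau's and Scharlau–Walhorn's classification, so the plan is to derive it from the facts in \secref{sec:Arithmeticity} together with their explicit list. I would not re-run the classification.

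\textbf{Step 1: reduce to an integral quadratic form.} Let $\Gamma$ be a non-cocompact arithmetic lattice in $\Isom(\bH^3)$ generated by reflections. By Vinberg's theorem \cite{Vinberg1967}, $\Gamma$ is of simplest type, so it is commensurable with $O_\Phi(\cO_k)$ for some hyperbolic form $\Phi$ over a totally real field~$k$. Since $\Gamma$ is non-cocompact, Godement's criterion gives unipotent elements. Hence $k=\Q$ and $\Phi$ is isotropic of signature $(3,1)$ over~$\Q$. Because $\Gamma$ is generated by reflections in hyperplanes defined over $\Q$ (Vinberg's criterion), $\Gamma$ preserves some $\Z$-lattice $L$ in the quadratic space. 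So, after conjugation, $\Gamma \subseteq O(L)$ for an integral lattice $L$ of signature $(3,1)$.

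\textbf{Step 2: enlarge to a maximal reflective lattice.} Let $W(L)$ be the subgroup of $O(L)$ generated by all reflections in $O(L)$. Then $\Gamma\subseteq W(L)$, and $W(L)$ has finite index in $O(L)$ because it contains the lattice~$\Gamma$. So $L$ is a reflective lattice. Next, pass to a maximal element of the finite poset of reflective groups containing $W(L)$. I would do this through Scharlau's reduction, which replaces $L$ by a suitable overlattice or rescaled dual (for example, a $p$-elementary reduction) at each prime. That reduction only enlarges the reflection group, and it terminates because the discriminant strictly decreases.

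\textbf{Step 3: identify the maximal groups.} The maximal lattices obtained in Step 2 come from finitely many genera with bounded discriminant, as in Scharlau's and Scharlau–Walhorn's work. For each one, Vinberg's algorithm produces a finite-volume Coxeter polyhedron. These are exactly the 49 polyhedra \SW{1}–\SW{49} tabulated in \appref{ScharlauWalhornData}. Hence $\Gamma$ is contained in one of them.

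\textbf{Main obstacle.} The hard part is completeness: showing that only finitely many explicit genera are reflective, and that Vinberg's algorithm terminates on each. Here I would simply invoke \cite{Scharlau, ScharlauWalhorn1992} rather than reprove their finiteness and case analysis.
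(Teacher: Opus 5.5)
Your proposal matches the paper, which also gives no independent proof of this theorem: it quotes the result from Scharlau and Scharlau--Walhorn, with the same Vinberg/Godement reduction to an isotropic integral form of signature $(3,1)$ (Section~\ref{sec:Arithmeticity}) as background. Two caveats about what you are invoking. First, the paper notes that those two papers justify the completeness of the list at length but do not claim a complete proof; Scharlau reports having since completed one, and Allcock has an independent one to appear, and that is the completeness argument you are actually relying on. Second, the hard part of completeness is certifying that every other candidate lattice is \emph{non}-reflective, not the termination of Vinberg's algorithm on the $49$, which is automatic for reflective lattices.
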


These papers list the lattices, and give
considerable justification for why the list is complete, but 
do not claim a complete proof.  Scharlau has informed us that since then he has
completed a proof; and Allcock has found a different one (to appear).

This means that any arithmetic hyperbolic Coxeter polyhedron is reflectively tiled by one of the 49 Scharlau-Walhorn polyhedra. We are led to the following question:
\begin{question} \label{PolyhedronQuestion}
    Which hyperbolic Coxeter polyhedra $\cP$ admit an ideal, right-angled $\cP$-polyhedron?
\end{question}
The rest of the paper is devoted to answering this question for the 49 Scharlau-Walhorn polyhedra. We will rule out 45 of the 49 polyhedra, and show that the remaining four, \SW{2}, \SW{4}, \SW{12}, and \SW{13}, admit ideal, right-angled $\cP$-polyhedra. 

\section{Pairs of Bad Features} \label{BadPairs}

One relatively simple way to rule out certain polyhedra tiling ideal, right-angled ones is by inductive arguments using a pair of bad features---features that cannot exist in an ideal, right-angled  polyhedron. When we glue two polyhedra with pairs of bad features together, we may remove some of these features, but we introduce new ones. This section describes several such arguments.
The essential ingredient is the following proposition.

\begin{prop} \label{GluingPolyhedra}
Any $\cP$-polyhedron can be constructed via a sequence of steps in which two $\cP$-polyhedra are glued together along a single common face.
\end{prop}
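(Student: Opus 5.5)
We argue by induction on the number $N$ of tiles, proving the stronger statement that any $\cP$-polyhedron $\cQ$ with $N\ge 2$ tiles can be written as $\cQ=\cQ_1\cup\cQ_2$. Here $\cQ_1,\cQ_2$ are $\cP$-polyhedra with fewer tiles, and $\cQ_1\cap\cQ_2$ is a single common face of both. When $N=1$ there is nothing to do. The natural way to split $\cQ$ is by a wall of the reflective tiling. Every face of every tile lies in a mirror $H$ of $\Gamma(\cP)$, and the tiling restricted to $H$ is itself a tiling of $H$ by polygons. So the first step is: if $\cQ$ has at least two tiles, then some mirror $H$ meets the interior of $\cQ$. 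The reason is that the interior of $\cQ$ is connected and contains interior points of two different tiles, so a path between them must cross a tile boundary, which lies in a mirror.

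Next choose such a mirror $H$ and set $\cQ_1=\cQ\cap H^+$ and $\cQ_2=\cQ\cap H^-$, where $H^\pm$ are the closed half-spaces bounded by $H$. We then verify three things.
\begin{itemize}
\item Each $\cQ_i$ is convex, as the intersection of two convex sets.
\item Each $\cQ_i$ is a union of tiles. No tile's interior meets $H$, because $H$ is a mirror of the tiling and each tile lies on one side of every mirror. So every tile of $\cQ$ lies entirely in $H^+$ or in $H^-$.
\item Each $\cQ_i$ has fewer than $N$ tiles, since $H$ meets the interior of $\cQ$ and so both sides are nonempty with nonempty interior.
\end{itemize}
Thus $\cQ_1$ and $\cQ_2$ are $\cP$-polyhedra. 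Their intersection $F=\cQ\cap H$ is a convex polygon of full dimension in $H$, and it is a face of both $\cQ_1$ and $\cQ_2$, namely the face supported by $H$. Hence $\cQ$ is obtained by gluing $\cQ_1$ and $\cQ_2$ along the single common face $F$. Applying the induction hypothesis to $\cQ_1$ and $\cQ_2$ decomposes each of them into a sequence of such gluings, and we finish by performing the final gluing.

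The main subtleties are small. First, we must make sure that ``glued along a single common face'' is what the later arguments need: the face $F$ should be an entire face of each piece, not just part of a face. This holds automatically, since each $\cQ_i$ has exactly one face in $H$, and that face equals $\cQ\cap H$. Second, we should confirm that each tile lies on one side of every mirror. This is the standard fact that a Coxeter polyhedron is a fundamental chamber for $\Gamma(\cP)$, so no mirror cuts through the interior of a chamber. Third, there is finite volume and cusps: $\cQ_i$ is a finite union of finite-volume tiles, so it has finite volume and is a legitimate convex hyperbolic polyhedron. With these checks the induction closes.
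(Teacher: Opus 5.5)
Your proof is correct and takes essentially the same route as the paper. Both split $\cQ$ along a mirror of $\Gamma(\cP)$ that passes through its interior (the paper takes the plane of a face shared by two adjacent tiles, which amounts to the same choice), use the Coxeter condition to keep every tile on one side of that mirror, and recurse; your version spells out the checks the paper leaves implicit (convexity, the tile count, and that $\cQ\cap H$ is an entire face of each piece).
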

\begin{proof}
Suppose that $\cQ$ is a $\cP$-polyhedron larger than $\cP$. Then $\cQ$ must contain at least two adjacent copies of $\cP$, glued along a face. If we extend this face to a full hyperplane, it separates $\cQ$ into two convex polyhedra glued along a single face, and each polyhedron is a finite union of copies of $\cP$. We can continue subdividing in this manner until we arrive at individual copies of $\cP$.
\end{proof}

Note that this proof requires the hypothesis that $\cP$ is a Coxeter polyhedron, i.e. that each dihedral angles has the form $\pi/n$. This is used in the step where the hyperplane through a face separates the $\Gamma$-translates of $\cP$. 

We say that a hyperbolic polyhedron has a pair of bad faces if it has two ultraparallel compact faces.

\begin{prop} \label{BadFacePair}
    Suppose that $\cP$ has a pair of bad faces. Suppose further that each of these faces meets its neighbors at angles of the form $\pi/(\hbox{even})$. Then every
    $\cP$-polyhedron has a pair of compact faces; in particular, no $\cP$-polyhedron is ideal. 
\end{prop}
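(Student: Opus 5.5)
We argue by induction on the number of copies of $\cP$, using \propref{GluingPolyhedra}. The inductive statement has to be stronger than ``has two compact faces'', because gluing can destroy a compact face. We prove: every $\cP$-polyhedron $\cQ$ has a pair of bad faces (two ultraparallel compact faces), and each of these faces meets each of its neighboring faces of $\cQ$ at a dihedral angle of the form $\pi/(\hbox{even})$. Call such a pair a \emph{good bad pair}. The base case $\cQ=\cP$ is the hypothesis. For the inductive step, write $\cQ=\cQ_1\cup\cQ_2$ with $\cQ_1,\cQ_2$ glued along a common face $F$ lying in a hyperplane $H$; by induction each $\cQ_i$ has a good bad pair $\{A_i,B_i\}$.

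The key local fact concerns how a face of $\cQ_1$ fares in $\cQ$. Let $A$ be a face of $\cQ_1$ with $A\neq F$.

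\emph{Case 1: $A$ does not meet $F$.} Then $A$ is unchanged and is a face of $\cQ$. Its neighbors in $\cQ$ are the same as in $\cQ_1$, with the same angles.

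\emph{Case 2: $A$ meets $F$ at a dihedral angle $\pi/2m$.} Reflecting across $H$, the plane of $A$ meets $H$ at angle $\pi/2m$. A face $A'$ of $\cQ_2$ along the same edge makes the angle of $\cQ$ there equal to the sum of the two angles at that edge. By convexity this sum is at most $\pi$. If the angles sum to $\pi$ (possible only when $m=1$), then $A$ and $A'$ merge, and the merged face is typically no longer one of our bad faces.

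The obstacle is that gluing can thus destroy bad faces. The natural fix is a counting argument rather than tracking a single pair. If $A_1$ and $B_1$ both avoid $F$, they survive as a good bad pair of $\cQ$. So the real obstacle is to show that the good bad pairs of $\cQ_1$ and $\cQ_2$ cannot all be destroyed at once.

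The plan there is a separation argument. Because $A_1$ and $B_1$ are ultraparallel and compact, consider the unique common perpendicular and the configuration relative to $H$. If $A_1$ is adjacent to $F$ at angle $\pi/2$, then its mirror image $A_1^{\ast}$ in $\cQ_2$ is also a face of $\cQ_2$ adjacent to $F$. This holds because both $\cQ_1$ and $\cQ_2$ are unions of tiles of the $\Gamma$-tiling, and the even-angle condition ensures that $H$ is locally a mirror of the tiling near $A_1$. The merged face $A_1\cup A_1^{\ast}$ is then compact, and its angles with its neighbors are still of the form $\pi/(\hbox{even})$, since they are inherited from $A_1$ and $A_1^{\ast}$. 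In this way a bad face either survives or is doubled across $H$ into a new compact face with even angles.

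It remains to check ultraparallelism. Two doubled or surviving faces should remain ultraparallel, because two compact faces of a convex polyhedron that do not share an edge or vertex are ultraparallel. Compact faces cannot meet at an ideal point, so it suffices that they do not become adjacent. If they did become adjacent, they would meet at an angle $\pi/(\hbox{even})\le\pi/2$. The delicate point to verify is that the parity condition excludes the bad situation, in which the face of $\cQ$ containing $A_1$ is merged with a non-compact face of $\cQ_2$. We expect to use the fact that an angle $\pi/2m$ with $m\ge2$ at $F$ cannot combine to $\pi$.

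Finally, the concluding clause follows directly: an ideal polyhedron has no compact faces, since every face contains an ideal vertex.
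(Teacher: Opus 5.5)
Your proposal has a genuine gap. The inductive step is never completed (you flag the ``delicate point'' yourself), and what is missing is the one observation that makes the proof short. A bad face $A$ of $\cP$ meets every neighbor $N$ at an angle $\pi/2m$. Hence the rotation $(\sigma_A\sigma_N)^m$ by $\pi$ about the edge $A\cap N$ lies in $\Gamma(\cP)$, and it carries $A$ to the face of the opposite chamber that lies in the plane of $A$ on the other side of that edge. It follows that the whole plane of $A$ is tiled by $\Gamma$-translates of $A$ alone. So \emph{any} face of \emph{any} $\cP$-polyhedron lying in a translate of this plane is a finite union of copies of the compact polygon $A$, and is therefore compact. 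This has two consequences for your argument:
\begin{itemize}
\item Merging across the gluing plane $H$ cannot destroy compactness, contrary to your remark that the merged face ``is typically no longer one of our bad faces''.
\item A face in such a plane can never merge with a non-compact face, which settles your ``delicate point''.
\end{itemize}
The right invariant is simply a pair of faces lying in ultraparallel planes, each plane a $\Gamma$-translate of the plane of a bad face of $\cP$. No dihedral angles of $\cQ$ need to be tracked.

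The angle half of your invariant is in fact not preserved for the faces you track, and the step meant to preserve it is false. If $A$ meets $N$ at $\pi/6$ (as faces $1$ and $4$ of SW39 do), then in $\cP\cup\sigma_N(\cP)$ the face $A$ meets $\sigma_N(A)$ at $\pi/3$. If instead $A$ meets $N$ at $\pi/4$, then in $\cQ_1=\cP\cup\sigma_N(\cP)$ the face $A$ meets $F=\sigma_N(A)$ at a right angle made up of two chambers. Gluing $\cQ_2=\sigma_F\sigma_N(\cP)$ along $F$ gives a convex $\cP$-polyhedron in which $\cQ_2$ has no face in the plane of $A$, so the mirror image $A_1^\ast$ you invoke need not be a face of $\cQ_2$. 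In that polyhedron $A$ meets a neighbor at $3\pi/4$. This also shows that $\cP$-polyhedra need not be acute-angled, so ``non-adjacent compact faces are ultraparallel'' is not available. Ultraparallelism of the planes must therefore be part of the invariant, which costs nothing since the planes never move.

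Finally, you exclude the only case in which a distinguished face really disappears, namely $F\in\{A_1,B_1\}$, and your counting/common-perpendicular plan for it is never carried out. The paper disposes of everything in two cases:
\begin{itemize}
\item If $F$ is a distinguished face of both pieces, keep $B_1$ and $B_2$. Their planes are ultraparallel to $H$ and lie on opposite sides of it, so they are unchanged faces of $\cQ$ in ultraparallel planes.
\item Otherwise, keep the distinguished pair of a piece for which $F$ is not distinguished. Its faces extend within their own planes to faces of $\cQ$, which are compact by the tiling observation and still ultraparallel.
\end{itemize}
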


\begin{proof}
By \propref{GluingPolyhedra}, it suffices to show that when two $\cP$-polyhedra $\cP_1$ and $\cP_2$ are glued together, the property of having a pair of bad faces is preserved. We will assume that each polyhedron has a distinguished pair of bad faces, tiled by copies of the bad faces from $\cP$. 

If $\cP_1$ and $\cP_2$ are glued along a face which is one of the distinguished pair of faces for both $\cP_1$ and $\cP_2$, then $\cP_1 \cup \cP_2$ still has two distinguished faces, one from $\cP_1$ and one from $\cP_2$. These faces are on opposite sides of the glued face, so they must be ultraparallel to each other.

If $\cP_1$ and $\cP_2$ are glued along a face which is not one of the distinguished pair of faces for one of the polyhedra, say $\cP_1$, then $\cP_1 \cup \cP_2$ still has two distinguished faces, both from $\cP_1$. These faces remain ultraparallel. It can happen that the gluing operation extends one or both of these faces. However, the condition that each bad face meets all its neighbors at angles of $\pi/(\hbox{even})$ ensures that the plane through this face is tiled by copies of the same face, so the extended face will remain compact. 
\end{proof}

\begin{example} \label{Example20}
The polyhedron \SW{20} has a pair of bad faces, numbered 1 and 7. These faces are compact, ultraparallel, and meet all their neighbors at angles of $\pi/2$ or $\pi/4$. Thus this polyhedron cannot tile an ideal polyhedron.
\end{example}

\begin{corollary}
The polyhedra \SW{20}, \SW{23}, \SW{24}, \SW{28}, \SW{30}, \SW{32}--\SW{37}, 
\SW{39}--\SW{41}, \SW{43}--\SW{46}, \SW{48}, and \SW{49}
cannot tile ideal polyhedra. \qed
\end{corollary}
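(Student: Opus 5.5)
The plan is to apply \propref{BadFacePair} separately to each listed polyhedron, exactly as in \exref{Example20}. For each of \SW{20}, \SW{23}, \SW{24}, \SW{28}, \SW{30}, \SW{32}--\SW{37}, \SW{39}--\SW{41}, \SW{43}--\SW{46}, \SW{48}, and \SW{49}, I will read off from the Coxeter diagram (or Gram matrix) in \appref{ScharlauWalhornData} a pair of faces $F,F'$. Three things must then be checked for this pair.

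First, $F$ and $F'$ must be ultraparallel. In the Gram matrix this means the entry for $(F,F')$ has absolute value greater than $1$, shown as a dashed edge in the Coxeter diagram.

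Second, each of $F$ and $F'$ must be compact, i.e.\ contain no ideal vertex. An ideal vertex of $\cP$ on $F$ corresponds to a parabolic (affine) subdiagram of rank $2$ in the Coxeter diagram. It consists of three or four faces including $F$, meeting at angles that form a Euclidean triangle group or a $(2,2,2,2)$ rectangle. So I will check that no affine subdiagram of type $\tilde A_2,\tilde B_2,\tilde G_2$ or $\tilde A_1\times\tilde A_1$ contains the node $F$ or the node $F'$. Equivalently, the face $F$ is a compact polygon: the faces adjacent to it give a compact hyperbolic polygon in the hyperplane of $F$, with every vertex of $F$ a finite vertex of $\cP$ (a spherical rank-$3$ subdiagram).

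Third, every dihedral angle between $F$ (resp.\ $F'$) and a neighboring face must be $\pi/2$ or $\pi/4$, or more generally $\pi/(2m)$. In diagram terms, every edge at the node $F$ is absent (angle $\pi/2$) or carries an even label. No simple edge (label $3$) and no odd label $5$ is allowed; the latter cannot occur anyway in these crystallographic arithmetic groups over $\Q$.

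Once the three conditions hold, \propref{BadFacePair} says every $\cP$-polyhedron has a pair of compact faces, so it cannot be ideal. Applying \thmref{ScharlauWalhornTheorem} is not even needed here, since the statement concerns these specific $\cP$.

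The only real work, and the main obstacle, is the case-by-case search for the pair $F,F'$ in each of the nineteen diagrams. The compactness check is the step most prone to error, because ideal vertices are encoded only implicitly as affine subdiagrams. I would handle it systematically. For each candidate node, I list its neighbors and verify that every pair of neighbors adjacent around $F$ together with $F$ forms a spherical triangle group, and that no affine rank-$2$ subdiagram passes through it. Since the diagrams have at most a dozen or so nodes, this is a finite, mechanical verification, recorded as a table of the chosen pairs.
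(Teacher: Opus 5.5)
Your proposal is correct and matches the paper: the corollary is an immediate application of Proposition~\ref{BadFacePair}, and the paper lists the required ultraparallel pair of compact faces for each polyhedron in the table of Appendix~\ref{ArgumentData}, found by a Mathematica search over the Gram matrices. Three small slips, none affecting the argument:
\begin{itemize}
\item The list contains twenty polyhedra, not nineteen.
\item Some of them have up to $20$ faces, not a dozen or so.
\item Your diagrammatic form of the $\pi/(\hbox{even})$ condition should apply only to faces that actually meet $F$, since the dotted edge from $F$ to $F'$ (and any other ultraparallel face) is of course allowed.
\end{itemize}
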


In \appref{ArgumentData}, we list a pair of bad faces for each of these cases.

We say that $\cP$ has a pair of bad edges if it has two acute-angled edges, and 
there is no way to choose faces, one containing one of them and one containing
the other, that share an edge.

\begin{prop} \label{BadEdgePair}
Suppose that $\cP$ has a pair of bad edges. Then any $\cP$-polyhedron has a pair of bad edges; in particular, no $\cP$-polyhedron is right-angled. 
\end{prop}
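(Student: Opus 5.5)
We argue by induction along the gluing sequence of \propref{GluingPolyhedra}, in the same way as the proof of \propref{BadFacePair}. It suffices to show that if $\cP_1$ and $\cP_2$ are $\cP$-polyhedra, each with a distinguished pair of bad edges, and $\cQ=\cP_1\cup\cP_2$ is obtained by gluing them along a common face $F$, then $\cQ$ again has a pair of bad edges. The conclusion then follows: a right-angled polyhedron has no acute-angled edges at all, so no $\cP$-polyhedron can be right-angled.

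The key geometric point is what happens to an acute edge $e$ of $\cP_1$ under the gluing. There are three cases.
\begin{enumerate}
\item If $e$ is not contained in $F$, then $e$ is still an edge of $\cQ$. Its two incident faces may be extended by faces of $\cP_2$, but its dihedral angle is unchanged, so it stays acute.
\item If $e\subset F$ and the dihedral angle of $\cP_1$ at $e$ is $\alpha<\pi/2$, then in $\cQ$ the angle at that edge becomes $\alpha+\beta$, where $\beta$ is the angle of $\cP_2$ there. By convexity of $\cQ$ we have $\alpha+\beta\le\pi$, and since everything is reflectively tiled by a Coxeter polyhedron, the angle is $\pi/m$. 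Either it equals $\pi$, so the edge disappears, or it is again acute.
\item An edge of $\cP_1$ lying in $F$ can also merge with an edge of $\cP_2$. Convexity forces all the angles in such a merger to sum to at most $\pi$.
\end{enumerate}

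So I would choose the surviving pair as follows. If both distinguished edges of $\cP_1$ avoid $F$, keep them. Otherwise, if some distinguished edge of $\cP_1$ lies in $F$, take one distinguished edge from each of $\cP_1$ and $\cP_2$ that lies off $F$, or survives as an acute edge.

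The main obstacle is verifying that the chosen pair is still bad in $\cQ$: no face of $\cQ$ containing one edge may share an edge with a face containing the other. Faces of $\cQ$ are unions of faces of $\cP_1$ and $\cP_2$ across $F$, so new adjacencies arise exactly when a face through $e$ is extended across $F$. The first thing I would try is to use the badness in $\cP_i$ together with the fact that $F$ separates $\cP_1$ from $\cP_2$.
\begin{itemize}
\item When both edges come from $\cP_1$, a face of $\cQ$ containing $e$ meets $\cP_1$ in the corresponding face of $\cP_1$. Two faces of $\cQ$ sharing an edge would then restrict, on the $\cP_1$ side, to faces of $\cP_1$ that share an edge or both meet $F$ along adjacent edges. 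This would contradict badness in $\cP_1$, possibly after also using $F$ itself as the face through which the adjacency is witnessed.
\item When the edges come from different pieces, they lie on opposite sides of the hyperplane of $F$. A face through one and a face through the other can only share an edge lying in $F$.
\end{itemize}
I expect to handle this second situation by choosing, in each $\cP_i$, a distinguished edge off $F$. This is possible because both edges of a bad pair cannot lie in $F$: $F$ itself would then contain both, contradicting badness. With that choice, the remaining task is to rule out the shared-edge configuration in $F$ by a short angle/convexity argument, and this is the step I anticipate needing the most care.
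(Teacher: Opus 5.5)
Your skeleton matches the paper's: induct along the gluings, keep a pair that the gluing face avoids, and otherwise combine the two ``other'' edges. But the verification you defer as the main obstacle does not go through, because you carry only the combinatorial form of badness.

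In the keep case, faces $A_1\ni e_1$ and $A_2\ni e_2$ of $\cP_1$ may both be adjacent to $F$ and be extended across it by faces $A_1',A_2'$ of $\cP_2$. Then $A_1'$ and $A_2'$ can share an edge lying strictly on the $\cP_2$ side of the plane $H$ of $F$, while $A_1$ and $A_2$ meet $F$ along non-adjacent edges of $F$. Restricting to $\cP_1$ sees nothing. So your claim that a shared edge restricts to ``faces of $\cP_1$ that share an edge or meet $F$ along adjacent edges'' is false, already for a Euclidean cube cut by a plane that slices off one edge, and badness in $\cP_1$ gives no contradiction.

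The missing idea is to use the geometric form of badness as the inductive invariant: every plane of a face through one edge is parallel or ultraparallel to every plane of a face through the other. This holds for $\cP$ because $\cP$ is acute-angled, so non-adjacent faces have disjoint planes. It survives the keep case trivially, since extending a face across $F$ does not change its plane. This is what the paper means by ``remains parallel or ultraparallel.''

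In the mixed case, the point is not an angle/convexity computation. It is badness applied with $F$ itself as the face through the \emph{other} distinguished edge. If $e_1\subset F$, the planes of the faces through $e_1'$ miss $H$. So those faces are not adjacent to $F$, are not extended, keep the angle at $e_1'$, and lie strictly on the $\cP_1$ side.

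Your rule mixes as soon as $\cP_1$ has an edge in $F$, but this argument also needs $F$ to contain a distinguished edge of $\cP_2$. If $F$ avoids both distinguished edges of $\cP_2$, you must keep $\cP_2$'s pair; this is the paper's ``(say)''. Otherwise the faces through your chosen $e_2$ may extend into $\cP_1$ and become adjacent to faces through $e_1'$, and your assertion that they can only share an edge in $F$ is unsupported. When both pieces do have an edge in $F$, $H$ separates the two families of planes, which also preserves the plane form of the invariant.

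Finally, your item (2) is wrong as stated. An edge lying in $F$ can become right-angled ($\pi/4+\pi/4$) or even obtuse ($\pi/3+\pi/3$), since a convex union of tiles need not be Coxeter. So ``survives as an acute edge'' should be dropped from your selection rule.
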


\begin{proof} 
We will show that when two $\cP$-polyhedra $\cP_1$ and $\cP_2$ are glued together along a single face, the property of having a pair of bad edges is preserved. We assume that $\cP_1$ and $\cP_2$ each have a pair of distinguished bad edges. 

    First, suppose that the face of (say) $\cP_1$ involved in the gluing
     contains neither of the distinguished edges of $\cP_1$.  
     These edges extend to edges of $\cP_1 \cup \cP_2$, 
     with the same (acute) dihedral angles.  And each face 
     containing one of them remains
    parallel or ultraparallel to each face containing the other.

    The other case is that the gluing is along a face containing a distinguished edge of $\cP_1$ and a face containing a  distinguished edge of $\cP_2$. The other distinguished edge of $\cP_1$, and the other distinguished edge of $\cP_2$, are edges of $\cP_1\cup\cP_2$,
    with the same (acute) dihedral angles.  And the plane of the gluing separates the faces
    that contain one of these edges from the faces that contain the other.
\end{proof}

\begin{example} \label{Example25}
The polyhedron \SW{25} has a pair of bad edges. The edge between faces 1 and 4 has an angle of $\pi/3$, as does the edge through faces 9 and 10. Faces 1 and 4 are each ultraparallel to both 9 and 10. Thus this polyhedron cannot tile a right-angled polyhedron.
\end{example}

\begin{corollary}
The polyhedra \SW{25} and \SW{42} cannot tile right-angled polyhedra. \qed
\end{corollary}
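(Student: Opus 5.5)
The plan is to follow the pattern of Example~\ref{Example25} and apply \propref{BadEdgePair} directly to each of the two polyhedra, \SW{25} and \SW{42}. For \SW{25}, Example~\ref{Example25} already exhibits the pair of bad edges. There are two acute edges: faces $1\cap 4$ and faces $9\cap 10$, each with dihedral angle $\pi/3$. Both faces $1$ and $4$ are ultraparallel to both faces $9$ and $10$. The definition of a pair of bad edges only concerns faces \emph{containing} the edges. The only faces containing the edge $1\cap 4$ are faces $1$ and $4$, and likewise for $9\cap 10$. So the condition ``no face containing one edge shares an edge with a face containing the other'' reduces to checking that none of the four pairs $(1,9),(1,10),(4,9),(4,10)$ meet along an edge. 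Ultraparallelism gives exactly this.

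For \SW{42}, I would read off the Gram matrix or Coxeter diagram in \appref{ScharlauWalhornData}. In the diagram I would look for two edges with acute dihedral angles, that is, angles $\pi/n$ with $n\ge 3$, drawn as labelled or simple bonds between nodes. Say these edges are $F_a\cap F_b$ and $F_c\cap F_d$. I would then check that each of the four pairs $(F_a,F_c)$, $(F_a,F_d)$, $(F_b,F_c)$, $(F_b,F_d)$ is either parallel or ultraparallel, i.e.\ their Gram entry has absolute value $\ge 1$ (a bold or dotted bond). This ensures the faces do not meet in an edge of the polyhedron. The pair I intend to use is the one recorded for \SW{42} in \appref{ArgumentData}.

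Once such a pair is exhibited for each polyhedron, \propref{BadEdgePair} shows that every \SW{25}- or \SW{42}-polyhedron also has a pair of bad edges. Such a polyhedron therefore has an acute dihedral angle, so it is not right-angled. Hence neither polyhedron reflectively tiles a right-angled polyhedron.

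The only real work is locating the bad pair for \SW{42}. The diagram may contain many acute edges, and for most choices some face through one edge will meet a face through the other. I would first try edges whose faces lie far apart in the diagram, such as acute bonds at opposite ends of a long chain of dotted or bold bonds. I would then check all four Gram entries, remembering that an entry of exactly $1$ (parallel faces meeting at an ideal point) is still acceptable, since those faces share no edge.
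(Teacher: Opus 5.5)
Your proposal is correct and is the paper's argument: the corollary is an immediate application of Proposition~\ref{BadEdgePair}. For \SW{25} it uses the bad edge pair $1$-$4$, $9$-$10$ from Example~\ref{Example25}, and for \SW{42} it uses the pair $2$-$3$, $8$-$9$ recorded in Appendix~\ref{ArgumentData}, which is the entry you said you would take.
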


We say that $\cP$ has a pair of bad vertices if it has two finite vertices, 
and there is no way to choose faces, one containing one of them and one containing
the other, that share an edge.
The proof of the next proposition is similar to that of \propref{BadEdgePair}, so we omit it. 

\begin{prop} \label{BadVertexPair}
Suppose that $\cP$ has a pair of bad vertices. Then any $\cP$-polyhedron has a pair of bad vertices; in particular, no $\cP$-polyhedron is ideal. \qed
\end{prop}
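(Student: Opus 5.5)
We argue exactly as in \propref{BadEdgePair}. By \propref{GluingPolyhedra}, every $\cP$-polyhedron is built from copies of $\cP$ by repeatedly gluing two $\cP$-polyhedra along a single common face. So it suffices to prove one inductive step. Suppose $\cP_1$ and $\cP_2$ are $\cP$-polyhedra, each carrying a distinguished pair of bad vertices. We must show that $\cP_1\cup\cP_2$, glued along a face $F$, again has a pair of bad vertices. The final claim then follows: an ideal polyhedron has no finite vertices at all, so it has no pair of bad vertices.

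The first task is to check that a finite vertex $v$ of $\cP_1$ survives as a finite vertex of $\cP_1\cup\cP_2$. There are two cases.
\begin{enumerate}
\item If $v\notin F$, then a neighbourhood of $v$ in $\cP_1\cup\cP_2$ coincides with its neighbourhood in $\cP_1$, so $v$ is still a vertex and the faces through it are unchanged.
\item If $v\in F$, then $v$ is a vertex of the face $F$. Since $F$ is a common face of $\cP_1$ and $\cP_2$, the point $v$ is also a (finite) vertex of $\cP_2$. The union near $v$ is convex, and the faces through $v$ are the unions or extensions of faces of $\cP_1$ and $\cP_2$ through $v$. The point $v$ could fail to be a vertex only if it became interior to an edge or face. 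That would require enough faces to extend straight across $F$, which makes $v$ a non-vertex of both pieces only in degenerate situations.
\end{enumerate}
I expect this to be the main subtlety. The cleanest way to handle it is to choose the distinguished vertices so that this degeneracy cannot occur, as in the case split below.

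There are two cases for the gluing, mirroring the proof of \propref{BadEdgePair}.

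\textbf{Case 1.} Suppose $F$ contains neither distinguished vertex of (say) $\cP_1$. Then both distinguished vertices of $\cP_1$ remain finite vertices of $\cP_1\cup\cP_2$. Each face of $\cP_1\cup\cP_2$ through one of them is either a face of $\cP_1$ or an extension of one across $F$. We need two faces, one through each vertex, to never share an edge. Faces of $\cP_1$ through the two vertices previously met only in parallel or ultraparallel fashion. In the union, such a pair of faces lies in the same planes as before, so the planes still do not intersect inside $\bH^3$. Hence the faces still share no edge.

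\textbf{Case 2.} Otherwise $F$ contains a distinguished vertex of $\cP_1$ and a distinguished vertex of $\cP_2$. In this case we take the \emph{other} distinguished vertex $u_1$ of $\cP_1$ and the other distinguished vertex $u_2$ of $\cP_2$. We must first check that $u_1$ and $u_2$ do not lie on $F$. By badness, $F$ contains one distinguished vertex of $\cP_1$, so $F$ cannot also contain $u_1$; otherwise $F$ would be a single face through both, contradicting badness of the pair (taking $F$ and any face adjacent to it through the other vertex). The same argument applies to $u_2$. So both are strictly off $F$, on opposite sides of the plane $H$ of $F$. By the argument of case (1) above, they remain finite vertices of the union. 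Every face of $\cP_1\cup\cP_2$ through $u_1$ lies in the closed half-space on $\cP_1$'s side and is not contained in $H$. The same holds for $u_2$ on the other side. Two such faces could share an edge only along $H$. But a face through $u_1$ meeting $H$ in an edge would be a face of $\cP_1$ adjacent to $F$ through $u_1$, and $F$ contains the other distinguished vertex of $\cP_1$, which contradicts badness in $\cP_1$. Therefore $u_1,u_2$ form a bad pair, completing the induction.
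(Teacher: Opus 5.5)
Your proposal is correct and matches the argument the paper intends; the paper omits this proof as a repeat of the proof of Proposition~\ref{BadEdgePair}. You reduce to a single gluing via Proposition~\ref{GluingPolyhedra}, then split according to whether the gluing face $F$ avoids both distinguished vertices of one piece, or contains one from each, in which case the plane $H$ of $F$ separates the other two.

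One tightening is needed to close the induction. Your Case~1 uses the invariant in its stronger form: the planes are parallel or ultraparallel, which holds for $\cP$ because it is acute-angled. So Case~2 must also conclude that form, not just that no edge is shared. This is immediate: each plane of a face through $u_1$ misses $H$, since $F$ contains the other distinguished vertex of $\cP_1$, so it lies in the open half-space on the side of $\cP_1$, and likewise for $u_2$. This also justifies your unproved claim that those faces stay on their own side of $H$. Your item~(2) is never used, which is just as well, since a finite vertex on $F$ can genuinely disappear after gluing.
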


\begin{example} \label{Example47}
The polyhedron \SW{47} has multiple pairs of bad vertices. One pair is as follows: faces 1, 2 and 6 intersect at a finite vertex, as do faces 8, 9 and 12. Faces 1, 2 and 6 are each ultraparallel to all of 8, 9 and 12. Thus this polyhedron cannot tile an ideal polyhedron.
\end{example}

Polyhedron \SW{47} is in fact the only case ruled out by \propref{BadVertexPair} which was not already ruled out. 

We will use a variation of \propref{BadEdgePair} to rule out a few more cases. Suppose that an edge $E$ in the polyhedron $\cP$ is compact, and at each endpoint, $E$ is orthogonal to a plane in the reflective tiling of $\mathbb{H}^3$ by $\cP$. This property guarantees that the line through $E$ is tiled by $\Gamma$-images of $E$. So $E$ cannot be part of
an edge in an ideal $\cP$-polyhedron. We say that $\cP$ has a pair of bad compact 
edges if it has two compact edges with this property, and it is not possible to
choose faces of~$\cP$, one containing one of these edges and one containing the other,
that share an edge.
The next result has the same proof as \propref{BadEdgePair}.

\begin{prop} \label{BadCompactEdgePair}
    Suppose that $\cP$ has a pair of bad compact edges. Then any $\cP$-polyhedron has a pair of bad compact edges; in particular, no $\cP$-polyhedron is ideal. \qed
\end{prop}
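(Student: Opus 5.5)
The plan is to run the same induction as in the proof of \propref{BadEdgePair}, using \propref{GluingPolyhedra} to reduce to a single gluing step. We carry along the following invariant: every $\cP$-polyhedron $\cQ$ has a \emph{distinguished} pair of edges $E_1,E_2$ with four properties. Each $E_i$ is compact. Each $E_i$ is a union of $\Gamma$-images of bad compact edges of $\cP$. At each endpoint, $E_i$ is orthogonal to a mirror of the reflective tiling. Finally, no face of $\cQ$ containing $E_1$ shares an edge with a face containing $E_2$. For $\cQ=\cP$ this is exactly the hypothesis.

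First we record why the property is stable under enlarging. A bad compact edge lies on a line tiled by $\Gamma$-images of $E$: the reflection in the orthogonal mirror at an endpoint maps the line to itself. So if a gluing extends $E_i$, the extension still runs along this line. It stops at an endpoint of some $\Gamma$-image of $E$, where again a mirror of the tiling is orthogonal to the line. Hence the extended edge is still compact and still a union of images of $E$, with orthogonal mirrors at its ends. Note that $E_i$ stays an edge of the glued polyhedron rather than becoming interior. Indeed, the two faces of $\cQ$ meeting at $E_i$ lie in mirrors, and a convex union of tiles still has some supporting mirrors there. We should double-check this last point when the dihedral angle at $E_i$ is $\pi/2$, and the plane of gluing contains one of the faces at $E_i$. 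In that case the gluing may turn $E_i$ into the interior of a face. That would destroy the edge, and it is the one place the argument needs care.

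Next comes the case split, as in \propref{BadEdgePair}. Suppose $\cQ_1\cup\cQ_2$ is glued along a face $F$. In the first case, $F$ contains neither distinguished edge of $\cQ_1$. Then the distinguished edges of $\cQ_1$ persist, possibly extended along their lines. Faces containing them only grow within the same planes. So any two planes through $E_1$ and $E_2$ respectively remain non-intersecting (parallel or ultraparallel). Thus no face through one can share an edge with a face through the other. In the second case, $F$ contains a distinguished edge of $\cQ_1$ and a distinguished edge of $\cQ_2$. Then we take the other distinguished edge of $\cQ_1$ together with the other distinguished edge of $\cQ_2$. The plane of $F$ separates every face through the first from every face through the second. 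Hence these faces cannot share an edge, since any shared edge would lie on one side of the plane.

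The main obstacle is the compatibility issue flagged above. When a face adjacent to $E_i$ is glued along, we must know that $E_i$ survives as an edge. Under our invariant, the two faces of $\cQ$ through $E_i$ meet at a nontrivial angle. We expect that both faces cannot be coplanar with $F$, and that if one face lies in $F$, then $E_i\subset F$ and we are in the second case. Granting this, each gluing preserves a pair of bad compact edges. The conclusion follows because an ideal polyhedron has no compact edges at all.
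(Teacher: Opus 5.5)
This is essentially the paper's proof, which just reruns the two-case gluing induction of Proposition~\ref{BadEdgePair}, and the step you leave as an expectation is immediate: by convexity $\cQ_1$ meets the plane of $F$ exactly in $F$, so a face through $E_i$ lying in that plane must be $F$ itself, which is your second case. If instead $E_i\not\subset F$, then $E_i$ has points off that plane, near which $\cQ_1\cup\cQ_2$ coincides with $\cQ_1$, so $E_i$ survives inside an edge of the union with the same two face-planes and dihedral angle.

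One thing to tidy: state the invariant as ``every plane of a face through $E_1$ is disjoint from every plane of a face through $E_2$,'' which is what Example~\ref{Example14} checks and what follows from the hypothesis for the acute-angled $\cP$. That stronger form, not merely ``no shared edge,'' is what your first case uses when you say the planes ``remain'' non-intersecting, and it is preserved in both of your cases.
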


\begin{remark} \label{BadCompactEdgePairRemark}
The same proof also shows, more generally, that if two $\cP$-polyhedra, each with a bad pair of compact edges, are glued along a face, then the resulting polyhedron still has a bad pair of compact edges. This holds even if $\cP$ itself does not have a bad pair.
\end{remark}

\begin{example} \label{Example14}
    In  polyhedron \SW{14}, the edge between faces 1 and 3 is compact. It intersects face 2 orthogonally at one endpoint, and intersects the image $\sigma_4(F_1)$ of face~$1$
    under the reflection $\sigma_4$ across face 4, orthogonally at the other endpoint. Similarly, the edge between face 5 and face 6 is compact, intersecting face 4 orthogonally at one endpoint and face $\sigma_2(F_6)$ orthogonally at the other endpoint. Face 1 is parallel to face 6 and ultraparallel to face 5; face 3 is parallel to face 5 and ultraparallel to face 6. Thus the edges $1$-$3$ and $5$-$6$ form a bad compact edge pair, so \SW{14} cannot tile an ideal polyhedron. 
\end{example}

\begin{corollary}
The  polyhedra \SW{14}, \SW{17}, \SW{22}, and \SW{26}
 cannot tile ideal polyhedra. \qed
\end{corollary}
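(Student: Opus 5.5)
The corollary follows by applying \propref{BadCompactEdgePair} to each of the four polyhedra \SW{14}, \SW{17}, \SW{22}, and \SW{26}. \exref{Example14} already does this for \SW{14}, so it remains to find, for each of \SW{17}, \SW{22}, and \SW{26}, a pair of bad compact edges. For each I will read off the Coxeter diagram and Gram matrix in \appref{ScharlauWalhornData} and carry out the same three checks as in \exref{Example14}.

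First, I will identify candidate compact edges $E=F_i\cap F_j$. These are edges both of whose endpoints are finite vertices, i.e.\ the three faces at each endpoint generate a spherical (finite) triangle group.

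Second, I will verify the orthogonality condition at each endpoint of $E$. At an endpoint $v=F_i\cap F_j\cap F_k$, the line through $E$ is orthogonal to a plane of the tiling if the finite vertex group at $v$ contains a mirror perpendicular to the line $F_i\cap F_j$. The simplest case is when $F_k$ itself meets both $F_i$ and $F_j$ at angle $\pi/2$, as with face $2$ for edge $1$-$3$ in \SW{14}. Otherwise I will look for an image $\sigma_k(F_l)$ of another face under a reflection of the vertex stabilizer, as with $\sigma_4(F_1)$ in \exref{Example14}. This is a finite check inside the spherical group. For instance, in a vertex group of type $B_3$ or $A_1\times I_2(2m)$, there is a mirror perpendicular to a given edge precisely when the appropriate root lies in the orthogonal complement; I will compute this with the Gram matrix.

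Third, I will check badness: for edges $E_1=F_a\cap F_b$ and $E_2=F_c\cap F_d$, none of the pairs $(F_a,F_c),(F_a,F_d),(F_b,F_c),(F_b,F_d)$ may share an edge. This means each such Gram entry has absolute value $\ge 1$ (parallel or ultraparallel), or the two faces are disjoint.

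The main obstacle is the second step when the perpendicular mirror is not itself a face of $\cP$. There one must compute the full set of mirrors through the vertex, which is the root system of the finite group, and confirm that one of them is orthogonal to the edge. I would first try reflecting neighboring faces across the third face at the vertex, as in the \SW{14} example, since this covers the dihedral-type vertex groups. For each case I would record the chosen edge pair and the witnessing planes in \appref{ArgumentData}, parallel to the bad face pairs listed there.
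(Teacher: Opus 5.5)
Your proposal is correct and matches the paper's argument: the corollary is Proposition~\ref{BadCompactEdgePair} applied to each polyhedron, with SW14 handled by Example~\ref{Example14}, and the other three cases found by reading the diagrams and Gram matrices with the same compactness, endpoint-orthogonality and non-adjacency checks you describe. What your plan leaves open is the actual pairs, which the paper records in the table of Appendix~\ref{ArgumentData}: the compact edges 2-9 and 4-10 for SW17, 2-6 and 4-9 for SW22, and 1-3 and 5-8 for SW26.
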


We have now ruled out 27 of the 49 Scharlau-Walhorn polyhedra. It is possible to prove further statements along the lines of \propref{BadFacePair}, \propref{BadEdgePair}, and \propref{BadVertexPair}. For example, we could extend to pairs consisting of two different types of bad features. But we will instead move on to a detailed analysis of the potential vertices and faces of an ideal, right-angled  $\cP$-polyhedron. This will lead to more powerful criteria.

\section{Potential Vertices of Ideal, Right-Angled  Polyhedra: Euler Characteristic} \label{PotentialVertices}

In this section, we consider the vertices of a hypothetical ideal $\cP$-polyhedron $\cQ$.
If we visualize a vertex of $\cQ$ as lying at the point vertically at infinity in the upper half-space model of $\mathbb{H}^3$, and take a sufficiently high horizontal cross section $T$ of $\cQ$, we obtain a Euclidean polygon. Since the adjacent faces of $\cQ$ meet at right angles, $T$ must be a rectangle. It is tiled by cross-sections $S$ of $\cP$. The cross-sections $S$ are all congruent Euclidean polygons, with each angle equal to $\pi/n$ for some $n \in \mathbb{N}$. 
The only such polygons are rectangles, the $(2, 3, 6)$ right triangle, the $(2, 4, 4)$ right triangle, the $(3, 3, 3)$ equilateral triangle. Each one generates an affine reflection group and a tiling of the Euclidean plane. Then $T$ is a rectangle in this tiling. We now describe the minimal rectangle(s) in each case, and show that any rectangle is tiled by minimal rectangles.

\begin{prop} \label{IdealVertices}
If $S$ is the $(2, 3, 6)$ right triangle, there is a minimal rectangle in the associated tiling consisting of six copies of $S$ as shown below. If $S$ is the $(2, 4, 4)$ right triangle, there are two possible minimal rectangles in the associated tiling, one consisting of two copies of $S$ and one consisting of four copies. If $S$ is the $(3, 3, 3)$ equilateral triangle, then there are no rectangles in the associated tiling. If $S$ is a rectangle, then it is the minimal rectangle. Any rectangle in these tilings is tiled by copies of one minimal rectangle.
\end{prop}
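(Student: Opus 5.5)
The idea is to work case by case with the four Euclidean reflection tilings, using one structural fact. A rectangle $T$ that is a union of tiles has its four edges on lines of the tiling, and these are mirror lines of the affine reflection group $W$ generated by $S$. Moreover $T$ is itself a $S$-polygon in the sense of the definition above, so its sides lie on mirrors and its corners are vertices of the tiling at which two perpendicular mirrors meet. So the first step is to classify the pairs of perpendicular mirror directions in each tiling. Then we show that any rectangle bounded by mirrors in two such directions is cut, by the mirrors of those same two directions, into congruent minimal rectangles.

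\emph{Case by case.} If $S$ is the equilateral $(3,3,3)$ triangle, the mirrors come in only three directions, at mutual angles $\pi/3$. No two mirrors are perpendicular, so no rectangle is a union of tiles.

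If $S$ is a rectangle, the mirrors form a grid of two perpendicular families. Any rectangle $T$ bounded by mirrors is then a block of grid cells, i.e.\ a union of copies of $S$, and $S$ is the unique minimal rectangle.

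If $S$ is the $(2,4,4)$ triangle, the mirrors are the lines of the square grid together with its diagonals. There are two perpendicular direction pairs: axis-parallel, with minimal rectangle a grid square made of four copies of $S$, and diagonal, with minimal rectangle the square made of two copies of $S$ glued along the hypotenuse.

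If $S$ is the $(2,3,6)$ triangle, the mirrors fall into six directions at multiples of $\pi/6$. Each perpendicular pair consists of one ``long'' family (the lines of the triangular lattice) and one ``short'' family (the altitudes). All three such pairs are equivalent under the rotation by $2\pi/3$ in $W$, so there is one minimal rectangle up to symmetry. Concretely, it is half of an equilateral lattice triangle's ... rather, the rectangle spanned by one side of a lattice triangle and the altitude to it. Checking the figure, it contains six copies of $S$.

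\emph{The tiling claim.} Fix a perpendicular pair of directions. Within each of the two directions, the mirrors form a family of parallel lines. The one subtle point is that these lines are equally spaced, which we need to check: the translation subgroup of $W$ acts transitively on them, except that in the $(2,3,6)$ and $(2,4,4)$ cases a direction may contain two alternating types of lines. I would handle this by taking, in each direction, the lines through the corners of the minimal rectangle and verifying that they constitute all mirrors in that direction. Granting this, $T$ has corners on lattice intersections of the two families, and cutting $T$ along all interior lines of both families decomposes it into translates (or reflections) of the minimal rectangle. Each minimal rectangle is a union of tiles, since its sides lie on mirrors and hence no tile crosses them.

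The main obstacle is exactly this spacing check: ruling out, say, a $(2,4,4)$ axis-parallel rectangle of width only half a grid square. I would resolve it by listing the mirrors in coordinates for each case. For instance, for $(2,4,4)$ the mirrors are $x\in\Z$, $y\in\Z$, and $x\pm y\in\Z$, so the axis-parallel mirrors are exactly the integer lines, which gives spacing $1$ and confirms the minimal rectangle is the unit square.
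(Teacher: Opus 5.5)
Your argument is essentially the paper's. The paper inspects the tilings and notes that, for any two orthogonal edge directions, the edges in those directions cut out a rectangular grid whose cell is the minimal rectangle. Your classification of perpendicular mirror directions is a more explicit version of that, and your $(3,3,3)$, rectangle, and $(2,4,4)$ cases are correct as stated.

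There is one slip in the $(2,3,6)$ case. Let $AB$ be a side of a lattice triangle $ABC$, and let $M$ be the midpoint of $AB$. Take the rectangle with base $AB$ and height $|CM|$, on the same side of $AB$ as $C$; this is the one you describe. It is not minimal, because the line $CM$ is a mirror and is the midline of that rectangle. So it consists of two minimal rectangles, i.e.\ $12$ copies of $S$, which contradicts the count of six you then read off the figure.

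The minimal rectangle has dimensions $\tfrac12|AB|\times|CM|$. Equivalently, it is the rectangle having the lattice side $AC$ as a diagonal: the half lattice triangle $AMC$ (three copies of $S$) together with its image under the half-turn about the midpoint of $AC$. Your first instinct (``half of a lattice triangle'') was the right one. Your own check, that the lines through the corners exhaust the mirrors in each direction, would have exposed the extra mirror $CM$.

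Finally, the spacing issue you flag as the main obstacle needs no coordinates. The signed positions of the mirrors parallel to a given line form a discrete subset of $\mathbb{R}$ that is closed under reflection in each of its points. Any such set is an arithmetic progression. So in each direction the mirrors are equally spaced, whether or not they alternate between two conjugacy classes.
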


\begin{figure}[ht] 
\centering
\begin{tikzpicture}[scale=1.2]
  \def\r{1} 
  \def\h{1.732} 
  
  \clip (0,0) rectangle (3,3.464);
  
  \foreach \i in {0,1,2} {
    \foreach \j in {0,1,2} {
      \pgfmathsetmacro{\x}{1.5*\i*\r}
      \pgfmathsetmacro{\y}{\j*\h + 0.5*\h*mod(\i,2)}
      
      \draw (\x + \r, \y) 
        -- (\x + 0.5*\r, \y + 0.866*\r)
        -- (\x - 0.5*\r, \y + 0.866*\r)
        -- (\x - \r, \y)
        -- (\x - 0.5*\r, \y - 0.866*\r)
        -- (\x + 0.5*\r, \y - 0.866*\r)
        -- cycle;
      
      \draw (\x + \r, \y) -- (\x - \r, \y);
      \draw (\x + 0.5*\r, \y + 0.866*\r) -- (\x - 0.5*\r, \y - 0.866*\r);
      \draw (\x - 0.5*\r, \y + 0.866*\r) -- (\x + 0.5*\r, \y - 0.866*\r);
      
      \draw (\x + 0.75*\r, \y + 0.433*\r) -- (\x - 0.75*\r, \y - 0.433*\r);
      \draw (\x + 0.75*\r, \y - 0.433*\r) -- (\x - 0.75*\r, \y + 0.433*\r);
      \draw (\x, \y + 0.866*\r) -- (\x, \y - 0.866*\r);
    }
  }
  
  
  \fill[red, opacity=0.2] (1.5, 1.732+0.866) -- (1.5-.75, 1.732-0.433) -- (1.5, 1.732-0.866) -- (1.5+.75, 1.732+0.433) -- cycle;
  
  
\end{tikzpicture}\qquad
\begin{tikzpicture}[scale=1.025]
  \def\s{1} 
  
  \foreach \i in {0,1,2,3,4} {
    \draw (\i*\s, 0) -- (\i*\s, 4*\s);
  }
  \foreach \j in {0,1,2,3,4} {
    \draw (0, \j*\s) -- (4*\s, \j*\s);
  }
  
  \foreach \i in {0,1,2,3} {
    \foreach \j in {0,1,2,3} {
      \draw (\i, \j) -- (\i+1, \j+1);
      \draw (\i, \j+1) -- (\i+1, \j);
    }
  }

  \fill[red, opacity=0.2] (1, 1) -- (2, 1) --(2, 2) --(1, 2) -- cycle;

    \fill[red, opacity=0.2] (3, 3) -- (2.5, 2.5) --(3,2) --(3.5, 2.5) -- cycle;

\end{tikzpicture}\qquad 
\begin{tikzpicture}[scale=2.35]
  \def\s{1} 
  \def\h{0.866} 

  \clip (1,\h) rectangle (3,3*\h);
  
  \foreach \i in {0,1,2,3,4,5,6} {
    \draw (0, \i*\h*.5) -- (5*\s, \i*\h*.5);
  }
  
  \foreach \i in {-1,0,1,2,3,4,5,6} {
    \draw (\i*\s/2, 0) -- (\i*\s/2 + 2.5*\s, 5*\h);
    \draw (\i*\s/2 + 2.5*\s, 0) -- (\i*\s/2, 5*\h);
  }
  
\end{tikzpicture}

\caption{From left: tiling by $(2, 3, 6)$ triangle with minimal rectangle; tiling by $(2, 4, 4)$ triangle with minimal rectangles; tiling by $(3, 3, 3)$ triangle.}
\label{MinRectangles} 
\end{figure}

\begin{proof}
    The proof is by direct examination of the tilings shown in \figref{MinRectangles}. If we choose two orthogonal directions of edges in the tiling, we observe that the set of edges in these directions cuts out a rectangular grid with minimal rectangle as described.
\end{proof}

Thus, for example, if $\cP$ has an ideal vertex with faces forming a $(2, 3, 6)$ triangle, then $\cQ$ must have a multiple of 6 copies of $\cP$ incident to this vertex, in a rectangular configuration. Similarly, if $\cP$ has an ideal vertex with faces forming a $(2, 4, 4)$ triangle, then $\cQ$ must have a multiple of 2 copies of $\cP$ incident to this vertex, in a rectangular configuration. If $\cP$ has an ideal vertex with faces forming a $(3, 3, 3)$ triangle, then no right-angled $\cP$-polyhedron exists (but this does not apply to any of the Scharlau-Walhorn polyhedra). We also make an observation about the copies of $\cP$ incident to each face at an ideal vertex.

\begin{corollary} \label{RectangleFact}
If $F_1$ and $F_2$ are two faces of $\cQ$ which are parallel at an ideal vertex, then $F_1$ and $F_2$ contain the same number of copies of faces of $\cP$ incident to that vertex. \qed
\end{corollary}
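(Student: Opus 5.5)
The plan is to reduce the statement to a count of tiles along opposite sides of a rectangle in the Euclidean tiling of \propref{IdealVertices}. Place the ideal vertex $v$ of $\cQ$ at $\infty$ in the upper half-space model. Take a horizontal cross-section $T$ of $\cQ$ high enough that it meets only copies of $\cP$ having $v$ as an ideal vertex. As explained before \propref{IdealVertices}, $T$ is a Euclidean rectangle tiled by congruent cross-sections $S$ of $\cP$, and these are exactly the tiles of the affine reflection tiling generated by $S$. The four faces of $\cQ$ through $v$ are vertical planes, and they meet the horosphere in the four sides of $T$. Two faces $F_1$ and $F_2$ that are parallel at $v$ correspond to a pair of opposite sides of $T$. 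The copies of faces of $\cP$ lying in $F_i$ and incident to $v$ correspond bijectively to the edges of the tiling, that is, the sides of tiles $S$, lying along the corresponding side of $T$.

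So it suffices to show that two opposite sides of $T$ are subdivided into the same number of tile edges. By the last sentence of \propref{IdealVertices}, $T$ is tiled by translates of a single minimal rectangle $R$ in a rectangular grid configuration. Say $T$ consists of $a\times b$ copies of $R$. Each side of $T$ is then a union of $a$ (respectively $b$) sides of copies of $R$, and opposite sides use the same number of copies of $R$. Next I would check, case by case, that within $R$ opposite sides carry the same number of tile edges. For $S$ a rectangle, $R=S$ and each side is a single edge. For the $(2,4,4)$ triangle, the two-tile minimal rectangle is a square cut by one diagonal, with each side a single edge. The four-tile rectangle is a square cut by both diagonals, with each side again a single edge. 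For the $(2,3,6)$ triangle, I would read the six-tile rectangle off \figref{MinRectangles} and check that each pair of opposite sides has matching subdivisions. This is plausible because the reflection symmetry of the tiling through the center of $R$, or the central symmetry of $R$, swaps opposite sides and preserves the tiling.

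The cleanest uniform argument uses this symmetry directly. The point reflection through the center of $R$ is an isometry preserving the affine tiling in every case, since the rotation by $\pi$ about a rectangle center is in the affine Coxeter group or normalizes it appropriately. It therefore carries the tile edges on one side bijectively to those on the opposite side. Alternatively, the grid is formed by lines of the tiling in two orthogonal directions, and translation by a side length of $R$ preserves the tiling. That translation carries one side of a copy of $R$ to the opposite side, preserving its subdivision, and summing over copies of $R$ along the side gives the equality.

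The main obstacle is justifying this translation-invariance (or central symmetry) carefully for the $(2,3,6)$ case. I would do this by direct inspection of \figref{MinRectangles}, noting that the translation lattice of the tiling contains both orthogonal periods of the minimal rectangle.
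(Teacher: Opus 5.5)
Your argument is correct and is essentially the paper's. The paper reads the corollary off Proposition~\ref{IdealVertices}: the cusp section $T$ is an $a\times b$ grid of minimal rectangles, and opposite sides of each minimal rectangle $R$ carry equally many tile edges. Your half-turn about the center of $R$ is a valid way to see the latter; in the $(2,3,6)$ case that center is a right-angled vertex of the tiling, so the half-turn even lies in the reflection group.

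Drop your translation and reflection alternatives, though, because they fail in the $(2,3,6)$ case. There a long side of $R$ runs from a hexagon center (a $\pi/6$-vertex) to an edge midpoint (a $\pi/2$-vertex). Traversed in the same direction, the two long sides are cut as hypotenuse-then-short-leg and short-leg-then-hypotenuse, respectively. Consequently:
\begin{itemize}
\item translation by a side of $R$ carries a $\pi/6$-vertex to a $\pi/2$-vertex, so it is not a symmetry of the tiling;
\item the translation lattice contains twice, but not once, each side vector of $R$;
\item the mirrors through the center of $R$ are not parallel to its sides, so no reflection swaps its opposite sides.
\end{itemize}
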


While it may seem trivial that opposite sides of a rectangle are equal, this fact will be used in an important way in \secref{Combinatorics}.

Finally, the fact that four faces meet at each vertex of $\cQ$ is useful for Euler characteristic computations. Recall that the Euler characteristic $\chi$ of any genus 0 polyhedron (hyperbolic or otherwise), defined as the number of faces minus edges plus vertices, is 2. We can compute the Euler characteristic locally at a face $F$ as 
\begin{equation} \label{FaceLocalEC}
\chi_F = 1 - \frac{ \# \text{ edges of }F}{2} + \frac{\# \text{ vertices of }F}{4} = 1-\frac{n_F}{4}
\end{equation}
where $n_F$ is the number of edges (or vertices) of $F$. Since each edge is incident to two faces, and each vertex is incident to four faces, summing $\chi_F$ over all faces $F$ of $\cQ$ gives the Euler characteristic $\chi(\cQ)=2$.

Similarly, we can compute the Euler characteristic locally at a vertex $V$ as 
\begin{equation} \label{VertexLocalEC}
\chi_V = 1 - \frac{4}{2} + \sum_{\text{faces }F\text{ containing }V} \frac{1}{n_F} = -1 + \sum_F \frac{1}{n_F}  
\end{equation}
Again, since each vertex is incident to four edges, each edge has two vertices, and each face $F$ has $n_F$ vertices, summing $\chi_V$ over all vertices $V$ of $\cQ$ gives the Euler characteristic $\chi(\cQ)=2$.

\begin{prop} \label{AllQuadEuler}
Suppose that every face of $\cP$ is a hyperbolic polygon with four or more edges. Then no ideal, right-angled  $\cP$-polyhedron exists. 
\end{prop}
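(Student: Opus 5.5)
The plan is to combine the face-local Euler characteristic \eqref{FaceLocalEC} with an inductive statement about the faces of $\cP$-polyhedra. Suppose $\cQ$ is an ideal, right-angled $\cP$-polyhedron. Every vertex of $\cQ$ has exactly four incident faces, so $\chi(\cQ)=2=\sum_F \chi_F=\sum_F\bigl(1-\tfrac{n_F}{4}\bigr)$. If every face $F$ of $\cQ$ has $n_F\ge 4$, each term is $\le 0$, which contradicts $\chi(\cQ)=2$. So the whole proof reduces to one claim: if every face of $\cP$ has at least four edges, then so does every face of every $\cP$-polyhedron. Note that this claim is about arbitrary $\cP$-polyhedra, not only ideal right-angled ones, and it needs to be.

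I would prove the claim by induction using \propref{GluingPolyhedra}. Let $\cQ=\cQ_1\cup\cQ_2$, where $\cQ_1$ and $\cQ_2$ are $\cP$-polyhedra glued along a common face $G$, and assume every face of $\cQ_1$ and $\cQ_2$ has at least four edges. A face of $\cQ$ arises in one of two ways.
\begin{enumerate}
\item It may be a face $F_1$ of some $\cQ_i$, say $\cQ_1$, other than $G$, that does not merge with a face of $\cQ_2$. Then $F_1$ keeps all its edges. If it contains an edge $e$ of $G$, that edge is still an edge of $\cQ$: the plane of $F_1$ meets the adjacent face of $\cQ_2$ at $e$, unless the two are coplanar, which is the second case.
\item It may be $F_1\cup F_2$, where $F_i$ is a face of $\cQ_i$, the two faces are coplanar, and they share an edge $e\subset\partial G$.
\end{enumerate}

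In the second case, $F_1\cup F_2$ is a convex polygon, because $\cQ$ is convex. Its edges are those of $F_1$ and $F_2$ with the common edge $e$ removed. In addition, at each of the two endpoints of $e$ the edge of $F_1$ and the edge of $F_2$ may become collinear and fuse into a single edge. Hence $F_1\cup F_2$ has at least $n_1+n_2-2-2\ge 4+4-4=4$ edges.

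One should check that no face of $\cQ$ is assembled from more than two pieces. This holds because the gluing involves only the single plane of $G$: a face of $\cQ$ meets that plane in at most one edge, so it contains at most one piece from each side.

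The bookkeeping for edges whose endpoints are ideal vertices is the same, since we count the edges of polygons, possibly with ideal vertices. Since $\cP$ itself satisfies the claim by hypothesis, induction on the number of tiles finishes the claim, and the Euler characteristic computation above then gives the proposition.

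I expect the main obstacle to be the merging step: checking carefully that fusion of collinear edges can cost at most one edge at each endpoint of $e$, and that the faces of $\cQ$ adjacent to $G$ never combine in a more complicated way. Convexity of $\cQ_1$, $\cQ_2$ and $\cQ$, together with the fact that the plane of $G$ separates $\cQ_1$ from $\cQ_2$, should make this routine.
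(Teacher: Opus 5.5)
Your proposal is correct and follows the paper's proof essentially step for step. The paper also uses Proposition~\ref{GluingPolyhedra} to show that gluing preserves the property that every face has at least four edges, with the same count $n_1+n_2-4$, and then concludes that every $\chi_F=1-n_F/4$ is nonpositive, contradicting $\chi(\cQ)=2$. Your additional checks, that each face of $\cQ$ contains at most one piece from each side of the gluing plane and that ideal endpoints cause no extra loss, are correct and fill in details the paper leaves implicit.
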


\begin{proof}
First we show that the property of all faces having four or more edges is preserved when two $\cP$-polyhedra $\cP_1$ and $\cP_2$ are glued along a face. In this situation, some faces of $\cP_1$ can be glued to faces of $\cP_2$ along a single edge. If a polygon with $n_1$ edges is glued to a polygon with $n_2$ edges, the resulting polygon has at least $n_1+n_2-4$ edges, since two edges are removed by the gluing and two more pairs of edges can be identified. Thus gluing two faces with four or more edges will produce a face with four or more edges. It now follows from \propref{GluingPolyhedra} that any $\cP$-polyhedron will have the same property: all faces will have four or more edges.

This implies that the local Euler characteristic $\chi_F$ computed at any face will be nonpositive, and so the sum of all the Euler characteristics cannot be 2. It follows that no ideal, right-angled  $\cP$-polyhedron exists. 
\end{proof}

\begin{example} \label{Example15}
    The polyhedron \SW{15} has six quadrilateral and two pentagonal faces. Thus by \propref{AllQuadEuler}, it cannot tile an ideal, right-angled  polyhedron. 
\end{example}

\begin{corollary} 
The  polyhedra \SW{15}, \SW{27}, \SW{31}, and \SW{38}
 cannot tile ideal, right-angled polyhedra. \qed
 \end{corollary}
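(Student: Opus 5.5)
The statement is a corollary: for each of \SW{15}, \SW{27}, \SW{31}, and \SW{38}, check that every face of the polyhedron has at least four edges, and then apply \propref{AllQuadEuler}. \exref{Example15} has already done this for \SW{15}, which has six quadrilateral and two pentagonal faces. So the work is a finite inspection of the Coxeter diagrams (Gram matrices) of \SW{27}, \SW{31}, and \SW{38} in \appref{ScharlauWalhornData}.

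\textbf{Reading off face sizes.} For each of the three polyhedra, fix a face $F_i$. Its edges correspond to the faces $F_j$ that meet $F_i$ at a dihedral angle $\pi/m$. In the Coxeter diagram these are the nodes $j$ joined to $i$ by an ordinary edge (including the "no edge" case of angle $\pi/2$), as opposed to a bold edge (parallel faces) or a dashed edge (ultraparallel faces). So the number of edges of $F_i$ is the number of $j\neq i$ whose Gram entry $g_{ij}$ satisfies $|g_{ij}|<1$. I will count these for every node and confirm that each count is at least $4$. For a triangular face there would be exactly three such neighbours, forming a finite or parabolic vertex triple.

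\textbf{Sanity checks.} As a check, I will also verify the counts against the vertex structure. Each finite vertex corresponds to a spherical subdiagram of rank $3$, and each ideal vertex to a parabolic subdiagram of rank $2$ (rank $3$ as a Coxeter diagram). The edges of $F_i$ should then close up into a cycle of the correct length.

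\textbf{Main obstacle.} The only real difficulty is bookkeeping. I must distinguish correctly between neighbours that meet $F_i$ at a finite or ideal vertex and neighbours that are merely parallel, since parallel faces meet only at an ideal point and contribute no edge. Once the counts are confirmed, \propref{AllQuadEuler} applies directly. Any $\cP$-polyhedron then has only faces with at least four sides, so $\sum_F \chi_F\le 0\ne 2$. Hence none of these four polyhedra tiles an ideal, right-angled polyhedron.
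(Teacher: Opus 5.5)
Your proposal is correct and is the paper's argument: the corollary is Proposition~\ref{AllQuadEuler} applied after reading off from the Coxeter diagrams that every face has at least four sides. The appendix records six quadrilaterals and two pentagons for \SW{15}, and six quadrilaterals each for \SW{27}, \SW{31} and \SW{38}; your edge count via the pairs meeting at a finite angle $\pi/m$ is justified by the standard Andreev--Vinberg fact that, in an acute-angled polyhedron, faces whose planes intersect are adjacent along an edge.
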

In \appref{ArgumentData}, we describe the faces for each of these. 

We have now ruled out 31 of the Scharlau-Walhorn polyhedra. Subsequent sections will use Euler characteristic in conjunction with other criteria to be developed. 

\section{Potential Faces of Ideal, Right-Angled  Polyhedra} \label{PotentialFaces}

We can rule out ideal $\cP$-polyhedra by analyzing their potential faces. Any face of an ideal $\cP$-polyhedron is an ideal polygon, tiled by copies of faces of $\cP$. If $F$ is a hyperbolic Coxeter polygon, then an $F$-polygon is a convex finite-volume polygon in the tiling of $\mathbb{H}^2$ by reflections of $F$. The following question, a two-dimensional analogue of \qref{PolyhedronQuestion}, is relevant:

\begin{question} \label{IdealPolygonQuestion}
Which Coxeter polygons $F \subset \mathbb{H}^2$ admit an ideal $F$-polygon?
\end{question}

A complete answer to this question follows from work of Felikson.  We will call $F$ idealizable if there exists an ideal $F$-polygon. For example, a compact polygon is clearly not idealizable. The next theorem follows from \cite[Lemmas 12-15]{Felikson1998}:

\begin{thm} \label{IdealizablePolygons}
    An idealizable Coxeter polygon $F$ has one of the following forms: \begin{enumerate}
\item 
    \label{ItemIdealizableInfties}
    $(\infty, \ldots \infty)$ with $I\geq 3$ infinite vertices.
\item 
    \label{ItemIdealizableN}
$(n, \infty, \ldots \infty)$ with $n \in \Z_{\geq 2}$, and $I\geq 2$ infinite vertices.
\item 
    \label{ItemIdealizableN2}
    $(n, 2, \infty, \ldots \infty)$ with $n \in \Z_{\geq 2}$, and $I\geq 1$ ($I\geq2$ if $n=2$).
\item 
    \label{ItemIdealizable2N2}
    $(2, n, 2, \infty, \ldots \infty)$ with $n \in \Z_{\geq 2}$, and $I\geq 1$ infinite vertices.
    \qed
\end{enumerate} 
\end{thm}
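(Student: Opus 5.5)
Let $F$ be an idealizable Coxeter polygon, with cyclically ordered angles $\pi/n_1,\dots,\pi/n_k$, and let $Q$ be an ideal $F$-polygon. My plan is to mirror the vertex analysis of \secref{PotentialVertices} one dimension down: look at the finite vertices of $F$ and ask how they could disappear when copies of $F$ are glued together. By the two-dimensional analogue of \propref{GluingPolyhedra}, $Q$ is built from copies of $F$ by repeatedly gluing two $F$-polygons along a common edge, the edge being cut by a line of the tiling. A finite vertex of an $F$-polygon can only be removed when it lies on the gluing line and its two angles sum to $\pi$. This makes finite vertices behave like the bad features of \secref{BadPairs}.

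\textbf{Step 1 (right angles are the only erasable finite vertices).} Take a finite vertex $v$ of $F$ with angle $\pi/n$. The tiling lines through the corresponding vertex of the tiling cut a full neighbourhood into $2n$ sectors. A finite vertex $w$ of $Q$ is absent exactly when $Q$ contains a half-disc around $w$. Near a boundary point of $Q$, the polygon is bounded by a single tiling line through that point. So a finite vertex of $F$ lying on a side of $Q$ must sit where a tiling line through it makes a straight angle. This is always possible: a line through the vertex splits the $2n$ sectors into $n$ and $n$. The real constraint comes from edges. If $v$ lies in the interior of a side $s$ of $Q$, then the two edges of $F$ at $v$ lie in the union of copies of $F$. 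The edge of $F$ that runs along $s$ continues into the next tile only if the tiling line through it is straight there.

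\textbf{Step 2 (pair argument, as in \propref{BadVertexPair} and \propref{BadCompactEdgePair}).} Call two finite vertices of $F$ a \emph{bad pair} if they are not on a common edge. I will show, as in \propref{BadEdgePair}, that a bad pair persists under gluing, so $Q$ would have a finite vertex. Hence all finite vertices of $F$ lie on at most one edge, which means there are at most two of them and they are adjacent. Next I treat an edge $E$ with both endpoints finite, at angles $\pi/a$ and $\pi/b$. The edge $E$ is compact. If at each endpoint the tiling contains a line orthogonal to $E$, then the line through $E$ is tiled by compact copies of $E$, and $E$ can never become part of an ideal side. A line orthogonal to $E$ at a vertex of angle $\pi/a$ exists iff $a$ is even. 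This is the exact analogue of the bad compact edges in \propref{BadCompactEdgePair}. So when there are two finite vertices, at least one of them is odd.

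\textbf{Step 3 (the real classification, expected obstacle).} The constraints so far only restrict the finite angles to at most two adjacent vertices. This does not match the list in the theorem, which allows three finite vertices $(2,n,2)$. So my Step 2 must be too crude: right-angled vertices adjacent to a finite vertex are harmless. The correct bad-pair notion should exclude vertices of angle $\pi/2$ in suitable positions. I would redo the persistence argument with ``non-right finite vertices'' as the bad features. From this I expect to obtain at most one non-right finite vertex, with any right-angled finite vertices adjacent to it, and at most two right angles in total, which gives shapes (1)–(4). The remaining conditions on $I$ come from the requirement that $F$ have finite area but not be compact. The case $(2,2,\infty)$ is excluded because its two right angles flank a single ideal vertex, so the side opposite is a compact edge orthogonal to tiling lines at both ends, and Step 2's compact-edge argument rules it out.

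Verifying these persistence arguments carefully in each configuration, particularly the interplay between the $\pi/n$ vertex and the flanking right angles, is the main obstacle. That is presumably why the paper defers to \cite[Lemmas 12-15]{Felikson1998}. I would ultimately cite that source for this case analysis and simply check that its lemmas translate into the four listed forms.
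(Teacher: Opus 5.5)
The paper gives no proof of its own here. The theorem is quoted from \cite[Lemmas 12--15]{Felikson1998}, so your eventual fallback (cite Felikson and match his lemmas to the four forms) is exactly the paper's route. The trouble is that the argument you build before that fallback makes false claims, and the theorem itself refutes them.

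First, a pair of finite vertices ``not on a common side'' does not persist under gluing. Take $F=(2,2,2,\infty)$ (case (4) with $n=2$), with finite vertices $v_1,v_2,v_3$ in order, and glue it to its reflection $\sigma(F)$ across the side $v_1v_2$. The angles at $v_1$ and $v_2$ become $\pi$, so $F\cup\sigma(F)$ is the quadrilateral $(2,\infty,\infty,2)$. Its finite vertices $v_3,\sigma(v_3)$ lie on a common side, and one more reflection gives an ideal quadrilateral. Your definition is weaker than the two-dimensional form of the hypothesis of Proposition~\ref{BadVertexPair}. There, ``faces sharing an edge'' translates to ``sides sharing a finite vertex'', i.e.\ to their lines meeting in $\mathbb{H}^2$. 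The persistence proof uses precisely that every line through a side at one vertex is disjoint (parallel or ultraparallel) from every line through a side at the other, so that the gluing line separates the two families. With that definition persistence does hold, and $(v_1,v_3)$ in $(2,n,2,\infty,\dots)$ is simply not a bad pair, since the sides $v_1v_2$ and $v_2v_3$ meet at $v_2$. Your proposed repair, using ``non-right finite vertices'' as the bad features, goes the wrong way. It says nothing about $(2,\infty,2,\infty)$, which has no non-right vertex and is not on the list, whereas the correctly defined pair argument does exclude it.

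Second, the compact-side step is false. The polygons $(2,2,\infty,\infty)$, $(4,2,\infty,\infty)$ and $(2,4,2,\infty)$ are idealizable (cases (3) and (4)), yet every finite angle in them has the form $\pi/(\hbox{even})$, so ``at least one of them is odd'' fails. A compact side whose line is tiled by compact segments only prevents that line from carrying a side of an ideal $F$-polygon; the side itself can end up inside. For $(2,2,\infty,\infty)$ with finite vertices $v_1,v_2$, the union of $F$ and its reflection across $v_1v_2$ is already an ideal quadrilateral, in which $v_1v_2$ has become an interior chord. That is exactly why Proposition~\ref{BadCompactEdgePair} needs a \emph{pair}. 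Likewise $(2,2,\infty)$ is not excluded by any such argument. Its angle sum is $\pi$, so it is not a hyperbolic triangle at all, and every condition on $I$ in the statement is just the condition that $F$ exist as a hyperbolic polygon.

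Finally, Step~3 contains no argument. Even the corrected pair notion leaves configurations such as $(3,3,\infty,\infty)$ untouched: its finite vertices share a side, so no pair argument applies, and both angles are odd, so the compact-side criterion is silent. Excluding such cases is what you are really relying on \cite{Felikson1998} for. So the actual content of your proposal is the citation, which matches the paper; the preceding steps should be dropped or corrected, not presented as a proof.
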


\begin{figure}[ht] 
\centering
\, \hfill
\begin{tikzpicture}[scale=2]
	\draw (-1, 0) -- (1,0);
	\draw (0, 0) --  (.5, .866025);
	\draw (0, 0) --  (-.5, .866025);
	\draw (1,0) arc
    [
        start angle=-90,
        end angle=-210,
        x radius=0.57735,
        y radius =0.57735
    ] ;	
    \draw (-1,0) arc
    [
        start angle=-90,
        end angle=30,
        x radius=0.57735,
        y radius =0.57735
    ] ;
    \draw (.5, .866025) arc
    [
        start angle=-30,
        end angle=-150,
        x radius=0.57735,
        y radius =0.57735
    ] ;
    \draw[white] (-1,-1) -- (1, -1);
\end{tikzpicture}
\hfill
\begin{tikzpicture}[scale=2]
	\draw (-0.267949, 2) -- (1,2);
	\draw (-.5, 2-.866025) --  (0.133975, 2.232051);
	\draw (-.5, 2.866025) --  (0.133975, 2-0.232051);
	\draw (1,2) arc
    [
        start angle=-90,
        end angle=-150,
        x radius=1.73205,
        y radius =1.73205
    ] ;	
	\draw (1,2) arc
    [
        start angle=90,
        end angle=150,
        x radius=1.73205,
        y radius =1.73205
    ] ;	
    \draw (-.5, 2.866025) arc
    [
        start angle=30,
        end angle=-30,
        x radius=1.73205,
        y radius =1.73205
    ] ;
    
	\draw (-1, 0) -- (1,0);
	\draw (0,0) --  (0.133975, 0.232051);
	\draw (-.5, .866025) --  (0,0);
	\draw (1,0) arc
    [
        start angle=-90,
        end angle=-150,
        x radius=1.73205,
        y radius =1.73205
    ] ;	
    \draw (-.5, .866025) arc
    [
        start angle=30,
        end angle=-0,
        x radius=1.73205,
        y radius =1.73205
    ] ;
    
    \draw (-.5,0) arc
    [
        start angle=-30,
        end angle=30,
        x radius=0.866025,
        y radius =0.866025
    ] ;
    \draw (-.5,0) arc
    [
        start angle=30,
        end angle=42.5,
        x radius=0.866025,
        y radius =0.866025
    ] ;
    \draw (-1,0) arc
    [
        start angle=-90,
        end angle=30,
        x radius=0.57735,
        y radius =0.57735
    ] ;
\end{tikzpicture}
\hfill
\begin{tikzpicture}[scale=2]
	\draw (.5, 0.288675) -- (-.5, -0.288675);
	\draw (.5, -0.288675) -- (-.5, 0.288675);
	\draw (0, 0.57735) -- (0, -0.57735);
    
	\draw (1,0) arc
    [
        start angle=-90,
        end angle=-210,
        x radius=0.57735,
        y radius =0.57735
    ] ;	
    \draw (-1,0) arc
    [
        start angle=-90,
        end angle=30,
        x radius=0.57735,
        y radius =0.57735
    ] ;
    \draw (.5, .866025) arc
    [
        start angle=-30,
        end angle=-150,
        x radius=0.57735,
        y radius =0.57735
    ] ;
    	\draw (1,0) arc
    [
        start angle=90,
        end angle=210,
        x radius=0.57735,
        y radius =0.57735
    ] ;	
    \draw (-1,0) arc
    [
        start angle=90,
        end angle=-30,
        x radius=0.57735,
        y radius =0.57735
    ] ;
    \draw (.5, -.866025) arc
    [
        start angle=30,
        end angle=150,
        x radius=0.57735,
        y radius =0.57735
    ] ;
    \draw[white] (-1,-1) -- (1, -1);
\end{tikzpicture}
\hfill \,
\caption{From left: minimal ideal polygon tiled by $(3, \infty, \infty)$; minimal ideal polygons tiled by $(3, 2, \infty)$; minimal ideal polygon tiled by $(2,3,2, \infty)$.}
\label{MinIdealPolygons} 
\end{figure}

If $F$ is idealizable, then there are $1$ or~$2$ ``minimal'' ideal $F$-polygons, such
that
every ideal $F$-polygon is a nonoverlapping union of $\Gamma(F)$-images of
the minimal ones.
We define them as follows.  
In case \eqref{ItemIdealizableInfties}
there is only one minimal polygon: $F$ itself.  
In case \eqref{ItemIdealizableN} or \eqref{ItemIdealizableN2}, let 
$v$ be the first vertex of~$F$, while in case \eqref{ItemIdealizable2N2} let it be
the second one.  The angle of $F$ at~$v$ is $\pi/n$, and the description of 
the minimal polygons 
begins with the union $Z$ of the $2n$ copies of $F$ containing~$v$.  In
case \eqref{ItemIdealizable2N2}, there is only one minimal polygon: $Z$
itself.  In case \eqref{ItemIdealizableN}, the minimal polygons are 
got by cutting~$Z$ in half, along a mirror of $\Gamma(F)$.  Up
to the action of $\Gamma(F)$, this yields one minimal polygon
if $n$ is odd, or two if $n$ is even.  In case \eqref{ItemIdealizableN2}
with $n$ even, we use the same construction, but only half of the
possible cuttings yield an ideal polygon, and all these cuttings
are $\Gamma(F)$-equivalent.  We take the resulting
polygon as the unique minimal polygon.  The most complicated case
is \eqref{ItemIdealizableN2} with $n$ odd, when there  are two
minimal polygons, one of which is~$Z$.  To get the other, first cut~$Z$ along
a mirror through~$v$, which yields a polygon with all vertices ideal
except for a single right-angled corner.  Doubling this half of~$Z$, across the
edge of $Z$ that contains this corner, yields our second minimal polygon.  
Figure~\ref{MinIdealPolygons} illustrates the minimal ideal polygon(s) 
for $F$ the $(3, \infty, \infty)$ triangle, the $(3, 2, \infty)$ triangle, and 
the $(2, 3, 2, \infty)$ quadrilateral (cf \cite[Figure 19]{Felikson1998}).
The minimal polygons deserve their name:

\begin{prop}
    \label{MinimalPolygons}
    Suppose $F$ is an idealizable polygon, and $F'$ is an ideal $F$-polygon.
    Then there is a set of $\Gamma(F)$-translates of minimal polygons of~$F$,
    that have disjoint interiors and cover~$F'$.
\end{prop}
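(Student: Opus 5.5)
We prove the statement by induction on the number of copies of $F$ in $F'$, using a two-dimensional analogue of \propref{GluingPolyhedra}. The key local fact is that every ideal vertex of $F'$ is an ideal vertex of some copy of $F$. Conversely, every finite vertex $w$ of a copy of $F$ inside $F'$ is a point of the tiling where $2m$ copies meet, and $F'$ must contain all of them or have a finite vertex there.

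We organize the argument around the distinguished vertex $v$ of $F$ (first vertex in cases \eqref{ItemIdealizableN}, \eqref{ItemIdealizableN2}, second in case \eqref{ItemIdealizable2N2}). In case \eqref{ItemIdealizableInfties} there is nothing to prove: every $F$-polygon is tiled by copies of $F$, which is the minimal polygon.

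In the other cases, take any copy $gF\subseteq F'$ and look at its vertex $gv$, where the angle is $\pi/n$ with $n\geq 2$. Since $F'$ is ideal, $gv$ is not a vertex of $F'$. So $gv$ either lies in the interior of $F'$, in which case $F'$ contains the whole star $gZ$ of $2n$ copies, or lies in the relative interior of an edge of $F'$, in which case $F'$ contains exactly half of $gZ$ cut along a mirror through $gv$. We then treat the finite vertices with angle $\pi/2$ adjacent to $v$ (cases \eqref{ItemIdealizableN2} and \eqref{ItemIdealizable2N2}) the same way. Each such right-angled corner of a copy inside $F'$ must be interior to $F'$ or on an edge of $F'$, which forces the reflected copy across the appropriate side to lie in $F'$ as well.

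The casework then shows that the union of copies forced by these constraints is exactly one of the minimal polygons $M$ described above, and that $M\subseteq F'$ contains $gF$. Concretely:
\begin{itemize}
\item In case \eqref{ItemIdealizable2N2}, the two right angles flanking $v$ force all of $gZ$.
\item In case \eqref{ItemIdealizableN}, we get $gZ$ or a half of it, and each half is a minimal polygon.
\item In case \eqref{ItemIdealizableN2} with $n$ even, parity of $n$ determines which halvings are compatible with the right-angled vertex lying on an edge.
\item In case \eqref{ItemIdealizableN2} with $n$ odd, either $gZ\subseteq F'$, or $F'$ contains a half of $gZ$ whose right-angled corner is non-ideal. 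That corner must then be interior to an edge of $F'$, forcing the doubled polygon.
\end{itemize}

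Having found such an $M$ with $gF\subseteq M\subseteq F'$, we cut $F'$ along the lines through the edges of $M$ that are not edges of $F'$. Each piece is convex, and is a union of tiles because edges of $F$-tiles lie on mirrors. The remaining pieces are ideal $F$-polygons with fewer copies, since their vertices are either vertices of $F'$ or ideal vertices of $M$. Induction finishes the proof.

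The main obstacle is the case analysis in case \eqref{ItemIdealizableN2}, particularly for $n$ odd. There we must check carefully that the forced region is convex and contained in $F'$. We must also check that after removing $M$ the leftover regions have no new finite vertices. For this I would verify that at every vertex of $M$ lying in the interior of $F'$, the angle of $M$ is $\pi$ or less and is matched by complementary tiles, using the explicit structure of the star $Z$.
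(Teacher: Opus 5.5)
Your strategy is essentially the paper's. You examine a translate of the distinguished vertex $v$, split according to whether it lies in the interior of $F'$ or in the relative interior of an edge, read off a minimal polygon $M\subseteq F'$, remove it and induct. The paper first exhausts translates of $v$ on $\partial F'$ and only then uses interior ones. Starting from an arbitrary tile $gF$ is an immaterial reordering, provided you halve $gZ$ when $gv$ is interior in case (2) and in case (3) with $n$ even, since $gZ$ is not minimal there.

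The difficulty you flag at the end does not exist. $M$ is ideal, so each of its vertices is an ideal vertex of $F'$, and each of its edges is a complete geodesic, hence an edge or a diagonal of $F'$. No vertex of $M$ lies inside $F'$, and the leftover pieces are automatically ideal.

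The step that is actually unjustified is ruling out $gv$ on an edge of $F'$ in two situations: in case (4), and in case (3) with $n$ even when the edge lies along a mirror of type $va$ (with $a$ the right-angled vertex next to $v$). Your local rule cannot achieve this. A right-angled corner in the interior of an edge of $F'$ is perfectly consistent locally; it occurs inside the doubled minimal polygon when $n$ is odd. It only forces one more tile. So ``the two right angles flanking $v$ force all of $gZ$'' and ``compatible with the right-angled vertex lying on an edge'' are not reasons.

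The missing observation is global: in these situations the mirror $L$ through $gv$ is tiled entirely by compact edges.
\begin{itemize}
\item At each translate of $v$ on $L$, the edges of the tiling along $L$ are compact. In case (4) this holds because both sides of $F$ at $v$ are compact. In case (3) it holds because, for $n$ even, the continuation of a $va$-edge through a translate of $v$ is again a $va$-edge.
\item $L$ passes straight through each right-angled vertex into another such edge.
\end{itemize}
Hence propagating your forcing rule along $E\subseteq L$ (what the paper means by working one's way along $E$) never terminates. It puts infinitely many tiles into $F'$, contradicting finite area.

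With this added, case (4) gives $gZ\subseteq F'$, and case (3) with $n$ even forces $E$ to lie on a mirror of type $vc$ with $c$ ideal. Your induction then goes through. The case you singled out as hardest, case (3) with $n$ odd, is exactly the one where the propagation stops after one step.
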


\begin{proof}
Case \eqref{ItemIdealizableInfties} is trivial.  In any other case,
suppose an edge~$E$ of~$F'$ contains a translate~$v'$ of~$v$.  
By working one's way along~$E$,
one can find the copies of~$F$ that
lie in~$F'$ and meet $E$ (in $\bH^2$).  Their union is one of minimal
polygons described above.  Removing it from~$F'$ leaves a set of disjoint ideal
$F$-polygons, which we can decompose by
induction.

This reduces us to the case that every translate of~$v$ lies in the interior of~$F'$.
The copy~$Z'$ of~$Z$ centered at such a point is an ideal polygon, and removing it from~$F'$ 
leaves some ideal polygons.  In case \eqref{ItemIdealizable2N2}, or in case \eqref{ItemIdealizableN2}
when $n$ is odd, $Z'$ is
one of our minimal polygons, and we use induction to decompose the remaining pieces.
In the other cases, we cut~$Z'$ in half, and again use induction on the remaining pieces.
%
\end{proof}

Now we apply this result in the three-dimensional setting. If faces $F_1$ and $F_2$ of $\cP$ meet along an edge at an angle of $\pi/n$ for $n$ odd, then each plane through that edge contains a $\Gamma$-translate of $F_1$ meeting a $\Gamma$-translate of $F_2$. If they meet along an edge at an angle of $\pi/n$ for $n$ even then each plane through that edge contains either two $\Gamma$-translates of $F_1$ or two $\Gamma$-translates of $F_2$. Thus we partition the faces of $\cP$ into equivalence classes, where two faces are equivalent if they are connected by a chain of faces which meet at angles of $\pi/n$ for $n$ odd. Two faces of $\cP$ have $\Gamma$-translates in the same plane if and only if they are connected by a chain of adjacent faces in that plane, which occurs if and only if they are in the same equivalence class. 

Each reflection in $\Gamma$ fixes a plane pointwise, and we call these planes the mirrors of $\Gamma$.  
Each mirror~$P$  is a union of $\Gamma$-translates of faces of~$\cP$, from a single
equivalence class. Consider the set of all mirrors which meet $P$ orthogonally, and let $\Gamma_P$ be the group generated by their reflections. This is a discrete hyperbolic reflection group acting on $P$, and we write $\Delta_P$ for one of its chambers.  
This is a 
(possibly $\infty$-sided) polygon, and a
fundamental domain for~$\Gamma_P$.
Now suppose $P$ cuts out a face of some ideal, right-angled $\cP$-polyhedron~$\cQ$.  
Then that face of~$\cQ$ is an ideal $\Delta_P$-polygon. The idea behind the next proposition 
is that this situation is impossible when $\Delta_P$ is not idealizable.
We call the equivalence class of faces of $\cP$, that corresponds to~$P$, 
idealizable if $\Delta_P$ is. We use the same language for each member of that equivalence class.
The following proposition is a generalization of
\propref{BadFacePair}.

\begin{prop} \label{BadFaceOrbit}
Let $\cP$ be a hyperbolic Coxeter polyhedron with set of faces $\Delta$. Let $\Delta' \subseteq \Delta$ be the set of non-idealizable faces of $\cP$. Let $\Gamma'$ be the group generated by reflections across the faces in $\Delta'$. Then any ideal, right-angled  $\cP$-polyhedron,
    that contains~$\cP$, contains the orbit of $\cP$ under $\Gamma'$. 
\end{prop}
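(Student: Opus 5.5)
The key claim is that an ideal, right-angled $\cP$-polyhedron $\cQ$ cannot have any face lying in a mirror that contains a $\Gamma$-translate of a non-idealizable face. Granting this, $\cQ$ is closed under the reflections in $\Delta'$ applied to the copies of $\cP$ it contains, and the proposition follows.

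To prove the claim, suppose $\cQ$ has a face $F_\cQ$ lying in a mirror $P$ of $\Gamma$. The face $F_\cQ$ is an ideal polygon in $P$. Each edge of $F_\cQ$ is an edge of $\cQ$, so it lies in another face of $\cQ$, hence in another mirror $P'$. Since $\cQ$ is right-angled, $P'$ meets $P$ orthogonally, so $P'\cap P$ is a mirror of $\Gamma_P$.

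Next, $F_\cQ$ is a union of faces of copies of $\cP$ lying in $P$. These are translates of faces from the equivalence class corresponding to $P$. I would check that each such translate lies in a single chamber of $\Gamma_P$, i.e. that no mirror of $\Gamma_P$ crosses its interior. This holds because a mirror orthogonal to $P$ meets $P$ only along lines that are unions of edges of the tiling of $P$; the interior of a copy of $\cP$ meets no mirror. Hence $F_\cQ$ is a convex, finite-area polygon bounded by mirrors of $\Gamma_P$ and tiled by chambers $\Delta_P$. In other words, it is an ideal $\Delta_P$-polygon, which would make $\Delta_P$ idealizable. So if the class of $P$ is non-idealizable, no face of $\cQ$ lies in $P$.

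Now let $\cQ\supseteq\cP$ be ideal and right-angled, and let $C$ be the set of copies of $\cP$ contained in $\cQ$. Take $g\cP\in C$ and a face $gF$ of $g\cP$ with $F\in\Delta'$. The face $gF$ is either interior to $\cQ$ or lies on $\partial\cQ$. If it lies on $\partial\cQ$, then, because it spans a two-dimensional region, the mirror through $gF$ contains a face of $\cQ$. That mirror belongs to the class of $F$, which is non-idealizable, contradicting the claim. So $gF$ is interior, and the adjacent copy $g\sigma_F\cP$ (where $\sigma_F$ is the reflection in $F$) also lies in $C$.

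Finally, I would induct on word length in the generators $\sigma_F$, $F\in\Delta'$, of $\Gamma'$. Suppose $h\cP\in C$ with $h\in\Gamma'$. The copies adjacent to $h\cP$ across its faces $hF$, $F\in\Delta'$, are $h\sigma_F\cP$. By the previous step these lie in $C$. Therefore $h\sigma_F\cP\in C$ for all such $F$, so $\Gamma'\cP\subseteq\cQ$, as the proposition claims.

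The main obstacle is the identification of $F_\cQ$ as a genuine $\Delta_P$-polygon. This requires that its boundary edges lie on mirrors of $\Gamma_P$, which is exactly where right-angledness is used. It also requires the local tiling compatibility: that faces of copies of $\cP$ in $P$ refine the $\Gamma_P$-chambers.
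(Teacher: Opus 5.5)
Your proposal is correct and follows essentially the same route as the paper. The paper's proof also rests on the fact that a face of $\cQ$ lying in a mirror $P$ would be an ideal $\Delta_P$-polygon; it states this in the discussion just before the proposition, and you justify it in more detail via right-angledness and the chamber structure of $\Gamma_P$. It then inducts on word length in the generators of $\Gamma'$, obtaining each new tile as the image of the previous one under the conjugated reflection $h\sigma_F h^{-1}$, which is exactly your step from $h\cP$ to $h\sigma_F\cP$.
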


\begin{proof}
Suppose $\cQ$ is such a polyhedron, and $F\in\Delta'$. Then $F$ cannot be part of an external face of $\cQ$, so $\cQ$ also contains the reflection of $\cP$ across $F$. 
We now show by induction
    that $\sigma_{i_1}\cdots \sigma_{i_k}(\cP) \subseteq \cQ$, where $\sigma_{i_1}, \ldots \sigma_{i_k}$ are reflections across faces of $\cP$ in $\Delta'$. Proceeding by induction on $k$, we may assume that $\sigma_{i_{k-1}}\cdots \sigma_{i_1}(\cP) \subseteq \cQ$. Then
$$\sigma_{i_1}\cdots \sigma_{i_k}(\cP) = \sigma_{i_1}\cdots \sigma_{i_{k-1}}\sigma_{i_k}\sigma_{i_{k-1}}\cdots\sigma_{i_1}(\sigma_{i_{1}}\cdots \sigma_{i_{k-1}}(\cP))$$
and $\sigma_{i_1}\cdots\sigma_{i_{k-1}}\sigma_{i_k}\sigma_{i_{k-1}}\cdots\sigma_{i_1}$ is a reflection across a face of $\sigma_{i_{1}}\cdots \sigma_{i_{k-1}}(\cP)$, corresponding to a face in $\Delta'$. Since this face cannot be part of an external face of $\cQ$, if $\sigma_{i_{1}}\cdots \sigma_{i_{k-1}}(\cP) \subseteq \cQ$, then $\sigma_{i_{1}}\cdots \sigma_{i_{k}}(\cP) \subseteq \cQ$. 
\end{proof}

\begin{corollary}
Suppose that the reflections across the
    non-idealizable faces of $\cP$ generate an infinite group. Then no finite-volume
    ideal, right-angled  $\cP$-polyhedron exists. \qed
\end{corollary}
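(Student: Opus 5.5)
This follows directly from \propref{BadFaceOrbit}. Suppose, for contradiction, that $\cQ$ is a finite-volume ideal, right-angled $\cP$-polyhedron. Since $\cQ$ is a finite union of $\Gamma$-translates of $\cP$, we may translate by an element of $\Gamma$ and assume that $\cQ$ contains $\cP$ itself as one of its tiles. The image of $\cQ$ is again an ideal, right-angled $\cP$-polyhedron, because $\Gamma$ preserves the reflective tiling. By \propref{BadFaceOrbit}, $\cQ$ then contains $\gamma(\cP)$ for every $\gamma \in \Gamma'$, where $\Gamma'$ is the group generated by reflections across the non-idealizable faces of $\cP$.

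Next we check that the translates $\gamma(\cP)$, for distinct $\gamma\in\Gamma'$, are distinct tiles. The group $\Gamma=\Gamma(\cP)$ acts simply transitively on the tiles of the reflective tiling; this is the standard fact that a Coxeter polyhedron is a strict fundamental domain for its reflection group, and it follows from the Poincar\'e polyhedron theorem. Since $\Gamma'\le\Gamma$, the map $\gamma\mapsto\gamma(\cP)$ is injective on $\Gamma'$. As $\Gamma'$ is infinite by hypothesis, $\cQ$ contains infinitely many tiles with pairwise disjoint interiors.

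Each tile has the same positive volume $\mathrm{vol}(\cP)$, which is finite since $\cP$ has finite volume. Hence $\mathrm{vol}(\cQ)=\infty$. This contradicts the assumption that $\cQ$ has finite volume, and equally contradicts the definition of a $\cP$-polyhedron as a finite union of tiles. So no such $\cQ$ exists.

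The only step needing care is the injectivity of $\gamma\mapsto\gamma(\cP)$, since that is what turns ``$\Gamma'$ infinite'' into ``infinitely many tiles.'' It is standard, so I expect no real obstacle.
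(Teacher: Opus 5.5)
Your proof is correct and is essentially the paper's argument. The paper simply observes that, by Proposition~\ref{BadFaceOrbit}, such a $\cQ$ would contain the union of the $\Gamma'$-translates of $\cP$, which has infinite volume; you make explicit the normalization $\cP\subseteq\cQ$ and the injectivity of $\gamma\mapsto\gamma(\cP)$ that the paper leaves implicit.
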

This is immediate since the union of the $\Gamma'$-translates of $\cP$ would have infinite volume. 

\begin{example} \label{Example7}
    In polyhedron \SW{7}, $\{1, 4\}$ and $\{2,6\}$ are equivalence classes of faces. Each of faces 1, 2, 4, and 6 is a pentagon with one ideal vertex. Together, faces 1 and 4 tile a fundamental domain which is an $(\infty, 2, 2, \infty, 2, 2)$ hyperbolic hexagon; faces 2 and 6 do the same. This hexagon cannot tile an ideal polygon by \thmref{IdealizablePolygons}. The reflections across faces 1, 2, 4, 6 generate an infinite group because 1 is parallel to 6 and 2 is parallel to 4. Thus no ideal, right-angled  $\cP$-polyhedron exists.
\end{example}


\begin{example} \label{Example5}
In polyhedron \SW{5}, $\{1, 2, 4, 5\}$ is an equivalence class of faces. Faces 1, 5 are compact quadrilaterals while faces 2, 4 are quadrilaterals with a single ideal vertex. Together, these faces tile a fundamental domain which is an $(\infty, 2, 2, \infty, 2, 2)$ hyperbolic hexagon. This hexagon cannot tile an ideal polygon. The reflections across faces 1, 2, 4, 5 generate an infinite group, because face 2 is parallel to face 4. Thus no ideal, right-angled  $\cP$-polyhedron exists.
\end{example}

\begin{corollary}
The polyhedra \SW{5}, \SW{7}, \SW{9}, \SW{18}, \SW{21}, and \SW{29}
 cannot tile ideal, right-angled polyhedra. \qed
\end{corollary}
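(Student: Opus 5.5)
The corollary follows from the Corollary to \propref{BadFaceOrbit}: for each listed polyhedron it suffices to find a set $\Delta'$ of non-idealizable faces whose reflections generate an infinite group. \SW{5} and \SW{7} were handled in Examples~\ref{Example5} and~\ref{Example7}. For \SW{9}, \SW{18}, \SW{21} and \SW{29} I would follow the same template, using the Coxeter diagrams and Gram matrices in \appref{ScharlauWalhornData}.

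First, partition the faces of $\cP$ into equivalence classes, joining two faces when they meet at an angle $\pi/n$ with $n$ odd. For each class, the mirror $P$ spanned by those faces is a union of $\Gamma$-translates of them. I would determine the chamber $\Delta_P$ of $\Gamma_P$, the group generated by reflections in mirrors orthogonal to $P$. Concretely, assemble the faces of the class around their odd-angle edges, as in the $(\infty,2,2,\infty,2,2)$ hexagons of the two examples. The boundary edges of the resulting polygon lie on mirrors; I would check which of these meet $P$ orthogonally, and read off the angles and ideal vertices of $\Delta_P$.

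Next, compare each $\Delta_P$ with the list in \thmref{IdealizablePolygons}. A polygon is non-idealizable if it is compact, or if it has too few ideal vertices relative to its finite corners, or if its finite corners are not arranged in one of the patterns $(n)$, $(n,2)$, $(2,n,2)$ consecutively. A typical example is a hexagon with two ideal vertices separated by pairs of right angles. Any face all of whose vertices are finite is automatically non-idealizable.

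Finally, verify that the reflections in the non-idealizable faces generate an infinite group. It suffices to exhibit two such faces that are parallel or ultraparallel, since then the two reflections generate an infinite dihedral group. Alternatively, one can show that the Coxeter subdiagram on $\Delta'$ is not spherical. This step is a direct reading of the Gram matrix, where an entry $\le -1$ signals a parallel or ultraparallel pair.

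The main obstacle is the middle step: correctly identifying $\Delta_P$. $P$ may meet mirrors orthogonally that are not faces of $\cP$ but $\Gamma$-images of faces, as happened with $\sigma_4(F_1)$ in Example~\ref{Example14}. To handle this, I would first compute the angles at each vertex of the faces in the class, using the vertex links (spherical or Euclidean triangles). I would then use the affine or spherical vertex groups to decide whether an edge through a corner is orthogonal to $P$. This determines whether a corner of the glued polygon is a genuine corner of $\Delta_P$ or is subdivided further. Once the $\Delta_P$ are correct, the comparison with \thmref{IdealizablePolygons} and the infinitude check are routine.
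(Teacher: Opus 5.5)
Your proposal is the paper's own argument: the corollary is the Corollary to Proposition~\ref{BadFaceOrbit} applied case by case, with the non-idealizable classes recorded in Appendix~\ref{ArgumentData}. These are $\{1,2,4,5,7\}$ (O) for \SW{9}; $\{6\}$ (C) and $\{1,4\}$ (O) for \SW{18}; $\{6\}$ (C), $\{2\}$ (P), $\{8\}$ (P) and $\{1,4,7\}$ (O) for \SW{21}; and $\{1,7\}$ (C), $\{6\}$ (C), $\{8\}$ (C) and $\{4\}$ (P) for \SW{29}, so what you leave undone is only that bookkeeping.

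One correction: idealizability is a property of an equivalence class, not of a single face. A compact face is therefore automatically non-idealizable only when its whole class consists of compact faces; for instance, faces 1 and 5 of \SW{5} are compact but lie in the class $\{1,2,4,5\}$, whose polygon has ideal vertices. Every compact class the paper actually uses here is of the all-compact kind, so nothing breaks.
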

In \appref{ArgumentData}, we list the non-idealizable equivalence classes of faces for each of these.

The next two propositions show that
this argument is still useful when $\Gamma'$ is finite.  We write $\cP'$ for the 
union of the $\Gamma'$-images of $\cP$.

\begin{prop} \label{FiniteBadFaceOrbit1} Let $\Delta'$ be a 
    union of equivalence classes of
    faces of $\cP$, and  let $\Gamma'$ be the group generated by the reflections across them.
    Then $\cP'=\cup_{\gamma\in\Gamma'}\,\gamma(\cP)$  is 
    Coxeter polyhedron, possibly of
    infinite volume.  If $\Gamma'$ is finite, then $\cP'$ has finite volume and finitely many faces.
\end{prop}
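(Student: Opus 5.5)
The plan is to show that $\cP'$ is an intersection of half-spaces bounded by mirrors of $\Gamma$, and that its dihedral angles are angles already present in the tiling by $\cP$. The key observation is that reflections across faces in $\Delta'$ permute the mirrors belonging to the classes in $\Delta'$. Take any mirror $H$ of $\Gamma$ containing a $\Gamma$-translate of a face of $\cP$ from an equivalence class \emph{not} in $\Delta'$. Such an $H$ is never crossed when passing from one $\Gamma'$-translate of $\cP$ to an adjacent one across a wall in the $\Gamma'$-orbit of $\Delta'$ faces. The reason is that every wall crossed is a mirror whose faces come from classes in $\Delta'$, since $\Gamma'$ maps $\Delta'$-type face translates to $\Delta'$-type face translates. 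Because each mirror is tiled by translates of faces from a single equivalence class, a mirror of a non-$\Delta'$ class never coincides with a crossed wall.

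The first step is to identify $\cP'$ with the $\Gamma'$-orbit of the chamber. The group $\Gamma'$ is a reflection subgroup of $\Gamma$, so its mirrors cut $\bH^3$ into chambers, and the chamber containing $\cP$ is a union of $\Gamma$-tiles. I claim this chamber is exactly $\cP$ union nothing more in the $\Gamma'$ direction. More precisely, $\cP'$ is the union of the tiles reachable from $\cP$ by crossing only walls that are $\Gamma'$-translates of $\Delta'$ faces.

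Equivalently, I will argue the complement picture directly. Consider the faces of the translates $\gamma(\cP)$, $\gamma\in\Gamma'$, that are $\gamma$-images of faces in $\Delta\setminus\Delta'$. Their supporting planes are mirrors $H$ of non-$\Delta'$ type. Then $\cP'$ is contained in the intersection $C$ of the half-spaces bounded by these planes and containing $\cP'$. Conversely, any tile in $C$ adjacent to $\cP'$ across a $\Delta'$-type wall is a $\Gamma'$-translate of $\cP$. This step is a standard connectivity argument: $\cP'$ is connected and closed under crossing $\Delta'$-walls, so its boundary consists only of non-$\Delta'$ faces. For convexity, I use the fact that the mirrors through a non-$\Delta'$ face of $\gamma(\cP)$ do not meet the interior of $\cP'$. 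If such a mirror $H$ cut $\cP'$, some tile of $\cP'$ would have a face lying in $H$ that is interior to $\cP'$. That face would be crossed by a $\Delta'$-wall, which contradicts the single-class property of mirrors. Thus $\cP'$ is an intersection of half-spaces, hence convex, with faces lying on non-$\Delta'$ mirrors.

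The next step is to check the Coxeter condition. At an edge of $\cP'$, two faces of $\cP'$ meet, and near a point of that edge $\cP'$ is a union of tiles around the edge. The tiling around the edge is the dihedral tiling with angle $\pi/m$ from the corresponding edge of $\cP$. The tiles belonging to $\cP'$ form a sector between two non-$\Delta'$ mirrors, with only $\Delta'$ mirrors inside it. Since $\cP'$ is convex, this sector is a wedge of angle $k\pi/m$ bounded by mirrors of a finite dihedral reflection group. Its bounding mirrors, together with the intermediate reflections, generate a dihedral group in which the wedge is a union of chambers. I expect this to be the main obstacle: showing the angle is actually $\pi/j$, rather than an arbitrary multiple $k\pi/m$. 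My approach is that the reflections across the two bounding faces lie in $\Gamma$ and preserve the set of non-$\Delta'$ mirrors through the edge. The bounding mirrors are consecutive among those mirrors, so they are consecutive mirrors of the dihedral subgroup they generate, and hence the angle is $\pi/j$. Alternatively, observe directly that each reflection across a wall of $\cP'$ maps $\cP'$ to a neighboring chamber. This makes $\cP'$ a fundamental chamber of the reflection group generated by its walls, which forces Coxeter angles.

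Finally, suppose $\Gamma'$ is finite. Then $\cP'$ is a finite union of finite-volume polyhedra, so it has finite volume. Its faces lie among the finitely many planes $\gamma(F)$ with $\gamma\in\Gamma'$ and $F\in\Delta\setminus\Delta'$, so there are finitely many of them.
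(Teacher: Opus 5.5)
Your proof is correct. Most of it tracks the paper's argument:

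\begin{itemize}
\item you describe $\cP'$ as the set of tiles reachable from $\cP$ by crossing $\Delta'$-walls;
\item you use the single-class property of mirrors to show that no non-$\Delta'$ mirror separates two tiles of $\cP'$;
\item you identify $\cP'$ with the intersection of the corresponding half-spaces, where your boundary/connectivity argument does the job of the paper's segment argument;
\item you handle the finite case the same way.
\end{itemize}

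The genuine difference is the Coxeter-angle step. The paper sorts edges of $\cP$ by type:
\begin{itemize}
\item an edge between two $\Delta'$-faces becomes interior to $\cP'$;
\item an edge between two non-$\Delta'$-faces keeps its angle;
\item a mixed edge has angle $\pi/m$ with $m$ even, since odd $m$ would make the two faces equivalent. Writing $m=2n$, exactly two tiles fill the wedge, so the angle becomes $\pi/n$, and when $n=1$ the edge disappears into a face.
\end{itemize}

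You argue differently. The reflections in the two bounding walls preserve the set of non-$\Delta'$ mirrors through the edge, so every mirror of the dihedral group they generate lies in that set. Since the bounding walls are consecutive in that set, no such mirror enters the wedge, and this forces the angle to be $\pi/j$.

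Your argument never uses the parity of mixed angles and would work for any $\Gamma$-invariant family of mirrors, but it only certifies that the angles are Coxeter. The paper's case analysis is shorter and avoids what you called the main obstacle. It also computes the new angles explicitly, and that is what the paper uses later, e.g.\ for the $\pi/3$ edge in Example~\ref{Example8}.

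Two small points. First, the chamber of $\Gamma'$ containing $\cP$ in your second paragraph is not $\cP'$. It is cut out only by the planes of the $\Delta'$-faces of $\cP$ and is usually much larger, so drop that sentence in favour of the reachability description you give right after it. Second, your ``alternatively'' remark would need a separate proof that the reflected copies of $\cP'$ do not overlap, so rely on the dihedral argument instead.
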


\begin{proof}
The region $\cP'$ is a union of tiles given by translates of $\cP$. Each tile has the form $\sigma_{i_1}\cdots \sigma_{i_k}(\cP)$, where $F_{i_1}, \ldots F_{i_k}$ are faces in $\Delta'$. The sequence of tiles
\begin{equation*}
\cP, \  \sigma_{i_1}(\cP), \  \sigma_{i_1}\sigma_{i_2}(\cP), \ldots, \  \sigma_{i_1}\sigma_{i_2}\cdots \sigma_{i_k}(\cP)
\end{equation*} 
are adjacent along translates of faces in $\Delta'$. Conversely, the word $\sigma_{i_1}\sigma_{i_2}\cdots \sigma_{i_k}\in \Gamma'$ can be reconstructed from a sequence of tiles starting at $\cP$ and adjacent along translates of faces in $\Delta'$. So a given tile is in $\cP'$ if and only if such a sequence exists. 

Consider a face of the form $\gamma'(F)$ where $\gamma' \in \Gamma'$ and $F$ is a face of $\cP$ which is not in $\Delta'$. Let $P$ be the hyperbolic plane through $\gamma'(F)$. Because the faces not in $\Delta'$ meet faces in $\Delta'$ at angles of $\pi/2n$, $P$ must be tiled exclusively by translates of faces not in $\Delta'$. Since any tile in $\cP'$ is connected to $\cP$ via a sequence of tiles adjacent along translates of faces in $\Delta'$, this implies that we cannot have tiles on both sides of $P$. Thus $\gamma'(F)$ is an exterior face of $\cP'$, and $\cP'$ lies entirely on one side of the plane $P$ through the face. 

Now we will show that $\cP'$ is precisely the convex hyperbolic region bounded by all planes $P$ through faces $\gamma'(F)$ where $\gamma' \in \Gamma'$ and $F \notin \Delta'$. Suppose that $\gamma(\cP)$ for $\gamma\in \Gamma$ is an arbitrary tile, and suppose that for all the planes $P$, $\gamma(\cP)$ and $\cP$ lie on the same side of $P$. If we draw a hyperbolic line segment connecting a point in $\cP$ to a point in $\gamma(\cP)$, the first wall that it crosses must be in $\Delta'$, so that it enters a new tile in $\cP'$. But we may repeat the argument inductively to show that every wall this segment crosses is a $\Gamma'$-translate of a wall in $\Delta'$, and every tile it enters is in $\cP'$. We conclude that $\gamma(\cP) \in \cP'$. 

It remains to show that each angle between adjacent faces is $\pi/n$ for some $n\in \mathbb{N}$. Any edge of $\cP'$ is the translate of an edge of $\cP$. An edge of $\cP$ between two faces in $\Delta'$ becomes interior to $\cP'$. An edge between two faces not in $\Delta'$ becomes an edge between two exterior faces of $\cP'$, with the same angle. Finally, an edge between a face in $\Delta'$ and a face not in $\Delta'$ becomes an edge of $\cP'$ where two tiles are glued together along the $\Delta'$ face. Since the angle in $\cP$ between a $\Delta'$ face and a non-$\Delta'$ face is $\pi/2n$, the angle in $\cP'$ is $\pi/n$. Or if $n=1$, this edge in $\cP$ becomes interior to a face of $\cP'$.

Finally, if $\Gamma'$ is finite, then $\cP'$ is a finite union of tiles, so it has finite volume and finitely many faces (and conversely).
\end{proof}

\begin{prop} \label{FiniteBadFaceOrbit2}
    Let $\Delta'$ be a union of equivalence classes of faces of~$\cP$, all non-idealizable, and
    let $\Gamma'$ and $\cP'$ be as in \propref{FiniteBadFaceOrbit1}. If $\Gamma'$ is finite, then every ideal $\cP$-polyhedron is a $\cP'$-polyhedron. 
\end{prop}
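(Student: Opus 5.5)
We must show that every ideal $\cP$-polyhedron $\cQ$ is a union of $\Gamma$-translates of $\cP'$, and that these translates are themselves tiles of the reflective tiling by the Coxeter polyhedron $\cP'$. By \propref{FiniteBadFaceOrbit1}, $\cP'$ is a finite-volume Coxeter polyhedron whose faces are $\Gamma'$-translates of faces of $\cP$ not in $\Delta'$. So $\Gamma(\cP')$ is the subgroup of $\Gamma$ generated by reflections in those faces. Moreover, $\Gamma = \Gamma(\cP')\rtimes\Gamma'$ in the usual way, since $\Gamma'$ is the stabilizer of $\cP'$ permuting its copies of $\cP$. Hence the $\Gamma$-translates of $\cP'$ are exactly the tiles $\gamma(\cP')$ of the reflective tiling by $\cP'$. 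These tiles have disjoint interiors and each is a union of $|\Gamma'|$ tiles $\gamma\gamma'(\cP)$. Concretely, $\gamma(\cP')$ is the union of the $\gamma\Gamma'\gamma^{-1}$-orbit of $\gamma(\cP)$.

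The key step is the following claim, an adaptation of \propref{BadFaceOrbit}: if a tile $\gamma(\cP)$ lies in $\cQ$, then every tile of $\gamma(\cP')$ lies in $\cQ$. The argument mirrors that proof. Suppose $\gamma\sigma_{i_1}\cdots\sigma_{i_{k-1}}(\cP)\subseteq\cQ$. Its face corresponding to $F_{i_k}\in\Delta'$ lies in a mirror $P$ whose faces come from the equivalence class of $F_{i_k}$, which is non-idealizable. If this face were on the boundary of $\cQ$, then $P\cap\cQ$ would be a face of $\cQ$. Since $\cQ$ is ideal, that face is an ideal polygon.

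Here is the main subtlety. In \propref{BadFaceOrbit} the face of $\cQ$ is an ideal $\Delta_P$-polygon because $\cQ$ is right-angled, so its faces' edges lie in mirrors orthogonal to $P$. The present statement only assumes $\cQ$ ideal. I would argue instead that the face $P\cap\cQ$ is a convex ideal polygon tiled by copies of faces from that equivalence class. Its edges lie in mirrors meeting $P$, so it is a union of chambers of the reflection group on $P$ generated by mirrors meeting $P$ at angles $\pi/2m$. I would then check that Felikson's classification, \thmref{IdealizablePolygons}, as applied via $\Delta_P$, still excludes this. If that fails, I would read "ideal" as "ideal, right-angled", consistent with the section's context, and use the orthogonality directly. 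Either way, the face of $\cQ$ is a non-idealizable configuration, which is a contradiction. So the face is interior to $\cQ$, and the adjacent tile $\gamma\sigma_{i_1}\cdots\sigma_{i_k}(\cP)$ also lies in $\cQ$. Induction on $k$ covers all of $\gamma(\cP')$.

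With the claim in hand, $\cQ$ is a union of $\Gamma$-translates of $\cP'$. These are tiles of the $\cP'$-tiling, there are finitely many of them since $\cQ$ has finite volume, and $\cQ$ is convex. So $\cQ$ is a $\cP'$-polyhedron. I expect the main obstacle to be justifying that a boundary face of $\cQ$ in such a mirror would be an ideal $\Delta_P$-polygon, that is, that its edges come from mirrors orthogonal to $P$.
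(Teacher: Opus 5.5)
Your argument is essentially the paper's. Both use Proposition~\ref{BadFaceOrbit} to show that whenever a chamber $\gamma(\cP)$ lies in $\cQ$, the whole translate $\gamma(\cP')$ does too, and then identify these translates with tiles of the reflective tiling by $\cP'$. You do the identification globally, via the semidirect decomposition $\Gamma=\Gamma(\cP')\cdot\Gamma'$; the paper grows $\cQ$ outward from $\cP'$ by reflecting across faces of $\cP'$.

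Your fallback reading ("ideal" as "ideal, right-angled") is the right one. The paper's proof also rests on Proposition~\ref{BadFaceOrbit}, so it tacitly assumes $\cQ$ is right-angled, and that is the only setting in which the paper applies the result. Your first option would not go through as sketched. Without right angles, an edge of $P\cap\cQ$ can lie on a line where $P$ meets a mirror at angle $k\pi/n$ with $n$ odd. Such a line is not a wall of $\Gamma_P$, so $P\cap\cQ$ need not be a $\Delta_P$-polygon, and Theorem~\ref{IdealizablePolygons} does not apply.
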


\begin{proof}
Let $\cQ$ be an ideal $\cP$-polyhedron. Assuming that $\cP \subseteq \cQ$, as in \propref{BadFaceOrbit}, we have that $\cP' \subseteq \cQ$. If $\cQ$ is larger than this, then it must contain an additional copy of $\cP$ which is adjacent to $\cP'$ along a face $F$ of $\cP'$. Again by \propref{BadFaceOrbit}, $\cQ$ contains the orbit of this copy of $\cP$ under the group generated by its copies of the 
    faces in $\Delta'$. This orbit is the reflection of $\cP'$ across $F$. Repeating this process, we find that $\cQ$ is tiled by translates of $\cP'$, using the group generated by reflections across the faces of $\cP'$, so it is a $\cP'$-polyhedron.
\end{proof}

We conclude this section with six examples which use the tools introduced in this section alongside tools from previous sections. The first three combine \propref{FiniteBadFaceOrbit2} with propositions from \secref{BadPairs}.

\begin{example} \label{Example8}
    In the polyhedron $\cP={}$\SW{8}, faces 1 and 5 meet their neighbors in angles of $\pi/2n$, so each is its own equivalence class. These faces are compact and hence non-idealizable. Thus any $\cP$-polyhedron is in fact a $\cP'$-polyhedron, where $\Gamma'$ is the group generated by reflections $\sigma_1$ and $\sigma_5$, isomorphic to the Klein 4-group. 
    Among the faces of $\cP'$ are $F_4\cup\sigma_5(F_4)$ and
    its image under $\sigma_1$. Write $E_4$ for the edge along which they meet, 
    where they make an angle $\pi/3$ (twice the angle between $F_1$ and~$F_4$).
    Applying the diagram automorphism
    $1\leftrightarrow5$, $2\leftrightarrow6$,
    $3\leftrightarrow4$ of~$\cP$, we obtain an edge $E_3$ with dihedral angle~$\pi/3$,
    which lies in the faces $F_3\cup\sigma_1(F_3)$ and 
    $\sigma_5(F_3\cup\sigma_1(F_3))$ of~$\cP'$.
    Neither face containing $E_4$ meets either face
    containing $E_3$,  
    so $E_3$ and $E_4$ 
    form a bad edge pair. By \propref{BadEdgePair}, there is no 
    ideal, right-angled $\cP'$-polyhedron.  So there is no
    ideal, right-angled  $\cP$-polyhedron either.
\end{example}

\begin{example} \label{Example11}
    In the polyhedron $\cP={}$\SW{11}, there is one equivalence class of compact faces, namely \{1, 5\}. The group $\Gamma'$ generated by their reflections has six elements.  Among the faces of $\cP'$ are $F_2\cup\sigma_1(F_2)$ and its other two images under
    $\langle{\sigma_1,\sigma_5}\rangle$.  These three faces
    meet at a finite vertex~$V_{125}$, so named
    because it is 
    the vertex $F_1\cap F_2\cap F_5$ of~$\cP$.  Applying the diagram automorphism
    $1\leftrightarrow5$, $2\leftrightarrow4$, $3\leftrightarrow6$ of~$\cP$ gives another
    finite vertex~$V_{145}$ of~$\cP'$.  No face of~$\cP'$ containing
    $V_{125}$ meets any face of $\cP'$ containing $V_{145}$.  (The key to this is that
    faces $2$ and $4$ of~$\cP$ do not meet.)
    Therefore these vertices form a bad pair of vertices of~$\cP'$.
    By \propref{BadVertexPair}, 
    there is no ideal, right-angled $\cP'$-polyhedron.  
    So there is no
    ideal, right-angled  $\cP$-polyhedron either.
%
\end{example}

\begin{example} \label{Example19}
    In polyhedron $\cP={}$\SW{19}, there are two equivalence classes of 
    compact faces, namely \{1, 7\} and \{6\}. 
    The group $\Gamma'$ generated by their reflections  has 
    order twelve.   The union of the $\langle\sigma_1,\sigma_6,\sigma_7\rangle$-images
    of~$F_4$ equals the union of six faces of~$\cP'$.  One of these is
    $F_4\cup\sigma_7(F_4)$.  Two more are its images under
    $\langle\sigma_1,\sigma_7\rangle$, and we will see the others soon.  
    These three faces contain 
    the finite vertex $V_{147}=F_1\cap F_4\cap F_7$ of~$\cP$, which is also
    a vertex of~$\cP'$.  They are also ultraparallel to the plane~$P$
    containing $F_6$.  (For the face containing $F_4$ this is obvious; 
    for the others it follows by symmetry under $\langle\sigma_1,\sigma_7\rangle$,
    which fixes~$P$.)  It follows that $P$ separates the union of these three faces
    from the union of their $\sigma_6$-images.  Therefore $V_{147}$ and $\sigma_6(V_{147})$
    form a bad pair of vertices of~$\cP'$,
    and we can argue as in the previous example.  
\end{example}

The next example uses a more subtle combination of ideas from this section and ideas from \secref{BadPairs}.

\begin{example} \label{Example10}
    In polyhedron $\cP={}$\SW{10}, face $6$ is compact and meets its neighbors at angles of $\pi/2n$, so it is its own equivalence class. $\cQ=\cP \cup \sigma_6(\cP)$ is a Coxeter polyhedron with diagram 
\begin{center}
\begin{tikzpicture}
\coordinate (1) at (0,0);
\coordinate (2) at (0,1);
\coordinate (3) at (-2,-1);
\coordinate (4) at (1,0);
\coordinate (5) at (-1,0);
\coordinate (3') at (-2, 1);
\coordinate (2') at (0, -1);

\draw[thick] (4) -- (1);
\draw[thick, line width=2pt] (4) -- (2);
\draw[thick, line width=2pt] (4) -- (2');
\draw[dashed] (5) -- (1);
\draw[thick, line width=2pt] (5) -- (3);
\draw[thick, line width=2pt] (5) -- (3');
\draw[dashed] (3) -- (3');
\draw[dashed] (2) -- (3');
\draw[dashed] (2') -- (3);

\filldraw[fill=white] (1) circle (2pt) node[above] {\scriptsize 1};
\filldraw[fill=white] (2) circle (2pt) node[right] {\scriptsize 2};
\filldraw[fill=white] (2') circle (2pt) node[right] {\scriptsize$2'$};
\filldraw[fill=white] (3) circle (2pt) node[left] {\scriptsize 3};
\filldraw[fill=white] (3') circle (2pt) node[left] {\scriptsize$3'$};
\filldraw[fill=white] (4) circle (2pt) node[right] {\scriptsize 4};
\filldraw[fill=white] (5) circle (2pt) node[above] {\scriptsize 5};
\end{tikzpicture}
\end{center}
\noindent By \propref{FiniteBadFaceOrbit2}, every
    ideal, right-angled  $\cP$-polyhedron is also a $\cQ$-polyhedron.
    We claim that no ideal, right-angled $\cQ$-polyhedron~$\cR$ exists; suppose otherwise.
    Each face of~$\cQ$, other than $1$ and~$4$, forms its own equivalence class and meets
    all of its neighboring faces at right angles.  We repeatedly cut $\cR$ along
    planes of these types, as in \propref{GluingPolyhedra}, decomposing
    $\cR$ into right-angled $\cQ$-polyhedra (``blocks'') which are obtained by gluing
    copies of~$\cQ$ along faces of types $1$ and~$4$.
    Because blocks are right-angled, and
    the dihedral angle along the $1$-$4$ edge is~$\pi/3$, each block consists of
    either~$3$ or~$6$ copies of~$\cQ$, gathered around such an edge.  In particular,
    each block contains two copies of~$\cQ$ that are separated by a face of type~$4$.

    We claim that each block has
    a pair of bad compact edges, in the sense of \propref{BadCompactEdgePair}.  To see this, first
    observe
    that the $2$-$2'$ edge~$E$ of~$\cQ$ is compact and orthogonal to  faces $1$ and~$5$.
    So $E$ (or rather its extension to an edge of the block) is a candidate for one
    edge in a pair of bad compact edges.  We apply this observation in two copies
    of $\cQ$ in the block, separated by a face of type~$4$.  
    It is not possible to choose a face of the block that contains~$E$, and
    one that contains~$\sigma_4(E)$, that share an edge.  This is because
    the faces  of~$\cQ$ that contain~$E$ (namely $2$ and~$2'$) are parallel to face~$4$.
    Therefore $E$ and $\sigma_4(E)$ form a pair
    of bad compact edges of the block.  
    Therefore by \rmkref{BadCompactEdgePairRemark}, $\cR$ 
    has a pair of bad compact edges, contrary to the assumption that it is ideal. 
\end{example}

The last two examples in this section do not directly use the polyhedron $\cP'$, but they use the idea of minimal ideal polygons tiled by faces of $\cP$ alongside Euler characteristic arguments.

\begin{example} \label{Example16}
    In  polyhedron $\cP={}$\SW{16}, all angles are of the form $\pi/2n$ so each face forms its own equivalence class. Faces 1 and 5 are compact, so they cannot tile ideal polygons,
    so they cannot occur in the boundary of any $\cP$-polyhedron. 
    Faces 3 and 4 are quadrilaterals. Face 2 is a $(2, 6, \infty)$ hyperbolic triangle. By \thmref{IdealizablePolygons}, the minimal ideal polygon tiled by this triangle is a quadrilateral. Thus every ideal, right-angled  $\cP$-polyhedron lacks triangular faces. 
    By \propref{AllQuadEuler}, no such polyhedron can exist. 
\end{example}


\begin{example} \label{Example3}
    For polyhedron $\cP={}$\SW{3}, we claim that no ideal, right-angled $\cP$-polyhedron $\cQ$
    exists; suppose otherwise.
    The equivalence classes of faces of~$\cP$ are \{1, 4\}, \{2, 5\}, \{3\}, and \{6\}. 
    Writing $P$ for a plane of type~$\{1,4\}$, the fundamental polygon for $\Gamma_P$
    is a $(2, 3, 2, \infty)$ quadrilateral. By \thmref{IdealizablePolygons}, 
    the minimal ideal polygon tiled by this quadrilateral is a hexagon; see
    \figref{MinIdealPolygons}.  Therefore, 
     every face of~$\cQ$, that lies in a plane of type~$\{1,4\}$,
    has at least $6$ sides.  By the diagram automorphism of~$\cP$, the same holds
    for faces of~$\cQ$ that lie in planes of type~$\{2,5\}$.

    At the ideal vertex of~$\cP$, 
    faces 2 and 4 are parallel.  It follows that at every ideal vertex~$V$
    of~$\cQ$,
    there are two parallel faces of~$\cQ$, each of which lies in a plane
    of type $\{1,4\}$ or type $\{2,5\}$.  By the previous paragraph, these
    faces have at least six sides each.  The other faces incident to~$V$ have at least~$3$
    sides each.
    By Equation \eqref{VertexLocalEC}, the local Euler characteristic at~$V$ is
    nonpositive.  Summing over vertices shows that
    the total Euler characteristic is not $2$, which is 
    a contradiction.
\end{example}

We have now ruled out 43 of the 49 Scharlau-Walhorn polyhedra. Since four admit ideal, right-angled $\cP$-polyhedra, we have two to go. 

\section{Combinatorial Arguments} \label{Combinatorics}

In this section, we rule out the polyhedra $\cP={}$\SW{1}, \SW{6}. This is difficult because they are so simple (see \figref{HardPolyhedra}). We use all our results on potential vertices and faces of ideal, right-angled  $\cP$-polyhedra to reformulate  \qref{PolyhedronQuestion} combinatorially: the existence of such a $\cP$-polyhedron implies the existence of a
certain kind of combinatorial tiling of $S^2$. We prove these tilings do not exist, so
neither do the polyhedra.

\begin{figure}[ht] 
\centering
\, \hfill \includegraphics[width=.4\textwidth]{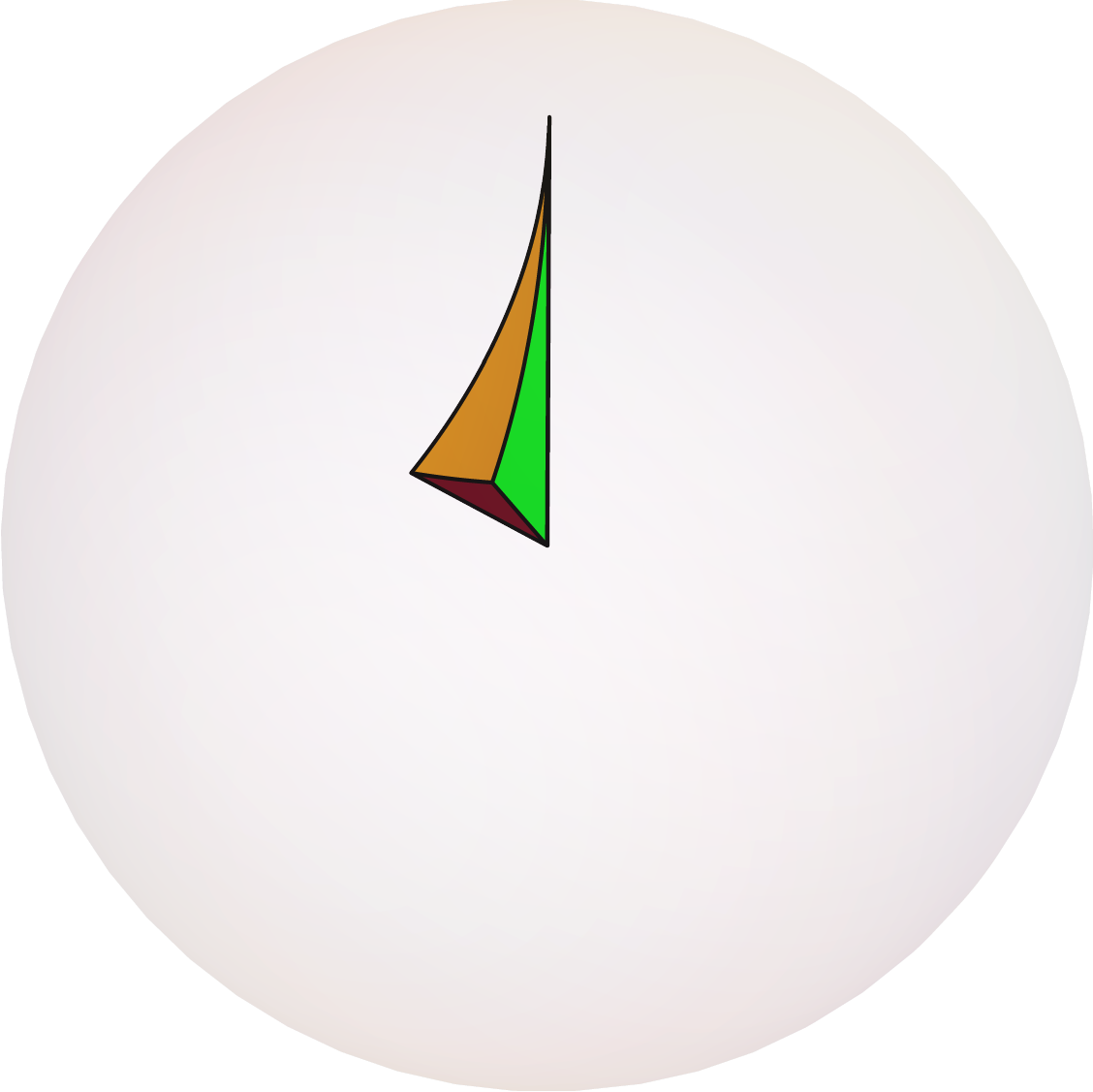} \hfill
\includegraphics[width=.4\textwidth]{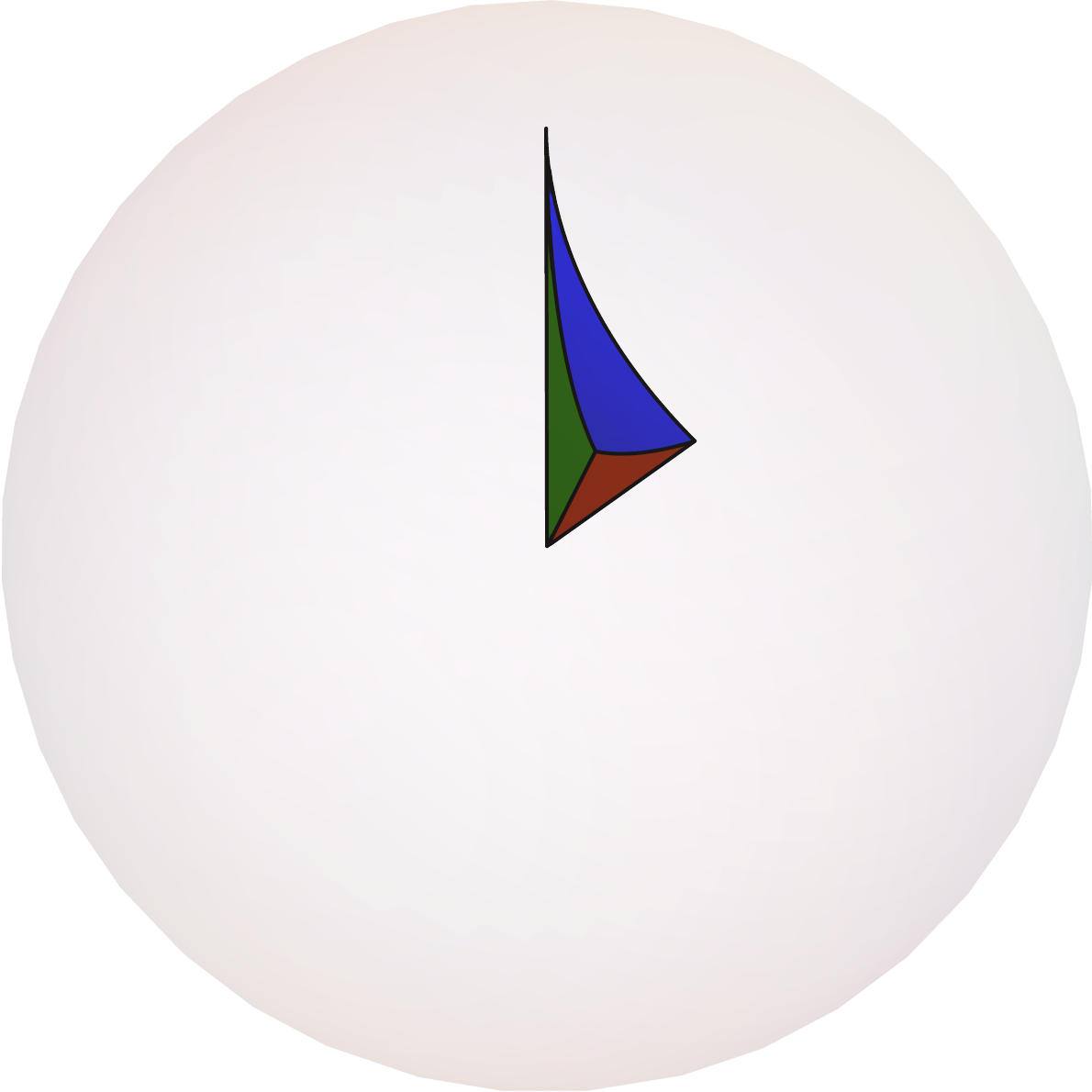} \hfill \,
\caption{From left to right: polyhedra \SW{1} and \SW{6}.}
\label{HardPolyhedra} 
\end{figure}

\begin{prop} \label{SW6Combinatorics}
    The polyhedron $\cP={}$\SW{6} does not admit an ideal, right-angled  $\cP$-polyhedron.
\end{prop}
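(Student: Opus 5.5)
Suppose $\cQ$ is an ideal, right-angled $\cP$-polyhedron with $\cP={}$\SW{6}. The plan is to turn the local constraints of Sections~\ref{PotentialVertices} and~\ref{PotentialFaces} into a combinatorial structure on the boundary sphere of~$\cQ$, and then reach a contradiction by a counting argument. The first step is to read off, from the Coxeter diagram of \SW{6} in \appref{ScharlauWalhornData}, three pieces of data. These are the equivalence classes of faces (chains joined by odd angles). Next comes the polygon $\Delta_P$ for each class, together with its minimal ideal polygons as classified by \thmref{IdealizablePolygons} and \propref{MinimalPolygons}. Last is the cusp cross-section type at each ideal vertex of~$\cP$, together with its minimal rectangle from \propref{IdealVertices}. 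Any non-idealizable class is handled by \propref{FiniteBadFaceOrbit2}: we replace $\cP$ by $\cP'$ and redo this bookkeeping, using \propref{FiniteBadFaceOrbit1} to see that $\cP'$ is again Coxeter. The outcome is a short list of face types of~$\cQ$. Each is labelled by its plane type and tiled by minimal ideal polygons, so its number of sides is constrained, for example to a particular residue or to at least some bound.

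Next we encode $\cQ$ combinatorially. Each ideal vertex $V$ of $\cQ$ has four faces, forming two parallel pairs. By \propref{IdealVertices} the cusp rectangle is tiled by copies of the minimal rectangle. By \corref{RectangleFact}, opposite faces at $V$ contain equally many copies of $\cP$-faces incident to~$V$, so each vertex carries a ``width'' in each of its two directions. Inside a face $F$ of $\cQ$, these counts at successive vertices are tied to how the minimal polygons of \propref{MinimalPolygons} subdivide~$F$. For instance, a vertex of the minimal polygon coming from the $\pi/n$ corner carries a fixed multiplicity of copies. This yields a labelled quadrangulation-type structure on $S^2$: the dual of the $1$-skeleton of $\cQ$ is a $4$-valent planar map, whose vertex labels and face labels must satisfy the local rules just listed.

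The final step is to show that no such labelled map exists. I would first try Euler characteristic, as in Example~\ref{Example3}. Let $t$ be the number of triangular faces of $\cQ$. Since $\sum_F \chi_F = 2$ by \eqref{FaceLocalEC}, we get $t/4 \ge 2$, so triangles must exist, and only certain plane types can supply them. I would then use \corref{RectangleFact} to propagate constraints: a triangular face of a given type forces the face opposite it at each of its vertices to have a matching count. I would aim to show that this forces every triangle to be opposite a face with at least $6$ sides, or to be surrounded in a way that makes $\chi_V \le 0$ at each of its vertices, using \eqref{VertexLocalEC}. A discharging argument then moves the positive curvature of the triangles onto neighbouring large faces, and the total stays below~$2$.

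I expect this last step to be the main obstacle. Purely local curvature counting probably does not suffice, since the local rules presumably allow isolated triangles. The likely difficulty is a global parity or consistency argument. One option is to follow the "strips" of parallel faces through the vertices, each strip carrying a constant count by \corref{RectangleFact}, and show that a strip through a triangle cannot close up consistently. I would start by trying such a strip or charge argument on small cases before generalizing.
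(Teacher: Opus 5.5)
Your preliminary steps are the right framework and agree with the paper's. Face~1 is compact with angles $\pi/2$ and $\pi/4$, so one passes to $\cP'=\cP\cup\sigma_1(\cP)$. Its classes $\{2,4,4'\}$ and $\{3\}$ make every face of $\cQ$ a union of minimal ideal quadrilaterals (``blue'') or minimal ideal triangles (``yellow''), and the colors alternate yellow, blue, yellow, blue around each vertex.

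But the proposal stops where the proof has to happen, and the tools you name for the last step cannot finish it. Euler characteristic and discharging only see face sizes, and that data is consistent. The cuboctahedron, with its triangles yellow and its squares blue, satisfies all of it:
opposite faces have the same color at every vertex;
yellow faces have at least $3$ sides;
blue faces have an even number $\ge 4$ of sides;
and $\sum_F\chi_F=8\cdot\tfrac14=2$.
The counts of Corollary~\ref{RectangleFact} can also be balanced on it. Every vertex of a blue tile has count $2$, a yellow tile has counts $(2,1,1)$, and choosing where the yellow $2$'s go amounts to choosing a perfect matching of the cube. So no argument that uses only face sizes and vertex widths can reach a contradiction, whether by discharging or by following strips.

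The missing idea is edge data. Each side of a minimal tile is either \emph{short} (a copy of the $2$-$3$ edge of $\cP'$) or \emph{long} (a union of two copies of the $3$-$4$ or $3$-$4'$ edge). This is a property of the geodesic itself, so long sides are glued to long sides. The yellow triangle has one long and two short sides. The blue quadrilateral has two opposite long sides and two opposite short sides. This already excludes the cuboctahedron: its yellow faces supply $8$ long sides and its blue faces supply $12$, yet every edge is yellow--blue.

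In general the paper argues topologically.
\begin{enumerate}
\item Choosing a blue tile in each of the two blue sectors shows that every vertex lies on at least two long edges, so the long edges contain a cycle.
\item Take such a cycle bounding a minimal disc $D$. Then $D$ has no interior vertices and no interior long edges.
\item Hence the short edges in $D$ are non-crossing chords with endpoints on $\partial D$.
\item An innermost chord cuts off a single tile with only one short side. This contradicts the fact that every tile has exactly two.
\end{enumerate}
Your plan never introduces the long/short distinction, so it contains neither this argument nor a substitute for it.
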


\begin{proof}
Face 1 of $\cP$ is compact and meets its neighbors at angles of $\pi/2$ and $\pi/4$, so it is not idealizable. The other faces are. Thus by  \propref{FiniteBadFaceOrbit1} and \propref{FiniteBadFaceOrbit2}, we may immediately replace $\cP$ by $\cP'=\cP \cup \sigma_1(\cP)$. This is a Coxeter polyhedron with diagram

\begin{center}
\begin{tikzpicture} 
\coordinate (2) at (2,0);
\coordinate (3) at (3,0);
\coordinate (4) at (1,.5);
\coordinate (4') at (1,-.5);
\draw[thick, double distance = 2.5pt] (3) -- (2);
\draw[thick, double distance = .3pt] (3) -- (2);
\draw[thick] (4') -- (2);
\draw[thick] (4) -- (2);

\filldraw[fill=white] (2) circle (2pt) node[above] {\scriptsize 2};
\filldraw[fill=white] (3) circle (2pt) node[above] {\scriptsize 3};
\filldraw[fill=white] (4) circle (2pt) node[left] {\scriptsize 4};
\filldraw[fill=white] (4') circle (2pt) node[left] {\scriptsize$4'$};
\end{tikzpicture}
\end{center}

The equivalence classes of faces are $\{2, 4, 4'\}$ and $\{3\}$. The fundamental polygon for the $\{2, 4, 4'\}$ equivalence class is a $(2, 2, \infty, \infty)$ quadrilateral, while face $3$ is a $(2, \infty, \infty)$ triangle. These fundamental polygons are shown in the
    top row of \figref{SW6Faces}, colored blue and yellow.
Also shown are  the minimal ideal polygons they tile, from
    \propref{IdealizablePolygons}.  These are a quadrilateral and a triangle,
    each the union of  two copies of the fundamental polygon.

\begin{figure}[ht] 
\centering
\, \hfill
\begin{tikzpicture}[scale=2]    
    \clip (-.71,-.71) rectangle (.71,.71);
	\draw[thick] (0, .41421) -- (0, -.41421);
	\draw (0, 0) -- (0.707107,0.707107);
	\draw (0, 0) -- (0.707107, -0.707107);
    \fill[blue, opacity=0.2] circle (1);
    \fill[white] (1.414, 0) circle (1);
    \fill[white] (0, 1.414) circle (1);
    \fill[white] (0, -1.414) circle (1);
    \fill[white] (0, 1) -- (-1, 1) -- (-1, -1) -- (0, -1) -- cycle;
	\draw[thick] (0, .41421) -- (0, -.41421);
	\node at (.1, .25) {$4\phantom{'}$};
	\node at (.1, -.25) {$4'$};
	\node at (.25, 0) {2};
	\draw[thick] (0.707107,0.707107) arc
    [
        start angle=135,
        end angle=225,
        x radius=1,
        y radius =1
    ] ;	
	\draw[thick] (0.707107,0.707107) arc
    [
        start angle=-45,
        end angle=-90,
        x radius=1,
        y radius =1
    ] ;	
	\draw[thick] (0.707107,-0.707107) arc
    [
        start angle=45,
        end angle=90,
        x radius=1,
        y radius =1
    ] ;	
\end{tikzpicture}
\hfill
\begin{tikzpicture}[scale=2]
    \clip (-1,-.21) rectangle (1,1.2);
	\draw (0, 0) -- (0.707107, -0.707107);
    \fill[yellow, opacity=0.2] circle (1);
    \fill[white] (1,1) circle (1);
    \fill[white] (0, 1) -- (-1, 1) -- (-1, -1) -- (0, -1) -- cycle;
    \fill[white] (1, 0) -- (1, -1) -- (-1, -1) -- (-1, 0) -- cycle;
	\node at (.2, .2) {3};
	\draw[thick] (0, 0) -- (0, 1);
	\draw[thick] (0, 0) -- (1,0);
	\draw[thick] (1, 0) arc
    [
        start angle=-90,
        end angle=-180,
        x radius=1,
        y radius =1
    ] ;	
\end{tikzpicture}
\hfill \,

\,

\, \hfill
\begin{tikzpicture}[scale=2]
    \clip (-.71,-.71) rectangle (.71,.71);
	\draw[thick] (0, .41421) -- (0, -.41421);
    \fill[blue, opacity=0.2] circle (1);
    \fill[white] (1.414, 0) circle (1);
    \fill[white] (0, 1.414) circle (1);
    \fill[white] (0, -1.414) circle (1);
    \fill[white] (-1.414, 0) circle (1);
	\draw[thick] (0, .41421) -- (0, -.41421);
    \draw[thin, dashed] (-0.707107,-0.707107) -- (0.707107,0.707107);
    \draw[thin, dashed] (-0.707107,0.707107) -- (0.707107,-0.707107);
	\draw[thick] (0.707107,0.707107) arc
    [
        start angle=135,
        end angle=225,
        x radius=1,
        y radius =1
    ] ;	
	\draw[thick] (0.707107,0.707107) arc
    [
        start angle=-45,
        end angle=-135,
        x radius=1,
        y radius =1
    ] ;	
	\draw[thick] (0.707107,-0.707107) arc
    [
        start angle=45,
        end angle=135,
        x radius=1,
        y radius =1
    ] ;	
	\draw[thick] (-0.707107,0.707107) arc
    [
        start angle=45,
        end angle=-45,
        x radius=1,
        y radius =1
    ] ;	
\end{tikzpicture}
\hfill
\begin{tikzpicture}[scale=2]
    \clip (-1,-.21) rectangle (1,1.2);
	\draw (0, 0) -- (0.707107, -0.707107);
    \fill[yellow, opacity=0.2] circle (1);
    \fill[white] (1,1) circle (1);
    \fill[white] (-1,1) circle (1);
    \fill[white] (1, 0) -- (1, -1) -- (-1, -1) -- (-1, 0) -- cycle;
	\draw[thick] (0, 0) -- (0, 1);
	\draw[thick] (-1, 0) -- (1,0);
	\draw[thick] (1, 0) arc
    [
        start angle=-90,
        end angle=-180,
        x radius=1,
        y radius =1
    ] ;	
	\draw[thick] (-1, 0) arc
    [
        start angle=-90,
        end angle=0,
        x radius=1,
        y radius =1
    ] ;	
\end{tikzpicture}
\hfill \,

\caption{Above: fundamental polygons for the $\{2, 4, 4'\}$ and $\{3\}$ equivalence classes. Below: minimal ideal polygons---quadrilateral and triangle.}
\label{SW6Faces} 
\end{figure}

By \propref{IdealVertices}, each vertex of an ideal $\cP'$-polyhedron~$\cQ$
has four incident faces, 
forming a rectangle (the link of the vertex).   
At one ideal vertex of $\cP'$, faces 2, 3 and 4 form a $(2,3,6)$ triangle, 
and at the other, faces 2, 3 and $4'$ form another.  It follows that for each 
vertex of~$\cQ$, 
and one pair of opposite edges of its
rectangle, the corresponding faces of~$\cQ$
are tiled by copies of face~$3$.  Each of the other two
faces is tiled by copies of faces 2, 4 and~$4'$.   Because~$\cQ$ 
is ideal, each of its faces is tiled by copies of the minimal
polygons in the lower half of
\figref{SW6Faces}.
We annotate them as  shown in
\figref{SW6Tiles}.   
Namely, each ``short'' (unmarked) edge 
is a copy of the 2-3 edge of~$\cP'$.  
Each ``long'' (twice ticked) edge is the union of two copies
of the 3-4 edge (or two copies of the 3-$4'$ edge) of~$\cP'$.

\begin{figure}[ht] 
\centering
\, \hfill
\begin{tikzpicture}
\coordinate (A) at (-1.2, 1);
\coordinate (B) at (-1.2, -1);
\coordinate (C) at (1.2, -1);
\coordinate (D) at (1.2, 1);
\draw[thick, blue] (A) to[bend left=10] (B) to[bend left=12] (C) to[bend left=10] (D) to[bend left=12] cycle;
\fill[opacity=0.2, blue] (A) to[bend left=10] (B) to[bend left=12] (C) to[bend left=10] (D) to[bend left=12] cycle;
\draw[blue, decoration={markings, mark=at position 0.48 with {\draw[blue, thick] (0,-0.1) -- (0,0.1);}}, 
    decoration={markings, mark=at position 0.52 with {\draw[blue, thick] (0,-0.1) -- (0,0.1);}},
    postaction={decorate}] (D) to[bend left=12] (A);
\draw[blue, decoration={markings, mark=at position 0.48 with {\draw[blue, thick] (0,-0.1) -- (0,0.1);}}, 
    decoration={markings, mark=at position 0.52 with {\draw[blue, thick] (0,-0.1) -- (0,0.1);}},
    postaction={decorate}] (B) to[bend left=12] (C);
\end{tikzpicture}
\hfill
\begin{tikzpicture}
\coordinate (A) at (-1.25, .3);
\coordinate (B) at (1.25, .3);
\coordinate (C) at (0, 1.8);
\draw[thick, gold] (A) to[bend left=12] (B) to[bend left=10] (C) to[bend left=10] cycle;
\fill[opacity=0.2, yellow] (A) to[bend left=15] (B) to[bend left=15] (C) to[bend left=10] cycle;
\draw[gold, decoration={markings, mark=at position 0.48 with {\draw[thick] (0,-0.1) -- (0,0.1);}}, 
    decoration={markings, mark=at position 0.52 with {\draw[thick] (0,-0.1) -- (0,0.1);}},
    postaction={decorate}] (A) to[bend left=12] (B);
\end{tikzpicture} \hfill 
\begin{tikzpicture}
\coordinate (C) at (0,0);
\coordinate (A0plus) at (1.0,.01);
\coordinate (A0minus) at (1.0,-.01);
\coordinate (A1) at (1,0.27);
\coordinate (A2) at (.9*0.87,.9*0.50);
\coordinate (A3) at (.9*0.71,.9*0.71);
\coordinate (A4) at (.9*0.50,.9*0.87);
\coordinate (A5) at (0.27,1);
\coordinate (A6plus) at (0.01,1.0);
\coordinate (A6minus) at (-0.01,1.0);
\coordinate (A7) at (-0.27,1);
\coordinate (A8) at (-.9*0.50,.9*0.87);
\coordinate (A9) at (-.9*0.71,.9*0.71);
\coordinate (A10) at (-.9*0.87,.9*0.50);
\coordinate (A11) at (-1,0.27);
\coordinate (A12plus) at (-1.0,0.01);
\coordinate (A12minus) at (-1.0,-0.01);
\coordinate (A13) at (-1,-0.27);
\coordinate (A14) at (-.9*0.87,-.9*0.50);
\coordinate (A15) at (-.9*0.71,-.9*0.71);
\coordinate (A16) at (-.9*0.50,-.9*0.87);
\coordinate (A17) at (-0.27,-1);
\coordinate (A18plus) at (0.01,-1.0);
\coordinate (A18minus) at (-0.01,-1.0);
\coordinate (A19) at (0.27,-1);
\coordinate (A20) at (.9*0.50,-.9*0.87);
\coordinate (A21) at (.9*0.71,-.9*0.71);
\coordinate (A22) at (.9*0.87,-.9*0.50);
\coordinate (A23) at (1,-0.27);
\fill[opacity=0.2, blue] (.01, .01) -- (1, .01) -- (1,1) -- (.01, 1) -- cycle;
\fill[opacity=0.2, yellow] (-.01, .01) -- (-1, .01) -- (-1,1) -- (-.01, 1) -- cycle;
\fill[opacity=0.2, blue] (-.01, -.01) -- (-1, -.01) -- (-1,-1) -- (-.01, -1) -- cycle;
\fill[opacity=0.2, yellow] (.01, -.01) -- (1, -.01) -- (1,-1) -- (.01, -1) -- cycle;
\draw[thick, blue] (0,.01) -- (A0plus);
\draw[thick, blue] (C) -- (A1);
\fill[blue] (A2) circle(.5pt);
\fill[blue] (A3) circle(.5pt);
\fill[blue] (A4) circle(.5pt);
\draw[thick, blue] (C) -- (A5);
\draw[thick, blue] (.01,0) -- (A6plus);
\draw[thick, gold] (-.01,0) -- (A6minus);
\draw[thick, gold] (C) -- (A7);
\fill[gold] (A8) circle(.5pt);
\fill[gold] (A9) circle(.5pt);
\fill[gold] (A10) circle(.5pt);
\draw[thick, gold] (C) -- (A11);
\draw[thick, gold] (0,.01) -- (A12plus);
\draw[thick, blue] (0,-.01) -- (A12minus);
\draw[thick, blue] (C) -- (A13);
\fill[blue] (A14) circle(.5pt);
\fill[blue] (A15) circle(.5pt);
\fill[blue] (A16) circle(.5pt);
\draw[thick, blue] (C) -- (A17);
\draw[thick, blue] (-.01,0) -- (A18minus);
\draw[thick, gold] (.01,0) -- (A18plus);
\draw[thick, gold] (C) -- (A19);
\fill[gold] (A20) circle(.5pt);
\fill[gold] (A21) circle(.5pt);
\fill[gold] (A22) circle(.5pt);
\draw[thick, gold] (C) -- (A23);
\draw[thick, gold] (0,-.01) -- (A0minus);
\end{tikzpicture} \hfill \,

\caption{From left: quadrilateral tile; triangle tile; pattern of tiles around a vertex.}
\label{SW6Tiles} 
\end{figure}

This tiling of $\partial\cQ\cong S^2$ obviously has the following properties:
\begin{enumerate}
\item Each yellow or blue face (union of contiguous yellow or blue tiles) is a topological
disc.
\item Long edges are glued to long edges and short edges to short edges.
\item At each vertex, the yellow and blue tiles meet as shown in \figref{SW6Tiles}, with a sector of yellow tiles, then a sector of blue tiles, then a sector of yellow tiles, then a sector of blue tiles around the vertex. 
\end{enumerate}
We will derive a contradiction.  
First, each vertex lies on at least two long edges.  (Choose a blue tile in each of the two blue sectors, and in each of these tiles choose the long edge that contains the vertex.)  So, one can start a path consisting of long edges at some vertex and continue it indefinitely. As the tiling is finite, there will be a circle consisting of these edges. A circle cuts the sphere into two discs. We may choose a circle of long edges which cuts out a minimal (by inclusion) disc~$D$. In particular, this disc has no internal long edges and no interior vertices. The short edges in~$D$ have their endpoints in~$\partial D$, and do not cross each other.  Therefore we may choose an ``innermost'' short edge: one of the two subdiscs, into which it divides~$D$, contains no other
short edge. By innermost-ness, this  subdisc consists of a single tile. This contradicts the fact that each tile has two short edges.
\end{proof}

\begin{prop} \label{SW1Combinatorics}
    The polyhedron $\cP={}$\SW{1} does not admit an ideal, right-angled  $\cP$-polyhedron.
\end{prop}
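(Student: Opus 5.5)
I will follow the strategy of \propref{SW6Combinatorics}: translate the existence of an ideal, right-angled $\cP$-polyhedron $\cQ$ into a combinatorial tiling of $\partial\cQ\cong S^2$, and then show that no such tiling exists. This requires the Coxeter data of \SW{1} from \appref{ScharlauWalhornData}, which is not reproduced here.

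\emph{Step 1 (simplify $\cP$).} First determine the equivalence classes of faces of \SW{1}, i.e.\ those connected through odd dihedral angles. For each class, compute the fundamental polygon $\Delta_P$. If some class is non-idealizable in the sense of \thmref{IdealizablePolygons} and the group it generates is finite, then \propref{FiniteBadFaceOrbit1} and \propref{FiniteBadFaceOrbit2} let me replace $\cP$ by the larger Coxeter polyhedron $\cP'$, as was done for \SW{6}. For each remaining idealizable class, use \propref{MinimalPolygons} and the description preceding it to list the minimal ideal polygons. Every face of $\cQ$ is then tiled by $\Gamma$-translates of these polygons.

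\emph{Step 2 (vertex and edge data).} At each ideal vertex of $\cP'$, identify the cusp cross-section ($(2,3,6)$, $(2,4,4)$, or a rectangle) and apply \propref{IdealVertices}. This tells me which face types occupy which pairs of opposite sides of each vertex rectangle of $\cQ$. Combined with \corref{RectangleFact}, it gives the local pattern of minimal tiles around a vertex, analogous to \figref{SW6Tiles}. I will then mark the edges of the minimal tiles by type (for example, copies of distinct edges of $\cP'$, or ``long'' edges that are unions of two such copies). The gluing rules follow: faces of distinct classes meet only along matching edge types, and each vertex shows an alternating sector pattern.

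\emph{Step 3 (contradiction).} Before any topological argument, I will try the Euler characteristic. If the minimal polygons for the class occupying a parallel pair at every vertex have many sides, as in Example~\ref{Example3}, then \eqref{VertexLocalEC} finishes the proof immediately. If that fails, and I expect it to fail since \SW{1} survived the earlier sections, I will run the innermost-curve argument. First show that every vertex lies on at least two edges of a distinguished type. Following such edges produces closed curves, and I choose one bounding a minimal disc $D$. Inside $D$ there are then no vertices and no distinguished edges, so the remaining edges are chords. An innermost chord cuts off a single tile, and this contradicts the edge-type count of each minimal tile, such as each tile carrying two edges of some type.

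The main obstacle is Step 3. The difficulty lies in choosing the right distinguished edge type so that it is forced at every vertex, twice, while every minimal tile has at least two edges of the complementary type. If the minimal polygons of \SW{1} include the triangle case \eqref{ItemIdealizableN2} with $n$ odd, there are two kinds of minimal tile, and I may need to combine the disc argument with a parity count from \corref{RectangleFact} along the boundary of $D$.
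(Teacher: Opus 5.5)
Your Steps 1--2 agree with the paper's setup. For SW1 both face classes $\{1,2,4\}$ and $\{3\}$ are idealizable, with fundamental polygons the $(2,6,\infty)$ and $(2,3,\infty)$ triangles, so there is no passage to $\mathcal{P}'$. The boundary $\partial\mathcal{Q}$ is then tiled by three tiles:
a blue quadrilateral with vertex labels $2,2,4,4$ and one long edge, joining its $2$-corners;
a red ``balanced'' triangle with labels $2,2,2$ and no long edge;
and a red ``slim'' triangle with labels $4,1,1$ and one long edge, joining its $1$-corners.
Corollary~\ref{RectangleFact} forces equal label sums in opposite sectors at each vertex.

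The gap is Step~3, which is the entire content of the proposition. The innermost-chord mechanism you borrow from SW6 does not work for these tiles.
\begin{enumerate}
\item Long edges are not forced at every vertex. Take a vertex whose two blue sectors are single quadrilaterals at their $4$-corners and whose two red sectors are single balanced triangles. It has four short edges, satisfies all gluing and label constraints (sums $4=4$ and $2=2$), and has angle deficit $1-12/12=0$.
\item Short edges are forced, but the complementary long type occurs at most once per tile, and never on a balanced triangle. So a minimal disc bounded by short edges can be a single tile, and nothing is contradicted.
\item The local Euler characteristic bound also fails, as you expected: $\chi_V$ can be $-1+\tfrac23+\tfrac24=\tfrac16$.
\end{enumerate}

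What can be salvaged is weaker, and the paper needs a quantitative argument on top of it. A red tile's label at $V$ is odd exactly when it contributes a long edge at $V$. Hence equality of the two red sector sums makes the number of long red--blue edges at each vertex even. That at least one such edge exists already requires a discrete Gauss--Bonnet count. This is Lemma~\ref{AngleDeficit}: with $1-\tfrac1{12}\sum(\text{labels})$ at interior vertices and $\tfrac12-\tfrac1{12}\sum(\text{labels})$ at boundary vertices, the deficits sum to $\chi$. If no long red--blue edge existed, every red sector would sum to at least $2$ and every blue one to at least $4$, so all deficits would be $\le 0$, which is impossible on $S^2$.

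Even then, a minimal disc $D$ bounded by long red--blue edges can contain interior vertices and interior long red--red and blue--blue edges. So there is no innermost chord isolating a single tile. The paper finishes in four steps:
\begin{enumerate}
\item It replaces interior slim--slim pairs by balanced pairs.
\item It splits boundary slim triangles into $(1,2,3)$ triangles.
\item It decomposes $D$ along non-crossing red--red paths, paired at interior vertices via the label equality, to obtain a region $D'$.
\item It bounds deficits vertex by vertex to show that the total deficit of $D'$ is less than $1=\chi(D')$.
\end{enumerate}
Your closing remark about a parity count from Corollary~\ref{RectangleFact} points at the parity ingredient. Without the angle-deficit estimate on $D'$, however, the proposal does not reach a contradiction.
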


\begin{proof}
The equivalence classes of faces of $\cP$ are \{1, 2, 4\} and \{3\}. The fundamental polygon for the equivalence class \{1, 2, 4\} is a $(2, 6, \infty)$ triangle, while face 3 is a $(2, 3, \infty)$ triangle. These fundamental polygons are shown in \figref{SW1Faces}, colored blue and red.
We also illustrate the minimal ideal polygons tiled by each fundamental polygon, as given in \propref{IdealizablePolygons}. In the first case, the minimal ideal polygon is a quadrilateral tiled by six fundamental domains. In the second case, there are two choices of minimal ideal polygon, each one a triangle tiled by six fundamental domains. We call one of these the ``balanced'' triangle and one the ``slim'' triangle. 

\begin{figure}[ht] 
\centering
\, \hfill
\begin{tikzpicture}[scale=4]
	\clip (0,0) -- (1,0) -- (0.5, 0.288675) -- cycle;
	\fill[blue, opacity=0.2] (0,0) -- (1,0) -- (0.5, 0.288675) -- cycle;
	\fill[white] (1, 0.57735) circle (0.57735);
	\draw[ultra thick] (0,0) -- (1, 0);
	\draw[ultra thick] (0,0) -- (0.5, 0.288675);
	\draw[thick] (1, 0) arc
    [
        start angle=-90,
        end angle=-150,
        x radius=0.57735,
        y radius =0.57735
    ] ;	
	\draw (1,0) arc
    [
        start angle=-90,
        end angle=-114.75,
        x radius=1.73205,
        y radius =1.73205
    ] ;	
	\draw (0.267949, 0) arc
    [
        start angle=180,
        end angle=174.7,
        x radius=1.73205,
        y radius =1.73205
    ] ;	
	
	\node at (.2, .05) {1};
	\node at (.35, .06) {4};
	\node at (.47, .17) {2};
\end{tikzpicture}
\hfill
\begin{tikzpicture}[scale=4]
	\clip (0, 0) -- (1,0) --  (0.133975, 0.232051) -- cycle;
	\fill[red, opacity=0.2] (0, 0) -- (1,0) --  (0.133975, 0.232051) -- cycle;
	\fill[white] (1, 1.73205) circle (1.73205);
	\draw[ultra thick] (0, 0) -- (1,0);
	\draw[ultra thick] (0, 0) --  (0.133975, 0.232051);
	\draw[thick] (1,0) arc
    [
        start angle=-90,
        end angle=-120,
        x radius=1.73205,
        y radius =1.73205
    ] ;	
	\node at (.17, .09) {3};
\end{tikzpicture}
\hfill \,

\vspace{.25in}

\, \hfill
\begin{tikzpicture}[scale=2]
	\clip (-1,0) -- (-.5, -.433012) -- (1,0) -- (0.5, 0.866025) -- (-0.5, 0.866025) -- cycle;
	\fill[blue, opacity=0.2] (-1,0) -- (1,0) -- (0.5, 0.866025) -- (-0.5, 0.866025) -- cycle;
	\fill[white] (1, 0.57735) circle (0.57735);
	\fill[white] (-1, 0.57735) circle (0.57735);
	\fill[white] (0, 1.1547) circle (0.57735);
	\draw[thick] (-1,0) -- (1, 0);
	\draw[thick] (0,0) -- (0.5, 0.288675);
	\draw[thick] (0,0) -- (-0.5, 0.288675);
	\draw[thick] (0,0) -- (0.5, 0.866025);
	\draw[thick] (0,0) -- (-0.5, 0.866025);
	\draw[thick] (0,0) -- (0, 0.57735);
	\draw[thick] (0.5, 0.866025) arc
    [
        start angle=-30,
        end angle=-150,
        x radius=0.57735,
        y radius =0.57735
    ] ;	
	\draw[thick] (1, 0) arc
    [
        start angle=-90,
        end angle=-210,
        x radius=0.57735,
        y radius =0.57735
    ] ;	
	\draw[thin, dashed] (1,0) arc
    [
        start angle=-90,
        end angle=-150,
        x radius=1.73205,
        y radius =1.73205
    ] ;	
	\draw[thin, dashed] (0.267949, 0) arc
    [
        start angle=180,
        end angle=150,
        x radius=1.73205,
        y radius =1.73205
    ] ;	
	\draw[thick] (-1, 0) arc
    [
        start angle=-90,
        end angle=30,
        x radius=0.57735,
        y radius =0.57735
    ] ;	
	\draw[thin, dashed] (-1,0) arc
    [
        start angle=-90,
        end angle=-30,
        x radius=1.73205,
        y radius =1.73205
    ] ;	
	\draw[thin, dashed] (-0.267949, 0) arc
    [
        start angle=0,
        end angle=30,
        x radius=1.73205,
        y radius =1.73205
    ] ;	
\end{tikzpicture}
\hfill
\begin{tikzpicture}[scale=2]
	\clip (-.5, -.866025) -- (1,0) -- (-.5, .866025) -- cycle;
	\fill[red, opacity=0.2] (-.5, -.866025) -- (1,0) -- (-.5, .866025) -- cycle;
	\fill[white] (1, 1.73205) circle (1.73205);
	\fill[white] (1, -1.73205) circle (1.73205);
	\fill[white] (-0.267949-1.73205,0) circle (1.73205);
	\draw[thick] (-0.267949, 0) -- (1,0);
	\draw[thick] (-.5, -.866025) --  (0.133975, 0.232051);
	\draw[thick] (-.5, .866025) --  (0.133975, -0.232051);
	\draw[thick] (1,0) arc
    [
        start angle=-90,
        end angle=-150,
        x radius=1.73205,
        y radius =1.73205
    ] ;	
	\draw[thick] (1,0) arc
    [
        start angle=90,
        end angle=150,
        x radius=1.73205,
        y radius =1.73205
    ] ;	
    \draw[thick] (-.5, 0.866025) arc
    [
        start angle=30,
        end angle=-30,
        x radius=1.73205,
        y radius =1.73205
    ] ;
\end{tikzpicture}
\hfill
\begin{tikzpicture}[scale=2]
	\clip (-1, 0) -- (-.5, -.433012) -- (1,0) -- (-.5, .866025) -- cycle;
	\fill[red, opacity=0.2] (-1, 0) -- (1,0) -- (-.5, .866025) -- cycle;
	\fill[white] (1, 1.73205) circle (1.73205);
	\fill[white] (-1, 0.57735) circle (0.57735);
	\draw[thick] (-1, 0) -- (1,0);
	\draw[thick] (0,0) --  (0.133975, 0.232051);
	\draw[thick] (-.5, .866025) --  (0,0);
	\draw[thick] (1,0) arc
    [
        start angle=-90,
        end angle=-150,
        x radius=1.73205,
        y radius =1.73205
    ] ;	
    \draw[thick] (-.5, .866025) arc
    [
        start angle=30,
        end angle=-0,
        x radius=1.73205,
        y radius =1.73205
    ] ;
    
    \draw[thick] (-.5,0) arc
    [
        start angle=-30,
        end angle=30,
        x radius=0.866025,
        y radius =0.866025
    ] ;
    \draw[thick] (-.5,0) arc
    [
        start angle=30,
        end angle=42.5,
        x radius=0.866025,
        y radius =0.866025
    ] ;
    \draw[thick] (-1,0) arc
    [
        start angle=-90,
        end angle=30,
        x radius=0.57735,
        y radius =0.57735
    ] ;
\end{tikzpicture}
\hfill \,
\caption{Above: fundamental polygons for \{1, 2, 4\} and \{3\} equivalence classes. Below: minimal ideal polygons---quadrilateral, balanced triangle, and slim triangle.}
\label{SW1Faces} 
\end{figure}

Suppose, for the sake of contradiction, that an ideal, right-angled
$\cP$-polyhedron~$\cQ$ exists.
At the ideal vertex of $\cP$, faces 2, 3, 4 form a $(2,3,6)$ triangle. Thus by \propref{IdealVertices}, every vertex of~$\cQ$ has four faces forming a rectangle, with one pair of opposite faces tiled by copies of faces 2 and 4, and the other pair tiled by copies of face 3. 
Moreover, by \corref{RectangleFact}, the number of faces of translates of $\cP$ incident to the vertex, on any one of the four faces, is equal to the number on the opposite face.

We can now reformulate our hyperbolic tiling problem as a purely combinatorial tiling problem. We illustrate the combinatorial tiles in \figref{SW1Tiles}, with some annotation. ``Short'' unmarked edges are copies of the original 2-3 edge, while ``long'' edges marked with two ticks are copies of the original 1-3 and 4-3 edges. We also label each vertex of a tile with the number of faces of translates of $\cP$ incident to that vertex.
Our statement requires a hypothesis on these vertex labels based on \corref{RectangleFact}. We claim that there is no tiling of the sphere by copies of these three tiles, with the following properties: 
\begin{enumerate}
\item Each red or blue face (collection of adjacent red or blue tiles) is a topological
disc, and these faces form a spherical polyhedron.
\item Long edges are glued to long edges and short edges are glued to short edges.
\item At each vertex, the red and blue tiles meet as shown in \figref{SW1Tiles}, with a sector of red tiles, then a sector of blue tiles, then a sector of red tiles, then a sector of blue tiles around the vertex. Moreover, the sum of the vertex labels is equal in the two red sectors and equal in the two blue sectors. 
\end{enumerate}

\begin{figure}[ht] 
\centering
\, \hfill \begin{tikzpicture}
    \coordinate (A3) at (0,0);
    \coordinate (B3) at (3.8,0);
    \coordinate (C3) at (2.8,1.732);
    \coordinate (D3) at (1,1.732);
    
    \draw[thick, blue] (A3) to[bend left=8] (B3) to[bend left=5] (C3) to[bend left=5] (D3) to[bend left=5] cycle;
    \fill[opacity=0.2, blue] (A3) to[bend left=8] (B3) to[bend left=5] (C3) to[bend left=5] (D3) to[bend left=5] cycle;
    
    \draw[blue, decoration={markings, mark=at position 0.49 with {\draw[blue, thick] (0,-0.1) -- (0,0.1);}}, 
    decoration={markings, mark=at position 0.51 with {\draw[blue, thick] (0,-0.1) -- (0,0.1);}},
    postaction={decorate}] (A3) to[bend left=8] (B3);
    
    \node at (1.1,1.5) {4};
    \node at (2.7,1.5) {4};
    \node at (0.4,0.25) {2};
    \node at (3.4,.25) {2};
\end{tikzpicture} \hfill
\begin{tikzpicture}
    \coordinate (A1) at (0,0);
    \coordinate (B1) at (2,0);
    \coordinate (C1) at (1,1.732);
    
    \draw[thick, red] (A1) to[bend left=5] (B1) to[bend left=5] (C1) to[bend left=5] cycle;
    \fill[opacity=0.2, red] (A1) to[bend left=5] (B1) to[bend left=5] (C1) to[bend left=5] cycle;

    \node at (0.4,0.25) {2};     
    \node at (1.6,0.25) {2};     
    \node at (1,1.3) {2};       
\end{tikzpicture} \hfill
\begin{tikzpicture}
    \coordinate (A2) at (0,0);
    \coordinate (B2) at (2,0);
    \coordinate (C2) at (-1,1.732);
    
    \draw[thick, red] (A2) to[bend left=5] (B2) to[bend left=8] (C2) to[bend left=5] cycle;
    \fill[opacity=0.2, red] (A2) to[bend left=5] (B2) to[bend left=8] (C2) to[bend left=5] cycle;
    
    \draw[red, decoration={markings, mark=at position 0.485 with {\draw[red, thick] (0,-0.1) -- (0,0.1);}}, 
    decoration={markings, mark=at position 0.515 with {\draw[red, thick] (0,-0.1) -- (0,0.1);}},
    postaction={decorate}] (B2) to[bend left=8] (C2);
    
    \node at (.1,0.2) {4};
    \node at (-0.5,1.2) {1};
    \node at (1.2,.2) {1};
\end{tikzpicture}  \hfill
\begin{tikzpicture}[scale=1]
\coordinate (C) at (0,0);
\coordinate (A0plus) at (1.0,.01);
\coordinate (A0minus) at (1.0,-.01);
\coordinate (A1) at (1,0.27);
\coordinate (A2) at (.9*0.87,.9*0.50);
\coordinate (A3) at (.9*0.71,.9*0.71);
\coordinate (A4) at (.9*0.50,.9*0.87);
\coordinate (A5) at (0.27,1);
\coordinate (A6plus) at (0.01,1.0);
\coordinate (A6minus) at (-0.01,1.0);
\coordinate (A7) at (-0.27,1);
\coordinate (A8) at (-.9*0.50,.9*0.87);
\coordinate (A9) at (-.9*0.71,.9*0.71);
\coordinate (A10) at (-.9*0.87,.9*0.50);
\coordinate (A11) at (-1,0.27);
\coordinate (A12plus) at (-1.0,0.01);
\coordinate (A12minus) at (-1.0,-0.01);
\coordinate (A13) at (-1,-0.27);
\coordinate (A14) at (-.9*0.87,-.9*0.50);
\coordinate (A15) at (-.9*0.71,-.9*0.71);
\coordinate (A16) at (-.9*0.50,-.9*0.87);
\coordinate (A17) at (-0.27,-1);
\coordinate (A18plus) at (0.01,-1.0);
\coordinate (A18minus) at (-0.01,-1.0);
\coordinate (A19) at (0.27,-1);
\coordinate (A20) at (.9*0.50,-.9*0.87);
\coordinate (A21) at (.9*0.71,-.9*0.71);
\coordinate (A22) at (.9*0.87,-.9*0.50);
\coordinate (A23) at (1,-0.27);
\fill[opacity=0.2, blue] (.01, .01) -- (1, .01) -- (1,1) -- (.01, 1) -- cycle;
\fill[opacity=0.2, red] (-.01, .01) -- (-1, .01) -- (-1,1) -- (-.01, 1) -- cycle;
\fill[opacity=0.2, blue] (-.01, -.01) -- (-1, -.01) -- (-1,-1) -- (-.01, -1) -- cycle;
\fill[opacity=0.2, red] (.01, -.01) -- (1, -.01) -- (1,-1) -- (.01, -1) -- cycle;
\draw[thick, blue] (0,.01) -- (A0plus);
\draw[thick, blue] (C) -- (A1);
\fill[blue] (A2) circle(.5pt);
\fill[blue] (A3) circle(.5pt);
\fill[blue] (A4) circle(.5pt);
\draw[thick, blue] (C) -- (A5);
\draw[thick, blue] (.01,0) -- (A6plus);
\draw[thick, red] (-.01,0) -- (A6minus);
\draw[thick, red] (C) -- (A7);
\fill[red] (A8) circle(.5pt);
\fill[red] (A9) circle(.5pt);
\fill[red] (A10) circle(.5pt);
\draw[thick, red] (C) -- (A11);
\draw[thick, red] (0,.01) -- (A12plus);
\draw[thick, blue] (0,-.01) -- (A12minus);
\draw[thick, blue] (C) -- (A13);
\fill[blue] (A14) circle(.5pt);
\fill[blue] (A15) circle(.5pt);
\fill[blue] (A16) circle(.5pt);
\draw[thick, blue] (C) -- (A17);
\draw[thick, blue] (-.01,0) -- (A18minus);
\draw[thick, red] (.01,0) -- (A18plus);
\draw[thick, red] (C) -- (A19);
\fill[red] (A20) circle(.5pt);
\fill[red] (A21) circle(.5pt);
\fill[red] (A22) circle(.5pt);
\draw[thick, red] (C) -- (A23);
\draw[thick, red] (0,-.01) -- (A0minus);
\end{tikzpicture} \hfill \,
\caption{From left: quadrilateral tile; balanced triangle tile; thin triangle tile; pattern of tiles around a vertex.}
\label{SW1Tiles} 
\end{figure}

The main tool in the rest of the proof is a combinatorial analogue of \eqref{VertexLocalEC}. This can also be viewed as an angle deficit or discrete Gauss-Bonnet formula. 
\begin{lemma}\label{AngleDeficit}
    Suppose that a surface $S$ (possibly with boundary) admits a finite tiling by the tiles shown in \figref{SW1Tiles}
    . Define the angle deficit at a vertex $V$ as 
\begin{equation}
    AD_V=\left\lbrace \begin{array}{cc} 1 - \frac{1}{12}(\sum \text{angle labels at } V) & V \in S^{\circ} \\ \frac{1}{2} - \frac{1}{12}(\sum \text{angle labels at } V) & V \in \partial S \end{array}\right.
\end{equation} 
Then the sum of the angle deficits over all vertices $V$ is the Euler characteristic of $S$. 
\end{lemma}
\begin{proof}
Note that for each tile, the sum of the angle labels is equal to six times the number of edges, minus twelve. Thus, if we sum all the angle labels in the tiling, we obtain six times the number of boundary edges, plus twelve times the number of internal edges (because each internal edge is counted as part of two tiles), minus twelve times the number of faces. Hence, summing the angle deficit over all vertices gives 
\begin{equation*}
    \# \text{ int. vertices} + \frac{\# \text{ bdy. vertices}}{2} - \# \text{ int. edges} - \frac{\# \text{ bdy. edges}}{2}\ + \# \text{ faces}
\end{equation*}
Since the number of boundary vertices equals the number of boundary edges, this is simply the Euler characteristic $\chi(S)$.
\end{proof}

It follows from this lemma that a spherical tiling must contain at least one long edge where a red tile and a blue tile meet. If not, then at each vertex, the angle label in each red sector is at least 2, and the angle label in each blue sector is at least 4. Thus the angle deficit at each vertex is nonpositive, and the total angle deficit cannot equal the Euler characteristic of $2$. 

Moreover, at each vertex $V$ in the tiling, the number of incident long red-blue edges is even. We can see this as follows: in a red sector, if the total angle label is odd, then one edge of the sector is long and one is short. If the total angle label is even, then either both edges are long or both edges are short. Then by the third condition, since the two red sectors at a vertex have equal total angle labels, there are either zero, two, or four long red-blue edges.

So, one can start a path consisting of long red-blue edges at some vertex and continue it indefinitely. As the tiling is finite, there will be a cycle consisting of these edges. A cycle cuts the sphere into two discs. Let $\gamma$ be a cycle of long red-blue edges which cuts out a minimal (by inclusion) disc $D$. In particular, the interior of $D$ does not contain any long red-blue edges. 

We now consider the tiling of $D$. If a slim triangle has its long edge interior to $D$, then it meets another slim triangle along this edge; we can then replace this pair of slim triangles with a pair of balanced triangles, preserving the angle sums. Therefore we may assume that any slim triangle in $D$ has its long edge along the boundary of $D$. For the slim triangles at the boundary, we can split them into two triangles labeled $(1,2,3)$; this operation still preserves the angle deficit formula of \lemref{AngleDeficit}. These two operations are shown in \figref{Cuts}.

\begin{figure}[ht] 
\begin{tikzpicture}[scale=1.3]

    \coordinate (A1) at (0,0);
    \coordinate (B1) at (2,0);
    \coordinate (C1) at (1,1.732);
    \coordinate (D1) at (-1,1.732);
    
    \draw[thick, red] (A1) to[bend left=5] (B1) to[bend left=5] (C1) to[bend left=5] (D1) to[bend left=5] cycle;
    \draw[thick, red] (B1) to (D1);
    \fill[opacity=0.2, red] (A1) to[bend left=5] (B1) to[bend left=5] (C1) to[bend left=5] (D1) to[bend left=5] cycle;
    
    \draw[red, decoration={markings, mark=at position 0.485 with {\draw[red, thick] (0,-0.1) -- (0,0.1);}}, 
    decoration={markings, mark=at position 0.515 with {\draw[red, thick] (0,-0.1) -- (0,0.1);}},
    postaction={decorate}] (B1) to (D1);
    
    \node at (.1,0.2) {4};
    \node at (.9, 1.532) {4};
    \node at (-0.5,1.2) {1};
    \node at (1.2,.2) {1};
    \node at (1.5,.532) {1};
    \node at (-.2, 1.532) {1};

    \draw[->, thick] (1.7, .866) to[bend left=10] (2.3, .866);

\begin{scope}[xshift=3cm]
    \coordinate (A2) at (0,0);
    \coordinate (B2) at (2,0);
    \coordinate (C2) at (1,1.732);
    \coordinate (D2) at (-1,1.732);
    
    \draw[thick, red] (A2) to[bend left=5] (B2) to[bend left=5] (C2) to[bend left=5] (D2) to[bend left=5] cycle;
    \draw[thick, red] (A2) to (C2);
    \fill[opacity=0.2, red] (A2) to[bend left=5] (B2) to[bend left=5] (C2) to[bend left=5] (D2) to[bend left=5] cycle;
    
    \node at (.4,.25) {2};
    \node at (1.6,.25) {2};
    \node at (1,1.3) {2};
    \node at (0,.43) {2};
    \node at (.6,1.48) {2};
    \node at (-.6,1.48) {2};
\end{scope};

\begin{scope}[xshift=6.5cm]
    \coordinate (A3) at (0,0);
    \coordinate (B3) at (2,0);
    \coordinate (D3) at (-1,1.732);
    
    \draw[thick, red] (A3) to[bend left=5] (B3) to[bend left=5] (D3) to[bend left=5] cycle;
    \fill[opacity=0.2, red] (A3) to[bend left=5] (B3) to[bend left=5] (D3) to[bend left=5] cycle;
    
    \draw[red, decoration={markings, mark=at position 0.485 with {\draw[red, thick] (0,-0.1) -- (0,0.1);}}, 
    decoration={markings, mark=at position 0.515 with {\draw[red, thick] (0,-0.1) -- (0,0.1);}},
    postaction={decorate}] (B3) to[bend left=5] (D3);

    \node at (.1,0.2) {4};
    \node at (-0.5,1.2) {1};
    \node at (1.2,.2) {1};

    \draw[->, thick] (1.4, .866) to[bend left=10] (2.0, .866);
    
\end{scope};
\begin{scope}[xshift=9.5cm]
    \coordinate (A4) at (0,0);
    \coordinate (B4) at (2,0);
    \coordinate (D4) at (-1,1.732);
    \coordinate (E4) at (.9*.5, .9*.866);
    
    \draw[thick, red] (A4) to[bend left=5] (B4) to[bend left=5] (D4) to[bend left=5] cycle;
    \fill[opacity=0.2, red] (A4) to[bend left=5] (B4) to[bend left=5] (D4) to[bend left=5] cycle;
    \draw[thick, red] (A4) to (E4);
    
    \node at (.35,.2) {2};
    \node at (0,.38) {2};
    \node at (-0.5,1.2) {1};
    \node at (1.2,.2) {1};
    \node at (.15,.8) {3};
    \node at (.6,.55) {3};
\end{scope};
\end{tikzpicture}
\caption{Operations on slim triangles.}
\label{Cuts} 
\end{figure}

At any interior vertex of $D$, the angle deficit is bounded above by 0, since each red sector is labeled at least 2 and each blue sector is labeled at least 4. All positive contributions to the Euler characteristic must come from vertices along $\gamma$. 

Next, we consider edges in the interior of $D$ where two red tiles meet. The red-red edges form a graph embedded in $D$. At an interior vertex $V$ of $D$, each red tile must have an angle label of $2$. If $n$ red tiles meet $V$ in one sector, then the angle label is $2n$, and the opposite sector must also have $n$ red tiles. Thus both sectors have the same number ($n-1$) of red-red edges. By pairing up the opposite edges between the two sectors as shown in \figref{PairingUp}, we can decompose the red-red graph into paths which may intersect tangentially but do not cross. 

\begin{figure}[ht] 

\begin{tikzpicture} [scale=1.3]
    \draw[thick] (-1, 0) -- (1, 0);
    \draw[thick] (0, -1) -- (0, 1);
    \draw[thick, red] (0.38, 0.92) -- (-.38,-0.92);
    \draw[thick, red] (0.71, 0.71) -- (-0.71, -.71);
    \draw[thick, red] (0.92, .38) -- (-0.92,-0.38);
    \draw[->, thick] (1.2,0) to[bend left=10] (1.8, 0);

\begin{scope}[xshift=3cm]
    \draw[thick] (-1, 0) -- (1, 0);
    \draw[thick] (0, -1) -- (0, 1);
    \draw[thick, red] (0.38, 0.92) to[bend left=30] (-.92,-0.38);
    \draw[thick, red] (0.71, 0.71) -- (-0.71, -.71);
    \draw[thick, red] (0.92, .38) to[bend right=30] (-0.38, -.92);
\end{scope};

\begin{scope}[xshift=6.5cm, yshift=-.5cm]
    \draw[thick] (-1, 0) -- (1, 0);
    \draw[thick, red] (0, 0) to (-.71,.71);
    \draw[thick, red] (0, 0) -- (0.71, .71);
    \draw[thick, red] (0, 0) to (0, 1);
    \draw[->, thick] (1.2,.5) to[bend left=10] (1.8, .5);
\end{scope};
\begin{scope}[xshift=9.5cm, yshift=-.5cm]
    \draw[thick] (-1, 0) -- (1, 0);
    \draw[thick, red] (-.1, 0) to[bend right=30] (-.71,.71);
    \draw[thick, red] (0, 0) -- (0,1);
    \draw[thick, red] (.1, 0) to[bend left=30] (0.71, .71);
\end{scope};
\end{tikzpicture}

\caption{Separating red-red paths.}
\label{PairingUp} 
\end{figure}

The red-red graph subdivides $D$ into disjoint regions. No such region $D'$  contains an interior red-red edge. Since slim triangles have been split into two pieces with a red-red edge, this further implies that $D'$ does not contain any full slim triangles. We will choose a particular subregion $D'$ to use below. We may choose $D'$ to be a topological disc. Moreover, using the separation of the red-red graph into disjoint paths, we may choose $D'$ whose boundary touches at most one of these paths. Let $\gamma'$ be the part of the boundary of $D'$ which consists of red-red edges. There are three cases to consider, as shown in \figref{SW1Disc}.

Case 1: $D$ does not contain any red-red edges. In this case, we take $D'=D$. 

Case 2: $\gamma'$ is a cycle. In this case the whole boundary of $D'$ consists of red-red edges. Moreover, we may assume that at every boundary vertex but one, $\gamma'$ crosses between opposite red sectors. The reason for the single exception is that $\gamma'$ may be a cycle within a longer path that intersects itself tangentially; in this case it does not cross between sectors at the self-intersection point.

Case 3: $\gamma'$ is a path connecting two distinct boundary vertices of $D$. In this case
the boundary of $D'$ is the union of two arcs: $\gamma'$, consisting of red-red edges, and the part of $\gamma$ that lies in $\partial D$, consisting of long red-blue edges. At non-endpoint vertices, $\gamma'$ crosses between opposite red sectors. The two endpoints where $\gamma'$ intersects $\gamma$ must be analyzed separately. 

\begin{figure}[ht] 
\centering
$\,$ \hfill
\begin{tikzpicture}[scale=1.5]
\coordinate (A0) at (0.873132,-0.0640563);
\coordinate (A1) at (0.540645,0.818804);
\coordinate (A2) at (-0.054978,0.828725);
\coordinate (A3) at (-0.820175,0.542033);
\coordinate (A4) at (-0.899071,-0.384612);
\coordinate (A5) at (-0.121119,-1.15943);
\coordinate (A6) at (0.777393,-0.717881);
\coordinate (A0big) at (1.05*0.873132,-1.05*0.0640563);
\coordinate (A1big) at (1.05*0.540645,1.05*0.818804);
\coordinate (A2big) at (-1.05*0.054978,1.05*0.828725);
\coordinate (A3big) at (-1.05*0.820175,1.05*0.542033);
\coordinate (A4big) at (-1.05*0.899071,-1.05*0.384612);
\coordinate (A5big) at (-1.05*0.121119,-1.05*1.15943);
\coordinate (A6big) at (1.05*0.777393,-1.05*0.717881);
\draw[thick, blue] (A0) to[bend left=10] (A1) to[bend left=10] (A2) to[bend left=10] (A3) to[bend left=10] (A4) to[bend left=10] (A5) to[bend left=10] (A6) to[bend left=10] cycle;
\fill[opacity=0.2, purple] (A0) to[bend left=10] (A1) to[bend left=10] (A2) to[bend left=10] (A3) to[bend left=10] (A4) to[bend left=10] (A5) to[bend left=10] (A6) to[bend left=10] cycle;
\draw[thick, red, decoration={markings, mark=at position 0.47 with {\draw[thick, black] (0,-0.1) -- (0,0.1);}}, decoration={markings, mark=at position 0.53 with {\draw[thick, black] (0,-0.1) -- (0,0.1);}}, postaction={decorate}, bend left=10] (A0big) to (A1big); 
\draw[thick, red, decoration={markings, mark=at position 0.47 with {\draw[thick, black] (0,-0.1) -- (0,0.1);}}, decoration={markings, mark=at position 0.53 with {\draw[thick, black] (0,-0.1) -- (0,0.1);}}, postaction={decorate}, bend left=10] (A1big) to (A2big); 
\draw[thick, red, decoration={markings, mark=at position 0.47 with {\draw[thick, black] (0,-0.1) -- (0,0.1);}}, decoration={markings, mark=at position 0.53 with {\draw[thick, black] (0,-0.1) -- (0,0.1);}}, postaction={decorate}, bend left=10] (A2big) to (A3big); 
\draw[thick, red, decoration={markings, mark=at position 0.47 with {\draw[thick, black] (0,-0.1) -- (0,0.1);}}, decoration={markings, mark=at position 0.53 with {\draw[thick, black] (0,-0.1) -- (0,0.1);}}, postaction={decorate}, bend left=10] (A3big) to (A4big); 
\draw[thick, red, decoration={markings, mark=at position 0.47 with {\draw[thick, black] (0,-0.1) -- (0,0.1);}}, decoration={markings, mark=at position 0.53 with {\draw[thick, black] (0,-0.1) -- (0,0.1);}}, postaction={decorate}, bend left=10] (A4big) to (A5big); 
\draw[thick, red, decoration={markings, mark=at position 0.47 with {\draw[thick, black] (0,-0.1) -- (0,0.1);}}, decoration={markings, mark=at position 0.53 with {\draw[thick, black] (0,-0.1) -- (0,0.1);}}, postaction={decorate}, bend left=10] (A5big) to (A6big); 
\draw[thick, red, decoration={markings, mark=at position 0.47 with {\draw[thick, black] (0,-0.1) -- (0,0.1);}}, decoration={markings, mark=at position 0.53 with {\draw[thick, black] (0,-0.1) -- (0,0.1);}}, postaction={decorate}, bend left=10] (A6big) to (A0big); 
\node at (0,0) {$D'=D$};
\end{tikzpicture} \hfill
\begin{tikzpicture}[scale=1.5]
\coordinate (A0) at (0.873132,-0.0640563);
\coordinate (A1) at (0.540645,0.818804);
\coordinate (A2) at (-0.054978,0.828725);
\coordinate (A3) at (-0.820175,0.542033);
\coordinate (A4) at (-0.899071,-0.384612);
\coordinate (A5) at (-0.121119,-1.15943);
\coordinate (A6) at (0.777393,-0.717881);
\coordinate (B0) at (0.673548,-0.0144488);
\coordinate (B1) at (0.524057,0.28765);
\coordinate (B2) at (0.0615068,0.37449);
\coordinate (B3) at (-0.113107,0.561535);
\coordinate (B4) at (-0.3924,0.324973);
\coordinate (B5) at (-0.501173,0.0464216);
\coordinate (B6) at (-0.531447,-0.356042);
\coordinate (B7) at (-0.159655,-0.529353);
\coordinate (B8) at (0.195534,-0.61514);
\coordinate (B9) at (0.418503,-0.30277);
\coordinate (B0big) at (1.06*0.673548,-1.06*0.0144488);
\coordinate (B1big) at (1.06*0.524057,1.06*0.28765);
\coordinate (B2big) at (1.06*0.0615068,1.06*0.37449);
\coordinate (B3big) at (-1.06*0.113107,1.06*0.561535);
\coordinate (B4big) at (-1.06*0.3924,1.06*0.324973);
\coordinate (B5big) at (-1.06*0.501173,1.06*0.0464216);
\coordinate (B6big) at (-1.06*0.531447,-1.06*0.356042);
\coordinate (B7big) at (-1.06*0.159655,-1.06*0.529353);
\coordinate (B8big) at (1.06*0.195534,-1.06*0.61514);
\coordinate (B9big) at (1.06*0.418503,-1.06*0.30277);
\draw[thick, decoration={markings, mark=at position 0.47 with {\draw[thick] (0,-0.1) -- (0,0.1);}}, decoration={markings, mark=at position 0.53 with {\draw[thick] (0,-0.1) -- (0,0.1);}}, postaction={decorate}, bend left=10] (A0) to (A1);
\draw[thick, decoration={markings, mark=at position 0.47 with {\draw[thick] (0,-0.1) -- (0,0.1);}}, decoration={markings, mark=at position 0.53 with {\draw[thick] (0,-0.1) -- (0,0.1);}}, postaction={decorate}, bend left=10] (A1) to (A2);
\draw[thick, decoration={markings, mark=at position 0.47 with {\draw[thick] (0,-0.1) -- (0,0.1);}}, decoration={markings, mark=at position 0.53 with {\draw[thick] (0,-0.1) -- (0,0.1);}}, postaction={decorate}, bend left=10] (A2) to (A3);
\draw[thick, decoration={markings, mark=at position 0.47 with {\draw[thick] (0,-0.1) -- (0,0.1);}}, decoration={markings, mark=at position 0.53 with {\draw[thick] (0,-0.1) -- (0,0.1);}}, postaction={decorate}, bend left=10] (A3) to (A4);
\draw[thick, decoration={markings, mark=at position 0.47 with {\draw[thick] (0,-0.1) -- (0,0.1);}}, decoration={markings, mark=at position 0.53 with {\draw[thick] (0,-0.1) -- (0,0.1);}}, postaction={decorate}, bend left=10] (A4) to (A5);
\draw[thick, decoration={markings, mark=at position 0.47 with {\draw[thick] (0,-0.1) -- (0,0.1);}}, decoration={markings, mark=at position 0.53 with {\draw[thick] (0,-0.1) -- (0,0.1);}}, postaction={decorate}, bend left=10] (A5) to (A6);
\draw[thick, decoration={markings, mark=at position 0.47 with {\draw[thick] (0,-0.1) -- (0,0.1);}}, decoration={markings, mark=at position 0.53 with {\draw[thick] (0,-0.1) -- (0,0.1);}}, postaction={decorate}, bend left=10] (A6) to (A0);
\draw[red, thick] (B0) to[bend left=10] (B1) to[bend left=10] (B2) to[bend left=10] (B3) to[bend left=10] (B4) to[bend left=10] (B5) to[bend left=10] (B6) to[bend left=10] (B7) to[bend left=10] (B8) to[bend left=10] (B9) to[bend left=10] cycle;
\fill[opacity=0.2, purple] (B0) to[bend left=10] (B1) to[bend left=10] (B2) to[bend left=10] (B3) to[bend left=10] (B4) to[bend left=10] (B5) to[bend left=10] (B6) to[bend left=10] (B7) to[bend left=10] (B8) to[bend left=10] (B9) to[bend left=10] cycle;
\draw[red, thick] (B0big) to[bend left=10] (B1big) to[bend left=10] (B2big) to[bend left=10] (B3big) to[bend left=10] (B4big) to[bend left=10] (B5big) to[bend left=10] (B6big) to[bend left=10] (B7big) to[bend left=10] (B8big) to[bend left=10] (B9big) to[bend left=10] cycle;
\node at (0,0) {$D'$};
\node at (1,-1) {$D$};
\end{tikzpicture} \hfill
\begin{tikzpicture}[scale=1.5]
\coordinate (A0) at (0.873132,-0.0640563);
\coordinate (A1) at (0.540645,0.818804);
\coordinate (A2) at (-0.054978,0.828725);
\coordinate (A3) at (-0.820175,0.542033);
\coordinate (A4) at (-0.899071,-0.384612);
\coordinate (A0big) at (1.05*0.873132+.02,-1.05*0.0640563-.05);
\coordinate (A1big) at (1.05*0.540645,1.05*0.818804);
\coordinate (A2big) at (-1.05*0.054978,1.05*0.828725);
\coordinate (A3big) at (-1.05*0.820175,1.05*0.542033);
\coordinate (A4big) at (-1.05*0.899071,-1.05*0.384612-.05);
\coordinate (A5big) at (-1.05*0.121119,-1.05*1.15943);
\coordinate (A6big) at (1.05*0.777393,-1.05*0.717881);
\coordinate (C5) at (-0.27,-0.05);
\coordinate (C6) at (0.02,-0.14);
\coordinate (C7) at (0.52,.10);
\coordinate (C5big) at (-0.27,-0.09);
\coordinate (C6big) at (0.02,-0.18);
\coordinate (C7big) at (0.52,.06);
\draw[thick, blue] (A0) to[bend left=10] (A1) to[bend left=10] (A2) to[bend left=10] (A3) to[bend left=10] (A4);
\fill[opacity=0.2, purple] (A0) to[bend left=10] (A1) to[bend left=10] (A2) to[bend left=10] (A3) to[bend left=10] (A4) to[bend left=10] (C5) to[bend left=10] (C6) to[bend left=10] (C7) to[bend left=10] cycle;
\draw[thick, red, decoration={markings, mark=at position 0.47 with {\draw[thick, black] (0,-0.1) -- (0,0.1);}}, decoration={markings, mark=at position 0.53 with {\draw[thick, black] (0,-0.1) -- (0,0.1);}}, postaction={decorate}, bend left=10] (A0big) to (A1big); 
\draw[thick, red, decoration={markings, mark=at position 0.47 with {\draw[thick, black] (0,-0.1) -- (0,0.1);}}, decoration={markings, mark=at position 0.53 with {\draw[thick, black] (0,-0.1) -- (0,0.1);}}, postaction={decorate}, bend left=10] (A1big) to (A2big); 
\draw[thick, red, decoration={markings, mark=at position 0.47 with {\draw[thick, black] (0,-0.1) -- (0,0.1);}}, decoration={markings, mark=at position 0.53 with {\draw[thick, black] (0,-0.1) -- (0,0.1);}}, postaction={decorate}, bend left=10] (A2big) to (A3big); 
\draw[thick, red, decoration={markings, mark=at position 0.47 with {\draw[thick, black] (0,-0.1) -- (0,0.1);}}, decoration={markings, mark=at position 0.53 with {\draw[thick, black] (0,-0.1) -- (0,0.1);}}, postaction={decorate}, bend left=10] (A3big) to (A4big); 
\draw[thick, decoration={markings, mark=at position 0.47 with {\draw[thick, black] (0,-0.1) -- (0,0.1);}}, decoration={markings, mark=at position 0.53 with {\draw[thick, black] (0,-0.1) -- (0,0.1);}}, postaction={decorate}, bend left=10] (A4big) to (A5big); 
\draw[thick, decoration={markings, mark=at position 0.47 with {\draw[thick, black] (0,-0.1) -- (0,0.1);}}, decoration={markings, mark=at position 0.53 with {\draw[thick, black] (0,-0.1) -- (0,0.1);}}, postaction={decorate}, bend left=10] (A5big) to (A6big); 
\draw[thick, decoration={markings, mark=at position 0.47 with {\draw[thick, black] (0,-0.1) -- (0,0.1);}}, decoration={markings, mark=at position 0.53 with {\draw[thick, black] (0,-0.1) -- (0,0.1);}}, postaction={decorate}, bend left=10] (A6big) to (A0big); 
\draw[thick, red] (A4) to[bend left=10] (C5) to[bend left=10] (C6) to[bend left=10] (C7) to[bend left=10] (A0);
\draw[thick, red] (A4big) to[bend left=10] (C5big) to[bend left=10] (C6big) to[bend left=10] (C7big) to[bend left=5] (A0big);
\node at (0,.3) {$D'$};
\node at (1,-1) {$D$};
\end{tikzpicture} \hfill $\,$

\caption{From left: case 1, case 2, case 3.}
\label{SW1Disc} 
\end{figure}

We now show that the total angle deficit in $D'$ cannot equal $1$. At any interior vertex, the angle deficit is bounded above by $0$. All positive contributions to the Euler characteristic must come from boundary vertices along $\gamma$ or $\gamma'$.

At a boundary vertex $V$ along $\gamma'$, where $\gamma'$ crosses between opposite red sectors, the angle deficit is bounded above by $-1/6$. At a boundary vertex $V$ along $\gamma$, where two full long edges meet, there must be a blue trapezoid in $D'$ along each edge (not a slim triangle). If there are any additional tiles in $D'$ touching $V$, then the angle deficit is bounded above by $0$. If not, then it is $1/6$. However, in the latter case, the positive angle deficit at $V$ is offset by a deficit less than or equal to $-1/3$ at the other vertex $U$ shared by the two tiles. This holds whether $U$ is internal or at the boundary. If $U$ plays this role for multiple pairs of boundary tiles, then its angle deficit will be less than or equal to $-1/3$ of the number of pairs. The bounds described here are illustrated in \figref{Bounds}:

\begin{figure}[ht] 
\, \hfill
\begin{tikzpicture}[scale=1.3]
    \draw[thick, blue] (.01, 1) -- (.01, .01) -- (1, .01);
    \draw[thick, blue] (-.01, -1) -- (-.01, -.01) -- (-1, -.01);
    \draw[thick, red] (.01, -1) -- (.01, -.01) -- (1, -.01);
    \draw[thick, red] (-.01, 1) -- (-.01, .01) -- (-1, .01);
    \node[font=\scriptsize] at (.3, .2) {$\geq 4$};
    \node[font=\scriptsize] at (-.3, -.2) {$\geq 4$};
    \node[font=\scriptsize] at (-.3, .2) {$\geq 2$};
    \node[font=\scriptsize] at (.3, -.2) {$\geq 2$};
\end{tikzpicture} \hfill
\begin{tikzpicture}[scale=1.3]
    \draw[white] (0, -1) -- (0,0);
    \draw[thick, blue] (.01, 1) -- (.01, .01) -- (1, .01);
    \draw[thick, red] (-.01, 1) -- (-.01, -.01) -- (1, -.01);
    \draw[ultra thick, red] (.71, -.71) -- (-.71, .71);
    \node[font=\scriptsize] at (.3, .2) {$\geq 4$};
    \node[font=\scriptsize] at (.5, -.2) {$\geq 2$};
    \node[font=\scriptsize] at (-.25, .55) {$\geq 2$};
\end{tikzpicture} \hfill
\begin{tikzpicture}[scale=1.3]
    \draw[white] (0, -.71) -- (0,0);
    \draw[ultra thick, blue] (-.71, .71) -- (0, 0) -- (.71, .71);
    \draw[thick, blue] (-1, .01) -- (1, .01);
    \draw[thick] (-1, -.01) -- (1, -.01);
    \node[font=\scriptsize] at (-.5, .15) {$\geq 2$};
    \node[font=\scriptsize] at (.5, .15) {$\geq 2$};
    \node[font=\scriptsize] at (0, .4) {$\geq 2$};
\end{tikzpicture} \hfill
\begin{tikzpicture}[scale=1.3]
    \draw[thick, blue] (-1.5, .8) -- (0, 0) -- (1.5, .8) to[bend left=5] (.75, 1.2) to[bend left=5] (0, .8) to[bend left=5] (-.75, 1.2) to[bend left=5] cycle;
    \draw[ultra thick, blue] (0,0) -- (0, .8);
    \draw[thick] (-1.5, .77) -- (0, -.03) -- (1.5, .77);
    \node[font=\scriptsize] at (.15, .25) {$2$};
    \node[font=\scriptsize] at (-.15, .25) {$2$};
    \node[font=\scriptsize] at (.15, .75) {$4$};
    \node[font=\scriptsize] at (-.15, .75) {$4$};
    \node[font=\scriptsize] at (0, -.2) {V};
    \node[font=\scriptsize] at (0, 1) {U};
\end{tikzpicture} \hfill \,
\caption{From left: interior vertex; vertex along $\gamma'$; vertex along $\gamma$ with more than two tiles; vertex along $\gamma$ with two tiles.}
\label{Bounds} 
\end{figure}

Up to this point, the total angle deficit is bounded above by $0$, and the only vertices not yet accounted for are special ones depending on the case. In case 1, there are no special vertices to consider. In case 2, we only have one special vertex $V$, where the path $\gamma'$ does not cross between opposite red sectors. Since the angle deficit at $V$ is less than $1/3$, the total angle deficit is less than $1/3$. 

In case 3, the special vertices are at or adjacent to the endpoints of $\gamma'$. If an endpoint is not on a slim triangle, then this endpoint has an angle deficit bounded above by $1/6$. If an endpoint is on a slim triangle then it has an angle deficit of $1/4$, and its neighbor on half of the long edge has an angle deficit bounded above by $1/4$. These bounds are illustrated in \figref{BoundEndpoints}. If both endpoints of $\gamma'$ are on slim triangles, then the total angle deficit of the four special vertices could be $1$, but it would be offset by a deficit less than or equal to $-1/6$ at a vertex along $\gamma'$. Finally, if the endpoints of $\gamma'$ are neighbors, or share the same neighbor, this only reduces the total angle deficit. 

\begin{figure}[ht] 
\, \hfill
\begin{tikzpicture}[scale=1.5]
\draw[thick, red] (-.71, .71) -- (0,0) -- (.71, .71);
\draw[thick, black] (-.71, .68) -- (0,-.03) -- (.71, .68);
\node[font=\scriptsize] at (0, .3) {$\geq 2$}; 
\end{tikzpicture} \hfill
\begin{tikzpicture}[scale=1.5]
\draw[thick, red] (-.71, .71) -- (0,0);
\draw[thick, red] (0, 0) -- (0,1);
\draw[thick, blue] (.71, .71) -- (0, 0) -- (1,0);
\draw[thick, black] (-.71, .68) -- (0,-.03);
\draw[thick, black] (0,-.02) -- (1, -.02);
\node[font=\scriptsize] at (-.1, .3) {$2$}; 
\node[font=\scriptsize] at (.3, .1) {$2$}; 
\node[font=\scriptsize] at (.2, .45) {$\geq 0$}; 
\end{tikzpicture} \hfill
\begin{tikzpicture}[scale=1.5]
\draw[thick, red] (0, 1) -- (0,0) -- (1,0) to[bend left=5] (0, .5);
\draw[thick, blue] (1.71, .71) to[bend right=5] (1,0) -- (2, 0);
\draw[thick, black] (-.02, 1) -- (-.02,-.02) -- (2, -.02);
\node[font=\scriptsize] at (.1, .1) {$3$}; 
\node[font=\scriptsize] at (.55, .1) {$1$}; 
\node[font=\scriptsize] at (1.3, .1) {$2$}; 
\node[font=\scriptsize] at (.95, .25) {$\geq 0$}; 
\end{tikzpicture} \hfill \,
\caption{From left: case 2, special vertex; case 3, special vertex not on a slim triangle; case 3, special vertex on a slim triangle and neighbor.}
\label{BoundEndpoints}
\end{figure}

In all cases, we see that the total angle deficit in $D'$ is less than $1$, contradicting \lemref{AngleDeficit}. Thus no tiling of $D'$ exists, and no tiling of $S^2$ exists. We conclude that an ideal, right-angled  $\cP$-polyhedron cannot exist. 

\end{proof}

\section{Constructing ideal, right-angled  \texorpdfstring{$\cP$}{P}-Polyhedra} \label{Unfolding}

We have ruled out 45 of the 49 Scharlau-Walhorn polyhedra, leaving 
the cases
$\cP=$ \SW{2}, \SW{4}, \SW{12} and \SW{13}. 
These four polyhedra do admit right-angled ideal $\cP$-polyhedra. In this section, we describe all such $\cP$-polyhedra. In each case, the description is in terms
of a right-angled $\cP$-polyhedron that is easier
to understand than~$\cP$: the triangular bipyramid $\frac14\cO$, the square antiprism~$\cA$, 
and the rhombicuboctahedron~$\cR$ (twice).
Namely, every ideal right-angled $\cP$-polyhedron can be tiled (but not necessarily reflectively) by one of these.
In previous sections, we used 
``tiled by'' as shorthand for ``reflectively tiled by'', but here we will be explicit when we mean
the latter.  
The phrase ``is tiled by$\ldots$'' 
will mean ``is a union of isometric copies, with disjoint interiors, of$\ldots$''

\subsection{The case \SW{12}}
The polyhedron $\cP={}$\SW{12} is the simplest to understand:
cutting
an ideal rhombicuboctahedron along its planes of symmetry yields $48$ copies of~$\cP$.
(Recall that the rhombicuboctahedron is the rectification of the cuboctahedron, which is
the rectification of the cube.)
To see this, 
visualize $\cP$ with the 1-4-6 vertex at the center of the ball model of hyperbolic space.
The reflections across faces 1, 4, and 6
generate a finite reflection group of type $B_3$, which is the common
symmetry group of the regular octahedron, cuboctahedron, and rhombicuboctahedron. 
One can check that the union of the $B_3$-images of~$\cP$ is the
rhombicuboctahedron,
with the union of the images of face $2$, $3$, resp.\ $5$, being a $B_3$-orbit
of faces.
This proves the first part of the next proposition.

\begin{figure}[ht] 
\, \hfill \includegraphics[width=.4\textwidth]{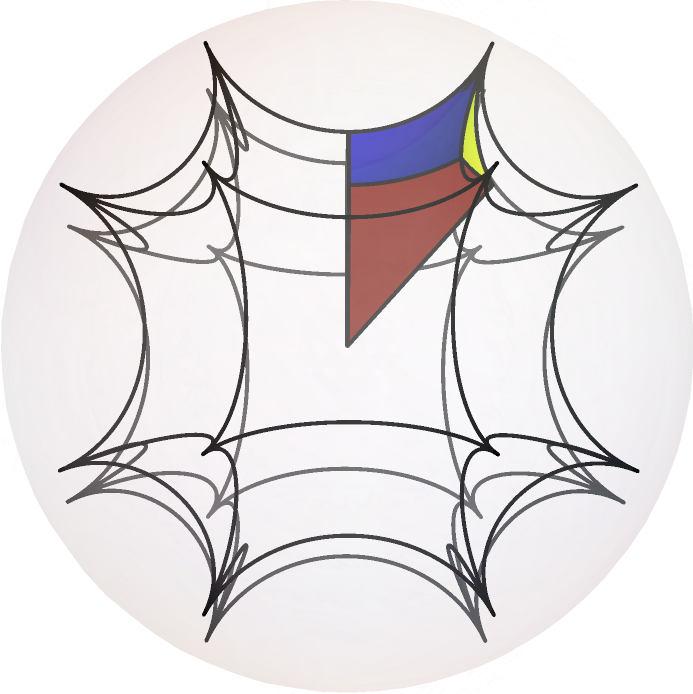} \hfill \includegraphics[width=.4\textwidth]{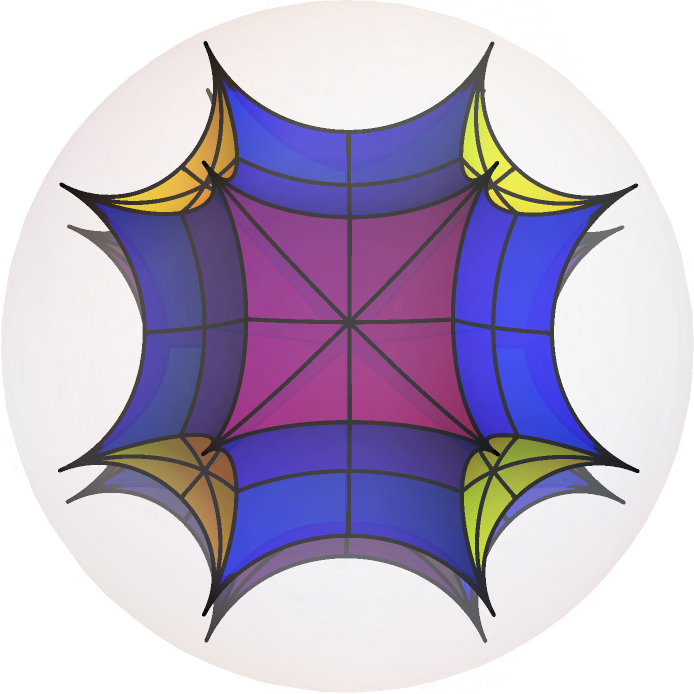} \hfill \,

\begin{tikzpicture}[scale=1.5]
\coordinate (2) at (1,1);
\coordinate (6) at (-1,1);
\coordinate (5) at (0,0);
\coordinate (4) at (0,1);
\coordinate (1) at (1,0);
\coordinate (3) at (-1,0);

\draw[thick] (4) -- (1);
\draw[thick, line width=2pt] (4) -- (2);
\draw[dashed] (5) -- (1);
\draw[thick, line width=2pt] (5) -- (3);
\draw[dashed] (6) -- (3);
\draw[thick, double distance = 2pt] (6) -- (4);

\filldraw[fill=red] (1) circle (2pt) node[below] {\scriptsize 1};
\filldraw[fill=blue] (2) circle (2pt) node[above] {\scriptsize 2};
\filldraw[fill=yellow] (3) circle (2pt) node[below] {\scriptsize 3};
\filldraw[fill=green] (4) circle (2pt) node[above] {\scriptsize 4};
\filldraw[fill=purple] (5) circle (2pt) node[below] {\scriptsize 5};
\filldraw[fill=orange] (6) circle (2pt) node[above] {\scriptsize 6};

\end{tikzpicture} 
\caption{From left: polyhedron \SW{12}; 48 copies of polyhedron  \SW{12} glued around the 1-4-6 vertex to form ideal, right-angled  rhombicuboctahedron; Coxeter diagram with face colors labeled.}
\label{FigRhombicuboctahedron1}
\end{figure}

\begin{prop} \label{Rhombicuboctahedron1}
Let $\cP$ be the polyhedron \SW{12}. 
    Let $\cR$ be the 
    union of the  $48$ images of $\cP$ that contain
    the 1-4-6 vertex.   This is an ideal, right-angled rhombicuboctahedron,
    and every  ideal, right-angled  $\cP$-polyhedron is also an $\cR$-polyhedron.  
\end{prop}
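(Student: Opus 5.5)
The plan is to get the second assertion from \propref{FiniteBadFaceOrbit2}, with $\Delta'=\{F_1,F_4,F_6\}$. The first assertion, that $\cR$ is the ideal right-angled rhombicuboctahedron, was established just before the statement: the $B_3$-orbit of $\cP$ about the $1$-$4$-$6$ vertex.

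First I would determine the equivalence classes of faces from the Coxeter diagram. Faces $1$ and $4$ meet at angle $\pi/3$, which is odd. Face $6$ meets face $4$ at $\pi/4$, and every other dihedral angle is either $\pi/2$ or involves non-meeting faces (parallel or ultraparallel). So the classes are $\{1,4\}$, $\{6\}$, $\{2\}$, $\{3\}$, $\{5\}$. In particular $\Delta'=\{F_1,F_4,F_6\}$ is a union of equivalence classes. The group $\Gamma'=\langle\sigma_1,\sigma_4,\sigma_6\rangle$ is the finite group $B_3$ of order $48$, because $1$-$4$-$6$ is a finite vertex with angles $\pi/3,\pi/4,\pi/2$. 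By \propref{FiniteBadFaceOrbit1}, $\cP'=\bigcup_{\gamma\in\Gamma'}\gamma(\cP)$ is a finite-volume Coxeter polyhedron, and it is exactly the union of the $48$ images of $\cP$ containing the $1$-$4$-$6$ vertex. Hence $\cP'=\cR$.

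The remaining hypothesis of \propref{FiniteBadFaceOrbit2} is that the classes $\{1,4\}$ and $\{6\}$ are non-idealizable. That is, for a mirror $P$ of each type, the chamber $\Delta_P$ of the group $\Gamma_P$ generated by reflections in mirrors orthogonal to $P$ must not appear in the list of \thmref{IdealizablePolygons}. Granting this, \propref{FiniteBadFaceOrbit2} says that every ideal $\cP$-polyhedron, in particular every ideal right-angled one, is a $\cP'$-polyhedron, i.e.\ an $\cR$-polyhedron, which finishes the proof.

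Proving non-idealizability is the main obstacle, and I would do it by explicitly computing $\Delta_P$ in each case. For $P\supseteq F_6$, the class is the single face $F_6$. Its neighbours meet it at $\pi/2$ or $\pi/4$, so $P$ is tiled by copies of $F_6$ and $\Delta_P$ is obtained from $F_6$ by reflecting across the edges where the dihedral angle is $\pi/4$. The $4$-$6$ edge has angle $\pi/4$, so the mirror orthogonal to $P$ along it is the $\sigma_6$-conjugate direction, not $F_4$ itself. Consequently $\Delta_P$ is the union of $F_6$ and its reflection across the $4$-$6$ edge, and I would read off its vertex angles from the finite vertices of $\cP$ lying on $F_6$. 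For $P\supseteq F_1\cup F_4$, $\Delta_P$ is assembled from copies of $F_1$ and $F_4$ around the $\pi/3$ edge, cut by the mirrors orthogonal to $P$.

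In each case I expect $\Delta_P$ to be compact, or to have at least two finite vertices with angle other than $\pi/2$, or to have its finite vertices in a pattern excluded by \thmref{IdealizablePolygons}. My reason is that faces $1$, $4$, $6$ all pass through the compact $B_3$ vertex, and in the rhombicuboctahedron they lie in its interior. If a case turns out to be borderline, the fallback is the direct route: show that a face of an ideal right-angled $\cQ$ lying in such a plane would have to contain a translate of the finite $1$-$4$-$6$ vertex on its boundary, which is impossible because $\cQ$ has only ideal vertices. That argument gives the containment $\cP'\subseteq\cQ$ in the proof of \propref{BadFaceOrbit} directly.
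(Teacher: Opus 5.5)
Your reduction matches the paper's: $\Delta'=\{1,4\}\cup\{6\}$, $\Gamma'=\langle\sigma_1,\sigma_4,\sigma_6\rangle\cong B_3$, $\cP'=\cR$, followed by Propositions~\ref{BadFaceOrbit} and~\ref{FiniteBadFaceOrbit2}. The gap is that the only substantive input, that the classes $\{6\}$ and $\{1,4\}$ are non-idealizable, is left as an expectation. The reason you give for expecting it is not valid. Passing through a finite $B_3$ vertex proves nothing: in \SW{4} the faces through the $1$-$4$-$5$ vertex are idealizable, and $\cA$ itself has a translate of that vertex in the interior of a square face (see the proof of Proposition~\ref{Antiprism}). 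The same example defeats your fallback. A finite vertex of the tiling that lies in $\partial\cQ$ can sit in the relative interior of a face or edge of $\cQ$, so ``$\cQ$ has only ideal vertices'' excludes nothing by itself. The obstruction has to come from the actual shape of $\Delta_P$.

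That computation is short, but your description of it is off. Let $P$ be the plane of $F_6$. Every dihedral angle along $F_6$ is $\pi/2$ or $\pi/4$, so every edge of $F_6$ lies on a mirror orthogonal to $P$. At the $4$-$6$ edge this mirror is $\sigma_4(P)$, which meets $P$ exactly in the line of that edge. Hence $\Delta_P=F_6$, not $F_6$ together with its reflection. Its vertices $1$-$2$-$6$, $1$-$4$-$6$, $2$-$5$-$6$, $4$-$5$-$6$ are all finite, so $\Delta_P$ is compact.

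For the class $\{1,4\}$, let $P'$ be the plane of $F_1$:
\begin{itemize}
\item $F_1$ is compact, with vertices $1$-$2$-$3$, $1$-$2$-$6$, $1$-$4$-$6$, $1$-$3$-$4$.
\item $F_4$ has the single ideal vertex $2$-$3$-$4$-$5$.
\item Since the $1$-$4$ angle is $\pi/3$, $P'$ continues across the $1$-$4$ edge as a copy $F_4'$ of $F_4$. That edge lies on no mirror orthogonal to $P'$, so $\Delta_{P'}=F_1\cup F_4'$.
\item At $1$-$3$-$4$ the two right angles merge into a straight angle.
\item At $1$-$4$-$6$ the face angles of $F_1$ and $F_4'$, namely $\arccos\sqrt{2/3}$ and $\arccos(1/\sqrt3)$, add up to $\pi/2$.
\end{itemize}
So $\Delta_{P'}$ is a $(2,2,2,2,\infty)$ pentagon, which is the paper's ``pentagon with a single ideal vertex''. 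It has four finite vertices, while every polygon in Theorem~\ref{IdealizablePolygons} has at most three, so it is not idealizable. With these two facts supplied, your argument is complete and coincides with the paper's proof.
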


\begin{proof} 
Face 6 is compact and is its own equivalence class. Faces 1 and 4 form
    another equivalence class, whose fundamental domain is a pentagon with a single ideal vertex. \thmref{IdealizablePolygons} shows that these
    faces are not idealizable. They intersect at a finite vertex, and 
    their reflections generate the reflection group~$B_3$. 
By \propref{BadFaceOrbit}, every ideal, right-angled  $\cP$-polyhedron contains the images
    of $\cP$ under this group.  Their union
    is the ideal, right-angled  rhombicuboctahedron $\cR$. 
    By \propref{FiniteBadFaceOrbit2}, every ideal, right-angled  $\cP$-polyhedron is a $\cR$-polyhedron. 
\end{proof}

\subsection{The case $\cP={}$\SW{4}}
The next example, polyhedron \SW{4}, is similar but requires multiple steps. 
As in \propref{Rhombicuboctahedron1}, we consider a finite vertex whose incident faces generate a reflection group of type $B_3$: in this case, the 1-4-5 vertex. 
The union of the $B_3$-images of~$\cP$ is an ideal, right-angled
cuboctahedron~$\cC$. 
Unlike the previous case, $\cC$ is not 
the smallest possible such $\cP$-polyhedron.  One can choose a plane of symmetry of~$\cC$, uniquely up to conjugacy,
which cuts it into two ideal polyhedra.  Each is the union of $24$ copies
of~$\cP$.  Pick one and call it $\cA$; it is an
ideal, right-angled square antiprism.  

\begin{prop} \label{Antiprism}
Let $\cP$ be the Scharlau-Walhorn polyhedron \SW{4}.  Then 
    the ideal, right-angled square antiprism~$\cA$ is a $\cP$-polyhedron.
    Furthermore,  every ideal, right-angled
$\cP$-polyhedron~$\cQ$ has a canonical tiling by copies of 
    $\cA$ and the ideal cuboctahedron~$\cC$.  In particular,
    it is tiled (possibly non-canonically) by~$\cA$.
\end{prop}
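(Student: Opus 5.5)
Following the pattern of \propref{Rhombicuboctahedron1}, I will start by identifying the non-idealizable faces of $\cP={}$\SW{4}. The expectation is that the faces through the 1-4-5 vertex split into equivalence classes whose fundamental polygons $\Delta_P$ fail to appear in \thmref{IdealizablePolygons}; for instance, they are compact, or pentagons with a single ideal vertex. The reflections in them generate the finite group $B_3$. By \propref{BadFaceOrbit} and \propref{FiniteBadFaceOrbit2}, every ideal, right-angled $\cP$-polyhedron $\cQ$ is then a $\cC$-polyhedron, where $\cC$ is the union of the $48$ $B_3$-images of $\cP$. Using \propref{FiniteBadFaceOrbit1}, I will check that $\cC$ is the ideal cuboctahedron. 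Its faces are the $B_3$-orbits of the remaining faces of $\cP$, and each of them meets the $B_3$-mirrors at angles $\pi/2n$, so all doubled angles are right. I will then verify that the plane of symmetry of $\cC$ is a mirror of $\Gamma(\cP)$, namely a $B_3$-translate of one of the mirrors through the 1-4-5 vertex. This plane cuts $\cC$ into two halves, each a union of $24$ tiles, and so $\cA$ is a convex union of copies of $\cP$, which makes it a $\cP$-polyhedron.

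Next I will build the canonical tiling. The reflection group $\Gamma(\cC)$ is right-angled, and $\cQ$ is a union of $\Gamma(\cC)$-translates of $\cC$. In any ideal right-angled polyhedron, a face of $\cQ$ is idealizable with right angles, and a minimal ideal polygon for a right-angled ideal face is the face itself. So the tiling by copies of $\cC$ is already canonical. To bring in $\cA$, I will look at the hexagonal cross-sections. The plane cutting $\cC$ into two antiprisms meets $\cC$ in an ideal polygon, and it is a mirror of $\Gamma(\cP)$ but not of $\Gamma(\cC)$. Within $\cQ$, consider the maximal unions of $\Gamma(\cP)$-tiles. Wherever such a plane meets $\partial\cQ$, or wherever two copies of $\cC$ in $\cQ$ are glued so that their symmetry planes disagree, I will split the $\cC$ into its two $\cA$ halves. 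The aim is a rule, invariant under the symmetries of $\cQ$, saying which copies of $\cC$ stay whole and which are split into two copies of $\cA$.

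The cleaner alternative, which I will try first, is to show directly that $\cQ$ is a union of copies of $\cA$ and $\cC$. The copies of $\cA$ come from the half-cuboctahedra that contain faces of $\cQ$ lying in a plane of the halving type. Once this is in place, the final clause is immediate: each $\cC$ is two copies of $\cA$, so $\cQ$ is tiled, possibly non-canonically, by $\cA$.

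The main obstacle is the middle step. First, I must identify precisely which faces of \SW{4} are non-idealizable and confirm that the group they generate is exactly $B_3$ and not larger. Second, I must show that an ideal $\cQ$ can be smaller than a union of copies of $\cC$, namely that $\cA$ itself is possible, and reconcile this with \propref{FiniteBadFaceOrbit2}. If the $B_3$ mirrors include an idealizable class, then $\cQ$ need only contain the orbit under the subgroup generated by the non-idealizable faces, presumably of order $24$, whose union is $\cA$. In that case I will redo the argument with $\Gamma'$ equal to that subgroup, so that every $\cQ$ is an $\cA$-polyhedron, reflectively or after a canonical pairing into $\cC$'s. Determining which of these two scenarios holds is where I expect the real work to lie.
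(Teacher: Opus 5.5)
Your first step is sound: $\{1,4\}\cup\{5\}$ is a union of equivalence classes, so Proposition~\ref{FiniteBadFaceOrbit1} shows the $B_3$-orbit $\cC$ is a right-angled Coxeter polyhedron, and halving $\cC$ along a mirror through the $1$-$4$-$5$ vertex $v$ exhibits $\cA$ as a $\cP$-polyhedron. The canonical-tiling part, however, rests on a false premise. The classes of \SW{4} are $\{1,4\},\{2\},\{3\},\{5\}$, and every one of them is idealizable:
\begin{itemize}
\item A type-$5$ plane has fundamental polygon face $5$ itself, a $(4,2,\infty)$ triangle.
\item A type-$\{1,4\}$ plane has fundamental polygon face $1$ together with the copy of face $4$ across the $1$-$4$ edge. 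This is a $(2,2,2,\infty)$ quadrilateral, which is case (4) of Theorem~\ref{IdealizablePolygons} with $n=2$, even though face $1$ is compact.
\item Faces $2$ and $3$ are $(4,2,\infty)$ and $(3,2,\infty)$ triangles.
\end{itemize}
So $\Gamma'$ is trivial, and Propositions~\ref{BadFaceOrbit} and~\ref{FiniteBadFaceOrbit2} say nothing. Neither of your scenarios occurs. If $\Gamma'=B_3$, every $\cQ$ would contain a cuboctahedron, contradicting the fact that $\cA$ itself (only $24$ chambers) is an ideal right-angled $\cP$-polyhedron. Your fallback, that every $\cQ$ is an $\cA$-polyhedron, is also false: two copies of $\cA$ glued non-reflectively along a triangular face still form a $\cP$-polyhedron (Remark~\ref{RemNonReflectiveCuboctahedral}). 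For the same reason, $\cQ$ need not be a union of $\Gamma(\cC)$-translates of $\cC$. The canonical tiling in the statement is in general not reflective, and $\cQ$ may contain no cuboctahedron at all.

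The idea you are missing is to sort chambers by where their unique translate $v$ of the finite $1$-$4$-$5$ vertex sits in $\cQ$. Since $\cQ$ has no finite vertices, there are four cases.
\begin{itemize}
\item If $v$ is interior to $\cQ$, the $48$ chambers at $v$ form a copy of $\cC$.
\item If $v$ is interior to a type-$5$ face, the $24$ chambers on one side form a copy of $\cA$. This is the only case your ``cleaner alternative'' captures.
\item If $v$ is interior to an edge $E$, then $E$ must extend a $4$-$5$ edge. Indeed, $1$-$4$ edges have angle $\pi/3$, and $1$-$5$ edges are compact and orthogonal to mirrors at both ends. Convexity then forces the quarter of $\cC$ cut out by the two mirrors through $E$, which is a $\frac12\cA$.
\item If $v$ is interior to a type-$\{1,4\}$ face, that face meets $\cC$ only in a non-ideal $(2,2,\infty,2,2,\infty)$ hexagon, and one gets two copies of $\frac12\cA$.
\end{itemize}
The decisive step, which nothing in your proposal replaces, is a pairing argument. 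Each $\frac12\cA$ has exactly one compact edge, and it lies in $\partial\cQ$. Since $\cQ$ has no compact edges, $\cQ$ extends across the adjacent face, and the tile there shares that edge, so it is again a $\frac12\cA$. The union $\cA'$ of the pair is again an ideal right-angled square antiprism, isometric to $\cA$ but not half of any cuboctahedron centred at a translate of $v$. Without this pairing the non-ideal pieces remain, and the tiling by $\cA$ and $\cC$ does not follow.
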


The canonical tiling statement is more complicated than the 
non-canonical tiling statement,
but it gives a 
fairly explicit way to construct all 
possible~$\cQ$.

\begin{proof}
    By ``canonical'' we mean that we will define the  
    tiling  entirely in terms of the
reflective tiling of~$\cQ$ by~$\cP$.  Given this,
    one can cut each cuboctahedron into two antiprisms,
along any of six planes, to get a non-canonical tiling of~$\cQ$ by~$\cA$.
First we construct a tiling by $\cA$, $\cC$ and $\frac12\cA$, where the last means the
polyhedron obtained by cutting $\cA$ along any plane of symmetry. 
    (This is a right-angled $\cP$-polyhedron with exactly two finite vertices;
    these are the endpoints of a compact edge.)
    Then we show that the
$\frac12\cA$ tiles assemble themselves into copies of~$\cA$.

    Suppose $v\in\cQ$ is a $\Gamma(\cP)$-translate of the 1-4-5 vertex of~$\cP$.
    We now describe the copies of $\cC$, $\cA$ and $\frac12\cA$ that tile~$\cQ$.
    If $v$ lies in the interior of $\cQ$, then $\cQ$ contains 
    the~$48$ translates of~$\cP$ that contain~$v$; we take their union (a cuboctahedron)
    as one of our tiles.  One of the~$48$, and the cuboctahedron,
    appear on the left in Figure~\ref{FigAntiprismA}, with $v$ at the center.
\begin{figure}[ht] 
\, \hfill
    \includegraphics[width=.4\textwidth]{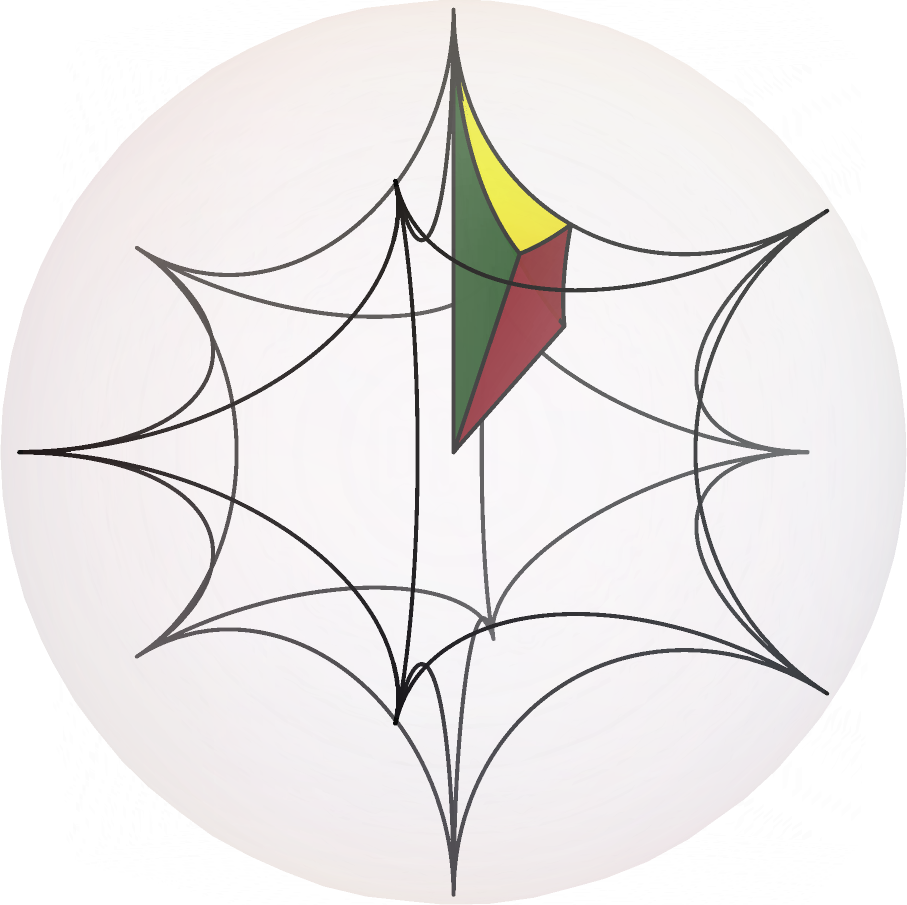} 
\hfill 
    \includegraphics[width=.4\textwidth]{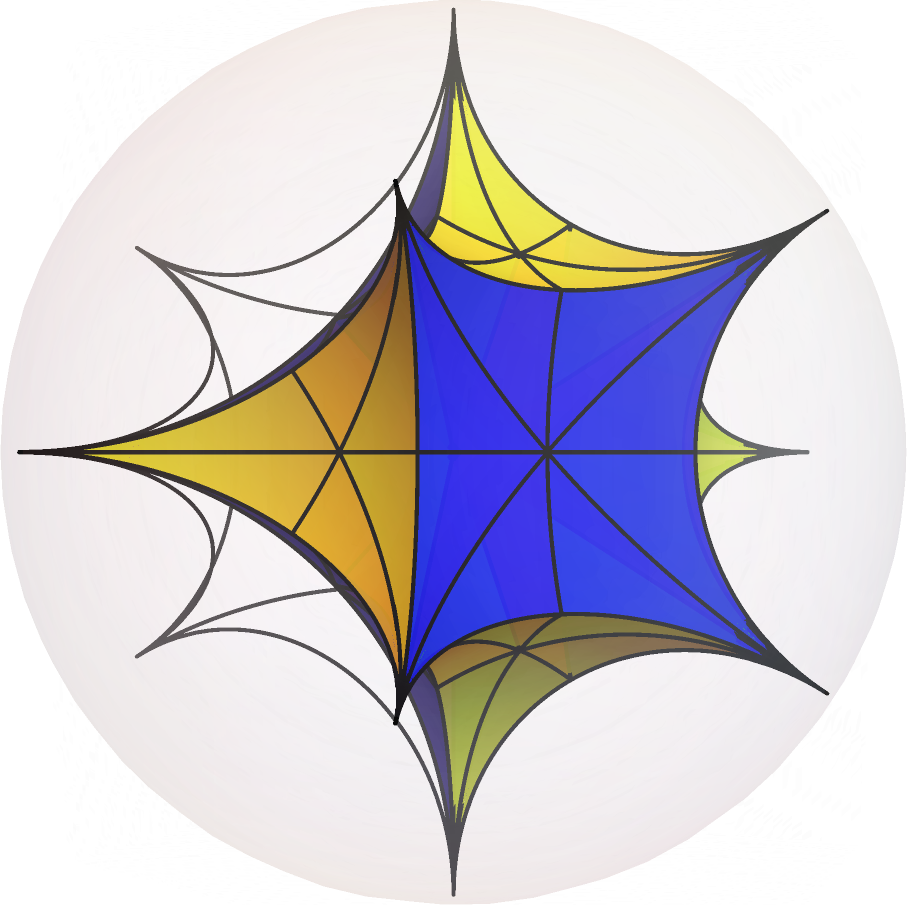} \hfill 
    \hfill \,

\begin{tikzpicture} [scale=1.5]
\coordinate (1) at (2,0);
\coordinate (2) at (0,0);
\coordinate (3) at (4,0);
\coordinate (4) at (1,0);
\coordinate (5) at (3,0);

\draw[thick] (4) -- (1);
\draw[thick, line width=2pt] (4) -- (2);
\draw[thick, double distance = 2pt] (5) -- (1);
\draw[thick, line width=2pt] (5) -- (3);

\filldraw[fill=red] (1) circle (2pt) node[above] {\scriptsize 1};
\filldraw[fill=blue] (2) circle (2pt) node[above] {\scriptsize 2};
\filldraw[fill=yellow] (3) circle (2pt) node[above] {\scriptsize 3};
\filldraw[fill=green] (4) circle (2pt) node[above] {\scriptsize 4};
\filldraw[fill=purple] (5) circle (2pt) node[above] {\scriptsize 5};
\end{tikzpicture}
\caption{From left: polyhedron \SW{4} with its 1-4-5 vertex~$v$ at the center
    of a cuboctahedron; $24$~translates of~\SW{4}, whose union is the
    ideal, right-angled square antiprism~$\cA$; Coxeter diagram with face colors labeled.}
    \label{FigAntiprismA}
\end{figure}
    Next, suppose $v$ lies in the interior of a face of~$\cQ$.
    There are two kinds of planes that contain~$v$:
    those tiled by copies of face~$5$ and those tiled by copies of faces $1$ and~$4$.
    The former cut the cuboctahedron along an ideal square, and the latter along
    a $(2,2,\infty,2,2,\infty)$ hexagon.
    If $v$ lies in the interior of a face of~$\cQ$ of type~$5$, then $\cQ$
    contains the~$24$ translates of~$\cP$ that contain~$v$ and lie on one side
    of that plane.  See the right side of Figure~\ref{FigAntiprismA}.
    Their union is a copy of~$\cA$, which we take as one of our tiles.
    We will return to the other case after studying the case
    that $v$ lies in an edge of~$\cQ$.  

    Suppose $v$ lies in an edge~$E$ of~$\cQ$.  
    In the reflective
    tiling of~$\bH^3$ by $\cP$, there are three kinds 
    of edges incident at~$v$, of types 
    1-4, 4-5 and 1-5. 
    Each 1-4 edge has dihedral angle $\pi/3$, so it cannot lie in
    an edge of any right-angled $\cP$-polyhedron. 
    Each 1-5 edge is compact, and orthogonal 
    to mirrors of~$\Gamma(\cP)$ at 
    its endpoints, so it cannot lie in an edge of any ideal $\cP$-polyhedron. 
    Therefore $E$ contains the 4-5 edge of a chamber containing~$v$.
    The left of Figure~\ref{FigAntiprismA} shows this chamber, with
    the 4-5 edge appearing vertical. 
    It follows that $E$ is the continuation of this edge, i.e., the
    vertical diameter
    of the cuboctahedron.  
    Note that $E$ lies in exactly two mirrors of~$\Gamma(\cP)$,
    and these cut the cuboctahedron into four sectors.  
    Also, $\cQ$ 
    is a convex union of chambers, and contains this
    chamber and diameter.  It follows that 
    $\cQ$ contains
    one of these sectors, pictured on the left in Figure~\ref{FigFoo}.
    We recognize it as
    $\frac12\cA$, because doubling it across its red/green face 
    yields the copy of~$\cA$ in Figure~\ref{FigAntiprismA}.  
    We take this copy of $\frac12\cA$ as 
    one of our tiles.

\begin{figure}[ht] 
\, \hfill 
    \includegraphics[width=.4\textwidth]{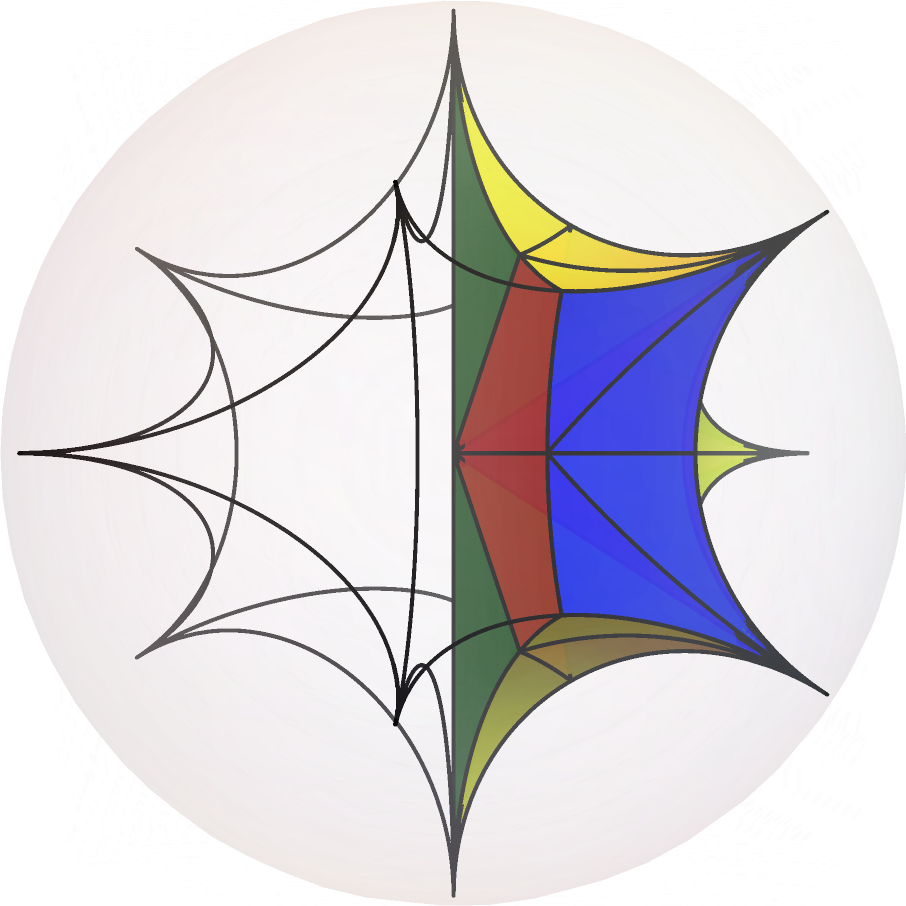} 
    \hfill 
    \includegraphics[width=.4\textwidth]{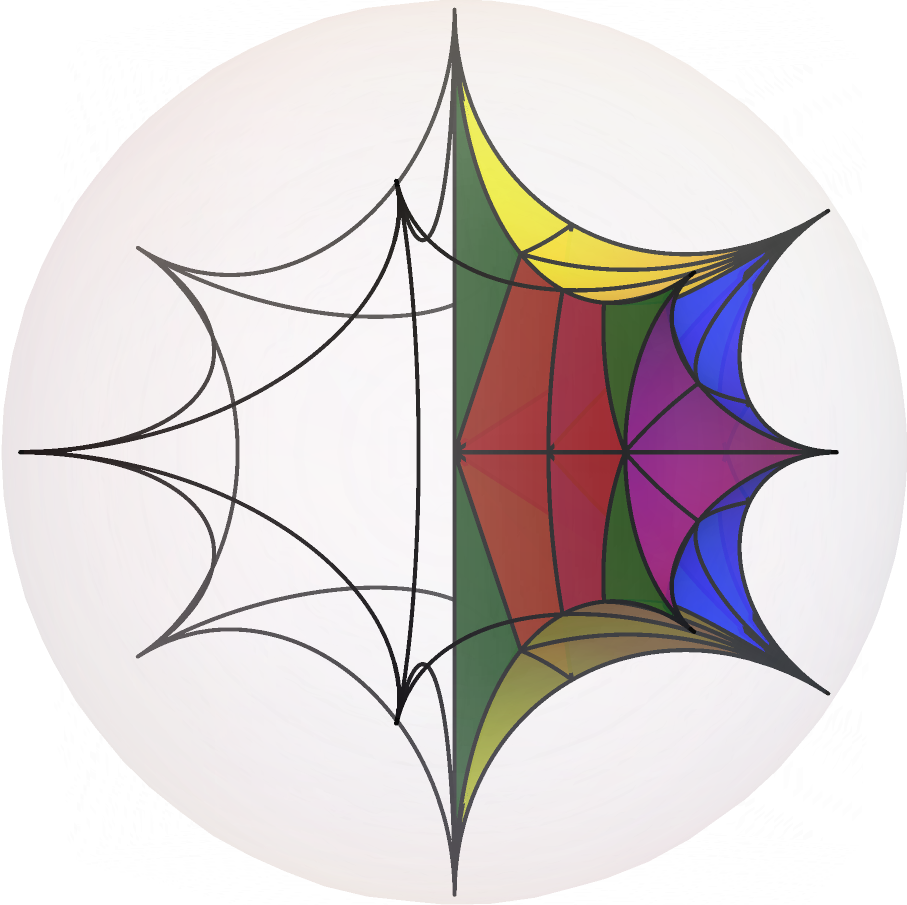} 
    \hfill \,
\caption{From left: the polyhedron~$\frac12\cA$; its double~$\cA'$ across its front
    (blue) face, which happens to be isometric to~$\cA$.}
\label{FigFoo}
\end{figure}

    Finally, suppose $v$ lies in the interior of a face of~$\cQ$ tiled
    by copies of faces $1$ and~$4$.  Then this face of~$\cQ$ meets the
    cuboctahedron in a $(2,2,\infty,2,2,\infty)$ hexagon.  Consider the
    line joining its ideal vertices.  We call it~$E$, for ease of comparison
    with the previous paragraph: by applying an element of $B_3$, we may
    suppose $E$ is the vertical diameter in Figure~\ref{FigFoo}.
    We saw that 
    there are two mirrors of~$\Gamma(\cP)$ that contain~$E$, and
    that they cut the cuboctahedron into four copies of $\frac12\cA$.
    Two of these lie in~$\cQ$.  After applying an element of $B_3$ that
    fixes~$E$, one of these appears on the left in Figure~\ref{FigFoo},
    with its green/red face in $\partial\cQ$.  The other is the
    reflection of this across the other face of~$\frac12\cA$
    that contains~$E$ (on the back).
    We include both these copies of $\frac12\cA$ among our tiles.

    We have now described all the tiles.  By construction, each chamber lying
    in~$\cQ$ lies in exactly one of them, according to how its 1-4-5 vertex
    lies in~$\cQ$.  This establishes that it is a tiling.  Finally, fix a tile
    $\frac12\cA$.  It has exactly one compact edge, and this lies in~$\partial\cQ$.
    On the left in Figure~\ref{FigFoo}, this edge is where blue meets red, 
    and the green/red face lies in~$\partial\cQ$.  
    Because $\cQ$ has no compact edges, $\cQ$ must extend across the blue
    face. The compact edge is also an edge in the tile on the other side,
    which must therefore be another copy of $\frac12\cA$.  Write $\cA'$ for
    their union, which appears on the right in Figure~\ref{FigFoo}.
    Surprisingly, $\cA'$ is also an ideal, right-angled square
    antiprism, hence isometric to~$\cA$.  In this manner, the $\frac12\cA$
    tiles in our tiling combine in pairs to form copies of~$\cA$.  
    This finishes the proof.
\end{proof}

\begin{remark}
    \label{RemNonReflectiveCuboctahedral}
    Although canonical, the tiling we have constructed may not be reflective.
    For example, consider the almost-hidden triangular yellow face of~$\frac12\cA$,
    on the right side of the first part of
    Figure~\ref{FigFoo}.  
    On the other side of this
    triangle
    lies a cuboctahedron, one of whose faces lies in the plane of
    the front blue face of~$\frac12\cA$.  Reflecting across this plane
    yields a second cuboctahedron, and extends $\frac12\cA$ to
    $\cA'$ as shown in the  second part of the figure.  The union
    of $\cA'$ and the two cuboctahedra is an ideal, right-angled $\cP$-polyhedron,
    and these are the tiles in its canonical tiling.  
    But each cuboctahedron contains only half of~$F$, so the faces of tiles
    don't even match up.

Also, in contrast to 
    \propref{Rhombicuboctahedron1}, there are
    right-angled  $\cP$-polyhedra that are not $\cA$-polyhedra.
    For example, one can
    glue two copies of $\cA$ together along a triangular face,
    in such a way that they are not reflections of each other, but their union
    is still a $\cP$-polyhedron.  
    The same construction applies with $\frac12\cA$ in place of~$\cA$.
    These complexities suggest that \propref{Rhombicuboctahedron1} is the best one can do.
\end{remark}

\subsection{The case \SW{13}}
Suppose a Coxeter polyhedron $\cP$ has a compact face which is 
orthogonal to its neighbors.  Then that plane slices every $\cP$-polyhedron in a
particularly simple way, allowing 
one to study such polyhedra by how they meet that plane.
This idea
 allows us to describe the ideal, right-angled 
\SW{13}-polyhedra: 

\begin{prop} \label{Rhombicuboctahedron2}
    Let $\cP$ be the polyhedron \SW{13}. Then the ideal, right-angled
    rhombicuboctahedron~$\cR$ (the common rectification of the cuboctahedron 
    and the rhombic dodecahedron) is a $\cP$-polyhedron.  Furthermore,
    every ideal, right-angled  $\cP$-polyhedron is tiled canonically by copies of~$\cR$.
\end{prop}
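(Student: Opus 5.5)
We will follow the hint preceding the statement: \SW{13} has a compact face, call it $F_c$, that is orthogonal to all its neighbours. We will use the plane $H$ through $F_c$ to slice $\cP$-polyhedra.

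Because $F_c$ meets its neighbours at right angles, $F_c$ is its own equivalence class. Also $H$ is tiled entirely by $\Gamma(\cP)$-translates of $F_c$, and $\Gamma_H$ has $F_c$ as its chamber. Since $F_c$ is compact, it is not idealizable. So, as in \propref{BadFaceOrbit}, no face of an ideal $\cP$-polyhedron $\cQ$ can lie in a plane of type $H$. Every such plane meeting the interior of $\cQ$ cuts $\cQ$ into two $\cP$-polyhedra, along a compact polygon tiled by copies of $F_c$.

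The first step is to identify $\cR$ as a $\cP$-polyhedron. We will look for a finite vertex of \SW{13} whose incident faces generate a $B_3$ (or $A_3$) group, as in the \SW{12} and \SW{4} cases. We then check from the Coxeter diagram that the union of the images of $\cP$ around that vertex, or a natural piece of it cut by a type-$H$ mirror, is an ideal right-angled polyhedron. Concretely, we expect $\cR$ to be the union of the $\cP$-chambers on one side of a type-$H$ plane that are generated by reflections in the non-idealizable faces other than $F_c$. Equivalently, $\cR$ is the $\cP'$ of \propref{FiniteBadFaceOrbit1} for $\Delta'$ equal to the non-idealizable classes other than $\{F_c\}$. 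Since \SW{12} and \SW{13} are commensurable, its shape should come out as a rhombicuboctahedron. We will verify the face count (18 quadrilaterals, 8 triangles) and the ideal vertices via \propref{IdealVertices}.

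The second step is to show that every ideal right-angled $\cQ$ is tiled by copies of $\cR$. We apply \propref{BadFaceOrbit} with $\Delta'$ taken to be all non-idealizable faces. Then $\cQ$ contains the $\Gamma'$-orbit of each of its chambers. Because $\Gamma'$ includes $\sigma_{F_c}$ and may be infinite, we will not use \propref{FiniteBadFaceOrbit2} directly. Instead we will use the slicing. The type-$H$ planes meeting $\cQ$ cut it into slabs, and each slab is a $\cP$-polyhedron whose only non-ideal features lie on its type-$H$ cut faces. Inside a slab, the finite group $\Gamma''$ generated by the remaining non-idealizable faces acts. \propref{FiniteBadFaceOrbit2}, applied to $\Gamma''$, shows that each slab is a union of $\Gamma''$-orbits of chambers. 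We expect each such orbit-union to be half of an $\cR$, split by a type-$H$ plane through its centre. Halves then pair up across $H$: the cut face is compact and interior to $\cQ$, so the chamber across it lies in $\cQ$, and by the reflection $\sigma_{F_c}$ it belongs to the mirror-image half. Each pair of halves glues into a copy of $\cR$. This is canonical because the whole construction depends only on the reflective tiling of $\cQ$ by $\cP$.

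The main obstacle will be this pairing step. The half-$\cR$ pieces on the two sides of a type-$H$ plane need not be reflections of one another, by analogy with the $\frac12\cA$ tiles of \propref{Antiprism}, where the double $\cA'$ was not the reflective double. We will first try the analogue of that argument. Each half-piece has a distinguished compact region on $H$, and it extends across $H$ to a half-piece sharing that compact region. Their union is then ideal and right-angled with the same combinatorics, and so, by the rigidity of Coxeter polyhedra, isometric to $\cR$. If the compact regions on $H$ do not match up one-to-one, we will refine the decomposition by locating, for each $\Gamma(\cP)$-translate of the distinguished vertex, which stratum of $\cQ$ it lies in, as was done for \SW{4}.
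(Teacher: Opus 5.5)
Your outline has a genuine gap: the mechanism you expect to produce $\cR$ does not exist for \SW{13}. From the diagram, the only non-idealizable faces are face~$1$, a compact pentagon, and face~$6$, the compact $(2,4,6)$ triangle. Faces $2$ and $3$ are $(2,6,\infty)$ and $(2,4,\infty)$ triangles, and faces $4$ and $5$ are $(2,2,2,\infty)$ quadrilaterals; all four are idealizable. So your $\Gamma''$ is $\langle\sigma_1\rangle$, of order~$2$, and its orbit-unions are pairs of chambers, whereas half of $\cR$ has $24$ chambers. (Also $\Gamma'=\langle\sigma_1,\sigma_6\rangle$ is finite of order $4$, so \propref{FiniteBadFaceOrbit2} does apply, but it only gives blocks of $4$ chambers around a $1$-$6$ edge.)

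Nor is there a $B_3$ or $A_3$ vertex. The finite vertex groups have order at most $24$, at the $1$-$2$-$5$ and $1$-$5$-$6$ vertices, and neither orbit is half of $\cR$. The first orbit involves $\sigma_2$, which moves $H$ to a different type-$6$ plane. The second straddles $H$ with only $12$ chambers on each side. So neither your identification of $\cR$ as a $\cP$-polyhedron nor your slab decomposition goes through, and the \SW{4}-style fallback has no suitable distinguished vertex. On the other hand, the pairing step you flag as the main obstacle is not one: the chambers on the two sides of any triangle in $H$ are mirror images across $H$, so the two halves of each copy of $\cR$ match automatically.

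What is missing is a genuinely two-dimensional argument inside $H$. Since face $6$ is orthogonal to its neighbours, $S=\cQ\cap H$ is a right-angled polygon tiled by $(2,4,6)$ triangles. Two further constraints hold:
\begin{enumerate}
\item Face $1$ is compact with all angles $\pi/(\hbox{even})$, so no edge of $S$ lies on a type-$1$ line. Hence $S$ is a union of the $(2,2,2,3)$ quadrilaterals obtained by erasing those lines.
\item The $1$-$4$ and $1$-$5$ edges are compact and orthogonal to mirrors at both endpoints. Hence no vertex of $S$ is a tiling vertex lying on a type-$1$ line, i.e.\ one of the $\pi/4$ or $\pi/6$ corners of the triangles.
\end{enumerate}
One then proves that no polygon in this tiling has exactly one such forbidden vertex. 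Consequently the perpendicular to $\partial S$ at any allowed non-vertex boundary point exits $S$ at another allowed point, again orthogonally. Cutting along all such segments decomposes $S$ canonically into copies of a single right-angled octagon made of $24$ triangles.

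The $48$ chambers whose face $6$ lies in one such octagon form $\cR$. This gives both assertions at once. First, $\cR$ is a $\cP$-polyhedron. Second, each chamber of $\cQ$ has exactly one type-$6$ face, lying in exactly one octagon of exactly one slice, so the copies of $\cR$ tile $\cQ$ canonically. Without this planar decomposition, or a substitute for it, your outline does not reach the statement.
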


\begin{remark}
Both \SW{12} and \SW{13} tile~$\cR$ reflectively, 
so they are commensurable.  Here is a more direct way to see this:
gluing three copies of \SW{12} around its 1-4 edge, and gluing three copies of \SW{13} 
    around its 1-5 edge, yield isometric $\cP$-polyhedra.
\end{remark}

\begin{proof}[Proof of Proposition~~\ref{Rhombicuboctahedron2}]
    First suppose $\cP$ is any hyperbolic Coxeter polyhedron,
    with a compact face $F$ which 
    is orthogonal to its neighbors, and let $\cQ$ be an ideal
    $\cP$-polyhedron.  Since the plane of~$F$ is tiled by copies of~$F$,
    it  cannot contain a face of $\cQ$, so it meets the interior of~$\cQ$.
    Furthermore, 
    its intersection~$S$ with~$\cQ$
    is an $F$-polygon.  Because $F$ is orthogonal to its neighboring facets in~$\cP$,
    $S$ is orthogonal to every face of~$\cQ$ that it meets.  It follows
    that every edge of~$\cQ$ that meets $S$ does so orthogonally.
    In particular,
    $S$ is right-angled if~$\cQ$ is.

    Now we specialize to $\cP={}$\SW{13}, with $F$ being face 6.  The other faces that
    it meets are 1, 4 and 5, which it meets orthogonally.  Examining how they meet 
    each other shows that $F$ is a $(2,4,6)$ triangle, hence compact.  The
    previous paragraph shows:
    for every ideal, right-angled $\cP$-polyhedron~$\cQ$, its section~$S$ by any plane
    of type~$6$ is a right-angled polygon tiled by copies of the $(2,4,6)$ triangle. 
    Additionally:
\begin{itemize}
    \item Because face 1 is also compact and meets its neighbors at  angles of the form
        $\pi/(\hbox{even})$, it cannot 
    lie in a  face of~$\cQ$. Thus $S$ cannot have any edges cut out by hyperplanes of type~1. 
\item Because the 1-4 and 1-5 edges are compact and meet faces of~$\cP$
    orthogonally at their endpoints, they cannot lie in edges of~$\cQ$. 
        So no vertex of $S$ lies on any
        edge of type 1-4  or~1-5.
\end{itemize}
\figref{246tiling} shows the plane
    containing~$F$, tiled by $(2,4,6)$ 
    triangles. 
    The lines cut out by planes of type~1 are dashed---these cannot be edges of $S$, so
    $S$ is a union of the $(2,2,2,3)$-quadrilaterals obtained by ignoring the dashed lines.
    By a ``polygon in the tiling'', we mean a convex union of some of these 
    $(2,2,2,3)$-quadrilaterals.
    The lines cut out by planes of type~$4$ resp.~$5$
    are solid, and contain
    the short resp.\ long edges of the quadrilaterals.  
    We have colored red every vertex that lies on a dashed line; these are where
    the plane meets lines of types 1-4 and~1-5.  We showed above that $S$ has
    no such vertex: all vertices of~$S$ are blue.
    The shaded octagon
    is a candidate for~$S$; most of the proof amounts to showing that $S$ is
    tiled by such octagons.

\begin{figure}[ht] 
\include{HyperbolicTiling}

\caption{The tiling of a plane of type~$6$ by (2,4,6) triangles, with a right-angled octagon shaded yellow.}
\label{246tiling} 
\end{figure}

\begin{lemma}\label{LemNot1RedVertex}
    No polygon in the tiling 
    has exactly one vertex colored red.
\end{lemma}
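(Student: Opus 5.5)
The plan is a direct geometric argument: follow the dashed line through the supposed unique red vertex across the polygon, and show that it must leave through a second red vertex. Let $S$ be a polygon in the tiling, that is, a convex union of $(2,2,2,3)$-quadrilaterals, and suppose $v$ is its only red vertex. First I will record the local structure of the $(2,4,6)$ tiling from \figref{246tiling}. The sides of each triangle have types $1$, $4$, $5$, and each $(2,2,2,3)$-quadrilateral is the union of two triangles glued along a dashed (type~1) segment. Hence every dashed segment in the tiling is a diagonal of a quadrilateral, and both of its endpoints are red. Conversely, no side of a quadrilateral is dashed. I will also list the angles at which a dashed line can meet a solid line of type $4$ or $5$; these are the angles of the triangle at its two red corners.

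\textbf{Step 1: the dashed line enters $S$.} Let $\ell$ be a dashed line through $v$. The two sides of $S$ at $v$ are solid, and $\ell$ is a line of the tiling through a red point. Using the list of angles from the first paragraph, I will check that $\ell$ enters the interior of $S$ at $v$ and does not run along $\partial S$ or only touch $S$ at $v$. Concretely, the angle of $S$ at $v$ is a sum of quadrilateral angles at a red point. The claim to verify is that every such sum that is at most $\pi$ is cut by $\ell$, because each quadrilateral having $v$ as a red corner is itself bisected by its dashed diagonal.

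\textbf{Step 2: find where $\ell$ leaves.} By convexity, $\ell\cap S$ is a segment $[v,w]$. This segment is a chain of dashed diagonals of quadrilaterals lying in $S$, so $w$ is a red point of the tiling lying on $\partial S$. If $w$ is a vertex of $S$, it is a second red vertex, which is a contradiction. So the main obstacle is to exclude the possibility that $w$ lies in the interior of a side of $S$.

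\textbf{Step 3: the main obstacle.} Suppose $w$ lies in the interior of a side of $S$, contained in a solid line $L$. Then near $w$ the polygon $S$ is a half-disc bounded by $L$, filled by quadrilateral corners at the red point $w$ whose angles add up to $\pi$. The reflection across $\ell$ preserves the tiling. I would try first to show that at a red point the quadrilateral corners on one side of a solid line through it cannot sum to exactly $\pi$ unless $\ell\perp L$. In the perpendicular case I would then use the reflection across $\ell$ to push the argument further along $L$ and reach a contradiction. I am not confident this local analysis closes by itself.

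\textbf{Fallback.} If the local analysis does not close, I would use Gauss--Bonnet. Every quadrilateral has area $\pi/6$, so a polygon built from $k$ quadrilaterals with $n$ vertices and angles $\theta_i$ satisfies $k\pi/6=(n-2)\pi-\sum\theta_i$. Blue vertices and red vertices have different possible angle sets; I would compute which quadrilateral corners occur at each kind of point, and then look for a congruence, modulo a suitable multiple of $\pi/6$ or via a weighted count, that forces the number of red vertices to be even.
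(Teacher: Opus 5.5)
There is a genuine gap, and it is where you suspected: Step~3 contains the whole difficulty of the lemma. Neither your local plan nor your fallback can close it.

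\textbf{The local claim in Step~3 is false.} Take a red $1$-$4$ point on a type-$4$ line $L$. Two right-angled quadrilateral corners lie on each side of $L$, and both dashed lines cross $L$ at angle $\pi/4$. Take a red $1$-$5$ point on a type-$5$ line $L$. Three corners of angle $\pi/3$ lie on each side, and two of the three dashed lines cross $L$ at angle $\pi/6$. So nothing local stops $w$ from lying in the interior of a side.

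\textbf{The bad configuration actually occurs.} Take a $(2,2,2,3)$ quadrilateral $ABA'C$ with blue $A,A'$, red $B$ (right angle), red $C$ (angle $\pi/3$), and short sides $AB,BA'$. Let $\rho$ be the reflection in $AB$. Then $ABA'C\cup\rho(ABA'C)$ is a convex quadrilateral with vertices $A',C,\rho(C),\rho(A')$. The dashed diagonal leaving the red vertex $C$ exits at $B$, which lies in the interior of the side $A'\rho(A')$. So following one dashed line from $v$ need not reach a second red vertex. The hypothesis that $v$ is the \emph{only} red vertex has to be used globally, and your proposal never says how.

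\textbf{The fallback aims at a false statement.} Let $\rho'$ be the reflection in $AC$. Then $ABA'C\cup\rho'(ABA'C)$ is a convex pentagon with angles $\pi/2,\pi/2,2\pi/3,\pi/2,\pi/2$ and three red vertices $B$, $C$, $\rho'(B)$. So no congruence can force the number of red vertices to be even. Gauss--Bonnet alone also allows, for example, a right-angled pentagon with one red vertex: its area $\pi/2$ equals three quadrilaterals.

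\textbf{A minor fix in Step~1.} You must choose $\ell$ to be the diagonal of a quadrilateral of $S$ at $v$. At a right-angled red vertex, the other dashed line through $v$ meets $S$ only at $v$.

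\textbf{The missing idea is a reduction by cutting along solid lines at blue points.}
\begin{enumerate}
\item Suppose a blue point lies in the interior of a side of $P$. The other solid line through it is perpendicular to that side and is a line of the quadrilateral tiling. Cutting along it gives two smaller polygons in the tiling.
\item Check where the far end of the cut lands: a blue point, a red point interior to a side, or the red vertex itself. In every case at least one piece again has exactly one red vertex.
\item Inducting on area, you may assume every blue point of $\partial P$ is a vertex.
\item Colors alternate along each solid line. So the boundary neighbours of the red vertex are blue vertices, hence right-angled.
\item Convexity then forces $P$ to close up. It is a quadrilateral if the red angle is $\pi/2$ or $\pi/3$, and a hexagon if it is $2\pi/3$. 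Each has a second red vertex, a contradiction.
\end{enumerate}
This is the paper's argument.
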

\begin{proof}
    Suppose for a contradiction that such a polygon $P$ exists. If $P$ has a blue point
    in the interior of an edge, then cut~$P$ along the perpendicular line there. 
    This cuts $P$ into smaller convex polygons. 
    If the other end of the cut is colored blue, or if it is red and interior to
    an edge of~$P$,
    then one of the smaller 
    polygons has exactly one red vertex.  If the other end is the 
    unique red vertex of~$P$, then the same holds for both
    smaller polygons.
    Cutting repeatedly, we may suppose without loss that
    every blue point on $\partial P$ is a vertex of~$P$.
    Some picture-drawing leads to a contradiction.  Namely, 
    starting at the red vertex, the two blue points adjacent to it along the boundary 
    must also be vertices. If the red vertex has a $\pi/2$ or $\pi/3$ angle, 
    this forces the boundary to close up and form a quadrilateral with an additional red vertex.
    If the red vertex has a $2\pi/3$ angle, the boundary closes 
    up after two steps and forms a hexagon with an additional red vertex.
\end{proof}

    Suppose $P$ is a polygon in the tiling, with no red vertices,
    and $p\in\partial P$ is blue but not a vertex of~$P$.  
    If we cut
    along the 
    line orthogonal to $\partial P$ there, then the other end~$q$ of the
    cut cannot be red, by Lemma~\ref{LemNot1RedVertex}. So it is blue.  
    The cut line appears solid in Figure~\ref{246tiling}, and meets the
    interior of~$P$.  
    Since there are only two solid lines through each blue point,
    $q$ cannot be a vertex of~$P$.  Since the solid lines through~$q$
    are orthogonal, $\overline{pq}\perp\partial P$ at~$q$ as well as at~$p$.
    We will call $\overline{pq}$ a blue-blue segment across~$P$.  
    
\begin{lemma} \label{annagon} 
    Suppose $P$ is a polygon in the tiling, with no red vertices.
    Then cutting~$P$, along all blue-blue segments across it,
    decomposes~$P$ into right-angled octagons, each
    an image of the one shown in Figure~\ref{246tiling},
    under an element of the $(2,4,6)$ triangle group.
\end{lemma}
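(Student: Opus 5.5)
We prove this by induction on the number of $(2,2,2,3)$-quadrilaterals in~$P$, peeling off one octagon at a time. The base observation is local. Take a blue vertex $w$ of~$P$. It is not red, so it lies on exactly two solid lines, one of type~$4$ and one of type~$5$, and these are orthogonal. Since $P$ is convex and its edges lie on solid lines, the interior angle of~$P$ at~$w$ is $\pi/2$. Moreover, $P$ contains the unique quadrilateral $q$ of the tiling sitting in that right-angled corner at~$w$.

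We then look at the image~$O$ of the shaded octagon containing~$q$, chosen so that $w$ is one of its corners; such an image exists because the triangle group acts transitively on blue points, and on the two quadrants at each of them up to the local symmetry. The aim is to show $O\subseteq P$ and that each side of~$O$ either lies in~$\partial P$ or lies on a blue-blue segment across~$P$.

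The main step is to walk along $\partial O$ starting from~$w$. Consider the two sides of~$O$ leaving~$w$; they lie along the edges of~$P$ at~$w$. Follow such an edge of~$P$ to the next blue point~$p$ on it. Either $p$ is a vertex of~$P$, or we are in the situation described just before the lemma: the perpendicular at~$p$ is a solid line, and it crosses~$P$ to a blue point~$q'$ that is not a vertex, giving a blue-blue segment $\overline{pq'}$. In both cases the boundary of~$P$, or the cut, turns by $\pi/2$ exactly where $\partial O$ turns. By inspecting Figure~\ref{246tiling}, consecutive blue points along each solid line are spaced so that this procedure traces exactly the eight sides of~$O$. The crucial input is Lemma~\ref{LemNot1RedVertex}, which guarantees that no cut ends at a red point, and that $P$ cannot turn at a red point because $P$ has no red vertices. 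So the traced boundary closes up after eight right-angled turns and coincides with~$\partial O$. This is the step I expect to be the main obstacle: one must verify, using only the local geometry of the $(2,4,6)$ tiling (the red points at $\pi/2$, $\pi/3$, $2\pi/3$ positions, as in the proof of Lemma~\ref{LemNot1RedVertex}), that from a blue corner the next blue point along each solid line lies exactly one octagon side away, with no red point intervening that $\partial P$ could pass through.

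Once $O$ has been split off, every side of~$O$ lies in~$\partial P$ or along a blue-blue segment of~$P$. Hence $\overline{P\setminus O}$ is a finite union of convex polygons in the tiling, each cut off by a single blue-blue segment. Their vertices are vertices of~$P$ or endpoints of blue-blue segments, so all of them are blue. By induction each piece decomposes into octagons along its own blue-blue segments. Finally we need that the blue-blue segments of the pieces, together with the sides of~$O$, are exactly the blue-blue segments of~$P$. This holds because every blue-blue segment of~$P$ is a full chord of~$P$ orthogonal to~$\partial P$ at both ends. Such a chord cannot cross the interior of~$O$, since the interior of~$O$ contains no solid line passing through it perpendicular to the octagon's sides. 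So each blue-blue segment of~$P$ either is a side of~$O$ or lies entirely in one piece, where it is again a blue-blue segment.
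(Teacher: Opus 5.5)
There is a genuine gap in your last paragraph. A corner of $O$ can lie in the interior of $P$. Both sides of $O$ at such a corner lie on blue-blue segments that continue past it, so $\overline{P\setminus O}$ need not be a union of convex polygons each cut off by a single segment.

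For example, let $P$ be the union of the four images of the octagon that have a fixed blue point $x$ as a corner. This $P$ is a convex polygon in the tiling with only blue vertices, and from any starting corner $w$ your $O$ is one of these four octagons. Then $\overline{P\setminus O}$ is the union of the other three. That region is not convex, since it has a reflex angle at $x$. Moreover, the blue-blue segment through $x$ that contains a side of $O$ is twice as long as that side, so it is neither a side of $O$ nor contained in a single convex piece. Hence your induction hypothesis does not apply to what remains, and your final matching claim is false as stated.

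To repair this you must cut along the full chords containing the sides of $O$. You then need two facts you never prove:
\begin{enumerate}
\item[(a)] two crossing blue-blue segments of $P$ meet in a blue point, so the new pieces have no red vertices;
\item[(b)] every blue-blue segment of a piece extends to a blue-blue segment of $P$.
\end{enumerate}
Both follow from Lemma~\ref{LemNot1RedVertex} applied to the two halves of $P$ cut along one segment. They are exactly the content of the paper's proof, which inducts by cutting along any single blue-blue segment and does no peeling at all. Its base case, a polygon with no blue-blue segments, forces every edge to run blue--red--blue, and hence forces the octagon.

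The same missing facts are hidden in two other steps.

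First, your reason that no blue-blue segment crosses the interior of $O$ is incorrect. $O$ is centered at a red point where two type-$4$ lines cross at right angles. Each of these solid lines passes through the interior of $O$ and meets two opposite type-$4$ sides of $O$ perpendicularly, at their red midpoints. What actually rules out a crossing is that the segment would have to leave $O$ through a red point of $\partial O$. That contradicts (a), or the blueness of endpoints if that side of $O$ lies in $\partial P$.

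Second, in your walk around $\partial O$, at a corner of $O$ lying in the interior of $P$ you are not in the situation described just before the lemma, which concerns blue points of $\partial P$. You need to know that the perpendicular through an interior blue point of a blue-blue segment is again a blue-blue segment. That is an instance of (b).
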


\begin{proof}
    Assume inductively that this is known for all polygons in the tiling,
    that lack red vertices and have
    less area than~$P$.  Suppose first that
    $P$ has no blue-blue segments across it.
    Then every edge of~$P$ has blue endpoints, with
    exactly one red point in between.  This forces $P$ to be
    the shaded octagon,
    up to an element of the triangle group.
    On the other hand, suppose 
    there exists a blue-blue segment $\overline{pq}$ across~$P$.
    Cut $P$ along it, into smaller polygons $P_1$ and~$P_2$ with no
    red vertices.
    We claim that cutting $P$ along all of its blue-blue segments
    yields the same pieces as cutting $P_1$ and~$P_2$ along all of their
    blue-blue segments.  Given this, the lemma follows by induction.

    To prove our claim, we must prove two things.  First, if
    $\overline{p'q'}$ is a blue-blue segment across~$P$, that
    crosses $\overline{pq}$, then its intersections with the $P_i$
    are blue-blue segments across them.  This amounts
    to the claim that $\overline{pq}\cap\overline{p'q'}$ is blue,
    which follows by applying \lemref{LemNot1RedVertex} to either one of the~$P_i$.
    Second, if $\overline{p'q'}$ is a blue-blue segment across (say) $P_1$,
    then it extends to a blue-blue
    segment across~$P$.  This holds because its extension across~$P_2$
    enters~$P_2$ at a blue point, so it also exits at a blue point, again by \lemref{LemNot1RedVertex}.
\end{proof}

\begin{figure}[ht] 
\, \hfill \includegraphics[width=.3\textwidth]{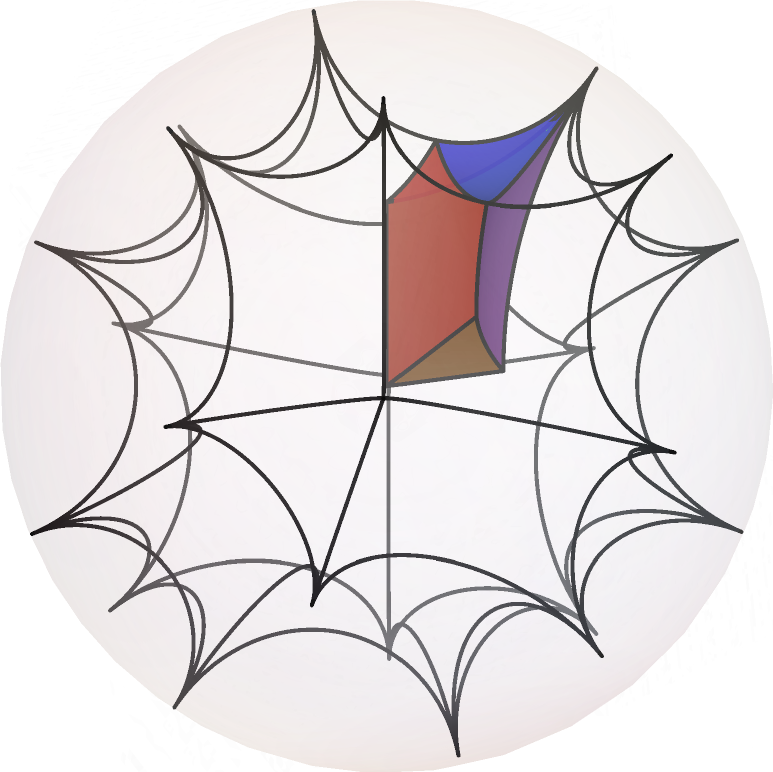} \hfill \includegraphics[width=.3\textwidth]{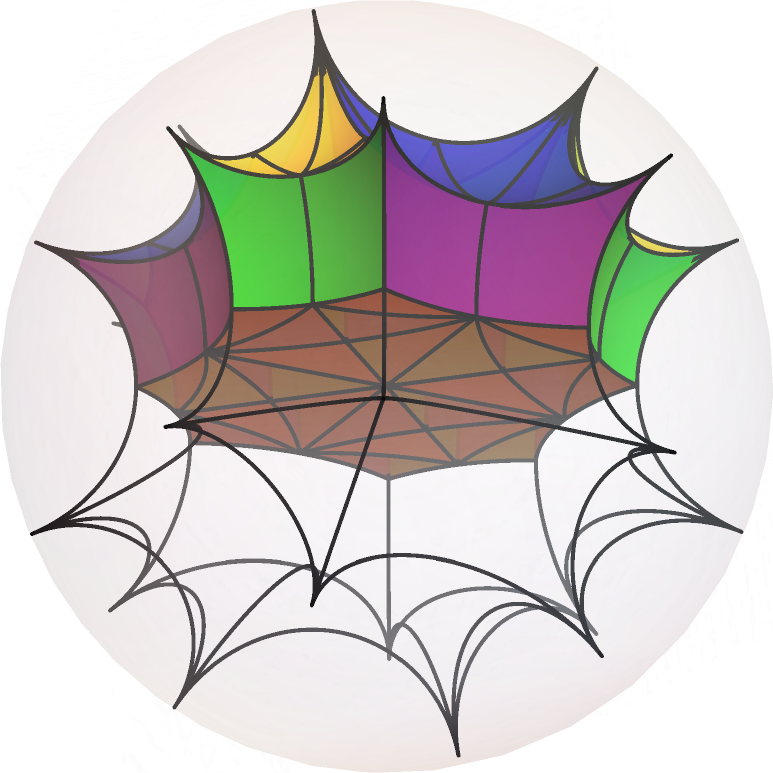} \hfill \includegraphics[width=.3\textwidth]{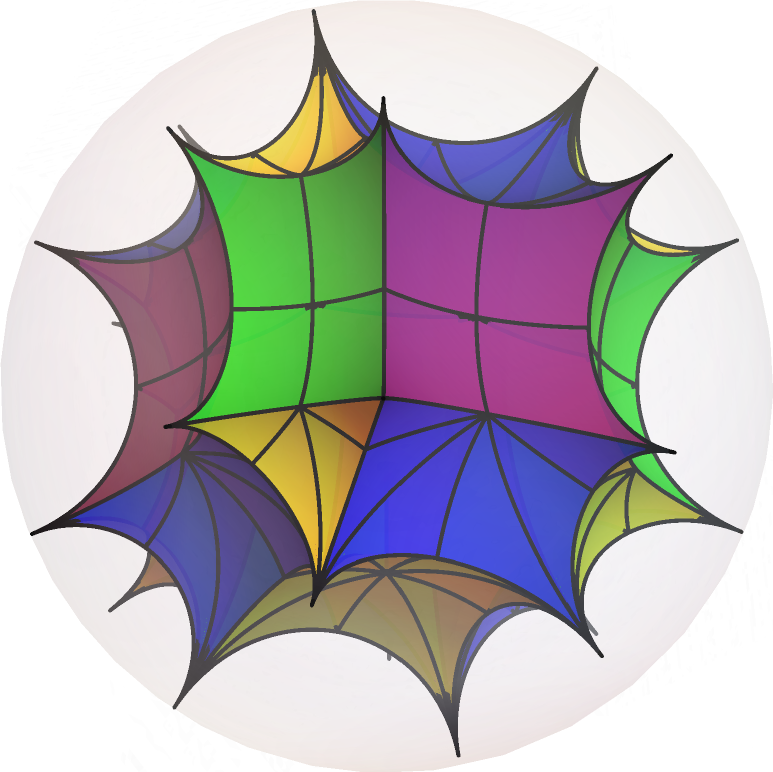} \hfill \,

\begin{tikzpicture} [scale=1.5]
\coordinate (4) at (1,1);
\coordinate (6) at (-1,1);
\coordinate (5) at (0,0);
\coordinate (2) at (0,1);
\coordinate (1) at (1,0);
\coordinate (3) at (-1,0);

\draw[thick, double distance = 2pt] (4) -- (1);
\draw[thick, line width=2pt] (4) -- (2);
\draw[thick, double distance = 2.5pt] (5) -- (1);
\draw[thick, double distance = .3pt] (5) -- (1);
\draw[thick, line width=2pt] (5) -- (3);
\draw[dashed] (6) -- (2);
\draw[dashed] (6) -- (3);

\filldraw[fill=red] (1) circle (2pt) node[below] {\scriptsize 1};
\filldraw[fill=blue] (2) circle (2pt) node[above] {\scriptsize 2};
\filldraw[fill=yellow] (3) circle (2pt) node[below] {\scriptsize 3};
\filldraw[fill=green] (4) circle (2pt) node[above] {\scriptsize 4};
\filldraw[fill=purple] (5) circle (2pt) node[below] {\scriptsize 5};
\filldraw[fill=orange] (6) circle (2pt) node[above] {\scriptsize 6};
\end{tikzpicture}
\caption{From left: polyhedron \SW{13}; the 24 copies of polyhedron \SW{13} along one side of the octagon; all 48 copies of polyhedron \SW{13} with a face in the octagon, forming the ideal, right-angled rhombicuboctahedron~$\cR$; Coxeter diagram with face colors labeled.}
\label{FigRhombicuboctahedron2}
\end{figure}

We return to the 
proof of \propref{Rhombicuboctahedron2}.
Consider a plane of type~$6$ that meets the interior of~$\cQ$,
and its intersection~$S$ with~$\cQ$.
\lemref{annagon} decomposes~$S$ canonically into octagons.  Each octagon is
the union of~$24$ of the $(2,4,6)$ triangles, each of which is a face of two
chambers of~$\Gamma(\cP)$.  These $48$ chambers lie in~$\cQ$, because $S$
meets the interior of~$\cQ$.  The $24$ chambers on one side of the
octagon appear in the middle of \figref{FigRhombicuboctahedron2},
and the full set of~$48$ appear
on the right.  The union~$U$ of the latter is isometric to~$\cR$.  
Each chamber of $\Gamma(\cP)$ in~$\cQ$ has its face of type~$6$
in exactly one of the type~$6$ planes that slice~$\cQ$, indeed in exactly one of 
the octagons into which  \lemref{annagon} decomposes that slice.   So it
lies in exactly one of the rhombicuboctahedra that we have 
constructed.  This establishes
the canonical tiling of~$\cQ$ by copies of~$\cR$.

\end{proof}

\begin{remark}
    Although canonical, this tiling may not be reflective, just
    as in Remark~\ref{RemNonReflectiveCuboctahedral}.
    For example, let $2\cR$ be the double of~$\cR$ 
    across one of its green faces in \figref{FigRhombicuboctahedron2}.
    Of the two adjacent yellow triangles, consider the upper one.  It extends
    to an ideal square face~$F$ of~$2\cR$,
    subdivided by its lines of symmetry.  So the union of $2\cR$,
    with the translate of~$\cR$ whose bottom face is~$F$, is a $\cP$-polyhedron.
    These three copies of~$\cR$ form the canonical
    decomposition, and they are not glued face-to-face.
\end{remark}

\subsection{The case \SW{2}}
Finally, let  $\cP$ be the polyhedron \SW{2}. 
There are two different types of ideal, right-angled $\cP$-polyhedra:

\begin{lemma} \label{All3No3}
Let $\cP$ be the polyhedron \SW{2} and $\cQ$ be an ideal, right-angled $\cP$-polyhedron. 
    Then either every face of~$\cQ$ is tiled by copies of face~$3$, or none is.
\end{lemma}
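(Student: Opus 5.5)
We do not know the Coxeter diagram of \SW{2} from the text above, so the plan is built around a local-to-global connectivity argument that should hold whatever the precise angles are. The faces of $\cQ$ lie in mirrors of $\Gamma(\cP)$, and each mirror is tiled by translates of faces from a single equivalence class of faces of $\cP$. So every face of $\cQ$ has a well-defined type: either ``type $3$'' (tiled by copies of face~$3$) or ``other''. Call two faces of $\cQ$ adjacent if they share an edge, or more usefully if they meet at an ideal vertex. Since the face-adjacency graph of a polyhedron is connected, it is enough to show that two faces of $\cQ$ meeting at a common ideal vertex have the same type. We would in fact prove the stronger statement that at every ideal vertex of $\cQ$, either all four incident faces are of type~$3$ or none is.

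The key step is a study of the vertex links. By \propref{IdealVertices}, the link of an ideal vertex $V$ of $\cQ$ is a Euclidean rectangle. It is tiled by cross-sections of $\cP$ at the translates of its ideal vertices, and its four sides are tiled by the edges of those cross-sections. We would list the ideal vertices of \SW{2} and, for each one, the Euclidean cross-section: a rectangle or a $(2,3,6)$ or $(2,4,4)$ triangle, with each side labeled by the face of $\cP$ it comes from. Each side of the link rectangle lies in one mirror through $V$, so it is labeled by a single equivalence class. We then check, cusp by cusp, which rectangles in the affine tiling of \propref{IdealVertices} (and \corref{RectangleFact}) can arise. The expected outcome is that at each ideal cusp of \SW{2}, the mirrors of type~$3$ through the cusp point are either all of the mirrors in both orthogonal directions or none of them. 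This would happen, for example, if face~$3$ meets the other faces at the cusp at angle $\pi/(\text{odd})$, so that the lines in the link tiling alternate labels along each direction. Any rectangle in that tiling would then have either all four sides of type~$3$ or no side of type~$3$. If instead a cusp allows a mixed rectangle, we must exclude it differently: either show that the adjacent non-type-$3$ face would have to be tiled by a non-idealizable polygon (using \thmref{IdealizablePolygons} and \propref{BadFaceOrbit}), or that the type-$3$ face would be forced to contain a compact edge.

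The main obstacle is this second alternative, the mixed vertices. Pure link combinatorics may not rule them out, and one may need the global structure of the faces. For that we would combine the minimal ideal polygons of \propref{MinimalPolygons} for face~$3$ and for the other classes with the rule of \corref{RectangleFact} that opposite sides of the link carry equal tile counts. If this local analysis goes through, connectivity of the face graph finishes the proof: a type-$3$ face propagates its type to every face that shares a vertex with it, and hence to all faces of $\cQ$.
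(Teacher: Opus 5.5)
\textbf{There is a gap: the cusp step is asserted, not proved.} Your architecture matches the paper's. You show that at each cusp of $\cQ$ the four incident faces are either all of type~$3$ or none is, then propagate through the connected face graph. But you never carry out the cusp step, and that step is the whole content of the lemma; you only state the ``expected outcome'' and hedge with a fallback.

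The missing input is concrete. \SW{2} is a tetrahedron: faces $1,4$ meet at $\pi/3$, faces $3,4$ and $2,3$ meet at $\pi/4$, and all other pairs are orthogonal. It has a single ideal vertex, where faces $2,3,4$ meet, and its horospherical section is a $(2,4,4)$ triangle with face~$3$ as hypotenuse. In that Euclidean tiling every corner angle is $\pi/(\mathrm{even})$, so each line is a mirror carrying one label. The hypotenuse lines run in two mutually orthogonal directions, and the leg lines run in the two directions at $45^\circ$ to those. A rectangle's sides lie in two orthogonal directions. So every rectangle in this tiling has all four sides on hypotenuse lines (all type~$3$) or all four on leg lines (none of type~$3$). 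This is what \propref{IdealVertices} and the center of \figref{MinRectangles} record, and nothing global is needed.

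\textbf{The mechanism you suggest points the wrong way.} If face~$3$ met a neighbor at $\pi/(\mathrm{odd})$, the two faces would lie in one equivalence class. The mirrors through that corner would then carry both labels rather than separate them. Also, if parallel lines alternated labels, a rectangle with sides on consecutive parallel lines would be mixed, so your stated conclusion does not follow from that premise.

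What actually drives the lemma is the even angle $\pi/4$ between face~$3$ and its neighbors, together with the $45^\circ$ offset. That offset prevents a type-$3$ line from ever being orthogonal to a non-type-$3$ line.

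Your contingency plan for ``mixed vertices'' (non-idealizable faces, compact edges, minimal polygons, \corref{RectangleFact}) addresses a situation that cannot occur for \SW{2}. It does genuinely occur at $(2,3,6)$ cusps, such as those of \SW{6}. As written, the proposal reduces the lemma to a check it does not perform and whose outcome it only conjectures.
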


\begin{proof}

Polyhedron $\cP$ has three equivalence classes of faces: $\{1, 4\}$, $\{2\}$, and $\{ 3\}$. It has a unique ideal vertex, where faces 2, 3, and 4 meet. Restricting to a horospherical section, they form a $(2, 4, 4)$ Euclidean triangle with face 3 as the hypotenuse. Because $\cQ$ is right-angled, its horospherical cross-section at each cusp is a rectangle tiled by $(2,4,4)$ Euclidean triangles. By \propref{IdealVertices} (see also \figref{MinRectangles}, center), either all the external faces at this cusp are tiled by copies of face 3, or none of them are.   

Now suppose some face of~$\cQ$ is tiled by copies of face 3.  Then so is every face that shares an edge or an ideal vertex with it. Repeating this argument shows that every face of~$\cQ$ is tiled by copies of face 3.
\end{proof}

Similarly to polyhedra \SW{4} and \SW{12}, the reflections across
faces 1, 3 and~4 of $\cP={}$\SW{2} generate a reflection group of type~$B_3$. 
In the ball model of hyperbolic space, with the 1-3-4
vertex at the center, face 2 lies in the surface of an ideal, right-angled  
hyperbolic octahedron $\cO$.  This octahedron is
the union of the $B_3$-translates of~$\cP$.
Now choose two antipodal vertices of~$\cO$, 
and consider the two planes of symmetry of~$\cO$
that contain
them but no other vertices.  We define~$\frac12 \cO$ as polyhedron cut out by one of these planes of symmetry, and~$\frac14\cO$ as the polyhedron cut out by both planes of symmetry. Pictured on the right in \figref{FigOctahedron}, $\frac14\cO$
is the hyperbolic right-angled triangular bipyramid, 
with two finite vertices and three ideal vertices.
(Some may prefer \figref{FigOctahedron''}, which shows $\frac14\cO$ from another perspective.)

But we also have a different approach, suggested by \lemref{All3No3}. The reflections in faces 1, 2, and 4 generate a group of order 12. Taking the orbit of polyhedron \SW{12} under this group gives a polyhedron all of whose faces are tiled by copies of face 3. Surprisingly, this polyhedron is the same as $\frac14 \cO$, although it is tiled in a different way. See \figref{FigOctahedron}, center.

\begin{figure}[ht] 
\, \hfill \includegraphics[width=.3\textwidth]{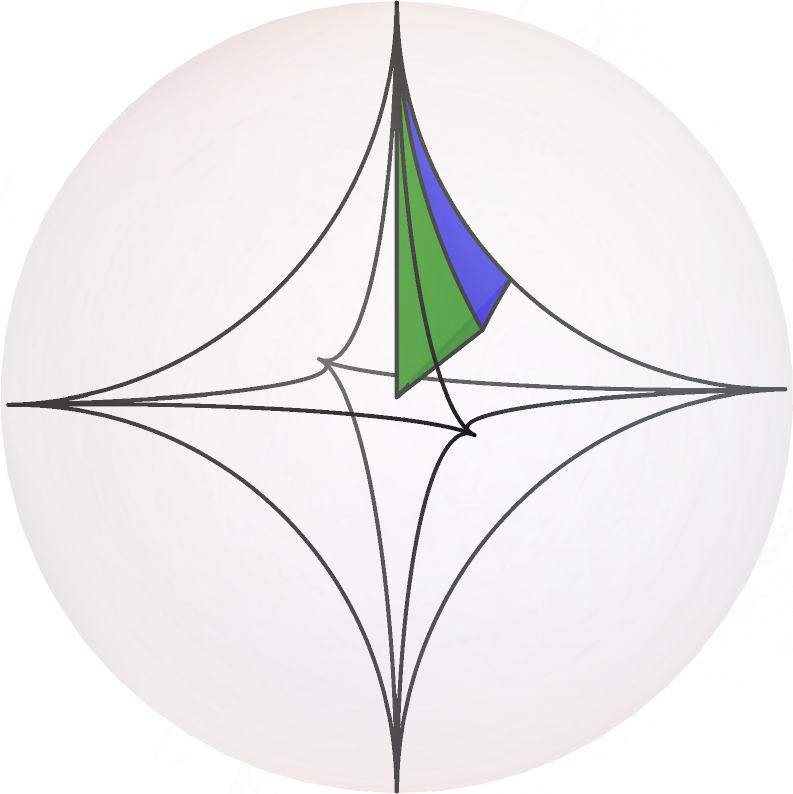} \hfill \includegraphics[width=.3\textwidth]{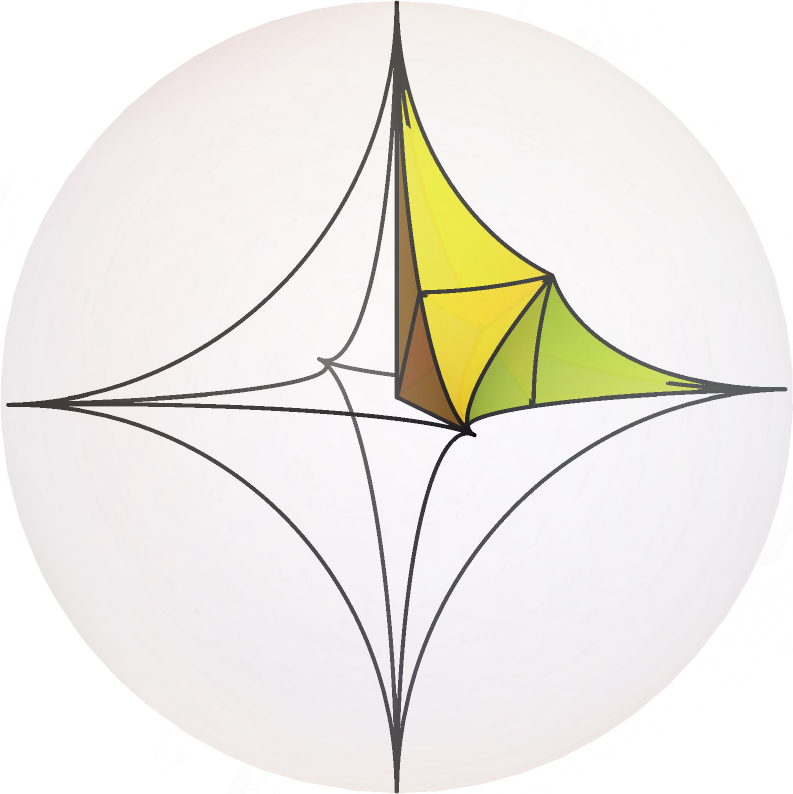} \hfill \includegraphics[width=.3\textwidth]{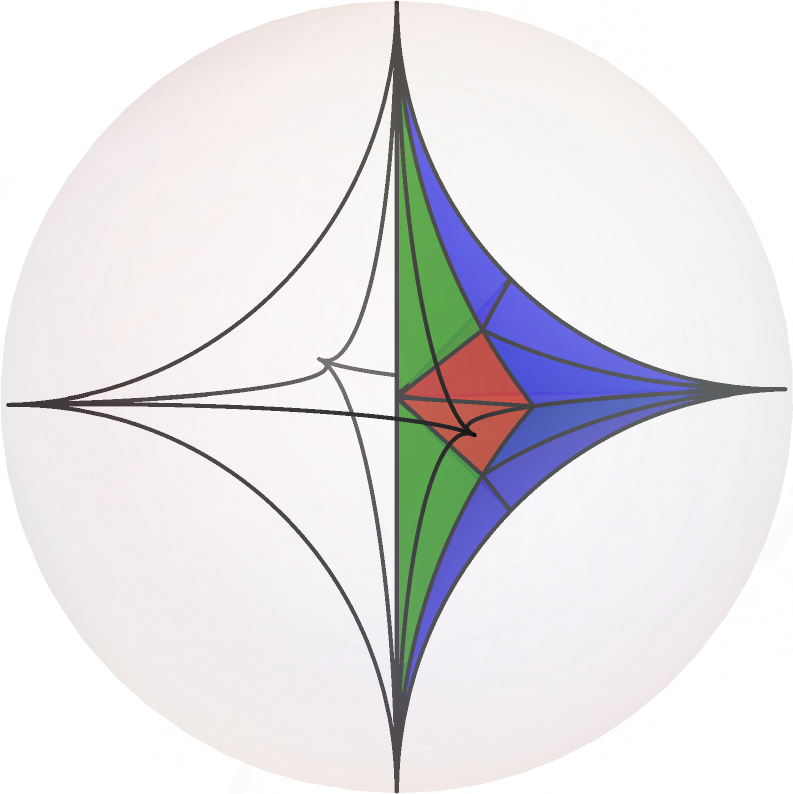} \hfill \,

\begin{tikzpicture}[scale=1.5]
\coordinate (1) at (0,0);
\coordinate (2) at (3,0);
\coordinate (3) at (2,0);
\coordinate (4) at (1,0);

\draw[thick, double distance = 2pt] (3) -- (2);
\draw[thick] (4) -- (1);
\draw[thick, double distance = 2pt] (4) -- (3);

\filldraw[fill=red] (1) circle (2pt) node[above] {\scriptsize 1};
\filldraw[fill=blue] (2) circle (2pt) node[above] {\scriptsize 2};
\filldraw[fill=yellow] (3) circle (2pt) node[above] {\scriptsize 3};
\filldraw[fill=green] (4) circle (2pt) node[above] {\scriptsize 4};
\end{tikzpicture} 
\caption{From left: polyhedron \SW{2}; 12 copies of polyhedron \SW{2} glued to form right-angled triangular bipyramid with all faces tiled by face 3; 12 copies of polyhedron \SW{2} glued to form right-angled triangular bipyramid with no faces tiled by face 3
    (see also \figref{FigOctahedron''}); Coxeter diagram with face colors labeled.}
\label{FigOctahedron}
\end{figure}

\begin{proposition} \label{Bipyramid}
Let $\cP$ be the polyhedron \SW{2}. Then the ideal, right-angled octahedron $\cO$ is a $\cP$-polyhedron, and every ideal, right-angled $\cP$-polyhedron $\cQ$ is tiled by~$\frac14 \cO$. Moreover:
\begin{enumerate}
    \item If $\cQ$ has all its faces tiled by copies of face 3, then $\cQ$ is tiled canonically and reflectively by~$\frac14 \cO$.
    \item If $\cQ$ has none of its faces tiled by copies of face 3, then $\cQ$ is tiled canonically by copies of~$\frac14 \cO$, $\frac12 \cO$, and $\cO$.    
\end{enumerate}
\end{proposition}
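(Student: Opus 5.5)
By \lemref{All3No3}, $\cQ$ falls into exactly one of the two cases (1) and (2). I would handle each case separately and then deduce the unconditional tiling by $\frac14\cO$. The first assertion, that $\cO$ is a $\cP$-polyhedron, is immediate from the description above: $\cO$ is the union of the $B_3$-translates of $\cP$ around the 1-3-4 vertex.

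In case (1), every face of $\cQ$ is tiled by copies of face~3. Then no face of~$\cQ$ lies in a plane of type $\{1,4\}$ or $\{2\}$. The 1-3-4 vertex has dihedral angles $\pi/2,\pi/3,\pi/4$, so the $\pi/3$ edge of type 1-4 cannot lie in an edge of $\cQ$; the same should hold for any compact edge orthogonal to mirrors at its endpoints. I would argue as in \propref{BadFaceOrbit}, with $\Delta'=\{1,2,4\}$ playing the role of the non-boundary faces. If $\cP\subseteq\cQ$, then $\cQ$ contains the orbit of $\cP$ under $\langle\sigma_1,\sigma_2,\sigma_4\rangle$, which has order~$12$, and the union of that orbit is the bipyramid of \figref{FigOctahedron}, center. \propref{FiniteBadFaceOrbit1} shows this union is a Coxeter polyhedron. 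The argument of \propref{FiniteBadFaceOrbit2} then gives a canonical reflective tiling of $\cQ$ by copies of it. We apply that argument with the faces of types $1,2,4$ forbidden from $\partial\cQ$ rather than non-idealizable; its proof only uses that such faces cannot be external. Finally I would check, via the identification asserted before the proposition, that this polyhedron is isometric to $\frac14\cO$.

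In case (2), no face of $\cQ$ is tiled by face~3, so every face of $\partial\cQ$ has type $\{1,4\}$ or $\{2\}$. I would imitate the proof of \propref{Antiprism}, organizing the chambers of $\cQ$ by the position of their 1-3-4 vertex $v$ relative to $\cQ$.
\begin{itemize}
\item If $v$ is interior, the $48$ chambers around $v$ form a tile $\cO$.
\item If $v$ lies in the interior of a face of $\cQ$, that face is a plane of type 1 or 4 through $v$ (a plane of type~3 is excluded). The half of the octahedron on the $\cQ$ side is a tile; this half is $\frac12\cO$ when the plane is a symmetry plane avoiding the vertices.
\item If $v$ lies on an edge $E$ of $\cQ$, the edge types at $v$ are 1-3, 1-4 and 3-4. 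The 1-4 edge is excluded because its angle is $\pi/3$. The 3-4 and 1-3 edges are excluded if they involve face~3, which by hypothesis does not occur on $\partial\cQ$. Hence $E$ must be a line through $v$ lying in two orthogonal mirrors of types 1/4, which cut $\cO$ into four quarters. As in the $\frac12\cA$ step, $\cQ$ contains one quarter, a copy of $\frac14\cO$.
\end{itemize}
Each chamber lies in exactly one tile, according to where its vertex $v$ sits, so this is a canonical tiling by $\frac14\cO$, $\frac12\cO$ and $\cO$. Since $\cO$ and $\frac12\cO$ are unions of copies of $\frac14\cO$, the unconditional claim follows.

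The main obstacle is the case analysis of where $v$ sits in case~(2). I must verify which planes and lines through $v$ can actually bound $\cQ$, and that each resulting sector really is $\frac12\cO$ or $\frac14\cO$ and not some other piece. This requires care about which mirrors through $v$ are of type $\{1,4\}$ versus type $\{3\}$, and about the horospherical $(2,4,4)$ constraint from \propref{IdealVertices}. In case~(1), I expect the delicate point to be confirming that the $12$-chamber orbit is genuinely isometric to $\frac14\cO$.
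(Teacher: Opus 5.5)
Your route is the paper's. You split by Lemma~\ref{All3No3}. In case (1) you close each chamber up under $\langle\sigma_1,\sigma_2,\sigma_4\rangle$; you route this through Propositions~\ref{FiniteBadFaceOrbit1} and~\ref{FiniteBadFaceOrbit2}, with ``non-idealizable'' replaced by ``cannot be external,'' where the paper argues directly, and that is fine. In case (2) you sort the chambers of $\cQ$ by the position of their 1-3-4 vertex $v$. Case (1), the interior subcase and the final deduction are correct. The gap is in the face and edge subcases of case (2), exactly where you said care was needed, and two of your statements there are wrong.

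\textbf{The edge subcase.} You rule out the 3-4 edge, but it is precisely the edge that must lie in $E$. Its dihedral angle is $\pi/4$, so $\cQ$ contains $\cP\cup\sigma_3(\cP)$ there and the plane of face $3$ bisects the right angle of $\cQ$ along $E$. The two faces of $\cQ$ along $E$ are then $F_4$ and $\sigma_3(F_4)$, both of type $\{1,4\}$. Every edge of $\cQ$ through $v$ contains a chamber edge at $v$, so excluding all three edge types would make this subcase empty. Your ``hence'' lands on the right line only because the line in two orthogonal type-$\{1,4\}$ mirrors through $v$ \emph{is} the 3-4 edge line. Argue it directly instead: the faces of $\cQ$ along $E$ are mirrors through $v$, not of type $3$, and orthogonal.

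\textbf{The missing ingredient.} What you need, and what the paper states explicitly, is the identification of the mirrors through $v$ with the symmetry planes of $\cO$. Work in the Klein model centered at $v$, with $\cO$ having ideal vertices $\pm e_1,\pm e_2,\pm e_3$ and faces $1,4,3$ of $\cP$ on $x_1=x_2$, $x_2=x_3$, $x_3=0$. The type-$3$ mirrors through $v$ are then the three coordinate planes, each through four vertices of $\cO$. The type-$\{1,4\}$ mirrors are the six planes $x_i=\pm x_j$, each through exactly one pair of opposite vertices.

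\textbf{Consequences for both subcases.} Your criterion ``a symmetry plane avoiding the vertices'' is wrong, since no symmetry plane through $v$ avoids the vertices. $\frac12\cO$ is the half cut off by a plane through exactly two opposite vertices, and every type-$\{1,4\}$ face through $v$ gives exactly that. Likewise, the only orthogonal pairs among the six diagonal planes are $\{x_i=x_j,\ x_i=-x_j\}$, which meet along the axis through $\pm e_k$. So the wedge at $E$ is the quarter of $\cO$ cut by the two planes through that opposite pair and no other vertex, i.e.\ exactly $\frac14\cO$.

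With this in place, your local argument gives the same tiles as the paper's convex-hull argument. Near $v$, $\cQ$ is a half-space or a right-angled wedge bounded by mirrors, so its chambers at $v$ are exactly those on that side.
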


\begin{proof} 
Based on \lemref{All3No3}, we consider two possible cases: all faces of $\cQ$ are tiled by copies of face 3, or no faces of $\cQ$ tiled by copies of face 3. 

In the first case, whenever $\cQ$ contains a copy of $\cP$ it must also contain that copy's mirror images across its 
    faces 1, 2, and 4; otherwise, one of these faces would be external. Faces 1, 2, and 4 meet at a finite vertex and generate a reflection group of order 12. The orbit of $\cP$ under this group is $\frac14 \cO$, as shown in \figref{FigOctahedron}, center. Since, in the reflective tiling of $\bH^3$ by $\cP$, each tile contains one copy of the 1-2-4 vertex, this gives rise to a canonical tiling of $\cQ$ by copies of $\frac14 \cO$. Moreover, if two copies of $\cP$ are glued reflectively along a copy of face 3, taking the orbit of each copy under the group generated by its faces 1, 2, and 4 gives two copies of $\frac14 \cO$, also glued reflectively. So this tiling of~$\cQ$ by copies of $\frac14\cO$ is reflective.

    In the second case, we examine a $\Gamma(\cP)$-image $v$ of
    the (finite) 1-3-4 vertex of~$\cP$, that lies in~$\cQ$.  
    Because $\cQ$ has no finite vertices, $v$ lies in the interior of an edge of~$\cQ$, or of 
    a face of~$\cQ$, or of~$\cQ$ itself.  We claim that the union of the copies of $\cP$ in~$\cQ$, that contain~$v$, is a copy of $\frac14\cO$, or~$\frac12\cO$,
    or~$\cO$ respectively.  
    These unions, as $v$ varies,
    are the tiles in our promised canonical decomposition of~$\cQ$.
    If $v$ lies in the interior of~$\cQ$, then our claim is
    obvious.  
    Next suppose $v$ lies in the interior of a face~$F$ of~$\cQ$.
    Because $\cQ$ has no faces of type~$3$, $F$ has type $\{1,4\}$.
    The planes of this type, that contain~$v$, are those that contain
    exactly two vertices of the octahedron~$\cO$ centered at~$v$.  Therefore,
    by applying an element of $\Gamma(\cP)$, we may suppose without
    loss that $v$ is the 1-3-4 vertex of~$\cP$, that $\cQ$ contains $\cP$,
    and that $F$ lies in the green plane pictured in the first part of
    \figref{FigOctahedron}.  So $F$ contains the intersection of this plane with~$\cO$,
    which is a quadrilateral.
    Consider the convex hull of the union of the quadrilateral with~$\cP$.
    Because $\cQ$ is convex, it contains
    every $\Gamma(\cP)$-translate of $\cP$ whose interior meets this
    convex hull.  These translates are exactly the ones that lie in~$\cQ$
    and contain~$v$, and their  union  is $\frac12\cO$.
    This finishes the case that $v$ lies in the interior of a face of~$\cQ$.

    Finally, suppose $v$ lies in the interior of an edge~$E$ of~$\cQ$.
    We may
    suppose without loss that $v$ is the 1-3-4 vertex of~$\cP$, that $\cQ$ contains
    $\cP$, and that
    $E$ extends the 1-3, 1-4 or 3-4 edge of~$\cP$.  
    The 1-4 edge has an angle of $\pi/3$
    in~$\cP$, 
    so it cannot lie in~$E$. The 1-3 edge is right-angled, 
    but  cannot lie in~$E$ because $\cQ$ has no faces of
    type~3. 
    Therefore $E$ contains the 3-4 edge, which appears vertical
    in the first part of \figref{FigOctahedron}.  So $E$ is the vertical diameter
    of~$\cO$.  Also, 
    $\cQ$ has no faces of type~$3$, and therefore contains $\sigma_3(\cP)$.
    Consider the convex hull of $E$, $\cP$ and $\sigma_3(\cP)$.
    Since $\cQ$ is convex, it contains
    every  $\Gamma(\cP)$-translate of~$\cP$
    whose interior meets this convex hull.
    The union of these translates is the copy of $\frac14\cO$ shown in
    the third part of \figref{FigOctahedron}, finishing the proof.  
\end{proof}

We conclude with a formal statement of the conjecture 
mentioned after \thmref{Thm:MainConverse}: 

\begin{conj} \label{Octahedron}
Let $\cP$ be the polyhedron \SW{2}, and let $\cO$ be the ideal, right-angled  hyperbolic octahedron. Then any ideal, right-angled  $\cP$-polyhedron is tiled by $\cO$.
\end{conj}
By \propref{Bipyramid}, to prove this conjecture, it would suffice to show that every ideal polyhedron tiled by~$\frac14 \cO$ is in fact tiled by $\cO$.

\section{Further Directions}

We propose several possible conjectures and open questions raised by our work.

\begin{enumerate}
\item \conjref{Octahedron} is a natural next step from the results in \secref{Unfolding}, and would lead to a strengthening of our main theorems. It would establish that the unique minimal ideal, right-angled  hyperbolic polyhedra with arithmetic reflection groups are the octahedron, square antiprism, and rhombicuboctahedron.

\item In \secref{Unfolding}, for each of the polyhedra \SW{2}, \SW{4}, \SW{12}, \SW{13}, we exhibit a larger minimal $\cP$-polyhedron which tiles all ideal, right-angled  $\cP$-polyhedra. In the cases of \SW{4} and \SW{13}, these tilings may not be reflective. However, they are constrained in that the finer tiling by $\cP$ is reflective. It is a technical but interesting problem to give precise conditions on the possible tilings by the square antiprism and rhombicuboctahedron---which gluings of tiles are allowed? This question can also be stated at the level of Koebe-Andreev-Thurston polyhedra, whose possible gluings are investigated in \cite{KontorovichNakamura2019}.

\item The methods of this article can be extended to classify other arithmetic hyperbolic Coxeter polyhedra with special features. For example, one could attempt to classify arithmetic polyhedra which are right-angled but not necessarily ideal, or ideal but not necessarily right-angled. One could also relax the right-angled condition and consider polyhedra with all angles $\pi/2n$ for $n \in \N$. Finally, one could attempt to give a fully general answer to \qref{PolyhedronQuestion}, not restricted to the list of arithmetic Scharlau-Walhorn polyhedra. 

\item Even two-dimensional analogues of \qref{PolyhedronQuestion} are challenging. In \secref{PotentialFaces}, we use a classification, due to Felikson, of hyperbolic polygons which tile an ideal polygon \cite{Felikson1998}. It seems to be an open question to classify all compact hyperbolic polygons which tile a right-angled polygon. 

\item There may be further applications of our work to the problem of classifying arithmetic hyperbolic lattices commensurate with link complements. Our methods will apply in situations where the fundamental group of the link is a hyperbolic reflection group. This is not always the case, but it  holds for certain links. It would be interesting to examine further classes of examples, beyond the $(2n)$-chain link. 

\item On the level of polyhedral circle packings, it is an important open problem to classify the integral but not superintegral packings (see \cite{KontorovichNakamura2019}). There is a connection to quasi-arithmeticity but not arithmeticity here, so the results of \cite{ScharlauWalhorn1992} cannot be directly applied. 
\end{enumerate}

\newpage

\appendix

\section{Summary of Arguments for Scharlau-Walhorn Polyhedra} \label{ArgumentData}

The following table summarizes the results of \secref{BadPairs}-\secref{Combinatorics}, ruling out 45 of the 49 Scharlau-Walhorn polyhedra. All the data below was found by running Mathematica code on a list of Gram matrices for the Scharlau-Walhorn polyhedra. It can be checked by examination of the Coxeter diagrams and Gram matrices in \appref{ScharlauWalhornData}. Many of the Scharlau-Walhorn polyhedra can be ruled out in multiple ways by our results. For each polyhedron, we only list the first statement in the paper which rules it out. By the same token, if a polyhedron can be ruled out in multiple ways by a single statement, e.g. if it has multiple bad face pairs, we only list one. 

In the justifications for polyhedra ruled out by \propref{BadFaceOrbit}, we use the abbreviations (C) for a compact equivalence class, (P) for an equivalence class whose fundamental domain is a pentagon with one ideal vertex, (H) for an equivalence class whose fundamental domain is a hexagon with two ideal vertices, and (O) for an equivalence class whose fundamental domain is an octagon with two ideal vertices.

\renewcommand{\propref}[1]{\hyperref[#1]{Prop. \ref{#1}}}
\renewcommand{\exref}[1]{\hyperref[#1]{Ex. \ref{#1}}}
\renewcommand{\thmref}[1]{\hyperref[#1]{Thm. \ref{#1}}}

\begin{tabular}{c|c|c}
Polyhedron & Ruled Out By & Justification \\
\hline 
\SW{1} & \propref{SW1Combinatorics} & Combinatorial argument. \\
\SW{2} & N/A & Tiles ideal, right-angled  octahedron. See \propref{Bipyramid}. \\
\SW{3} & \secref{PotentialFaces} & Argument using \thmref{IdealizablePolygons} and Equation \eqref{VertexLocalEC}. See \exref{Example3}. \\
\SW{4} & N/A & Tiles ideal, right-angled  square antiprism. See \propref{Antiprism}. \\
\SW{5} & \propref{BadFaceOrbit} & Non-idealizable face class: $\{1,2,4,5\}$ (H). See \exref{Example5}. \\
\SW{6} & \propref{SW6Combinatorics} & Combinatorial argument.  \\
\SW{7} & \propref{BadFaceOrbit} & Non-idealizable face classes: $\{1,4\}$ (H), $\{2,6\}$ (H). See \exref{Example7}. \\
\SW{8} & \secref{PotentialFaces} & Argument using \propref{FiniteBadFaceOrbit2} and \propref{BadEdgePair}. See \exref{Example8}. \\
\SW{9} & \propref{BadFaceOrbit} & Non-idealizable face class: $\{1,2,4,5, 7\}$ (O). \\
\SW{10} & \secref{PotentialFaces} & Argument using \propref{FiniteBadFaceOrbit2} and \propref{BadCompactEdgePair}. See \exref{Example10}. \\
\SW{11} & \secref{PotentialFaces} & Argument using \propref{FiniteBadFaceOrbit2} and \propref{BadVertexPair}. See \exref{Example11}. \\
\SW{12} & N/A & Tiles ideal, right-angled  rhombicuboctahedron. See \propref{Rhombicuboctahedron1}. \\
\SW{13} & N/A &  Tiles ideal, right-angled  rhombicuboctahedron. See \propref{Rhombicuboctahedron2}. \\
\SW{14} & \propref{BadCompactEdgePair} & Bad compact edge pair: 1-3, 5-6. See \exref{Example14}. \\
\SW{15} & \propref{AllQuadEuler} & Six quadrilateral faces, 2 pentagonal faces.  See \exref{Example15}. \\
\SW{16} & \secref{PotentialFaces} & Argument using \thmref{IdealizablePolygons} and \propref{AllQuadEuler}. See \exref{Example16}. \\
\SW{17} & \propref{BadCompactEdgePair} & Bad compact edge pair: 2-9, 4-10. \\
\SW{18} & \propref{BadFaceOrbit} & Non-idealizable face classes: $\{6\}$ (C), $\{1,4\}$ (O). \\
\SW{19} & \secref{PotentialFaces} & Argument using \propref{FiniteBadFaceOrbit2} and \propref{BadVertexPair}. See \exref{Example19} \\
\SW{20} & \propref{BadFacePair} & Bad face pair: 1, 7. See \exref{Example20}. \\
\SW{21} & \propref{BadFaceOrbit} & Non-idealizable face classes: $\{6\}$ (C), $\{2\}$ (P), $\{8\}$ (P), $\{1,4,7\}$ (O). \\
\SW{22} & \propref{BadCompactEdgePair} & Bad compact edge pair: 2-6, 4-9. \\
\SW{23} & \propref{BadFacePair} & Bad face pair: 1, 9. \\
\SW{24} & \propref{BadFacePair} & Bad face pair: 1, 8. \\
\SW{25} & \propref{BadEdgePair} & Bad edge pair: 1-4, 9-10. See \exref{Example25}. \\
\SW{26} & \propref{BadCompactEdgePair} & Bad compact edge pair: 1-3, 5-8. \\
\SW{27} & \propref{AllQuadEuler} & Six quadrilateral faces. \\
\SW{28} & \propref{BadFacePair} & Bad face pair: 7, 10. \\
\SW{29} & \propref{BadFaceOrbit} & Non-idealizable face classes: $\{1,7\}$ (C), $\{6\}$ (C), $\{8\}$ (C), $\{4\}$ (P). \\
\SW{30} & \propref{BadFacePair} & Bad face pair: 1, 8. \\
\SW{31} & \propref{AllQuadEuler} & Six quadrilateral faces. \\
\SW{32} & \propref{BadFacePair} & Bad face pair: 5, 10. \\
\SW{33} & \propref{BadFacePair} & Bad face pair: 6, 10. \\
\SW{34} & \propref{BadFacePair} & Bad face pair: 6, 9. \\
\SW{35} & \propref{BadFacePair} & Bad face pair: 1, 8. \\
\SW{36} & \propref{BadFacePair} & Bad face pair: 5, 13. \\
\SW{37} & \propref{BadFacePair} & Bad face pair: 1, 10. \\
\SW{38} & \propref{AllQuadEuler} & Six quadrilateral faces. \\
\SW{39} & \propref{BadFacePair} & Bad face pair: 1, 7. \\
\SW{40} & \propref{BadFacePair} & Bad face pair: 6, 9. \\
\SW{41} & \propref{BadFacePair} & Bad face pair: 6, 15. \\
\SW{42} & \propref{BadEdgePair} & Bad edge pair: 2-3, 8-9. \\
\SW{43} & \propref{BadFacePair} & Bad face pair: 6, 13. \\
\SW{44} & \propref{BadFacePair} & Bad face pair: 1, 7. \\
\SW{45} & \propref{BadFacePair} & Bad face pair: 1, 10. \\
\SW{46} & \propref{BadFacePair} & Bad face pair: 1, 13. \\
\SW{47} & \propref{BadVertexPair} & Bad vertex pair: 1-2-6, 8-9-12. See \exref{Example47}. \\
\SW{48} & \propref{BadFacePair} & Bad face pair: 1, 11. \\
\SW{49} & \propref{BadFacePair} & Bad face pair: 1, 8. \\
\end{tabular}

\section{Scharlau-Walhorn Data} \label{ScharlauWalhornData}

\subsection{Coxeter Diagrams}

A \emph{Coxeter diagram} encodes a hyperbolic polyhedron via the data of its faces and dihedral angles. Each vertex in the diagram represents one face of the polyhedron. The numbering of faces below follows \cite{ScharlauWalhorn1992}. Vertices are not connected in the diagram if their associated faces meet at an angle of $\pi/2$. Two vertices are connected by an $n-$fold edge if their associated faces meet at an angle of $\pi/(n+2)$. A thick edge between two vertices means that the associated faces are parallel and meet at a single point at infinity. A dotted edge between two vertices means that the associated faces are ultraparallel and do not intersect, even at infinity. 

Diagrams are labeled SW$\#$ (Scharlau-Walhorn numbering), and a superscript $\star$ denotes a diagram that is difficult to see; these are hyperlinked to their Gram matrices, provided in \S\ref{subsec:Gram}. A diagram is blue if the polyhedron tiles an ideal, right-angled polyhedron.

\begin{center}
\phantomsection \label{SW1}


\end{center}

\subsection{Gram Matrices}\label{subsec:Gram}\

We include Gram matrices for the Sharlau-Walhorn polyhedra 
whose Coxeter diagrams are difficult to 
parse. The Gram matrix is another encoding of the dihedral angles. In many sources, the entries of the Gram matrix are inner products of the normal vectors to the faces. We use a different convention which is still sufficient to describe the polyhedron. An entry of $n$ in the $(i, j)$ position indicates that face $i$ intersects face $j$ in an angle of $\pi/n$. Entries of $2$ are omitted. An entry of $\infty$ means that faces $i$ and $j$ are parallel, and an entry of $u$ means that they are ultraparallel. 

\subsubsection*{\emph{SW39:}}\label{subsub:39}
$$
\begin{array}{ccccc|ccccc|ccccc|ccc}
 \bullet &   &   & 6 & u &   & u & 4 &   & u &   & u & u & u & u & u & u &
   u \\
   & \bullet &   & \infty &   &   &   & u & u & u & u & u & u &
   u & u & u & u & u \\
   &   & \bullet &   & \infty & u & u & u & u & u & u & u &
   u & u & u & u & u & u \\
 6 & \infty &   & \bullet &   & u & u &   & u &   &   & u &   & u &
   u & u & u & u \\
 u &   & \infty &   & \bullet & u &   & u & u &   & u & u &
   u & u & u & u & u & u \\
   \hline
   &   & u & u & u & \bullet &   & u &   & u & u &   & u & u & u & u
   & u & u \\
 u &   & u & u &   &   & \bullet & u & u &   & u &   & u & 4 & u & u &
   u &   \\
 4 & u & u &   & u & u & u & \bullet &   & u &   & u &   & u &   &   & u &
   u \\
   & u & u & u & u &   & u &   & \bullet & u & u &   & u & u &   & u
   & u & u \\
 u & u & u &   &   & u &   & u & u & \bullet & u & u &   &   & u & u
   & u & u \\
   \hline
   & u & u &   & u & u & u &   & u & u & \bullet & u & u & u &
   u & u & u & u \\
 u & u & u & u & u &   &   & u &   & u & u & \bullet & u & 6 &   &
   \infty &   &   \\
 u & u & u &   & u & u & u &   & u &   & u & u & \bullet &   & u &  
   & u & u \\
 u & u & u & u & u & u & 4 & u & u &   & u & 6 &   & \bullet & u &  
   &   &   \\
 u & u & u & u & u & u & u &   &   & u & u &   & u & u & \bullet &   & \infty & u \\
 \hline
 u & u & u & u & u & u & u &   & u & u & u & \infty &   &   &   &
   \bullet &   & u \\
 u & u & u & u & u & u & u & u & u & u & u &   & u &   & \infty &   & \bullet & u \\
 u & u & u & u & u & u &   & u & u & u & u &   & u &   & u & u & u &
   \bullet \\
\end{array}
$$

\subsubsection*{\emph{SW40:}}\label{subsub:40}
$$
\begin{array}{ccccc|ccccc|ccccc|ccc}
 \bullet &   &   & u & u &   & 3 & 3 & u & u & u & u &   & u &   & u &   & u
   \\
   & \bullet &   & \infty &   & u &   & u & u & u & u & u &
   u & u & u & u & u & u \\
   &   & \bullet &   & \infty &   & u & u & u & u & u & u &
   u & u & u & u & u & u \\
 u & \infty &   & \bullet &   &   & u & u &   & u &   & u & u &
   u & u & u & u & u \\
 u &   & \infty &   & \bullet & u &   & u &   &   & u & u & u &
   u & u & u & u & u \\
   \hline
   & u &   &   & u & \bullet & u & 4 & u & u &   & u & u & u & u & u
   & u & u \\
 3 &   & u & u &   & u & \bullet & u & u &   & u & u & u &   & 4 & u &
   u &   \\
 3 & u & u & u & u & 4 & u & \bullet &   &   &   &   &   & u & u & u & u &
   u \\
 u & u & u &   &   & u & u &   & \bullet &   &   & u & u & u & u & u
   & u & u \\
 u & u & u & u &   & u &   &   &   & \bullet & u &   & u &   & u & u &
   u & u \\
   \hline
 u & u & u &   & u &   & u &   &   & u & \bullet & u & u & u & u &
   u & u & u \\
 u & u & u & u & u & u & u &   & u &   & u & \bullet &   &   & u &  
   & \infty & u \\
   & u & u & u & u & u & u &   & u & u & u &   & \bullet & u &
   u & \infty &   & u \\
 u & u & u & u & u & u &   & u & u &   & u &   & u & \bullet &
   u &   & u &   \\
   & u & u & u & u & u & 4 & u & u & u & u & u & u & u & \bullet &   &   &   \\
   \hline
 u & u & u & u & u & u & u & u & u & u & u &   & \infty &   & 
    & \bullet &   &   \\
   & u & u & u & u & u & u & u & u & u & u & \infty &   & u & 
    &   & \bullet & u \\
 u & u & u & u & u & u &   & u & u & u & u & u & u &   &   &   & u &
   \bullet \\
\end{array}
$$

\subsubsection*{\emph{SW45:}}\label{subsub:45}
$$
\begin{array}{ccccc|ccccc|ccccc|ccccc}
 \bullet &   &   & 6 &   &   & u & u &   & u & u & u & u & u & u & u &
   u & u & u & u \\
   & \bullet &   & \infty &   & u &   &   & u & u &   & u & u & u &
   u & u & u & u & u & u \\
   &   & \bullet &   & u & u & \infty & u & u & u & u & u &
   u & u & u & u & u & u & u & u \\
 6 & \infty &   & \bullet & u & 4 &   & u &   &   & u & u &   &   & u &  
   & u & u & u & u \\
   &   & u & u & \bullet &   & u &   & u & u & u &   & u & u & u & u
   & u & u & u & u \\ 
   \hline
   & u & u & 4 &   & \bullet & u & u &   &   & u &   & u & u & u & u &
   u & u & u & u \\
 u &   & \infty &   & u & u & \bullet & u & u & u &   & u &
   u &   & u & u & u & u & u & u \\
 u &   & u & u &   & u & u & \bullet & u & u &   &   & \infty & u &   & u & 4 & 6 &   &   \\
   & u & u &   & u &   & u & u & \bullet & u & u & u & u & u &
   u & u & u & u & u & u \\
 u & u & u &   & u &   & u & u & u & \bullet & u &   &   & u &   & u
   & u & u & u & u \\ \hline
 u &   & u & u & u & u &   &   & u & u & \bullet & u & u &   & u &
   u &   & u & u & u \\
 u & u & u & u &   &   & u &   & u &   & u & \bullet & u & u &   & u
   & u & u & u & u \\
 u & u & u &   & u & u & u & \infty & u &   & u & u & \bullet & u &   &   & u &   &   & u \\
 u & u & u &   & u & u &   & u & u & u &   & u & u & \bullet &
   u &   &   & u & u & u \\
 u & u & u & u & u & u & u &   & u &   & u &   &   & u & \bullet &
   u & u & u & \infty & u \\ \hline
 u & u & u &   & u & u & u & u & u & u & u & u &   &   & u & \bullet &   &   & u & u \\
 u & u & u & u & u & u & u & 4 & u & u &   & u & u &   & u &   & \bullet &   & u &   \\
 u & u & u & u & u & u & u & 6 & u & u & u & u &   & u & u &   &   &
   \bullet &   &   \\
 u & u & u & u & u & u & u &   & u & u & u & u &   & u & \infty & u & u &   & \bullet & u \\
 u & u & u & u & u & u & u &   & u & u & u & u & u & u & u & u &
     &   & u & \bullet \\
\end{array}
$$

\subsubsection*{\emph{SW46:}}\label{subsub:46}
$$
\begin{array}{ccccc|ccccc|ccccc|ccccc}
\bullet &  &  & u &  & u & 6 &  &  & u & u &  & u & u & u & u & u &u & u & u\\ & \bullet &  & \infty & u &  & u & u &  &  & u & u & u & u &\infty &  & u & u & u & u\\ &  & \bullet &  &  & \infty & u & u & u & u & u & u & u & u & u &u & u & u & u & u\\u & \infty &  & \bullet &  &  & 4 & u & u & u &  & u &  & u & u & u& u & u & u & u\\ & u &  &  & \bullet & u & u &  & u & u &  & u & u & u & u & u & u &u & u & u \\
\hline
u &  & \infty &  & u & \bullet & u & u & u &  & u & u &  &  & u & u& u & u & u & u\\6 & u & u & 4 & u & u & \bullet &  & u & 6 &  &  &  &  & 4 & u &  & & u & \\ & u & u & u &  & u &  & \bullet & u & u &  & u & u & u & u & u & u &u & u & u\\ &  & u & u & u & u & u & u & \bullet & u & u &  & u & u &  &\infty &  & u & u & u\\u &  & u & u & u &  & 6 & u & u & \bullet & u & u & u &  & u &  & u & &  & u \\
\hline
u & u & u &  &  & u &  &  & u & u & \bullet & u & u & u & u & u & u &u & u & u\\ & u & u & u & u & u &  & u &  & u & u & \bullet & u & u & u & u &  &u & u & u\\u & u & u &  & u &  &  & u & u & u & u & u & \bullet &  & u & u & u &u & u & u\\u & u & u & u & u &  &  & u & u &  & u & u &  & \bullet & u & u & u &u & u & u\\u & \infty & u & u & u & u & 4 & u &  & u & u & u & u & u & \bullet&  &  & u &  &  \\
\hline
u &  & u & u & u & u & u & u & \infty &  & u & u & u & u &  &\bullet & u & u &  & u\\u & u & u & u & u & u &  & u &  & u & u &  & u & u &  & u & \bullet &u & u & u\\u & u & u & u & u & u &  & u & u &  & u & u & u & u & u & u & u &\bullet &  & \\u & u & u & u & u & u & u & u & u &  & u & u & u & u &  &  & u &  &\bullet & \\u & u & u & u & u & u &  & u & u & u & u & u & u & u &  & u & u &  & & \bullet 
\end{array}
$$

\subsubsection*{\emph{SW49:}}\label{subsub:49}
$$
\begin{array}{ccccc|ccccc|ccccc|cccc}
\bullet &  &  & u &  & u &  & u &  & u & u & u & u & u & u & u & u & u \\
 & \bullet & 4 & 4 &  &  & u & u & u & u &  & u & u & u & u & u & u & u \\
 & 4 & \bullet &  & u & u &  &  & u & u & u & u & u & 4 & u & u & u & u \\
u & 4 &  & \bullet & u & u & u &  & u & u &  &  & u & u &  & u & u & u \\
 &  & u & u & \bullet &  & u & u &  &  & u & u & u & u & u & u & u & u \\ \hline
u &  & u & u &  & \bullet & u & u & u &  &  & 4 & u & u &  & u & u & u \\
 & u &  & u & u & u & \bullet & u &  & u & u & u & u & 4 & u & u &  & \ \\
u & u &  &  & u & u & u & \bullet & u & u & u &  &  &  & u & u & u & u \\
 & u & u & u &  & u &  & u & \bullet & 4 & u & u & u & u & u & 4 &  & u \\
u & u & u & u &  &  & u & u & 4 & \bullet & u & 4 &  & u & u &  & u & u \\ \hline
u &  & u &  & u &  & u & u & u & u & \bullet & u & u & u &  & u & u & u \\
u & u & u &  & u & 4 & u &  & u & 4 & u & \bullet &  & u &  & u & u & u \\
u & u & u & u & u & u & u &  & u &  & u &  & \bullet &  & u &  & u & u \\
u & u & 4 & u & u & u & 4 &  & u & u & u & u &  & \bullet & u &  & u &  \\
u & u & u &  & u &  & u & u & u & u &  &  & u & u & \bullet & u & u & u \\ \hline
u & u & u & u & u & u & u & u & 4 &  & u & u &  &  & u & \bullet &  &  \\
u & u & u & u & u & u &  & u &  & u & u & u & u & u & u &  & \bullet &  \\
u & u & u & u & u & u &  & u & u & u & u & u & u &  & u &  &  & \bullet 
\end{array}
$$

\bibliographystyle{alpha}

\bibliography{AKbibliog}

@inproceedings{delaHarpe,
    AUTHOR= {de la Harpe, Pierre},
    TITLE= {An invitation to {C}oxeter groups},
    BOOKTITLE={Group Theory from a Geometrical Viewpoint},
    YEAR={1991},
    editor={\'E. Ghys and A. Haefliger and A. Verjowsky},
    pages={193--253},
    address={Trieste},
    publisher={World Scientific},
}

@article {Felikson1998,
    AUTHOR = {Felikson, A. A.},
     TITLE = {Coxeter decompositions of hyperbolic polygons},
   JOURNAL = {European J. Combin.},
  FJOURNAL = {European Journal of Combinatorics},
    VOLUME = {19},
      YEAR = {1998},
    NUMBER = {7},
     PAGES = {801--817},
      ISSN = {0195-6698,1095-9971},
   MRCLASS = {52B05 (20F55)},
  MRNUMBER = {1649962},
MRREVIEWER = {Luis\ Paris},
       DOI = {10.1006/eujc.1998.0238},
       URL = {https://doi.org/10.1006/eujc.1998.0238},
}

@article {Reid1991,
    AUTHOR = {Reid, Alan W.},
     TITLE = {Arithmeticity of knot complements},
   JOURNAL = {J. London Math. Soc. (2)},
  FJOURNAL = {Journal of the London Mathematical Society. Second Series},
    VOLUME = {43},
      YEAR = {1991},
    NUMBER = {1},
     PAGES = {171--184},
      ISSN = {0024-6107,1469-7750},
   MRCLASS = {57M25 (22E40)},
  MRNUMBER = {1099096},
MRREVIEWER = {Colin\ C.\ Adams},
       DOI = {10.1112/jlms/s2-43.1.171},
       URL = {https://doi.org/10.1112/jlms/s2-43.1.171},
}

@article {Kellerhals,
    AUTHOR = {Kellerhals, Ruth},
     TITLE = {A polyhedral approach to the arithmetic and geometry of
              hyperbolic link complements},
   JOURNAL = {J. Knot Theory Ramifications},
  FJOURNAL = {Journal of Knot Theory and its Ramifications},
    VOLUME = {32},
      YEAR = {2023},
    NUMBER = {7},
     PAGES = {Paper No. 2350052, 24},
      ISSN = {0218-2165,1793-6527},
   MRCLASS = {57K10 (51F15 51M25)},
  MRNUMBER = {4626583},
       DOI = {10.1142/S0218216523500529},
       URL = {https://doi.org/10.1142/S0218216523500529},
}

@article {MeyerMillichapTrapp,
    AUTHOR = {Meyer, Jeffrey S. and Millichap, Christian and Trapp, Rolland},
     TITLE = {Arithmeticity and hidden symmetries of fully augmented pretzel
              link complements},
   JOURNAL = {New York J. Math.},
  FJOURNAL = {New York Journal of Mathematics},
    VOLUME = {26},
      YEAR = {2020},
     PAGES = {149--183},
      ISSN = {1076-9803},
   MRCLASS = {57K10 (57K32)},
  MRNUMBER = {4063955},
MRREVIEWER = {Michelle\ Chu},
}

@incollection {NeumannReid,
    AUTHOR = {Neumann, Walter D. and Reid, Alan W.},
     TITLE = {Arithmetic of hyperbolic manifolds},
 BOOKTITLE = {Topology '90 ({C}olumbus, {OH}, 1990)},
    SERIES = {Ohio State Univ. Math. Res. Inst. Publ.},
    VOLUME = {1},
     PAGES = {273--310},
 PUBLISHER = {de Gruyter, Berlin},
      YEAR = {1992},
      ISBN = {3-11-012598-6},
   MRCLASS = {57M50 (11F75 22E40)},
  MRNUMBER = {1184416},
MRREVIEWER = {Michael\ Kapovich},
}

@misc {Kontorovich2016IAS,
    AUTHOR = {Kontorovich, Alex},
     TITLE = {Thin Groups and Applications},
      YEAR = {2016},
NOTE = {Lecture at IAS, \url{https://www.ias.edu/video/analysisbeyond/2016/0521-AlexKontorovich}},
}

@misc {Whitehead2026,
    AUTHOR = {Whitehead, Ian},
     TITLE = {A Note on Right-Angled Ideal Hyperbolic Polyhedra},
      YEAR = {2026},
NOTE = {In preparation},
}

@incollection {VinbergGorbatsevichShvartsman1988,
    AUTHOR = {Vinberg, \`E.\ B. and Gorbatsevich, V. V. and Shvartsman, O.
              V.},
     TITLE = {Discrete subgroups of {L}ie groups},
 BOOKTITLE = {Current problems in mathematics. {F}undamental directions,
              {V}ol.\ 21 ({R}ussian)},
    SERIES = {Itogi Nauki i Tekhniki},
     PAGES = {5--120, 215},
 PUBLISHER = {Akad. Nauk SSSR, Vsesoyuz. Inst. Nauchn. i Tekhn. Inform.,
              Moscow},
      YEAR = {1988},
   MRCLASS = {22E40},
  MRNUMBER = {968445},
MRREVIEWER = {G.\ A.\ So\u ifer},
}

@article {BorelHC1962,
    AUTHOR = {Borel, Armand and Harish-Chandra},
     TITLE = {Arithmetic subgroups of algebraic groups},
   JOURNAL = {Ann. of Math. (2)},
  FJOURNAL = {Annals of Mathematics. Second Series},
    VOLUME = {75},
      YEAR = {1962},
     PAGES = {485--535},
      ISSN = {0003-486X},
   MRCLASS = {20.65 (14.50)},
  MRNUMBER = {147566},
MRREVIEWER = {P.\ Cartier},
       DOI = {10.2307/1970210},
       URL = {https://doi.org/10.2307/1970210},
}

@book{Kapovich2001,
    AUTHOR = {Kapovich, Michael},
    TITLE = {Hyperbolic manifolds and discrete groups},
    SERIES = {Progress in Mathematics},
    VOLUME = {183},
    PUBLISHER = {Birkhäuser Boston Inc.},
    ADDRESS = {Boston, MA},
    YEAR = {2001},
    PAGES = {xxvi+467},
    ISBN = {0-8176-3904-7},
    MRCLASS = {57M50 (20F65 20H10 30F40 30F60 32G15)},
    MRNUMBER = {1792613}
}

@article{RoederHubbardDunbar2007,
    AUTHOR = {Roeder, Roland K. W. and Hubbard, John H. and Dunbar, William D.},
    TITLE = {Andreev's theorem on hyperbolic polyhedra},
    JOURNAL = {Ann. Inst. Fourier (Grenoble)},
    VOLUME = {57},
    YEAR = {2007},
    NUMBER = {3},
    PAGES = {825--882},
    ISSN = {0373-0956},
    MRCLASS = {52B10 (30F40 51M25 52C15)},
    MRNUMBER = {2336781},
    NOTE = {Corrects errors in Andreev's original proof}
}

@article{BobenkoSpringborn2004,
    AUTHOR = {Bobenko, Alexander I. and Springborn, Boris A.},
    TITLE = {Variational principles for circle patterns and {K}oebe's theorem},
    JOURNAL = {Trans. Amer. Math. Soc.},
    VOLUME = {356},
    YEAR = {2004},
    NUMBER = {2},
    PAGES = {659--689},
    ISSN = {0002-9947},
    MRCLASS = {52C15 (53A30)},
    MRNUMBER = {2022715}
}

@book{Stephenson2005,
    AUTHOR = {Stephenson, Kenneth},
    TITLE = {Introduction to circle packing},
    SUBTITLE = {The theory of discrete analytic functions},
    PUBLISHER = {Cambridge University Press},
    ADDRESS = {Cambridge},
    YEAR = {2005},
    PAGES = {xii+356},
    ISBN = {0-521-82356-0},
    MRCLASS = {52C15 (30-01 30C35 52-01)},
    MRNUMBER = {2131318}
}

@article{BrightwellScheinerman1993,
    AUTHOR = {Brightwell, Graham R. and Scheinerman, Edward R.},
    TITLE = {Representations of planar graphs},
    JOURNAL = {SIAM J. Discrete Math.},
    VOLUME = {6},
    YEAR = {1993},
    NUMBER = {2},
    PAGES = {214--229},
    ISSN = {0895-4801},
    MRCLASS = {05C10 (52C15)},
    MRNUMBER = {1215229}
}

@article{ChowLuo2003,
    AUTHOR = {Chow, Bennett and Luo, Feng},
    TITLE = {Combinatorial {R}icci flows on surfaces},
    JOURNAL = {J. Differential Geom.},
    VOLUME = {63},
    YEAR = {2003},
    NUMBER = {1},
    PAGES = {97--129},
    ISSN = {0022-040X},
    MRCLASS = {53C44 (58J35)},
    MRNUMBER = {2015261}
}

@article{ColindeVerdiere1991,
    AUTHOR = {Colin de Verdière, Yves},
    TITLE = {Un principe variationnel pour les empilements de cercles},
    JOURNAL = {Invent. Math.},
    VOLUME = {104},
    YEAR = {1991},
    NUMBER = {3},
    PAGES = {655--669},
    ISSN = {0020-9910},
    MRCLASS = {52C15 (30C35 58E30)},
    MRNUMBER = {1106755}
}

@article{Rivin1994,
    AUTHOR = {Rivin, Igor},
    TITLE = {Euclidean structures on simplicial surfaces and hyperbolic volume},
    JOURNAL = {Ann. of Math. (2)},
    VOLUME = {139},
    YEAR = {1994},
    NUMBER = {3},
    PAGES = {553--580},
    ISSN = {0003-486X},
    MRCLASS = {52B10 (30F40 52C15 53A35)},
    MRNUMBER = {1283870}
}

@article{Rivin1996,
    AUTHOR = {Rivin, Igor},
    TITLE = {A characterization of ideal polyhedra in hyperbolic {3}-space},
    JOURNAL = {Ann. of Math. (2)},
    VOLUME = {143},
    YEAR = {1996},
    NUMBER = {1},
    PAGES = {51--70},
    ISSN = {0003-486X},
    MRCLASS = {52B10 (52A55 53A35)},
    MRNUMBER = {1370757}
}

@article{Schramm1991,
    AUTHOR = {Schramm, Oded},
    TITLE = {Rigidity of infinite (circle) packings},
    JOURNAL = {J. Amer. Math. Soc.},
    VOLUME = {4},
    YEAR = {1991},
    NUMBER = {1},
    PAGES = {127--149},
    ISSN = {0894-0347},
    MRCLASS = {52C15 (30C35 52A45)},
    MRNUMBER = {1076089}
}

@article{Schramm1991b,
    AUTHOR = {Schramm, Oded},
    TITLE = {Existence and uniqueness of packings with specified combinatorics},
    JOURNAL = {Israel J. Math.},
    VOLUME = {73},
    YEAR = {1991},
    NUMBER = {3},
    PAGES = {321--341},
    ISSN = {0021-2172},
    MRCLASS = {52C15 (30C35)},
    MRNUMBER = {1135215}
}

@article{HeSchramm1993,
    AUTHOR = {He, Zheng-Xu and Schramm, Oded},
    TITLE = {Fixed points, {K}oebe uniformization and circle packings},
    JOURNAL = {Ann. of Math. (2)},
    VOLUME = {137},
    YEAR = {1993},
    NUMBER = {2},
    PAGES = {369--406},
    ISSN = {0003-486X},
    MRCLASS = {30C35 (52C15)},
    MRNUMBER = {1207210}
}

@article{Andreev1970a,
    AUTHOR = {Andreev, E. M.},
    TITLE = {On convex polyhedra in Lobachevskii space},
    JOURNAL = {Math. USSR Sb.},
    VOLUME = {10},
    YEAR = {1970},
    PAGES = {413--440},
    NOTE = {English translation}
}

@article{Andreev1970c,
    AUTHOR = {Andreev, E. M.},
    TITLE = {Convex polyhedra of finite volume in Lobačevskiĭ space},
    JOURNAL = {Mat. Sb. (N.S.)},
    VOLUME = {83},
    NUMBER = {125},
    YEAR = {1970},
    PAGES = {256--260}
}

@article {Koebe1936,
  title={Kontaktprobleme der konformen Abbildung},
  author={Koebe, Paul},
  journal={Berichte Verh. S\"achs. Akademie der Wissenschaften Leipzig, Math.-Phys. Klasse},
  volume={88},
  pages={141--164},
  year={1936}
}

@misc{Scharlau,
    author = {Scharlau, Rudolf},
    title = {On the classification of arithmetic reflection groups
             on hyperbolic 3-space},
    note = {Preprint, Bielefeld},
year = {1989}
}

@article {ScharlauWalhorn1992,
    AUTHOR = {Scharlau, Rudolf and Walhorn, Claudia},
     TITLE = {Integral lattices and hyperbolic reflection groups},
      NOTE = {Journ\'ees Arithm\'etiques, 1991 (Geneva)},
   JOURNAL = {Ast\'erisque},
  FJOURNAL = {Ast\'erisque},
    VOLUME = {209},
      YEAR = {1992},
     PAGES = {15--16, 279--291},
      ISSN = {0303-1179,2492-5926},
   MRCLASS = {11H06 (20H15)},
  MRNUMBER = {1211022},
MRREVIEWER = {Ren\ Ding},
}

@article {KontorovichNakamura2019,
    AUTHOR = {Kontorovich, Alex and Nakamura, Kei},
     TITLE = {Geometry and arithmetic of crystallographic sphere packings},
   JOURNAL = {Proc. Natl. Acad. Sci. USA},
  FJOURNAL = {Proceedings of the National Academy of Sciences of the United
              States of America},
    VOLUME = {116},
      YEAR = {2019},
    NUMBER = {2},
     PAGES = {436--441},
      ISSN = {1091-6490},
   MRCLASS = {52C26 (20F55 51F15)},
  MRNUMBER = {3904690},
       DOI = {10.1073/pnas.1721104116},
       URL = {https://doi.org/10.1073/pnas.1721104116},
}

@InProceedings{Ziegler2007,
  author =	 {G\"unter M. Ziegler},
  title =	 {Convex Polytopes: {E}xtremal constructions and
                  $f$-vector shapes},
  booktitle =	 {``Geometric Combinatorics'', Proc. Park City
                  Mathematical Institute (PCMI) 2004},
  year =	 2007,
  editor =	 {E. Miller and V. Reiner and B. Sturmfels},
  address =	 {Providence, RI},
  publisher =	 {Amer. Math. Society},
  series =       {IAS/Park City Math. Ser.},
  volume =       13,
  note =	 {With an appendix by Th.\ Schr\"oder and N. Witte},
  pages =	 {617-691}
}

@article {Vinberg1981,
    AUTHOR = {Vinberg, \`E. B.},
     TITLE = {The nonexistence of crystallographic reflection groups in
              {L}obachevski\u\i \ spaces of large dimension},
   JOURNAL = {Funktsional. Anal. i Prilozhen.},
  FJOURNAL = {Akademiya Nauk SSSR. Funktsional\cprime ny\u\i \ Analiz i ego
              Prilozheniya},
    VOLUME = {15},
      YEAR = {1981},
    NUMBER = {2},
     PAGES = {67--68},
      ISSN = {0374-1990},
   MRCLASS = {51F15 (20H15 22E40 52A25)},
  MRNUMBER = {617472},
MRREVIEWER = {H. Schwerdtfeger},
}

@article {Nikulin2007,
    AUTHOR = {Nikulin, V. V.},
     TITLE = {Finiteness of the number of arithmetic groups generated by
              reflections in {L}obachevski\u\i \ spaces},
   JOURNAL = {Izv. Ross. Akad. Nauk Ser. Mat.},
  FJOURNAL = {Rossi\u\i skaya Akademiya Nauk. Izvestiya. Seriya
              Matematicheskaya},
    VOLUME = {71},
      YEAR = {2007},
    NUMBER = {1},
     PAGES = {55--60},
      ISSN = {1607-0046},
   MRCLASS = {20F55 (11E12 22E40)},
  MRNUMBER = {2477273},
MRREVIEWER = {I. Dolgachev},
       DOI = {10.1070/IM2007v071n01ABEH002349},
       URL = {http://dx.doi.org/10.1070/IM2007v071n01ABEH002349},
}

@article {Vinberg1967,
    AUTHOR = {Vinberg, \`E. B.},
     TITLE = {Discrete groups generated by reflections in {L}oba\v cevski\u\i \
              spaces},
   JOURNAL = {Mat. Sb. (N.S.)},
    VOLUME = {72 (114)},
      YEAR = {1967},
     PAGES = {471--488; correction, ibid. 73 (115) (1967), 303},
   MRCLASS = {20.60 (10.25)},
  MRNUMBER = {0207853},
MRREVIEWER = {H. Schwerdtfeger},
}

@article {Belolipetsky2016,
    AUTHOR = {Belolipetsky, Mikhail},
     TITLE = {Arithmetic hyperbolic reflection groups},
   JOURNAL = {Bull. Amer. Math. Soc. (N.S.)},
  FJOURNAL = {American Mathematical Society. Bulletin. New Series},
    VOLUME = {53},
      YEAR = {2016},
    NUMBER = {3},
     PAGES = {437--475},
      ISSN = {0273-0979},
   MRCLASS = {20F55 (11F06 11H56 20H05)},
  MRNUMBER = {3501796},
MRREVIEWER = {O. V. Shvartsman},
       DOI = {10.1090/bull/1530},
       URL = {http://dx.doi.org/10.1090/bull/1530},
}

@article {ABSW2008,
    AUTHOR = {Agol, Ian and Belolipetsky, Mikhail and Storm, Peter and
              Whyte, Kevin},
     TITLE = {Finiteness of arithmetic hyperbolic reflection groups},
   JOURNAL = {Groups Geom. Dyn.},
  FJOURNAL = {Groups, Geometry, and Dynamics},
    VOLUME = {2},
      YEAR = {2008},
    NUMBER = {4},
     PAGES = {481--498},
      ISSN = {1661-7207},
   MRCLASS = {20F55 (11F06 11H56 22E40)},
  MRNUMBER = {2442945},
MRREVIEWER = {Ruth Kellerhals},
       DOI = {10.4171/GGD/47},
       URL = {http://dx.doi.org/10.4171/GGD/47},
}

@article {LongMaclachlanReid2006,
    AUTHOR = {Long, D. D. and Maclachlan, C. and Reid, A. W.},
     TITLE = {Arithmetic {F}uchsian groups of genus zero},
   JOURNAL = {Pure Appl. Math. Q.},
  FJOURNAL = {Pure and Applied Mathematics Quarterly},
    VOLUME = {2},
      YEAR = {2006},
    NUMBER = {2, part 2},
     PAGES = {569--599},
      ISSN = {1558-8599},
   MRCLASS = {20H10 (11F06)},
  MRNUMBER = {2251482},
MRREVIEWER = {Jack O. Button},
       DOI = {10.4310/PAMQ.2006.v2.n2.a9},
       URL = {http://dx.doi.org/10.4310/PAMQ.2006.v2.n2.a9},
}

@incollection {Agol2006,
    AUTHOR = {Agol, Ian},
     TITLE = {Finiteness of arithmetic {K}leinian reflection groups},
 BOOKTITLE = {International {C}ongress of {M}athematicians. {V}ol. {II}},
     PAGES = {951--960},
 PUBLISHER = {Eur. Math. Soc., Z\"urich},
      YEAR = {2006},
   MRCLASS = {30F40 (20H10 57M07)},
  MRNUMBER = {2275630},
MRREVIEWER = {Igor Belegradek},
}

@book{Thurstonbook,
    Author = {Thurston, William P.},
    Title = {The Geometry and Topology of Three-Manifolds},
    Note = {Princeton University lecture notes (1978-1981). Electronic version available at \url{https://library.msri.org/books/gt3m/}},
    Year = {2002},
    Publisher = {Princeton University}
}

\end{document}